\numberwithin{equation}{section}
\theoremstyle{plain}
\numberwithin{equation}{section}
\newtheorem{theorem}{Theorem}[section]
\newtheorem{lemma}[theorem]{Lemma}
\newtheorem{proposition}[theorem]{Proposition}
\newtheorem{corollary}[theorem]{Corollary}
\theoremstyle{definition}
\newtheorem{remark}[theorem]{Remark}
  \def\put@fmt@data{%
    \copyright@fmt%
    \@thanks%
    \history@fmt
    \fline@cmd%
    \abstract@fmt%
    \keyword@fmt%
    \lline@cmd%
    \tableofcontents@fmt}
  \renewcommand\subsection{\@startsection {subsection}{2}{\z@}%
                                     {-\medskipamount}%
                                     {\medskipamount}%
                                     {\itshape\raggedright}}
  \renewcommand\subsubsection{\@startsection{subsubsection}{3}{\z@}%
                                     {-\medskipamount}%
                                     {.01\p@}%
                                     {\itshape\raggedright}}
  \renewcommand\paragraph{\@startsection{paragraph}{4}{\z@}%
                                     {\medskipamount}%
                                     {-10pt}%
                                     {\itshape}}
  \renewcommand\subparagraph{\@startsection{subparagraph}{5}{\parindent}%
                                     {0.1pt}%
                                     {-1em}%
                                     {\itshape}}
\def\ims@thmshape{3}
\def\ud{\mathrm{d}}
\def\E{{\mathbb E}}
\def\EE{{\mathbb E}}
\def\R{{\mathbb R}}
\def\RR{{\mathbb R}}
\def\PP{{\mathbb P}}
\def\cC{{\mathcal C}}
\def\DTheta{\textrm{\rm grad} \,  \Theta}
\def\HTheta{\textrm{\rm Hess} \,  \Theta}
\begin{document}

\begin{frontmatter}
\title{Zero noise limit for multidimensional SDEs driven by a pointy gradient}
\runtitle{Zero noise limit for multidimensional SDEs}
\begin{aug}
\author{\fnms{Fran\c{c}ois} \snm{Delarue} \thanksref{t1}\ead[label=e1]{delarue@unice.fr}},
\author{\fnms{Mario} \snm{Maurelli} \thanksref{t2}\ead[label=e2]{mario.maurelli@unimi.it}},
\runauthor{F. Delarue and M. Maurelli}

\thankstext{t1}{F.D. is supported by Institut Universitaire de France.}
\thankstext{t2}{M.M. thanks Universit\'e C\^ote d'Azur 
for the hospitality extended to him during his two week visit to Nice.}

\address{Universit\'e C\^ote d'Azur \\
Laboratoire J.-A. Dieudonn\'e
\\
Parc Valrose, 06108 Nice Cedex 02, France
\\
\printead{e1}
}

\address{Universit\`a degli Studi di Milano
\\
Department of Mathematics ``Federigo Enriques'' 
\\
via Saldini 50, 20133 Milano, Italy
\\
\printead{e2}
}

\end{aug}
\begin{abstract}
The purpose of the article is  to address the limiting behavior of the solutions of stochastic differential equations driven by a pointy $d$-dimensional {gradient} as the intensity of the underlying Brownian motion tends to $0$. By pointy {gradient}, we here mean that the {drift derives from a potential that is} ${\mathcal C}^{1,1}$ on any compact subset that does not contain the origin. As a matter of fact, the corresponding deterministic version of the differential equation may have an infinite number of solutions when {initialized} from {$0_{\RR^d}$}, in which case the limit theorem proved in the paper reads as a selection theorem of the solutions to the zero noise system. 

Generally speaking, our result says that, under suitable conditions, the probability that the particle leaves the origin by 
going through regions of higher potential tends to $1$ as the intensity of the noise tends to $0$. 
In particular, our result extends the earlier one due to Bafico and Baldi
\cite{BaficoBaldi} for the zero noise limit of one dimensional stochastic differential equations.
\end{abstract}

\end{frontmatter}

\section{Introduction}

\subsection{An overview of zero noise limits}
In his seminal paper 
\cite{Peano}, Peano addressed the existence of solutions to ordinary differential equations driven by 
continuous but possibly non-Lipschitz coefficients. Meanwhile, he highlighted the fact that, for some initial conditions, the equation could have several solutions. 
Those initial conditions are referred to as \textit{Peano points} and the fact that there exist several solutions is called 
\textit{Peano phenomenon}. A nice introduction to the subject is given in the thesis of Charpentier, 
see \cite{Charpentier2} together with the companion paper \cite{Charpentier1}. 
Therein, she points out several properties of the range of values taken at a given time by all the solutions {initialized} from a common Peano point, among which the so-called Mie and Osgood-Montel theorems: When {the differential equation is set in dimension 1}, the range {formed by all the solutions} at any time is shown to be an interval and extremal values of these intervals are shown to form extremal solutions as the underlying time index varies. 

Peano's phenomenon takes a somewhat  {puzzling} turn when the underlying differential equation is forced by a white noise. As proven in the mid 70's and the early 80's by Zvonkin 
\cite{Zvonkin}
(for one-dimensional equations), 
Veretennikov \cite{Veretennikov} (for higher dimensional equations) and 
{Stroock and Varadhan  \cite{StroockVaradhan} (at least for weak solutions, as addressed through the so-called martingale problem)}, noise restores uniqueness under pretty general boundedness conditions on the velocity field of the equation. In other words, 
Peano's phenomenon disappears in the presence of noise. 
Since these earlier results, several extensions have been addressed, including restoration of uniqueness for a more singular velocity field, restoration of uniqueness for an infinite dimensional stochastic differential equation and restoration of uniqueness under other types of noise than Brownian motion. We refer among others to 
Krylov and R\"ockner 
\cite{kry:roc:05}, Bass and Chen
\cite{BassChen},
Flandoli, Russo and Wolf
\cite{fla:rus:wol:03,fla:rus:wol:04}, 
Flandoli, Issoglio and Russo
\cite{FlandoliIssoglioRusso},
Delarue and Diel \cite{DelarueDiel},
Flandoli, Gubinelli and Priola \cite{FlandoliGubinelliPriola}, Davie \cite{dav:07} and Catellier and Gubinelli \cite{CatellierGubinelli}. The reader may also find a complete overview in the monograph of Flandoli \cite{Flandoli}.

The fact that noise may restore uniqueness {sheds} a new light on Peano's phenomenon. A natural
question is indeed to address the (weak) limit of the solution of the stochastic version of the ordinary differential 
equation as the intensity of the noise tends to $0$. Such a procedure is usually referred to as taking the 
\textit{zero-noise} limit in the corresponding stochastic differential equation. The intuition is that the zero-noise limit should select some \textit{special} solutions
among all the solutions of the original ordinary differential equation: As they are obtained by forcing the dynamics randomly, those \textit{special} solutions should be regarded as being the most meaningful ones from a physical point of view. The main result in this framework is due to Bafico and Baldi \cite{BaficoBaldi}:
It gives a quite complete picture of the zero-noise limit in the case of a one-dimensional ordinary differential equation with an isolated Peano point. Basically, the main result therein asserts that the zero-noise limit 
is a probability measure 
concentrated on extremal solutions; when {there are two extremal solutions that do leave the singularity}, 
the weight of each of them 
 depends on 
the rate at which it leaves the Peano point: the higher the rate, the higher the weight. 

Bafico and Baldi's result has been extended in several ways. In \cite{GradinaruHerrmannRoynette},
the authors addressed large deviations in the zero-noise limit, proving in particular that the rate at which the density of the stochastic solution decays (with the intensity of the noise) at points that do not seat on the extremal paths is not the same outside and inside the ``cone'' formed 
by the two extremal solutions. In 
\cite{DelarueFlandoli}, another proof of \cite{BaficoBaldi} is given for ordinary equations driven by power functions with an exponent between $0$ and $1$. Whilst 
\cite{BaficoBaldi} is mostly based upon PDE arguments (as it makes use of explicit solutions to 
elliptic equations for the exit time of the underlying diffusion process from an interval), the approach of  
\cite{DelarueFlandoli} is based upon the pathwise concept of transition point: 
Roughly speaking, a transition point is a time-space point $(t,x)$ that depends on the intensity of the noise with the following three features: $(i)$ $(t,x)$ converges {in time-space coordinates to the Peano point} as the intensity tends to $0$;
$(ii)$ as the intensity tends to $0$, the noise dominates in the dynamics up to time $t$ and the velocity field dominates after time $t$; (iii) $x$ is a typical position of the process at time $t$. 
In this framework, the strategy used in \cite{DelarueFlandoli} is to compute explicitly the transition points in function of the shape of the velocity field. 
Another probabilistic proof, based on It\^o Tanaka formula and related local times, is given in \cite{Trevisan} for velocity fields of the same form as in \cite{DelarueFlandoli}. 
 Lastly, 
in \cite{AttanasioFlandoli}, the authors investigate a 
$1d$ stochastic linear transport equation by exploiting the fact that the characteristics are precisely given 
by the stochastic differential equation considered in \cite{BaficoBaldi}.

Other extensions of \cite{BaficoBaldi} concern the $d$-dimensional case, but {they are not as precise as the analysis performed in \cite{BaficoBaldi} or they are in some way reminiscent of the 1d case}. In \cite{BuckdahnOuknineQuincampoix}, the authors prove that the zero-noise limit is concentrated on so-called Filippov's solutions of the ordinary differential equation, but this result mainly concerns equations with a discontinuous velocity field; indeed,  
for
a continuous velocity field, the notion of Filippov's solution coincides with the standard notion 
of solution and is then of little use. {The article \cite{PilipenkoProske} addresses the case where the drift has a discontinuity on an hyperplane. The authors are able to characterize the zero-noise limit in several situations (drift pushing {towards or out of} the hyperplane or even parallel to the hyperplane). This is a non-trivial multi-dimensional result, but it seems a quite different problem from drifts with one-point singularity, since it often relies on the time {duration} the process spends on each of the half-spaces generated by the hyperplane.}
In \cite{DelarueFlandoli2} and in 
\cite{JourdainReygner}, the authors address the zero-noise limit for 
specific higher dimensional examples:
\cite{DelarueFlandoli2} deals with a Vlasov-Poisson system of two particles and
 \cite{JourdainReygner} deals with an $n$-dimensional ordinary differential equation with a velocity field that depends on the ordered arrangement of the solution. { We also mention that the techniques in \cite{Trevisan} {may} probably accomodate the multi-dimensional case {when} the drift is non-Lipschitz and strongly repulsive in one point; however, here the strong repulsive assumption induces to consider only one direction (the radial direction), thus reducing morally the multi-dimensional issue to the one-dimensional case ({we exemplify the notion of strong repulsivity below}).}
 
{In contrast, the main scope of this paper is to analyse a case of a drift, which is singular (that is non-Lipschitz) at one point, but not necessarily strongly repulsive: hence the multi-dimensional nature of the problem cannot be disregarded.}

\subsection{Equation under concern}
In this paper, we consider the following equation:
\begin{equation}
\label{eq:1:1}
\ud X_{t}^{\varepsilon} =   \nabla V\bigl(X_{t}^{\varepsilon}\bigr) \ud t +
\varepsilon \ud B_{t}, \quad X_{0}^{\varepsilon}=0,
\end{equation}
where $(B_{t})_{t \geq 0}$ is a $d$-dimensional Brownian motion, 
with $d \geq 2$, 
$\varepsilon$ is a positive intensity parameter, which is intended to be small, and $V$ is a real-valued map defined on $\RR^d$, which is referred to as \textit{a potential}. In particular, $\nabla V$ maps $\RR^d$ into itself and the state $X_{t}^{\varepsilon}$
at time $t \geq 0$ is a vector of $\RR^d$. 
We sometimes regard the dynamics of $(X_{t}^{\varepsilon})_{t \geq 0}$ as the motion of  \textit{a particle}.
 
Importantly, $\nabla V$ is not assumed to be Lipschitz in the neighborhood of $0$ but is required to be continuous at $0$. As a byproduct, the 
zero noise version of the equation may be ill-posed. 
A typical instance is 
\begin{equation*}
V(x) = \vert x \vert^{1+\alpha}, \quad x \in \RR^d,
\end{equation*}
for $\alpha \in [0,1)$, 
where $\vert \cdot \vert$ denotes the Euclidean norm, in which case 
\begin{equation}
\label{eq:1:3}
\nabla V(x) = (1+\alpha) \vert x \vert^{\alpha-1} x, \quad x \in \RR^d. 
\end{equation}
Although the form of the singularity in \eqref{eq:1:3} is somehow representative of the types of potentials addressed in this paper, the radial structure of $V$ makes the example in itself of a somewhat limited scope. Using the rotational invariance of the dynamics, one may indeed investigate the solutions to 
\eqref{eq:1:1}--\eqref{eq:1:3} by following the arguments developed by Bafico and Baldi 
\cite{BaficoBaldi} in the one-dimensional case:
{\eqref{eq:1:3} is a typical instance of what we called above a \textit{strongly repulsive} drift}.

Our objective is thus to go further and to address cases where $V(x)$ behaves like $\vert x \vert^{1+\alpha}$ in some directions only. Typically, the result we prove below applies to potentials of the form
\begin{equation}
\label{eq:1:2}
V(x) = g \bigl( \frac{x}{\vert x \vert} \bigr) \vert x \vert^{1+\alpha}, \quad x \in \RR^d \setminus \{0\}, \quad V(0) =0,
\end{equation}
for a non-negative smooth enough spherical\footnote{i.e. defined on the sphere ${\mathbb S}^{d-1}$.} function $g$ that is non-zero on a sector with a non-empty interior. The detailed conditions that we need are spelled out below, see Section \ref{se:2}. 
In fact, our approach allows for more general potentials of the form
\begin{equation}
\label{eq:1:2:b}
V(x) = g(x) \vert x \vert^{1+\alpha} + h(x) \vert x \vert^{1+\beta}, \quad x \in \RR^d \setminus \{0\}, \quad V(0) =0,
\end{equation}
where $g$ is a non-negative perturbation of a spherical function 
(see \eqref{eq:expansion:V}
for a precise meaning)
and $h$
is an arbitrary function, the supports of the two being included in two disjoint $0$-originated cones of $\RR^d$. 
Importantly, $\alpha$ is assumed to be (strictly) less than $\beta$, which says that, close to the origin, 
the highest values of the potential are imposed by $g$\footnote{We believe that our analysis could be generalized to the case when $\beta < \alpha$ and $h$ is negative, but this would require an additional effort, see Remark \ref{re:beta<alpha}.}.

Equation \eqref{eq:1:1} is called \textit{a gradient flow}. The main feature is that, in the zero noise case, namely when $\varepsilon=0$, the potential is non-decreasing along the solutions of \eqref{eq:1:1}. When the dynamics are {initialized} from a point different from $0$ at which the gradient of $V$ is not zero, the potential is locally increasing (in the strict sense); in particular, when the dynamics start from a point at which $g$ is positive, the particle goes away from $0$ since the gradient of $V$ has a positive radial direction. 
For sure, when the initial condition is $0$, nothing can be said. When the initial condition is in the region 
$\{h \not =0\}$, the motion of the particle heavily depends on the sign of 
$h$, but, actually, the latter does not matter for our purposes. Such a picture slightly differs in the presence of noise: When $\varepsilon >0$, there is a competition between the steepness of the potential and the intensity of the noise. In this regard, the $1d$ case says that, in cases like \eqref{eq:1:2}
or \eqref{eq:1:2:b}, for which the gradient of the potential at $x$ may be much larger than $\vert x \vert$, the noise should only matter when the particle is really close to $0$; as soon as the particle is sufficiently far away from $0$, the deterministic dynamics  {should} dominate. Intuitively, the expected picture should be as follows: 
the steeper the potential the stronger the influence it has on the dynamics. In other words, whilst the particle has an isotropic motion under the sole action of the noise, the way it feels the potential differs from one 
region of the space to another; in particular, at a given distance of the origin, the action of the potential should depend upon the angle of the particle. In this regard, it must be clear that the potential should become effective at a smaller
distance in the region $\{g >0\}$ since it takes higher values there. In this region, the deterministic dynamics should dominate rather quickly and should push
the  
solution to \eqref{eq:1:1} away from the singularity, hence leading to the following guess: The particle leaves the singularity
by going through the region $\{ g >0\}$; certainly, the most likely directions of escape are those along which 
$g$ is maximal.

Here we provide a quite complete picture of this phenomenon for a family of potentials like \eqref{eq:1:2:b}. We first show that the noise dictates the form of the solution to \eqref{eq:1:1} (when $\varepsilon >0$ and $X_{0}^{\varepsilon}=0$) as long as the potential does not exceed a level of order $\varepsilon^2$ (whatever the form of $V$ provided it satisfies our conditions). This is an analogue of the concept of transition 
point used in \cite{DelarueFlandoli}, 
see Section \ref{se:3}.
The key fact to do so is to compare the law of the path of the particle to the law of 
a Brownian motion of intensity $\varepsilon$ by using Girsanov's theorem for martingales with a bounded mean oscillation. 
 Next, we prove that the sites at which the potential hits a level of order $\varepsilon^2$ are located, with probability asymptotically equal to $1$,  in the region $\{g>0\}$, see Section 
 \ref{se:4}. In Section \ref{se:escaping:0}, we prove that, when restarting with a potential of order $\varepsilon^2$ from 
 the region $\{g >0\}$, the particle stays within the region $\{ g >0\}$ with probability asymptotically equal to 
 1 and escapes from the singularity at a macroscopic rate. Lastly, we prove that the directions that the particle follows (while escaping from the singularity) are asymptotically given by the unit vectors $x/\vert x \vert$ of ${\mathbb S}^{d-1}$ that maximize {$g(r \frac{x}{\vert x\vert})$
 for $r$ close to $0$}, see Section \ref{se:6}. 
 Numerical examples are provided in  Section \ref{se:7} to illustrate our results. 
  Assumptions and statements are provided in Section \ref{se:2}. 
 
\section{Assumption and Statements}
\label{se:2}

\subsection{Assumptions} 
Throughout the paper, 
$(\Omega,{\mathcal F},{\mathbb F}=({\mathcal F}_{t})_{t \geq 0},{\mathbb P})$
is a complete filtered probability space satisfying the usual conditions and 
$(B_{t})_{t \geq 0}$ is a $d$-dimensional Brownian motion with respect to ${\mathbb F}${, where $d\ge2$}. 
Moreover, 
we use two sets of assumptions. The first one is defined right below. 
\vskip 10pt

\noindent \textbf{Assumption A.} 
\begin{itemize}
\item[({\textbf{A1}})]
The function $V$ is
a ${\mathcal C}^1$ Lipschitz function on $\RR^d$ which is ${\mathcal C}^{1,1}$ on any compact subsets of $\RR^d \setminus \{0\}$. It has the following writing:
\begin{equation*}
V(x) = g(x) \vert x \vert^{1+\alpha} + h(x) \vert x \vert^{1+\beta}, 
\end{equation*}
for $x \in \RR^d \setminus \{0\}$, where 
$\alpha \in (0,1)$ and $\beta \in (\alpha,1]$. Above, 
$g$ and $h$ are bounded functions that are 
 ${\mathcal C}^{1,1}$ 
on any compact subsets of $\RR^d \setminus \{0\}$. 
In particular, $V(0)=0$. 
The function $g$ is non-negative.
The sets $\{g \not =0\}$ and 
$\{ h \not =0\}$ are disjoint. {We denote by $C_{0}$ a common bound
to $g$ and $h$}.
\item[({\textbf{A2}})]
The gradient and Hessian of $V$ (the latter being defined almost everywhere) satisfy 
\begin{equation*}
\bigl\vert \nabla V(x) \bigr\vert \leq C_{0} \vert   x \vert^{\alpha}, \quad 
{\nabla V(x) \cdot \frac{x}{\vert x \vert} \geq - C_{0} \vert   x \vert^{\beta}}, 
\quad \bigl\vert \nabla^2 V(x) \bigr\vert \leq C_{0} \vert x \vert^{\alpha-1}.
\end{equation*}
In particular, $\nabla V(0)=0$. 
Also, there exists $a_{0}>0$ such that, 
 for any $x \in {\mathbb R}^d \setminus \{0\}$ with $g(x) \in (0,a_{0})$, 
\begin{equation*}
\Delta V(x) \geq 0,
\end{equation*}
{provided that the Hessian of $V$ is well-defined at point $x$.}
\item[({\textbf{A3}})]
There exists a cone ${\mathcal C}$ with $0$ as vertex 
such that $\inf_{x \in {\mathcal C}} g(x) \geq c_{0}$. 
In the interior of the set $g^{-1}(0)=\{g=0\}$, the gradient and Hessian of $V$ (the latter being defined almost everywhere) satisfy
\begin{equation*}
\bigl\vert \nabla V(x) \bigr\vert \leq C_{0} \vert   x \vert^{\beta}, \quad 
\bigl\vert \nabla^2 V(x) \bigr\vert \leq C_{0} \vert x \vert^{\beta-1}.
\end{equation*}
In the region $\{ g >0\}$,
\begin{equation*}
\bigl\vert \nabla V(x) \bigr\vert \geq c_{0} \frac{V(x)}{\vert x \vert}. 
\end{equation*}
\item[({\textbf{A4}})]
The rate at which $V$ and its derivative vanish at the boundary of $\{g >0\}$ is dictated by a function 
$L : {\RR^d  \setminus \{0\}} \rightarrow \RR_{+}$ that vanishes on $\partial \{g>0\}$ and by an exponent $p >0$
such that
\begin{equation*}
 c_{0} L(x)^{p+1}  \vert x \vert^{1+\alpha} \leq V(x) \leq C_{0} L(x)^{p+1}  \vert x \vert^{1+\alpha},
 \quad 
 c_{0} L(x)^p  \vert x \vert^{\alpha} \leq \bigl\vert \nabla V(x)
 \bigr\vert \leq C_{0} L(x)^p  \vert x \vert^{\alpha},
\end{equation*}
for $x $ such that $g(x) \in (0,a_{0})$. 
\vskip 4pt

\noindent
(If $g$ does not vanish, (\textbf{A4}) is satisfied for $a_{0}$ small enough.)
\end{itemize} 

\begin{remark}
Since we are just interested in the local behavior of $X^{\varepsilon}$ in the neighborhood of $0$, 
we can easily relax the above assumptions and assume that $V$ satisfies the above properties in 
a neighborhood of $0$ only.   
\end{remark}

\begin{remark}
\label{re:beta<alpha}
{An interesting extension is the case when $\beta$ is allowed to be strictly less than $\alpha$
and $h$ is negative {and decreases with $\vert x \vert$ (so that $h$ becomes more and more negative when moving away from the origin)}. In that case, we may expect that the particle hardly goes into the region 
$\{h <0\}$ because of the effect of the potential: In the part of the space where $h$ is negative, the potential tends to push back the particle to the origin.
Anyhow, our proof does not apply to this new case: One of our main step in the proof is to show that, until the potential reaches thresholds of order $\varepsilon^2$, 
the particle behaves like a Brownian motion, see Section 
\ref{se:3}. When $V$ is steeper on $\{h \not =0\}$ (the slope being here negative in the radial direction) than 
on $\{g =0\}$ (the slope being positive in the radial direction), this picture may no longer be true since 
$h$ may induce some reflection phenomenon (the precise form of which should be addressed carefully).
 
 Another extension concerns the case of non-potential perturbations. Again, 
 we feel that the tools that we use here could be adapted but, clearly, some care would be needed to make sure that the whole machinery
 indeed works.}
\end{remark}

Here is the second set of assumptions that we shall use. 
\vskip 10pt

\noindent \textbf{Assumption B.} 

\begin{enumerate}
\item[({\textbf{B1}})]
For the same $L$, $p$, {$C_0$} and $a_{0}$ as in (\textbf{\bf A4}), 
but under the additional assumption that $p \geq 1$, 
\begin{equation*}
 \bigl\vert \nabla^2 V(x)
 \bigr\vert \leq C_{0}
L(x)^{p-1}  \vert x \vert^{\alpha-1},
\end{equation*}
for almost every $x$ such that $g(x) \in (0,a_{0})$.

\item[({\textbf{B2}})]
There exist a differentiable map $\Theta : {\mathbb S}^{d-1} \rightarrow  {{\mathbb R}_{+}}$ and a function 
$\eta : \RR^d \rightarrow \RR$ such that 
\begin{equation}
\label{eq:expansion:V}
g(x) = \Theta\bigl( \frac{x}{\vert x \vert} \bigr) + \eta(x), \quad x \in \{ g >0\},
\end{equation}
where $\eta$ satisfies 
$\limsup_{\delta \searrow 0} \sup_{\vert x \vert \leq \delta} (\vert \eta(x) \vert / \vert x \vert )< \infty$. 
Moreover, the tangential direction to $\nabla V$ in the region $\{ g >0\}$ may be decomposed in the form
\begin{equation}
\label{eq:expansion:nablaV}
\begin{split}
&\nabla V(x) - 
\bigl( \nabla V(x) \cdot \frac{x}{\vert x \vert}
\bigr) \frac{x}{\vert x \vert}
= \Bigl[ \text{grad} \,  \Theta \bigl( \frac{x}{\vert x \vert} \bigr)
  + \eta'(x) \Bigr] \vert x \vert^{\alpha}, \quad x \in \{g >0\},
  \end{split}
\end{equation}
where, for $u \in {\mathbb S}^{d-1}$, $\DTheta(u)$ is the gradient of $\Theta$
at point $u \in {\mathbb S}^{d-1}$, which is a vector of the tangent space
$T_{u} {\mathbb S}^{d-1}$  
to the sphere ${\mathbb S}^{d-1}$ at $u$ that we regard as a vector of $\RR^d$, the tangent space to the sphere reading as the 
space of vectors of $\RR^d$ that are orthogonal to $u$. 
In the above expansion, the function $\eta'$ {(which should not be confused with the derivative of $\eta$)} is required to satisfy 
$\limsup_{\delta \searrow 0} \sup_{\vert x \vert \leq \delta} (\vert \eta'(x) \vert / \vert x \vert) < \infty$.
Lastly, we require that the function 
\begin{equation}
\label{eq:C11}
{\mathbb S}^{d-1} \ni u \mapsto 
 \DTheta(u)  \in {T_{u} {\mathbb S}^{d-1} \subset \RR^d}
\end{equation}
is Lipschitz. 
\end{enumerate}
\vskip 5pt

Assumption \eqref{eq:expansion:V} plays a crucial role in our analysis: Up to the perturbation $\eta$, 
it permits to separate the radius and the angular structures of the potential. In this regard, it is worth noting that, for $x$ small, the condition $g(x)>0$ is \textit{essentially} equivalent to 
$\Theta(x/\vert x \vert) >0$ (which, for a given $u \in {\mathbb S}^{d-1}$, may written in the form $\lim_{\lambda \searrow 0} g(\lambda u) >0$ if and only if $\Theta(u)>0$). 
Of course, the condition
\eqref{eq:expansion:nablaV}
for the gradient of the potential
has a similar role. It
is obtained by differentiating formally 
\eqref{eq:expansion:V} {(which can be made rigorous if $\eta$ is differentiable)},  {by means of} the useful relationship
\begin{equation}
\label{eq:DTheta}
\nabla \Bigl[ \Theta \bigl( \frac{x}{\vert x\vert} \bigr) \Bigr]=
\frac{1}{\vert x \vert}
\DTheta \bigl( \frac{x}{\vert x \vert} \bigr)
=
\frac{1}{\vert x \vert} \Bigl( I_{d} - \frac{x}{\vert x \vert} \otimes \frac{x}{\vert x \vert}
\Bigr) \nabla \Theta \bigl( \frac{x}{\vert x \vert} \bigr),
\end{equation}
for $x \in {\mathbb R}^d \setminus \{0\}$, where 
$\nabla$ in the left and right terms is the usual $d$-dimensional Euclidean gradient. 
 {In the last term, ${I_{d}} - \frac{x}{\vert x \vert} \otimes \frac{x}{\vert x \vert}$ is the orthogonal projection onto 
the orthogonal vector space to $x$ and $\nabla \Theta$ makes sense if $\Theta$ is extended to a neighborhood of the sphere.}

Condition \eqref{eq:C11}
says that $\Theta$ is ${\mathcal C}^{1,1}$.  
It
 implies in particular that 
$\sup_{u \in {\mathbb S}^{d-1}} 
\vert  \DTheta(u) \vert < \infty$. 
It is extremely useful as it permits to consider the 
ODE (with values in $\RR^d$)
\begin{equation}
\label{eq:ODE:2:2}
\dot{\phi}_{t}  = \DTheta\bigl(\frac{\phi_{t}}{\vert \phi_{t} \vert}\bigr), \quad t \geq 0 \ ; \quad \phi_{0} = u, 
\end{equation}
for an initial condition $u \in {\mathbb S}^{d-1}$. 
By \eqref{eq:C11}, the solution is locally well-defined. In fact, since $u \in {\mathbb S}^{d-1}$
and $\DTheta$ is orthogonal to $u$, the solution remains on the sphere and 
is (uniquely) defined over the entire $[0,\infty)$. It is denoted by $(\phi_{t}^u)_{t \geq 0}$.  
Actually, we set \eqref{eq:ODE:2:2} on $\RR^d$ for convenience. Since $\DTheta$ may be regarded as a vector field on ${\mathbb S}^{d-1}$,  we  {can also consider 
the equation on the sphere, in which case it takes the form}
\begin{equation}
\label{eq:ODE:2:2:sphere}
\dot{\phi}_{t}  = \DTheta(\phi_{t}), \quad t \geq 0 \ ; \quad \phi_{0} = u. 
\end{equation}
Below, we denote by ${\mathcal S}$ the collection of  {local maxima}
$u \in {\mathbb S}^{d-1}$ of 
\eqref{eq:expansion:nablaV} satisfying 
$\Theta(u) >0$. Given these objects, we also require the following properties:

\begin{enumerate}
\item[({\textbf{B3}})] 
For any $a>0$ small enough, there exists a basin of attraction ${\mathcal B}_{a}\subset 
\{u \in {\mathbb S}^{d-1} : \Theta(u) >0  \}$ 
such that
\vskip 2pt

\begin{enumerate}
\item[({\textbf{B3-a}})]
For all $r>0$, we can find $T_{0}:=T_{0}(r)>0$ {(which may depend on $r$)} satisfying  
\begin{equation*}
\forall t \geq T_{0}, \ \forall u \in {\mathcal B}_{a}, \quad 
\textrm{\rm dist}\bigl(\phi_{t}^u,{\mathcal S} \bigr) \leq r;
\end{equation*}
\item[({\textbf{B3-b}})] \textit{For all $r>0$,
there exists $r'>0$ such that, for any $u \in {\mathbb S}^{d-1}$ with $\textrm{\rm dist}(u,{\mathcal S}) < r'$, $u$ belongs to ${\mathcal B}_{a}$ and  
$(\phi_{t}^u)_{t \geq 0}$ 
remains at distance less than $r$ of ${\mathcal S}$
\footnote{The reader must be aware of the fact that
the second part of the assumption 
({\textbf{B3-b}}), namely
$(\phi_{t}^u)_{t \geq 0}$ 
remains at distance less than $r$ of ${\mathcal S}$, 
 follows from 
the first part of the assumption itself and from ({\textbf{B3-a}}).
Indeed, for a given $r>0$, we can choose $T_{0}$ as in 
({\textbf{B3-a}}) and then choose, by the first part of {({\textbf{B3-b}})}, $r'$ small enough such that
$u$ belongs to ${\mathcal B}_{a}$, which shows that 
$\textrm{\rm dist}\bigl(\phi_{t}^u,{\mathcal S} \bigr) \leq r$
for $t \geq T_{0}$. In order to guarantee that the same holds true 
for $t \in [0,T_{0}]$, we notice that, since ${\mathcal S}$ is preserved by 
$\phi$, we have 
$\sup_{t \in [0,T_{0}]}
\textrm{\rm dist}(\phi_{t}^u,{\mathcal S}) \leq C_{T_{0}} 
\textrm{\rm dist}(u,{\mathcal S})$ for a constant $C_{T_{0}}$ depending on 
$T_{0}$ but independent of $u$.};}
\item[({\textbf{B3-c}})] \textit{For any $a>0$ small enough,  {$\Theta^{-1}( [a,3a])$}
is included in ${\mathcal B}_{a}$.}
\end{enumerate}
\item[({\textbf{B4}})] \textit{The 
set ${\mathcal L}$ of local minima of the function 
$\Theta$ on $\{\Theta >0\}$
is finite (it may be empty). 
Moreover, }
\vskip 2pt

\begin{enumerate}
\item[({\textbf{B4-a}})]
\textit{For each local minimum $u_{w} \in {\mathcal L}$, 
there exists an open ball (on the sphere) $B(u_{w},e_{w}):=\{Êu \in {\mathbb S}^{d-1} :  {\vert u - u_{w} \vert} < e_{w}\}$, 
for some $e_{w}>0$, on which the function $\Theta$ is twice continuously differentiable and 
uniformly convex, meaning that the Hessian of $\Theta$ on the sphere, denoted by $\HTheta$, 
satisfies, for any $u \in B(u_{w},e_{w})$ and any $v \in T_{u} {\mathbb S}^{d-1}$, 
$v \cdot \HTheta(u) v \geq c \vert v \vert^2$, the inner product being regarded as the inner product  {on} 
$T_{u} {\mathbb S}^{d-1}$;}
\item[({\textbf{B4-b}})]
\textit{For any $a>0$ small enough
and
for each local minimum $u_{w} \in {\mathcal L}$, 
there exists $a_{w}>0$ such that $a_{w} < \inf_{\vert u - u_{w} \vert =e_{w}} \Theta(u)$
and 
\begin{equation*}
\bigl\{u \in {\mathbb S}^{d-1} : \Theta(u) \geq a \bigr\} \setminus 
\biggl( \bigcup_{u_{w} \in {\mathcal L}}
\bigl\{ u \in B(u_{w},e_{w}) :  \Theta(u) < a_{w}\bigr\} \biggr) \subset {\mathcal B}_{a}.
\end{equation*}
}
\end{enumerate}
\end{enumerate}

 {Assumptions $(\textbf{B3})$ and $(\textbf{B4})$ 
provide a strong form of asymptotic stability.} Condition $(\textbf{B3-a})$  
says that
${\mathcal B}_{a}$ is a subset of $\{\Theta >0 \}$
such that the distance between the flow $\phi$  and the set of local maxima 
${\mathcal S}$ converges to $0$ as time increases, uniformly in 
the initial condition $u \in {\mathcal B}_{a}$. 
Condition $(\textbf{B3-b})$ says that the neighborhood of ${\mathcal S}$ is included in 
${\mathcal B}_{a}$ while $(\textbf{B3-c})$ says that
the neighborhood of $\{ \Theta =  {2a} \}$, {for $a$ small, is   {also} in ${\mathcal B}_{a}$ (whence the dependence on $a$)}.  
Lastly, $(\textbf{B4-a})$ says that the local minima of $\Theta$ on $\{\Theta >0\}$ are located at bottoms of uniformly convex 
wells 
while $(\textbf{B4-b})$ guarantees that, starting sufficiently far away from the local minima (now including $0$ as a local minimum),
the flow is uniformly attracted to ${\mathcal S}$ (as it starts from ${\mathcal B}_{a}$ and ${\mathcal B}_{a}$ is uniformly
attracted to ${\mathcal S}$, see 
$(\textbf{B3-a})$). 

If useful, we remind the reader of the definition of $\HTheta$ in 
(\textbf{\bf B4-a}). For $u \in {\mathbb S}^{d-1}$, $\HTheta(u)$
is given by the covariant derivative of $\DTheta(u)$. On the sphere, the covariant derivative may be easily computed as the 
orthogonal projection of the (standard) Euclidean derivative onto $T_{u} {\mathbb S}^{d-1}$. In other words, 
if $\Theta$ extends to a local neighborhood of the sphere, then, 
with the same notation as in 
\eqref{eq:DTheta}, 
$\HTheta(u) = (I_{d} - u \otimes u) \nabla [\DTheta(u)]$, where $\nabla$ is the standard Euclidean derivative. In turn, 
$\DTheta(u)$ reads as $\DTheta(u)=
 (I_{d} - u \otimes u) \nabla \Theta(u)$
 and we end up with the formula
\begin{equation}
\label{eq:Hess:theta}
\HTheta(u) = (I_{d} - u \otimes u) \bigl[\nabla^2 \Theta(u)
- (u \cdot \nabla \Theta(u)) I_{d} \bigr],
\end{equation} 
where $\nabla^2$ is the standard Hessian matrix in ${\mathbb R}^d$. 
In this regard, it is worth noticing that the convexity property in \textbf{(B4-a)} may be stated in other ways without any use 
of the Hessian tensor. Still, we feel it simpler to use this definition in the text
and we let the interest reader refer to standard references on \textit{geodesic convexity}
for more details on the subject. In the special case of the sphere, the reader may find a complete overview of 
\textit{geodesic convexity} (including the definition that we use here) in \cite{Ferreira2014}. 
{Actually},
in the analysis below, we invoke convexity in two forms. {Of course}, we make use of the lower bound on the Hessian  tensor, 
as it is stated in \textbf{(B4-a)}. We also claim  {(and it is useful for us)} that we can find a constant $c'>0$  such that, for each local minimum $u_{w} \in {\mathcal L}$, 
for any $u \in B(u_{w},e_{w})$, 
\begin{equation}
\label{eq:convexity:2}
\bigl\vert \DTheta(u) \bigr\vert^2 \geq c' \bigl( \Theta(u) - \Theta(u_{w}) \bigr).
\end{equation}
Here the constant $c$' depends on $c$ in (\textbf{\bf B4-a}) and of the Lipschitz constant 
in 
\eqref{eq:C11}. Property \eqref{eq:convexity:2} is completely standard in the Euclidean setting. 
The proof on the sphere is similar, replacing \textit{Euclidean straight lines}
by \textit{geodesics on the sphere} (whence the notion of \textit{geodesic convexity}): We refer for instance to \cite[Proposition 8]{Ferreira2014} for the details.

\subsection{Example in dimension $d=2$}
\label{subse:2:2}
We here provide a quite generic example of a $2$-dimensional potential $V$ satisfying the above 
requirements. Take indeed 
\begin{equation*}
V(x) = g(\theta) \vert x \vert^{1+\alpha}, \quad x \in {\mathbb R}^2, 
\end{equation*}
where $\theta$ is the angle (or argument) in the polar decomposition of $x$ and 
$g$ is a function 
from ${\mathbb R}/(2 \pi {\mathbb Z})$ into $\RR_{+}$ 
which is positive on an open interval 
$U = (\theta_{0},\theta_{1}) + 2 \pi {\mathbb Z}$
of 
the torus, where $-\pi < \theta_{0} < \theta_{1} \leq \pi$, and which is $0$ outside. On the closure of $U$, $g$ extends into a twice continuously differentiable function, the extension of $g$ and its derivative
vanishing at the boundary of $U$. In particular,
$g$ is ${\mathcal C}^{1,1}$ on the torus.  

Clearly, for $x \not =0$,  
\begin{equation}
\label{eq:formula:2d:V}
\begin{split}
&\nabla V(x) = (1+ \alpha) g(\theta) \vert x\vert^{\alpha} u_{{\rho}}+
g'(\theta) \vert x \vert^{\alpha} u_{\theta}, 
\\
&\nabla^2 V(x) = (1+ \alpha) \alpha g(\theta) \vert x\vert^{\alpha-1} u_{{\rho}}
\otimes u_{{\rho}} +
 \alpha g'(\theta) \vert x\vert^{\alpha-1} u_{{\rho}} \otimes u_{\theta}
\\
&\hspace{50pt}+
\alpha g'(\theta) \vert x \vert^{\alpha-1} u_{\theta} \otimes u_{{\rho}}
+
\bigl( g''(\theta) + (1+\alpha) g(\theta) \bigr) \vert x \vert^{\alpha-1} u_{\theta} \otimes u_{\theta}, 
\\
&\Delta V(x) = (1+\alpha)^2  g(\theta) \vert x \vert^{\alpha-1} + g''(\theta) \vert x \vert^{\alpha-1}. 
\end{split}
\end{equation}
where
\begin{center}
 $u_{{\rho}}= \left(\begin{array}{c} \cos(\theta) \\ \sin(\theta) \end{array} \right)$ \quad and \quad $u_{\theta} = 
\left(\begin{array}{c} - \sin(\theta) \\ \cos(\theta) \end{array} \right)$.
\end{center}
So, for those $x$ whose argument is between $\theta_{0}$ and $\theta_{1}$, $\Delta V(x) \geq 0$ if and only if (identifying $g$ with its extension to $[\theta_{0},\theta_{1}]$)
\begin{equation*}
(1+\alpha)^2 g(\theta) +g''(\theta) \geq 0.
\end{equation*}
For $\theta>\theta_{0}$ close to $\theta_{0}$, 
$g(\theta) \approx \tfrac12 (\theta - \theta_{0})^2 g''(\theta_{0})$. Therefore, if 
$g''(\theta_{0})>0$, the above inequality is indeed true in the neighborhood of
$\theta_{0}$. The same holds in the neighborhood of $\theta_{1}$, which shows that (\textbf{A2})
and  
(\textbf{A3})  are indeed satisfied. Condition (\textbf{A4}) is easily satisfied as well 
with 
$L(x) = \min[(\theta - \theta_{0})_{+},(\theta_{1}-\theta)_{+}]$
and  $p=1$.

It now remains to discuss Assumption (\textbf{B}). (\textbf{B1}) is true with $p=1$  {possibly increasing $C_0$ }and 
(\textbf{B2}) holds with 
$\Theta(u) = g(\theta)$
{(and so $\DTheta(u) =  g'(\theta) u_{\theta}$)}, where $\theta$ is the argument of 
$u \in {\mathbb S}^1$. In particular, 
\eqref{eq:ODE:2:2} rewrites in the form
\begin{equation*}
\phi_{t}^u = \left( \begin{array}{c}
\cos \bigl( \vartheta_{t}^{\theta} \bigr) 
\\
\sin \bigl( \vartheta_{t}^{\theta} \bigr) 
\end{array}
\right)
, 
\quad 
\dot{\vartheta}_{t}^\theta = g(\vartheta_{t}^\theta), \quad t \geq 0 \ ; \quad \vartheta_{0}^\theta = \theta. 
\end{equation*}

Now we call ${\mathcal S}$ the collection of local maxima of $g$ on $(\theta_{0},\theta_{1})$
and ${\mathcal L}$ the collection of local minima (also on $(\theta_{0},\theta_{1})$). For simplicity, we assume 
that ${\mathcal S}$ is a finite union of closed intervals of the form $\cup_{i=1}^n [m_{0}^i,m_{1}^i]$, 
with $m_{0}^i \leq m_{1}^i < m_{0}^{i+1}$. Each of these intervals may reduce to one point. 
Following (\textbf{B4}), we assume that ${\mathcal L}$ is finite (and may be empty) and that elements 
of 
${\mathcal L}$ (if not empty) are located at bottoms of uniformly convex wells; in the latter case, 
we enumerate the elements of ${\mathcal L}$ in the form $u_{1}< \cdots <u_{\ell}$, for $\ell \geq 1$,
and we denote by $([u_{j}-e_{j},u_{j}+e_{j}])_{1 \leq j \leq \ell}$ non-trivial disjoint intervals on which 
$g$ is uniformly convex. 
Moreover, we assume that the zeros of $g'$ on $(\theta_{0},\theta_{1})$ are 
restricted to ${\mathcal S} \cup {\mathcal L}$. 
For $a$ small enough such that {$3a$} is less than the minimum of $g$ over ${\mathcal L}$, 
we let 
\begin{equation*}
{\mathcal B}_{a} = \bigl\{ \theta \in (\theta_{0},\theta_{1}) : g(\theta) \geq  {a}
\bigr\} \setminus \biggl( \bigcup_{j=1}^{\ell} \bigl\{ \theta \in (u_{j}-e_{j},u_{j}+e_{j}) : g(\theta) < g(u_{j}) + a
\bigr\} \biggr). 
\end{equation*}
Obviously, we may choose $a$ small enough such that, for each $j=1,\cdots,\ell$, 
$g(u_{j} \pm e_{j}) > g(u_{j})+a$. The latter guarantees that the level set $\{ \theta \in (u_{j}-e_{j},u_{j}+e_{j})  : g(\theta)=g(u_{j})+a\}$ separates the bottom of the well from ${\mathcal B}_{a}$.
This shows (\textbf{B4}).
 
It remains to check 
(\textbf{B3}). 
For sure, ${\mathcal B}_{a}$ contains $g^{-1}({[a,3a]})$, which is 
(\textbf{B3-c}). 
Also, for $\theta \in {\mathcal B}_{a} \setminus {\mathcal S}$,
$\theta$ lies in an interval of the form $[u+\epsilon,m)$ or $(m,u-\epsilon]$, where $u$ is a local minimum 
of $g$ ($u$ being possibly equal to $\theta_{0}$ or $\theta_{1}$)
and $m$ is equal to some $m_{0}^i$ or $m_{1}^i$, for some $i \in \{1,\cdots,n\}$. 
Here, $\epsilon$ is a small enough positive real that merely accounts for the fact that the distance between ${\mathcal B}_{a}$
and ${\mathcal L} \cup \{\theta_{0},\theta_{1}\}$ is not zero. 
In case when the interval has the form 
$[u+\epsilon,m)$, $g$ must be increasing on it
and $g'$ has to be (uniformly) positive on any compact subinterval.
In particular, $(\vartheta_{t}^{u+\epsilon})_{t \geq 0}$
reaches any neighborhood of $m$ in finite time. Since 
$\vartheta_{t}^{u+\epsilon} \leq 
\vartheta_{t}^{\theta}$ for any $\theta \in [u+\epsilon,m)$ and for any $t \geq 0$, 
we deduce that $(\vartheta_{t}^{\theta})_{t \geq 0}$
reaches any neighborhood of $m$ in finite time, uniformly in 
$\theta \in [u+\epsilon,m)$ ({and then never comes back as otherwise $g$ would decrease}). 
Proceeding similarly 
when the interval has the form 
$(m,u-\epsilon]$, 
this shows 
({\textbf{B3-a}}). 
Condition
({\textbf{B3-b}})  {is obvious}.

\begin{remark}
For sure, we could address more general $2$-dimensional examples of the form 
$V(x) = g(\theta) \vert x \vert^{1+\alpha} + h(x) \vert x \vert^{1+\beta}$, 
by choosing $g$ as before and by choosing $h$ as a ${\mathcal C}^{1,1}$ function 
with a support of empty intersection with the closure of the $2$-dimensional 
sector induced by $\{ g >0\}$. In order to completely match
\eqref{eq:expansion:V},
we could also consider perturbations of $g$ of the form $\ell(x) g(\theta)$ where 
$\ell$ is a smooth function satisfying $\ell(0) \not =0$.

Numerical examples are given in Section \ref{se:7}. 
\end{remark}

\subsection{Higher dimensional example}
\label{subse:2:3}

{The natural extension of the above $2d$ case is 
\begin{equation}
\label{ex:2:3}
V(x) = g \bigl( \frac{x}{\vert x \vert} \bigr) \vert x \vert^{1+\alpha}, \quad x \in {\mathbb R}^d \setminus \{0\},
\end{equation}
for a function $g$ defined (say) on a neighborhood ${\mathcal U}$ of the sphere
${\mathbb S}^{d-1} := \{ x \in {\mathbb R}^d : \vert x \vert=1\}$.
In that case,
\begin{equation*}
\begin{split}
&\nabla V(x) = \vert x \vert^{\alpha} {\bigl( I_{d} - \frac{x}{\vert x \vert} \otimes \frac{ x}{\vert x \vert} \bigr) \nabla g \bigl( \frac{x}{\vert x \vert}
\bigr) } + (1+\alpha) g\bigl( \frac{x}{\vert x \vert} \bigr) \vert x \vert^{\alpha}
\frac{x}{\vert x \vert},
\end{split}
\end{equation*}
for $x \in {\mathbb R}^d \setminus \{0\}$, see
\eqref{eq:DTheta}. 
Very much in the spirit of 
\eqref{eq:formula:2d:V}
(see also \eqref{eq:Hess:theta}), a similar formula may be derived for the second order derivatives of $V$.
 We may deduce  
that (\textbf{\bf A1}), (\textbf{\bf A2}) and (\textbf{\bf A3}) are satisfied if $g$ takes non-negative values, is not identically zero
on ${\mathbb S}^{d-1}$, and is ${\mathcal C}^{1,1}$ on ${\mathcal U}$
and if there exists $a_{0}>0$ such that the following holds true for the $x$'s in ${\mathcal U}$ 
for which 
$\nabla^2 g(x)$ is well-defined and $g(x) \in (0,a_{0})$:
\begin{equation}
\label{eq:Laplace:spheric}
\begin{split}
\Delta V(x) 
&= \vert x \vert^{\alpha-1}
\biggl( \textrm{\rm Trace} \biggl[
\bigl( I_{d} - \frac{x}{\vert x \vert} \otimes \frac{x}{\vert x \vert}
\bigr) 
\Bigl[
\nabla^2 g \bigl(\frac{x}{\vert x \vert}\bigr)
- \bigl(\frac{x}{\vert x \vert} \cdot \nabla g( \frac{x}{\vert x \vert}) \bigr) 
 \Bigr]
\bigl( I_{d} - \frac{x}{\vert x \vert} \otimes \frac{x}{\vert x \vert}
\bigr) 
\biggr] 
\\
&\hspace{15pt} + (1+ \alpha) \bigl(d-1+\alpha\bigr) g\bigl( \frac{x}{\vert x \vert} \bigr) \biggr) 
\\
&\geq 0.
\end{split}
\end{equation}
Conditions (\textbf{\bf A4}) and (\textbf{\bf B1}) are satisfied 
if, for $x$ in the neighborhood of 
${\mathbb S}^{d-1}$, $g(x) = \ell(x)^{p+1} {\mathbf 1}_{\{x \in {\mathcal U}_{+}\}}$ 
for some  {$p \geq 1$}, 
where ${\mathcal U}_{+} \subset \R^d$ is an open set of non-empty intersection with ${\mathbb S}^{d-1}$,
and $\ell$ is a positive-valued twice continuously differentiable function on 
${\mathcal U}_{+}$, such that 
$\ell$, $\nabla \ell$ and $\nabla^2 \ell$ extend by continuity to the closure $\overline {\mathcal U}_{+}$
of ${\mathcal U}_{+}$, the extensions satisfying, for any 
$u \in \partial {\mathcal U}_{+} \cap {\mathbb S}^{d-1}$,
\begin{equation*}  
\ell(u) = 0, \quad 
\textrm{\rm grad} \, \ell(u) = 
\bigl( I_{d} - u \otimes u \bigr) \nabla \ell(u) \not =0.
\end{equation*}
 {Indeed, 
with such a choice, 
$g$ is ${\mathcal C}^{1,1}$ on ${\mathcal U}$. 
Moreover,}
by compactness of $\overline {\mathcal U}_{+} \cap {\mathbb S}^{d-1}$ and by continuity of $\nabla \ell$, we can find $\varepsilon >0$ small enough 
such that $\inf_{u \in \overline{\mathcal U}_{+} \cap {\mathbb S}^{d-1} : \textrm{\rm dist}(u,\partial {\mathcal U}_{+}) \leq \varepsilon}
\vert \textrm{\rm grad} \, \ell(u) \vert >0$. Then, 
$a_{0}:=\inf_{u \in \overline{\mathcal U}_{+} \cap {\mathbb S}^{d-1} : \textrm{\rm dist}(u,\partial {\mathcal U}_{+}) \geq \varepsilon}
\ell^{p+1}(u) >0$, and (\textbf{\bf A4}) and (\textbf{\bf B1})
hold true with $L(x) = \ell(x/\vert x \vert) {\mathbf 1}_{\{x/\vert x \vert \in {\mathcal U}_{+}\}}$
and for these values of 
$p$
and 
$a_{0}$.

%

{Within this framework},   
we recover 
\eqref{eq:expansion:V}
and
\eqref{eq:expansion:nablaV} 
in 
 (\textbf{\bf B2})
 with
\begin{equation*}
\begin{split}
\Theta(u) = g(u), \quad 
 \DTheta(u) =  \Bigl( I_{d} - u \otimes u \Bigr) \nabla g( u), \ u \in {\mathbb S}^{d-1} \ ; \quad \eta(x) = \eta'(x) =  0, \quad x \in {\mathbb R}^d.
\end{split}
\end{equation*}
Obviously, 
(\textbf{\bf B2}) holds true if 
$g$ is ${\mathcal C}^{1,1}$ on ${\mathcal U}$. 

In order to guarantee
  (\textbf{\bf B3})
  and 
   (\textbf{\bf B4}), we
  may proceed as in the $2d$ example. 
  We call ${\mathcal S}$ and ${\mathcal L}$ the sets of local maxima 
  and of local minima  
   of $\Theta$ on $\{\Theta >0\}$
   and we require that, on 
   $\{\Theta >0\} \setminus 
  ({\mathcal S} \cup {\mathcal L})$, 
  $\vert \DTheta \vert >0$. Moreover,  
   following (\textbf{\bf B4-a}),
  we require 
   that   
${\mathcal L}$ 
 is finite and that each element of ${\mathcal L}$ is located at the bottom of a uniformly convex well.
 As in dimension 2, we ask ${\mathcal S}$ to write as the finite union of closed disjoint connected subsets (the value of the local maximum 
 on each connected component being hence constant).
As the proofs of (\textbf{\bf B3})
  and 
   (\textbf{\bf B4}) are here similar to the $2d$ case, we just give a sketch of them. 
The idea is as follows. Whenever the flow starts from 
a compact subset ${\mathcal K} \subset \{\Theta>0\} \setminus ({\mathcal L} \cup {\mathcal S})$,
it stays within ${\mathcal K}$ for a finite time only, uniformly with respect to the initial point 
(as long as the latter is in ${\mathcal K}$). Indeed, 
$(\ud/\ud t)[\Theta(\phi_{t})] = \vert \DTheta(\phi_{t}) \vert^2$
remains lower bounded by a positive constant, only depending on ${\mathcal K}$, as long as $\phi_{t}$
remains in ${\mathcal K}$. Hence, 
for any $r>0$, the distance between the flow and ${\mathcal S}$ becomes less than $r$ in finite time
(uniformly with respect to the initial point in ${\mathcal K}$)
as otherwise $\Theta$ would blow up along $\phi$.
In order to complete the proof, it suffices to show that, for a given $r>0$,
the flow remains at distance to ${\mathcal S}$ less than 
$r$
provided that $\textrm{\rm dist}(\phi_{0},{\mathcal S})$ is small enough. In fact, 
since ${\mathcal S}$ writes as the union of a finite number of closed disjoint connected sets, say 
${\mathcal S}_{1},\cdots,{\mathcal S}_{n}$, we just have to show that,
 for a given $r>0$, for any $i \in \{1,\cdots,n\}$, 
 $\phi$ remains at distance to ${\mathcal S}_{i}$ less than $r$ provided that 
$\textrm{\rm dist}(\phi_{0},{\mathcal S}_{i})$ is small enough.
For $r$ small enough, we have $\sup_{u : \textrm{\rm dist}(u,{\mathcal S}_{i}) = r}
 g \leq g({\mathcal S}_{i})-\varepsilon$, for some $\varepsilon >0$ depending on $r$. 
Choose now $r'>0$ such that 
$\inf_{u : \textrm{\rm dist}(u,{\mathcal S}_{i}) \leq r'}
 g \geq g({\mathcal S}_{i})-\varepsilon/2$.
 Then, whenever $\textrm{\rm dist}(\phi_{0},{\mathcal S}_{i})
 \leq r'$, we have $\Theta(\phi_{t}) \geq g({\mathcal S}_{i})- \varepsilon/2$
 for all $t \geq 0$. In particular, $\phi$ cannot go at distance to ${\mathcal S}_{i}$
 greater than or equal to $r$, as otherwise $\Theta(\phi)$ would pass below 
 $g({\mathcal S}_{i})- \varepsilon$.
 This provides a similar picture to the $2d$ case and we let the reader define 
 ${\mathcal B}_{a}$ accordingly.}

\subsection{Statements}
By Veretennikov \cite{Veretennikov}, we have the following proposition:

\begin{proposition}
Let Assumption \textbf{\bf A} be in force. Then, 
for all $\varepsilon >0$, the equation \eqref{eq:1:1}, with $X_{0}^{\varepsilon}=0$ as initial condition, has a unique  {(strong) }solution, which is denoted by $(X_{t}^{\varepsilon})_{t \geq 0}$.
\end{proposition}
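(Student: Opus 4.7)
The plan is to invoke Veretennikov's theorem \cite{Veretennikov} directly, which asserts that for a $d$-dimensional SDE with a bounded Borel measurable drift and a constant non-degenerate diffusion coefficient, strong existence and pathwise uniqueness hold for any deterministic initial condition. Thus the proof reduces to verifying the hypotheses.

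First, I verify boundedness and measurability of the drift $\nabla V$. By (\textbf{A1}), the potential $V$ is globally Lipschitz on $\mathbb{R}^d$ and $\mathcal{C}^1$ on $\mathbb{R}^d$ (with $\mathcal{C}^{1,1}$ regularity away from the origin). Being $\mathcal{C}^1$ with a Lipschitz constant gives that $\nabla V$ is defined and continuous everywhere on $\mathbb{R}^d$, with $\sup_{x \in \mathbb{R}^d} |\nabla V(x)| < \infty$; in particular, (\textbf{A2}) even gives $\nabla V(0) = 0$, so $\nabla V$ extends continuously to the origin. Hence $\nabla V$ is a bounded continuous (in particular Borel measurable) function from $\mathbb{R}^d$ to $\mathbb{R}^d$.

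Second, the diffusion coefficient of \eqref{eq:1:1} is the constant matrix $\varepsilon I_d$ which, for any fixed $\varepsilon > 0$, is bounded and uniformly elliptic. Both hypotheses of Veretennikov's theorem are therefore satisfied, and it yields a unique (strong) solution $(X_t^\varepsilon)_{t \geq 0}$ of \eqref{eq:1:1} with initial condition $X_0^\varepsilon = 0$.

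There is no real obstacle here: although $\nabla V$ fails to be Lipschitz at $0$ (which is the whole point of the paper and forbids the use of the classical Cauchy--Lipschitz existence theorem for the ODE obtained at $\varepsilon = 0$), the additive non-degenerate noise is precisely the kind of regularisation covered by \cite{Veretennikov}, so well-posedness at any fixed $\varepsilon > 0$ is immediate. The substance of the paper lies in the asymptotic analysis of $(X^\varepsilon)$ as $\varepsilon \searrow 0$, not in this preliminary well-posedness statement.
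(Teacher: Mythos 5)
Your proposal matches the paper exactly: the paper states the proposition with no further argument than the citation "By Veretennikov \cite{Veretennikov}", and your verification that the hypotheses hold (global boundedness of $\nabla V$ from the Lipschitz assumption in (\textbf{A1}), continuity at the origin from (\textbf{A2}), and uniform ellipticity of the constant diffusion coefficient $\varepsilon I_{d}$ for fixed $\varepsilon>0$) is the intended, routine check. The proof is correct and takes essentially the same route as the paper.
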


Here are the two statements we prove below.

\begin{theorem}
\label{thm:2:1}
Under Assumption \textbf{\bf A}, there exist two constants $c>0$ and $\psi \in (0,1)$
and a collection of positive times $(t_{\varepsilon})_{\varepsilon >0}$
converging to $0$ with $\varepsilon$ 
 such that, whenever
$X_{0}^{\varepsilon} = 0$ for all $\varepsilon >0$, it holds that  
\begin{equation*}
\liminf_{\varepsilon \searrow 0}
{\mathbb P}
\biggl(\forall 
t\in [t_{\varepsilon},  t_{\star}], 
\ 
V\bigl( X_{t}^\varepsilon\bigr)  \geq 
\Bigl( (1-\psi) c(t  - t_{\varepsilon})_{+} \Bigr)^{\tfrac{1}{1-\psi}}, 
\ 
g\bigl( X_{t}^{\varepsilon} \bigr) >0
\biggr) =1,
\end{equation*}
where 
$\displaystyle t_{\star} := \inf \Bigl\{t \geq 0 : \Bigl( (1-\psi) c t \Bigr)^{\frac{1}{1-\psi}} \geq \tfrac12 \Bigr\} \wedge 1$. 
\end{theorem}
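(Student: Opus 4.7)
\emph{Plan.} The proof proceeds in three phases mirroring Sections~\ref{se:3}--\ref{se:5}. Define the transition time
\begin{equation*}
t_{\varepsilon} := \inf\bigl\{t \ge 0 : V(X_{t}^{\varepsilon}) \ge \kappa\,\varepsilon^2\bigr\}
\end{equation*}
for a small constant $\kappa>0$; one shows $t_{\varepsilon}\to 0$ in probability. As long as $V(X_{t}^{\varepsilon})\le\kappa\varepsilon^2$, assumption (\textbf{\bf A2}) gives $|\nabla V(X_{t}^{\varepsilon})| \le C_{0}|X_{t}^{\varepsilon}|^{\alpha}$, and the constraint on $V$ forces $|X_{t}^{\varepsilon}|$ to be small (of order at most $\varepsilon^{2/(1+\alpha)}$ inside the cone $\mathcal{C}$, at most $\varepsilon^{2/(1+\beta)}$ inside $\{h\ne 0\}$), so that the drift is negligible compared with $\varepsilon\,\ud B$. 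By a Girsanov argument based on the BMO norm of the exponential martingale (Section~\ref{se:3}), the law of $(X_{t}^{\varepsilon})_{t \le t_{\varepsilon}}$ is equivalent to that of the stopped Brownian path $(\varepsilon B_{t})$, with a Radon--Nikodym density having uniformly bounded moments.

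\emph{Hitting $\{g>0\}$ and ODE comparison.} At $t_{\varepsilon}$, because $\alpha<\beta$, reaching the level $V=\kappa\varepsilon^2$ only requires a distance of order $\varepsilon^{2/(1+\alpha)}$ inside $\mathcal{C}\subset\{g>0\}$, while a distance of the much larger order $\varepsilon^{2/(1+\beta)}$ is needed inside $\{h\ne 0\}$. Transporting this geometric comparison through the Girsanov equivalence and using the rotational symmetry of Brownian motion, the hitting point $X_{t_{\varepsilon}}^{\varepsilon}$ lies in $\{g>0\}$ with probability tending to $1$ (Section~\ref{se:4}). On this event, apply It\^o's formula:
\begin{equation*}
\ud V(X_{t}^{\varepsilon}) = |\nabla V(X_{t}^{\varepsilon})|^2\,\ud t + \varepsilon\,\nabla V(X_{t}^{\varepsilon})\cdot \ud B_{t} + \tfrac{\varepsilon^2}{2}\Delta V(X_{t}^{\varepsilon})\,\ud t.
\end{equation*}
Using (\textbf{\bf A3}) together with the upper bound $V(x)\le C_{0}|x|^{1+\alpha}$ implied by (\textbf{\bf A2}), one gets, in the bulk of $\{g>0\}$ where $g$ is bounded below,
\begin{equation*}
|\nabla V(x)|^2 \ge c_{0}^2\,\frac{V(x)^2}{|x|^2} \ge c\, V(x)^{\psi}, \qquad \psi := \frac{2\alpha}{1+\alpha} \in (0,1).
\end{equation*}
Comparison with the deterministic ODE $\dot v = c v^{\psi}$, $v(t_{\varepsilon}) = 0$, whose explicit solution is $v(t) = ((1-\psi)c(t-t_{\varepsilon}))^{1/(1-\psi)}$, and absorption of the martingale fluctuation into a multiplicative discount $(1-\psi)$ of the constant $c$, then yields the target lower bound on $V(X_{t}^{\varepsilon})$ for $t\in[t_{\varepsilon},t_{\star}]$.

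\emph{Main obstacle.} The delicate step is Phase~3, where one must simultaneously: (i) control the martingale $\varepsilon\int \nabla V \cdot \ud B$, whose quadratic variation is of order $\varepsilon^2 V^{\psi}(t-t_{\varepsilon})$; (ii) dominate the possibly negative It\^o correction $(\varepsilon^2/2)\Delta V$ with $|\Delta V|\le C|x|^{\alpha-1}$ (using the nonnegativity provided by (\textbf{\bf A2}) inside the strip $g \in (0,a_{0})$); and (iii) prevent $X_{t}^{\varepsilon}$ from ever exiting $\{g>0\}$, where (\textbf{\bf A4}) permits $|\nabla V|$ to degenerate like $L(x)^{p}|x|^{\alpha}$ near the boundary. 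The critical noise-to-drift ratio along the ODE profile equals $\varepsilon V^{-1/2}$, which is small precisely in the regime $V \gg \varepsilon^2$ that starts at $t_{\varepsilon}$; this gap should be exploited by a dyadic stopping-time argument at the levels $V = 2^{k}\kappa\varepsilon^2$, with exponential martingale estimates on each slice and a barrier bound on $L(X^{\varepsilon})$ keeping the particle away from $\partial\{g>0\}$, thereby closing the asserted $\liminf$.
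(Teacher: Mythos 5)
Your Phases 1 and 2 do follow the paper's route (BMO/Girsanov comparison with $\varepsilon B$ up to potential level of order $\varepsilon^2$, then the cone argument showing the hitting point lies in $\{g>0\}$), but Phase 3 — which is where the real content of Theorem \ref{thm:2:1} sits — has a genuine gap. The inequality you invoke, $\vert \nabla V(x)\vert^2 \geq c_0^2 V(x)^2/\vert x\vert^2 \geq c\,V(x)^{2\alpha/(1+\alpha)}$, is only valid where $g(x)\geq a_0$, because the second step uses $V(x)\geq a_0\vert x\vert^{1+\alpha}$. Nothing prevents the path from spending time in the strip $\{0<g<a_0\}$ near $\partial\{g>0\}$, where by (\textbf{A4}) the gradient degenerates like $L(x)^p\vert x\vert^{\alpha}$ and your exponent is simply wrong. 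The paper needs two extra inputs there that your proposal does not supply: (i) the a priori comparison $V_t^{\varepsilon}\geq \varpi (R_t^{\varepsilon})^2$ propagated along the whole path (Proposition \ref{prop:5:2}, itself a nontrivial supermartingale argument with a discount factor, whose hypothesis $V_0^{\varepsilon}\geq \varphi (R_0^{\varepsilon})^2$ with $\varphi$ large is exactly what the lower bound $g(X^{\varepsilon}_{\tau^{\varepsilon}(v_0)})\geq\mu$ from Section \ref{se:4} delivers as $\varepsilon\to0$); and (ii) Lemma \ref{lem:5:4}, which under that comparison produces a \emph{different} exponent $\psi$ involving $p$, not $2\alpha/(1+\alpha)$. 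Without (i)–(ii) the ODE comparison $\dot v = c v^{\psi}$ cannot be run on the events that actually occur.

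Second, your mechanism for keeping the particle in $\{g>0\}$ — a "barrier bound on $L(X^{\varepsilon})$" — is not substantiated and it is unclear how it could be obtained, since near $\partial\{g>0\}$ the drift may be almost tangential and one has no direct control on $L$ along the path. The paper's mechanism (Lemma \ref{lem:5:1}) is different and essentially scalar: since the supports of $g$ and $h$ are disjoint, $V$ vanishes on $\partial\{g>0\}$, so $g^{\varepsilon}$ cannot reach $0$ before $V^{\varepsilon}$ drops below $\tfrac12 V_0^{\varepsilon}$; and the probability of the latter is at most $\exp(-v_0/2)$ by a Dol\'eans-Dade/Doob exponential estimate exploiting the surplus $\tfrac78\vert\nabla V\vert^2$ in the drift of $V^{\varepsilon}$. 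This same exponential estimate is what absorbs the martingale term in the escape-rate bound — not a "multiplicative discount of $c$", and not a dyadic slicing at levels $2^k\kappa\varepsilon^2$, which you only sketch. There are also smaller repairable issues (the theorem requires deterministic $t_{\varepsilon}$, so the random hitting time must be dominated by a deterministic infinitesimal time as in Subsection \ref{subse:5:5}; the ODE should be started from $\tfrac12 v_0\varepsilon^2>0$ rather than $0$ to avoid non-uniqueness; and the comparison must be stopped at $\Xi^{\varepsilon}$, whence the cap $t_{\star}$), but the missing treatment of the boundary strip and of the confinement in $\{g>0\}$ is the substantive gap.
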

\begin{theorem}
\label{thm:2:2}
Assume Assumptions \textbf{\bf A}
and \textbf{\bf B} are in force. 
Then, for given $\pi \in (0,1)$ and $r>0$, we can find 
a collection of positive times $(t_{\varepsilon})_{\varepsilon >0}$
converging to $0$ with $\varepsilon$
 together with a positive time ${\boldsymbol \epsilon} >0$ such that
\begin{equation*}
\liminf_{\varepsilon \searrow 0} {\mathbb P}
\biggl( \forall t \in [t_{\varepsilon},{\boldsymbol \epsilon}], \
\vert X_{t}^{\varepsilon} \vert >0, 
\ 
\textrm{\rm dist}\Bigl(\frac{X_{t}^{\varepsilon}}{\vert X_{t}^{\varepsilon} \vert},{\mathcal S}\Bigr) \leq r 
\biggr) \geq 1- \pi. 
\end{equation*}
\end{theorem}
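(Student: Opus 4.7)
The strategy is to combine Theorem \ref{thm:2:1} with a radial/angular decomposition of $X^\varepsilon$ and a time change that turns the angular dynamics into a small perturbation of the ODE \eqref{eq:ODE:2:2:sphere}. On the high-probability event ${\mathcal E}_\varepsilon$ provided by Theorem \ref{thm:2:1}, the particle stays in $\{g>0\}$, and the lower bound on $V(X^\varepsilon)$ combined with $V(x) \leq C_0 |x|^{1+\alpha}$ yields $R^\varepsilon_t := |X^\varepsilon_t| \geq \kappa (t-t_\varepsilon)^{1/((1+\alpha)(1-\psi))}$ on $[t_\varepsilon, t_\star]$. I will choose the $t_\varepsilon$ of the present theorem slightly larger than the one in Theorem \ref{thm:2:1} (still with $t_\varepsilon \to 0$) so that $(R^\varepsilon_t)^{1+\alpha} \gg \varepsilon^2$ throughout $[t_\varepsilon, {\boldsymbol \epsilon}]$.

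Next, applying It\^o's formula to $x \mapsto x/|x|$ (smooth on $\{X^\varepsilon \neq 0\}$) and using the radial/tangential decomposition \eqref{eq:expansion:nablaV}, the angular process $\Phi^\varepsilon_t := X^\varepsilon_t/R^\varepsilon_t$ will satisfy
\begin{equation*}
d \Phi^\varepsilon_t = (R^\varepsilon_t)^{\alpha-1}\bigl[\DTheta(\Phi^\varepsilon_t) + \eta'(X^\varepsilon_t)\bigr] dt + \frac{\varepsilon}{R^\varepsilon_t}(I_d - \Phi^\varepsilon_t \otimes \Phi^\varepsilon_t)\,dB_t + {\mathcal I}^\varepsilon_t\,dt,
\end{equation*}
with It\^o correction $|{\mathcal I}^\varepsilon_t| \leq C\varepsilon^2/(R^\varepsilon_t)^2$. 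Introducing the time change $\tau(t) := \int_{t_\varepsilon}^t (R^\varepsilon_s)^{\alpha-1}\,ds$, which is strictly increasing on ${\mathcal E}_\varepsilon$ and satisfies $\tau({\boldsymbol \epsilon}) \to +\infty$ as $\varepsilon \to 0$ (because $\alpha<1$ and $R^\varepsilon$ is small), the rescaled process $\widetilde\Phi^\varepsilon_\tau := \Phi^\varepsilon_{t(\tau)}$ will solve an SDE of the form
\begin{equation*}
d\widetilde\Phi^\varepsilon_\tau = \DTheta(\widetilde\Phi^\varepsilon_\tau)\,d\tau + \widetilde\eta^\varepsilon_\tau\,d\tau + \widetilde\sigma^\varepsilon_\tau\,d\widetilde B_\tau,
\end{equation*}
driven by a Brownian motion $\widetilde B$ in the rescaled filtration, with $|\widetilde\eta^\varepsilon_\tau| \lesssim R^\varepsilon_{t(\tau)} + \varepsilon^2/(R^\varepsilon_{t(\tau)})^{1+\alpha}$ and $|\widetilde\sigma^\varepsilon_\tau|^2 \lesssim \varepsilon^2/(R^\varepsilon_{t(\tau)})^{1+\alpha}$, both $o(1)$ under the above choice of $t_\varepsilon$.

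A Gronwall-type comparison on the sphere based on the Lipschitz property \eqref{eq:C11} of $\DTheta$ will then show that, on a sub-event of ${\mathcal E}_\varepsilon$ of probability tending to $1$, $\widetilde\Phi^\varepsilon_\tau$ stays uniformly close to $\phi_\tau^u$ (with $u := \Phi^\varepsilon_{t_\varepsilon}$) over any compact $\tau$-interval. Using the result of Section \ref{se:4} (the position at the transition lies in $\{g>0\}$) together with \textbf{(B4-b)} (the set $\{\Theta \geq a\}$ minus the wells around local minima is contained in ${\mathcal B}_a$) and the geodesic convexity estimate \eqref{eq:convexity:2} to rule out the wells in time, I plan to arrange that ${\mathbb P}(u \in {\mathcal B}_a) \geq 1-\pi/2$ for $a$ chosen small enough. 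Then \textbf{(B3-a)} gives $\mathrm{dist}(\phi_\tau^u,{\mathcal S}) \leq r/2$ for $\tau \geq T_0(r/2)$, and \textbf{(B3-b)} ensures that $\phi$ remains within $r$ of ${\mathcal S}$ thereafter. Transferring this to $\widetilde\Phi^\varepsilon$ via the perturbation estimate and back to original time will yield the desired bound with probability at least $1-\pi$.

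The main obstacle is twofold. First, the angular noise coefficient $\varepsilon/R^\varepsilon$ is initially of order $\varepsilon^{(\alpha-1)/(1+\alpha)}$, which diverges with $\varepsilon \to 0$; this forces a delicate bootstrap between the lower bound on $R^\varepsilon$ coming from Theorem \ref{thm:2:1} and the Gronwall stability estimate for $\widetilde\Phi^\varepsilon$. Second, one must rule out trajectories that linger in the wells around the local minima of $\Theta$; this is precisely where the uniform convexity assumption \textbf{(B4-a)} and the geodesic convexity bound \eqref{eq:convexity:2} become essential, providing a quantitative escape rate from each well.
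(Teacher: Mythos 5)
Your overall architecture (angular SDE, time change by $(R^\varepsilon_s)^{\alpha-1}$, comparison with the flow $\phi$ of \eqref{eq:ODE:2:2:sphere}, then (\textbf{B3})--(\textbf{B4})) is the right one, but two steps as described would not go through. First, the comparison with $\phi$ cannot be closed by a single Gronwall estimate ``over any compact $\tau$-interval'': the conclusion is needed on all of $[t_\varepsilon,{\boldsymbol\epsilon}]$, whose image under your time change has length $\tau({\boldsymbol\epsilon})\to\infty$, and the Gronwall factor $e^{L\tau}$ blows up on that horizon, while the accumulated quadratic variation $\int_{t_\varepsilon}^{{\boldsymbol\epsilon}}\varepsilon^2 (R^\varepsilon_s)^{-2}\,ds$ is not made small merely by choosing $t_\varepsilon$ so that $(R^\varepsilon_{t_\varepsilon})^{1+\alpha}\gg\varepsilon^2$. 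What is needed (and what the paper does) is a restart structure: compare with the flow only on blocks of fixed length $T$ of the time-changed clock, use (\textbf{B3-b}) to propagate the neighborhood of ${\mathcal S}$ from one block to the next, and make the per-block errors summable. The summability does not come from uniform smallness of the coefficients but from the fact that $V^\varepsilon$ grows exponentially along the time-changed clock (the event $D^0(\tau)$ in Proposition \ref{prop:good:events}, itself requiring $g^\varepsilon$ to stay above a fixed level $a$ --- which Theorem \ref{thm:2:1} does not give you; you need Proposition \ref{prop:hitting:g:a} and Lemma \ref{lem:staying:g:above:a} for that), so that the error on block $j$ is of order $\varepsilon^2/(V^\varepsilon_\tau e^{cj})$ and sums to $O(\varepsilon^2/V_\tau^\varepsilon)$. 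Your ``delicate bootstrap'' remark points at the problem but does not supply this mechanism.

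Second, the statement that you can ``arrange that ${\mathbb P}(u\in{\mathcal B}_a)\geq 1-\pi/2$ for $a$ small enough'' is not justified and is essentially false as a route: Section \ref{se:4} only guarantees $g(X^\varepsilon_{\tau^\varepsilon(v_0)})>\mu$, i.e.\ that the angle lies in $\{\Theta>0\}$ at a definite level, but that set contains the wells $\{u\in B(u_w,e_w):\Theta(u)<a_w\}$ around the local minima of $\Theta$, which are excluded from ${\mathcal B}_a$ for every $a$ by (\textbf{B4-b}); nothing in Sections \ref{se:3}--\ref{se:escaping:0} prevents the escaping particle from starting its angular journey inside such a well with non-negligible probability, and shrinking $a$ does not help. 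So the well case cannot be dismissed by a choice of parameters: one needs a genuine stochastic exit argument, namely that on each block of the time-changed clock the process $\Theta(\theta^\varepsilon)$, thanks to the geodesic convexity bounds (\textbf{B4-a}) and \eqref{eq:convexity:2} and a Dol\'eans-Dade/Doob estimate, gains a factor $e^{c/2}$ with a probability bounded below uniformly, and then to iterate over a deterministic number of blocks to exit the well in infinitesimal time with probability at least $1-\pi$ (Subsection \ref{subse:escape:well} and the third step of the paper's proof). You name the convexity ingredient but provide no such argument, and the case distinction between reaching the level set of $g$ at a positive time (where (\textbf{B3-c}) places the angle in ${\mathcal B}_a$) and starting already above it (where the well escape is unavoidable) is missing from your plan.
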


In the text, we call 
a collection of positive times $(t_{\varepsilon})_{\varepsilon >0}$
converging to $0$ with $\varepsilon$
an \textit{infinitesimal time} (or \textit{infinitesimal sequence}).

The interpretation of Theorem \ref{thm:2:1} is pretty clear: With probability converging to $1$, the potential 
grows polynomially fast after some infinitesimal time $t_{\varepsilon}$ (at least up until time $t_{\star}$); meanwhile $g(X^{\varepsilon})$ remains positive. 
In other words, with probability converging to $1$, the particle goes away from the singularity while staying within the region $\{g >0\}$ (observe indeed that the particle cannot stay close to $0$ as the potential increases). 

Although the statement does not say anything about the behavior of the particle 
after time $t_{\star}$, we can guess it quite easily. When starting away from the singularity (say from 
$X_{0}^{\varepsilon}=x_{0}$ for a fixed $x_{0}$ with $\vert x_{0} \vert >0$), the solution of the 
SDE 
\eqref{eq:1:1}
converges to the solution of the ODE $\dot{x}_{t} = \nabla V(x_{t})$ (with $x_{0}$ as initial condition) up until it reaches some fixed neighbordhood ${\mathcal O}$ of the origin. In fact, we can easily choose 
the neighborhood ${\mathcal O}$ in such a way that the solution of the limiting ODE does not reach it,  
since the
potential cannot decrease along the limiting ODE. Hence, as $\varepsilon$ tends to $0$, the probability that the solution of  
the
SDE 
\eqref{eq:1:1} with $x_{0}$ as initial condition reaches ${\mathcal O}$ gets smaller and smaller. In other words, the particle stays away from the singularity and follows, asymptotically as $\varepsilon$ tends to $0$, the solution $(x_{t})_{t \geq 0}$ of the deterministic version of 
\eqref{eq:1:1} with $x_{0}$ as initial condition. 

{The interpretation of Theorem \ref{thm:2:2} is also obvious: With probability converging to $1$, and under the required conditions, 
the paths that the particle follows to escape from the singularity are locally (i.e., on a piece of time $[0,{\boldsymbol \epsilon}]$ with 
${\boldsymbol \epsilon}$ independent of $\varepsilon$) directed by the local maxima in ${\mathbb S}^{d-1} \cap \{ \Theta >0\}$ 
of $\Theta$. In other words, the particle follows directions that maximize the potential, or equivalently, steepest lines. Somehow, this is the multi-dimensional generalization of Bafico and Baldi's result. 
}
\vskip4pt

\noindent \textbf{Notations.}
Throughout the proofs, we specify when needed the parameters on which the various constants do depend.
We often write $C(\textbf{\bf A})$ (or $C(\textbf{\bf A},\textbf{\bf B})$) to 
stress the fact that the constant $C$ in hand may depend on the parameters in 
Assumption \textbf{\bf A}
(or in 
Assumptions
\textbf{\bf A}
and
\textbf{\bf B}).

\subsection{Submartingale dynamics of the potential}

Part of the analysis relies on the submartingale properties of the potential process $(V_{t}^{\varepsilon} := V(X_{t}^{\varepsilon}))_{t \geq 0}$. The latter is pretty easy to check whenever the potential function $V$ is convex. When $V$ is not convex, we need to prove the submartingale property for a perturbation of the potential process, which is the precise purpose of the next two lemmas. 

\begin{lemma}
\label{lem:supermartingale}
Call $(X_{t}^{\varepsilon})_{t \geq 0}$ the solution of \eqref{eq:1:1} with $X_{0}^{\varepsilon}=0$ as initial condition and let
\begin{equation*}
V_{t}^{\varepsilon} := V(X_{t}^{\varepsilon}),
\quad 
\nabla V_{t}^{\varepsilon} := \nabla V(X_{t}^{\varepsilon}), 
\quad 
R_{t}^{\varepsilon} := \vert X_{t}^{\varepsilon} \vert, 
\quad t \geq 0.
\end{equation*}
Letting for any $\varepsilon >0$
\begin{equation*}
\kappa^{\varepsilon} := \inf \Bigl\{ t\geq 0 : R_{t}^{\varepsilon}  \geq  {\varepsilon^{\tfrac{2}{1+\beta}+\tfrac{\beta-\alpha}{2}}}  \Bigr\},
\end{equation*}
there {exist two positive constants $\eta:=\eta(\textbf{\bf A})$ and $\varepsilon_{\star}:=\varepsilon_{\star}(\textbf{\bf A})$}, such that, {for any 
$\varepsilon \in (0, \varepsilon_{\star}]$}, the process
\begin{equation*}
\biggl(  {V_{t \wedge \kappa^{\varepsilon}}^{\varepsilon}
+ \eta \bigl( R_{t \wedge \kappa^{\varepsilon}}^{\varepsilon} \bigr)^{1+\alpha}}
-  \int_{0}^{t \wedge \kappa^{\varepsilon}} \vert \nabla V_{s}^{\varepsilon}
\vert^2 \ud s   \biggr)_{t \geq 0} 
\end{equation*}
is a sub-martingale. 

If needed, we can choose $\eta$ in such a way that the function $x \mapsto V(x) + \tfrac12 \eta \vert x \vert^{1+\alpha}$ is non-negative on the ball of center $0$ and radius $1$. 
\end{lemma}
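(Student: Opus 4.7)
The plan is to apply Itô's formula to the process $Y_t := V(X_t^{\varepsilon}) + \eta (R_t^{\varepsilon})^{1+\alpha}$ and then to argue that, up to the stopping time $\kappa^{\varepsilon}$, the finite-variation part of $Y_t - \int_0^t |\nabla V_s^{\varepsilon}|^2 \ud s$ is non-negative for a suitable choice of $\eta$ and for $\varepsilon$ small enough. Since $V$ is only $\mathcal{C}^{1,1}$ on compact subsets of $\R^d \setminus \{0\}$, I would first verify that Itô's formula applies. Because $d \geq 2$ and $\nabla V$ is bounded and continuous with $\nabla V(0)=0$, the solution $X^{\varepsilon}$ is comparable (locally, via Girsanov) to an $\varepsilon$-Brownian motion; the Lebesgue measure of $\{t : X_t^{\varepsilon}=0\}$ is therefore zero, so a localization/approximation argument (either smoothing $V$ or applying Itô on the annuli $\{|x|>1/n\}$ and passing to the limit using the bounds in (\textbf{A2})) gives the standard Itô decomposition.

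Using $\ud X_t^{\varepsilon} = \nabla V_t^{\varepsilon} \ud t + \varepsilon \ud B_t$, the drift of $V(X_t^{\varepsilon})$ is $|\nabla V_t^{\varepsilon}|^2 + \tfrac{\varepsilon^2}{2}\Delta V(X_t^{\varepsilon})$; and by a direct computation, the drift of $(R_t^{\varepsilon})^{1+\alpha}$ is
\begin{equation*}
(1+\alpha)|X_t^{\varepsilon}|^{\alpha-1} X_t^{\varepsilon}\cdot \nabla V_t^{\varepsilon} + \tfrac{\varepsilon^2}{2}(1+\alpha)(d-1+\alpha)|X_t^{\varepsilon}|^{\alpha-1}.
\end{equation*}
Subtracting $|\nabla V_t^{\varepsilon}|^2$, the finite-variation part of the process in the statement has drift
\begin{equation*}
\mathcal{D}_t := \tfrac{\varepsilon^2}{2}\Delta V(X_t^{\varepsilon}) + \eta(1+\alpha)|X_t^{\varepsilon}|^{\alpha-1} X_t^{\varepsilon}\cdot \nabla V_t^{\varepsilon} + \tfrac{\varepsilon^2 \eta}{2}(1+\alpha)(d-1+\alpha)|X_t^{\varepsilon}|^{\alpha-1}.
\end{equation*}
The task reduces to showing $\mathcal{D}_t \geq 0$ on $\{t \leq \kappa^{\varepsilon}\}$.

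I would split into the two regimes: when $g(X_t^{\varepsilon}) \in (0,a_0)$, the Laplacian term is non-negative by (\textbf{A2}); otherwise (either in $\{g=0\}$, in $\{g \geq a_0\}$, or in $\{h\neq 0\}$), (\textbf{A2}) only gives $|\Delta V(x)| \leq C_0 d\, |x|^{\alpha-1}$. In both cases, the radial drift is controlled by the lower bound $\nabla V(x) \cdot x/|x| \geq -C_0|x|^{\beta}$ from (\textbf{A2}), so the middle term in $\mathcal{D}_t$ is bounded below by $-C_0 \eta (1+\alpha)|X_t^{\varepsilon}|^{\alpha+\beta}$. Choosing $\eta$ large enough that $\eta(d-1+\alpha) \geq 2C_0 d$ absorbs the negative contribution of $\Delta V$ by half of the positive spherical term; the remaining half is compared with the radial bound, which reduces to the inequality $\tfrac{\varepsilon^2}{4}(d-1+\alpha) \geq C_0 |X_t^{\varepsilon}|^{1+\beta}$. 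Since $|X_t^{\varepsilon}| \leq \varepsilon^{2/(1+\beta)+(\beta-\alpha)/2}$ up to $\kappa^{\varepsilon}$, this amounts to $\varepsilon^{(1+\beta)(\beta-\alpha)/2} \leq \text{const}$, which is true for all $\varepsilon \leq \varepsilon_{\star}(\textbf{A})$ since $\beta > \alpha$.

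The main subtlety, and the step I expect to require the most care, is the rigorous justification of the Itô expansion given the $\mathcal{C}^{1,1}_{\rm loc}$ regularity of $V$ away from $0$ combined with the fact that $X^{\varepsilon}$ is initialized at the singular point; this calls for either a mollification argument or a localization on $\{|X^{\varepsilon}|>\delta\}$ with $\delta \searrow 0$, exploiting the integrability of $|x|^{\alpha-1}$ for the $\varepsilon$-Brownian-like dynamics of $X^{\varepsilon}$ near $0$. Once this is done, the second-moment computation above is straightforward. For the final non-negativity statement, the decomposition $V(x) \geq -C_0 |x|^{1+\beta} \geq -C_0 |x|^{1+\alpha}$ valid on the unit ball (using $\beta > \alpha$ and the boundedness of $h$) gives $V(x) + \tfrac12 \eta |x|^{1+\alpha} \geq 0$ as soon as $\eta \geq 2C_0$, which is consistent with the lower bound required above.
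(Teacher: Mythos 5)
Your proposal is correct and follows essentially the same route as the paper: the same It\^o decomposition of $V(X^{\varepsilon})+\eta (R^{\varepsilon})^{1+\alpha}$, the same use of the radial lower bound and the Hessian bound from (\textbf{A2}), and the same choice of $\eta$ large and $\varepsilon_{\star}$ small, with the inequality $(R_{t}^{\varepsilon})^{1+\beta} \leq \varepsilon^{2}\varepsilon^{(1+\beta)(\beta-\alpha)/2}$ up to $\kappa^{\varepsilon}$ (using $\beta>\alpha$) doing the work exactly as in the paper. The only minor differences are that the paper justifies the It\^o expansion by noting that the law of $X^{\varepsilon}$ is equivalent to that of $\varepsilon B$, so that $X_{t}^{\varepsilon}\neq 0$ for all $t>0$ almost surely (points being polar in $d\geq 2$) and $t=0$ is included by continuity, rather than your mollification/annulus localization, and that your case split on $g\in(0,a_{0})$ is unnecessary here since the crude bound $\vert \Delta V(x)\vert \leq C \vert x\vert^{\alpha-1}$ suffices everywhere (the split is only needed in the companion Lemma where $\eta$ must be small).
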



\begin{proof}
We first notice that, for a given $\varepsilon$, 
the law of $(X_{t}^{\varepsilon})_{0 \leq t \leq T}$
is equivalent  to the law of $(\varepsilon B_{t})_{0 \leq t \leq T}$, for any $T>0$. 
In particular, for any $\varepsilon >0$,
\begin{equation*}
{\mathbb P} \Bigl( \forall t >0, \quad X_{t}^{\varepsilon} \neq 0 \Bigr) = 1. 
\end{equation*}
Hence, for any $t >0$, it makes sense to expand $(V_{s}^{\varepsilon})_{s \geq t}$ since 
$V$ is ${\mathcal C}^{1,1}$ on any compact subset of $\RR^d \setminus \{0\}$. Equivalently, 
it makes sense to write $\ud V_{t}^{\varepsilon}$, for $t>0$; then, the only difficulty is integrate those microscopic variations between $0$ and some positive time. We shall come back to this point when necessary. 
\vskip 4pt

\textit{First step.}
We start with the following two simple computations:
\begin{equation}
\label{eq:V:R2}
\begin{split}
&\ud V_{t}^{\varepsilon} = \Bigl( \vert \nabla V_{t}^{\varepsilon} \vert^2  + 
\tfrac12
\varepsilon^2 \Delta V_{t}^{\varepsilon} \Bigr) \ud t + \varepsilon \nabla V_{t}^{\varepsilon} \cdot \ud B_{t},
\quad t >0,
\\
&\ud \bigl( R_{t}^{\varepsilon}
\bigr)^2 = \Bigl( 2 X_{t}^{\varepsilon} \cdot \nabla V_{t}^{\varepsilon}
+ d \varepsilon^2 \Bigr) \ud t
+ 2 \varepsilon X_{t}^{\varepsilon} \cdot \ud B_{t}, \quad t >0.
\end{split}
\end{equation}
In particular, It\^o's formula, with the function $f(x)=x^{\frac{1+\alpha}{2}}$ yields
\begin{equation*}
\begin{split}
\ud \bigl( R_{t}^{\varepsilon}
\bigr)^{1+\alpha} &= \Bigl( (1+\alpha) \bigl(R_{t}^{\varepsilon}\bigr)^{\alpha} \frac{X_{t}^{\varepsilon}}{R_{t}^{\varepsilon}} \cdot \nabla V_{t}^{\varepsilon}
+ \varepsilon^2 (1+\alpha)  \frac{d+\alpha-1}{2}   \bigl( R_{t}^{\varepsilon} \bigr)^{\alpha-1}\Bigr) \ud t
+\varepsilon (1+\alpha)   \bigl( R_{t}^{\varepsilon} \bigr)^{\alpha} \frac{X_{t}^{\varepsilon}}{R_{t}^{\varepsilon}} \cdot \ud B_{t}.
\end{split}
\end{equation*}
Hence, for any constant $\eta>0$, 
\begin{align}
&\ud \Bigl[ V_{t}^{\varepsilon} + \eta \bigl( R_{t}^{\varepsilon} \bigr)^{1+\alpha} 
\Bigr]
=\Bigl(  \vert \nabla V_{t}^{\varepsilon} \vert^2  + \tfrac12
\varepsilon^2 \Delta V_{t}^{\varepsilon}
+
\eta (1+\alpha) \bigl( R_{t}^{\varepsilon} \bigr)^{\alpha} \frac{X_{t}^{\varepsilon}}{R_{t}^{\varepsilon}} \cdot \nabla V_{t}^{\varepsilon}
+ \varepsilon^2
\eta (1+\alpha) 
\frac{d+\alpha-1}{2} 
\bigl( R_{t}^{\varepsilon} \bigr)^{\alpha-1} 
\Bigr) \ud t \nonumber
\\
&\hspace{15pt}
+
\varepsilon \Bigl( \nabla V_{t}^{\varepsilon}
+ 
\eta(1+\alpha)  \bigl( R_{t}^{\varepsilon} \bigr)^{\alpha} \frac{X_{t}^{\varepsilon}}{R_{t}^{\varepsilon}}
\Bigr) \cdot \ud B_{t}. 
\label{eq:proof:submartingale}
\end{align}
\vskip 4pt

\textit{Second step.}
{Notice from (\textbf{A2}) that} there exists a constant $C$ such that, for 
{$t < \kappa^{\varepsilon}$}, 
\begin{equation*}
\eta (1+\alpha) \bigl( R_{t}^{\varepsilon} \bigr)^{\alpha} \frac{X_{t}^{\varepsilon}}{R_{t}^{\varepsilon}} \cdot \nabla V_{t}^{\varepsilon} \geq - \eta C  \bigl( R_{t}^{\varepsilon} \bigr)^{{\alpha+\beta}} \geq -   \eta C \varepsilon^2 \varepsilon^{\tfrac{(1+\beta)(\beta-\alpha)}2} \bigl( R_{t}^{\varepsilon} \bigr)^{\alpha-1},
\end{equation*}
{where we used the fact that, for $t < \kappa^{\varepsilon}$, 
$(R_{t}^{\varepsilon})^{\alpha+\beta} = (R_{t}^{\varepsilon})^{\alpha-1}
(R_{t}^{\varepsilon})^{1+\beta} \leq \varepsilon^2 \varepsilon^{\frac{(1+\beta)(\beta-\alpha)}2} (R_{t}^{\varepsilon})^{\alpha-1}$.}
{Now, by assumption (\textbf{A2})  on the shape of the potential function},
we can choose {$\eta:=\eta(\textbf{A})$ large enough {and $\varepsilon_{\star}:=\varepsilon_{\star}(\textbf{\bf A})$ small enough} such that, for
$\varepsilon \leq \varepsilon_{\star}$} and for
 $t>0$,
\begin{equation*}
\tfrac12
 \varepsilon^2 \Delta V_{t}^{\varepsilon}
 +
\eta (1+\alpha) \bigl( R_{t}^{\varepsilon} \bigr)^{\alpha} \frac{X_{t}^{\varepsilon}}{R_{t}^{\varepsilon}} \cdot \nabla V_{t}^{\varepsilon} 
+\varepsilon^2 \eta (1+\alpha) \frac{d+\alpha-1}{2} \bigl( R_{t}^{\varepsilon} \bigr)^{\alpha-1}  
\geq 0.
\end{equation*}
We deduce that  
\begin{equation*}
\biggl(  V_{t \wedge \kappa^{\varepsilon}}^{\varepsilon}
+ \eta \bigl( R_{t \wedge \kappa^{\varepsilon}}^{\varepsilon} \bigr)^{1+\alpha}
-  \int_{0}^{t \wedge \kappa^{\varepsilon}} \vert \nabla V_{s}^{\varepsilon}
\vert^2 \ud s   \biggr)_{t > 0} 
\end{equation*}
is a sub-martingale. We easily include time $t=0$ by a continuity argument.
Choosing $\eta$ such that 
$V(x) + \tfrac12 \eta \vert x \vert^{1+\alpha} 
 \geq 0$ for $\vert x \vert \leq 1$ (which is possible 
since $V(x) \leq C_{0} \vert x \vert^{1+\alpha}$ for $\vert x \vert \leq 1$),
we complete the proof.
\end{proof}

In fact, and this is a crucial point in the rest of the analysis, we can do better when we restart away from the boundary. 

\begin{lemma}
\label{lem:supermartingale:2}
For a given $r_{0}>0$, assume that, for any $\varepsilon >0$, 
the starting point $X_{0}^{\varepsilon}$ of the diffusion process is located 
at distance $R_{0}^{\varepsilon} \geq  \varepsilon^{\frac2{(1+\alpha)}} r_{0}$ from the origin. 

Then, there exist {two positive constants $r_{\star}:=r_{\star}(\textbf{\bf A})$ and $\varepsilon_{\star}=\varepsilon_{\star}(\textbf{\bf A})$} such that, for $\varepsilon \in (0, \varepsilon_{\star}]$, we can find a positive constant $\eta_{\varepsilon}:=\eta(\textbf{\bf A},\varepsilon)$, depending on $\varepsilon$, such that, for $r_{0} \geq r_{\star}$, the process
\begin{equation*}
\biggl(  V_{t \wedge \kappa^{\varepsilon,\prime}}^{\varepsilon} + \eta_\varepsilon \bigl( R_{t \wedge \kappa^{\varepsilon,\prime}}^{\varepsilon} \bigr)^{1+\alpha}
  - \frac12 \int_{0}^{t \wedge \kappa^{\varepsilon,\prime}} \vert \nabla V_{s}^{\varepsilon}
\vert^2 \ud s
\biggr)_{t \geq 0}
\end{equation*}
is a sub-martingale, where we used here the notations:
\begin{equation*}
\kappa^{\varepsilon,\prime} := \inf \Bigl\{ t \geq 0 : R_{t}^{\varepsilon} \leq \tfrac12 \varepsilon^{\tfrac{2}{1+\alpha}} r_{\star} \Bigr\} \wedge \inf \Bigl\{ t \geq 0 : R_{t}^{\varepsilon} \geq \varepsilon^{\tfrac{2}{1+\beta}+\tfrac{\beta-\alpha}{2}}\Bigr\}. 
\end{equation*}
Moreover, we can choose $\eta_{\varepsilon}$ {converging to $0$ with $\varepsilon$}
such that the function $x \mapsto V(x) + \tfrac12 \eta_{\varepsilon}
\vert x \vert^{1+\alpha}$ is positive on the ball of center $0$ and of radius $\varepsilon^{\frac{2}{1+\beta}+\frac{\beta-\alpha}{2}}$.
\end{lemma}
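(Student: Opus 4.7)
The plan is to follow the same Itô-expansion as in Lemma \ref{lem:supermartingale}, but to exploit the annulus constraint imposed by $\kappa^{\varepsilon,\prime}$ to relax the subtracted $|\nabla V|^2$ to $\tfrac12 |\nabla V|^2$ while simultaneously shrinking the correction coefficient $\eta_{\varepsilon}\to 0$. Applying It\^o's formula to $V + \eta_{\varepsilon}|x|^{1+\alpha}$ exactly as in \eqref{eq:proof:submartingale}, it suffices to verify that, for $t\in(0,\kappa^{\varepsilon,\prime})$, the full drift
\begin{equation*}
D_t := \tfrac12 |\nabla V_t^{\varepsilon}|^2 + \tfrac12 \varepsilon^2 \Delta V_t^{\varepsilon} + \eta_{\varepsilon}(1+\alpha)(R_t^{\varepsilon})^{\alpha}\frac{X_t^{\varepsilon}}{R_t^{\varepsilon}}\cdot \nabla V_t^{\varepsilon} + \varepsilon^2\eta_{\varepsilon}(1+\alpha)\frac{d+\alpha-1}{2}(R_t^{\varepsilon})^{\alpha-1}
\end{equation*}
is non-negative. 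The analysis splits into two regimes according to the value of $g(X_t^{\varepsilon})$.

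In the regime $g(X_t^{\varepsilon})<a_0$, assumption (\textbf{A2}) directly yields $\Delta V_t^{\varepsilon}\ge 0$, while the radial bound $\nabla V\cdot x/|x|\ge -C_0 |x|^\beta$ controls the cross term by $-\eta_{\varepsilon}(1+\alpha)C_0(R_t^{\varepsilon})^{\alpha+\beta}$. Factoring $(R_t^{\varepsilon})^{\alpha-1}$ and using the upper bound $R_t^{\varepsilon}\le \varepsilon^{2/(1+\beta)+(\beta-\alpha)/2}$ to write $(R_t^{\varepsilon})^{1+\beta}\le \varepsilon^{2}\varepsilon^{(1+\beta)(\beta-\alpha)/2}$, this negative contribution is absorbed by the last strictly positive term in $D_t$ as soon as $\varepsilon^{(1+\beta)(\beta-\alpha)/2}$ is smaller than an explicit constant; since $\beta>\alpha$, this is automatic for $\varepsilon\le\varepsilon_{\star}(\textbf{A})$.

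In the regime $g(X_t^{\varepsilon})\ge a_0$, the disjointness of $\{g\ne 0\}$ and $\{h\ne 0\}$ gives $V(X_t^{\varepsilon})\ge a_0(R_t^{\varepsilon})^{1+\alpha}$, so (\textbf{A3}) yields the lower bound $|\nabla V_t^{\varepsilon}|\ge c_0 a_0 (R_t^{\varepsilon})^{\alpha}$. Therefore $\tfrac12|\nabla V_t^{\varepsilon}|^2\ge \tfrac12 c_0^2 a_0^2 (R_t^{\varepsilon})^{2\alpha}$, which must dominate $\tfrac12\varepsilon^2 |\Delta V_t^{\varepsilon}|\le \tfrac12 \varepsilon^2 C_0 d (R_t^{\varepsilon})^{\alpha-1}$ and the cross term, bounded by Cauchy--Schwarz and (\textbf{A2}) by $\eta_{\varepsilon}(1+\alpha)C_0(R_t^{\varepsilon})^{2\alpha}$. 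The first comparison reduces to $(\tfrac12 c_0^2 a_0^2 - \eta_{\varepsilon}(1+\alpha)C_0)(R_t^{\varepsilon})^{1+\alpha}\ge \tfrac12 \varepsilon^2 C_0 d$; using the lower bound $R_t^{\varepsilon}\ge \tfrac12 \varepsilon^{2/(1+\alpha)} r_{\star}$ this becomes a condition of the form $(\tfrac12 c_0^2 a_0^2 - \eta_{\varepsilon}(1+\alpha)C_0) \, 2^{-(1+\alpha)}r_{\star}^{1+\alpha}\ge \tfrac12 C_0 d$, which is fulfilled by choosing $r_{\star}:=r_{\star}(\textbf{A})$ large and $\eta_{\varepsilon}$ below a fixed constant.

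For the last condition, the prescription $\eta_{\varepsilon}:=C\varepsilon^{\gamma}$ with $\gamma=(\beta-\alpha)(2/(1+\beta)+(\beta-\alpha)/2)>0$ and $C$ chosen so that $\tfrac12\eta_{\varepsilon}\ge C_0\,\varepsilon^{\gamma}$ ensures positivity of $V(x)+\tfrac12\eta_{\varepsilon}|x|^{1+\alpha}$ on the ball of radius $\varepsilon^{2/(1+\beta)+(\beta-\alpha)/2}$ via the factorisation $|x|^{1+\alpha}(\tfrac12\eta_{\varepsilon}-C_0|x|^{\beta-\alpha})$, and simultaneously gives $\eta_{\varepsilon}\to 0$ and $\eta_{\varepsilon}$ below the threshold required in the second case for $\varepsilon\le\varepsilon_{\star}$. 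The main subtlety, which I expect to be the delicate point, is to ensure that the same choice of $\eta_{\varepsilon}$ and $r_{\star}$ works uniformly across both regimes: the decomposition above makes the two cases interact only through the common requirement that $\eta_{\varepsilon}$ be small, which is compatible with the positivity lower bound precisely because the annulus forces $|x|^{\beta-\alpha}$ to be quantitatively small with $\varepsilon$. Once $D_t\ge 0$ is established, extending the submartingale inequality up to $t=0$ follows by a standard continuity argument as in Lemma \ref{lem:supermartingale}.
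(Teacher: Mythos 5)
There is a genuine gap in your case analysis: your first regime ``$g(X_t^{\varepsilon})<a_0$'' silently includes the set $\{g=0\}$, and on that set the claim ``(\textbf{A2}) directly yields $\Delta V_t^{\varepsilon}\ge 0$'' is false. Assumption (\textbf{A2}) only guarantees $\Delta V\ge 0$ for $g(x)\in(0,a_0)$, i.e.\ strictly inside $\{g>0\}$. On $\{g=0\}$ the potential reduces to $V(x)=h(x)\vert x\vert^{1+\beta}$, there is no sign condition on $\Delta V$, and no lower bound on $\vert\nabla V\vert$ either (the bound $\vert\nabla V\vert\ge c_0 V/\vert x\vert$ in (\textbf{A3}) holds only on $\{g>0\}$); all one has, and only in the interior of $g^{-1}(0)$, is $\vert\nabla^2 V(x)\vert\le C_0\vert x\vert^{\beta-1}$, so $\tfrac12\varepsilon^2\Delta V_t^{\varepsilon}$ can be negative of order $\varepsilon^2\vert x\vert^{\beta-1}$ and cannot be absorbed by $\tfrac18\vert\nabla V\vert^2$. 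It must instead be dominated by the term $\varepsilon^2\eta_{\varepsilon}(1+\alpha)\tfrac{d+\alpha-1}{2}\vert x\vert^{\alpha-1}$, and since $\vert x\vert^{\beta-1}=\vert x\vert^{\alpha-1}\vert x\vert^{\beta-\alpha}\le \vert x\vert^{\alpha-1}\varepsilon^{\frac{2(\beta-\alpha)}{1+\beta}+\frac{(\beta-\alpha)^2}{2}}$ on the annulus defined by $\kappa^{\varepsilon,\prime}$, this forces $\eta_{\varepsilon}$ to be at least of that order in $\varepsilon$ — which is precisely the reason $\eta_{\varepsilon}$ appears in the drift and depends on $\varepsilon$ at all. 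This is the paper's ``third case'', and it also requires an almost-everywhere argument: the Hessian bound is only available in the interior of $g^{-1}(0)$, so one invokes Lemma \ref{lem:boundary:zero:measure} (zero Lebesgue measure of $\partial\{g=0\}$) together with the absolute continuity of the law of $X_t^{\varepsilon}$, a step absent from your proposal.

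The rest of your argument is essentially the paper's: the uniform absorption of the cross term via $\nabla V\cdot \tfrac{x}{\vert x\vert}\ge -C_0\vert x\vert^{\beta}$ on the annulus, the regime $g\ge a_0$ handled through $\vert\nabla V\vert\gtrsim \vert x\vert^{\alpha}$ and the lower radial barrier $\tfrac12\varepsilon^{2/(1+\alpha)}r_\star$ (with $r_\star$ large), and the final positivity of $V+\tfrac12\eta_{\varepsilon}\vert\cdot\vert^{1+\alpha}$ on the ball of radius $\varepsilon^{2/(1+\beta)+(\beta-\alpha)/2}$. Interestingly, the $\eta_{\varepsilon}\sim\varepsilon^{\gamma}$ you prescribe for that last positivity, with $\gamma=\frac{2(\beta-\alpha)}{1+\beta}+\frac{(\beta-\alpha)^2}{2}$, is exactly of the right order to also close the missing $\{g=0\}$ case; but as written the verification that makes the lemma nontrivial is replaced by an incorrect appeal to (\textbf{A2}), so the proof needs to be repaired by adding that third regime explicitly.
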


Notice that, for $\varepsilon \in (0,1)$, 
\begin{equation}\label{eq:comparison_barriers}
\begin{split}
\varepsilon^{\tfrac{2}{1+\alpha}}
=
\varepsilon^{\tfrac{2}{1+\beta}+\tfrac{\beta-\alpha}{2}}
\varepsilon^{\tfrac{2 (\beta-\alpha)}{(1+\alpha)(1+\beta)}-\tfrac{\beta-\alpha}{2}}
&=
\varepsilon^{\tfrac{2}{1+\beta}+\tfrac{\beta-\alpha}{2}}
\varepsilon^{(\beta-\alpha) \tfrac{4 - (1+\alpha)(1+\beta)}{2(1+\alpha)(1+\beta)}}
= o \bigl( \varepsilon^{\tfrac{2}{1+\beta}+\tfrac{\beta-\alpha}{2}} \bigr),
\end{split}
\end{equation}
where we used the Landau notation $o(\cdot)$ in the last line. This proves that, for $\varepsilon$ small enough,
the stopping time $\kappa^{\varepsilon,\prime}$ makes sense. 

\begin{proof}
The proof is pretty much the same as the proof of Lemma 
\ref{lem:supermartingale}.
The point is to investigate the sign of the drift in the semi-martingale expansion 
\eqref{eq:proof:submartingale}. In this new framework, we can no longer choose 
$\eta$ as large as needed and the main difficulty
for lower bounding the drift in 
\eqref{eq:proof:submartingale}
 is to guarantee that the following condition is indeed satisfied
 {for $\varepsilon$ small enough}
{({notice that} the coefficient $\tfrac18$ in front of $\vert \nabla V_{t}^{\varepsilon} \vert^2$ is for later use, here we could work with 
$\tfrac12$ instead of $\tfrac18$)}
\begin{equation*}
\tfrac18 \vert \nabla V_{t}^{\varepsilon} \vert^2
+\tfrac12
\varepsilon^2 \Delta V_{t}^{\varepsilon}
+
\eta (1+\alpha) \bigl( R_{t}^{\varepsilon} \bigr)^{\alpha} \frac{X_{t}^{\varepsilon}}{R_{t}^{\varepsilon}} \cdot \nabla V_{t}^{\varepsilon}
+ 
\varepsilon^2
\eta (1+\alpha) 
\frac{d+\alpha-1}{2} 
\bigl( R_{t}^{\varepsilon} \bigr)^{\alpha-1} 
\geq 0,
\end{equation*}
with the constraint that $\eta$ becomes small with $\varepsilon$. Equivalently,
we want to check, for any $x$ with
$\vert x \vert \in [\tfrac12 \varepsilon^{\frac{2}{1+\alpha}} r_{{\star}},{\varepsilon^{\frac{2}{1+\beta}+\frac{\beta-\alpha}{2}}}]$ ({the value of $r_{\star}$ being made preciser later on in the proof}), the inequality 
\begin{equation}
\label{eq:condition:submartingale:0}
\tfrac18 \vert \nabla V(x) \vert^2
+\tfrac12
\varepsilon^2 \Delta V(x)
+
\eta (1+\alpha) \vert x \vert^{\alpha} \frac{x}{\vert x \vert} \cdot \nabla V(x)
+ 
\varepsilon^2
\eta (1+\alpha) 
\frac{d+\alpha-1}{2} 
\vert x \vert^{\alpha-1} 
\geq 0,
\end{equation}
{holds} for $\varepsilon$ small enough.

To start with, we may proceed as in the second step of the proof of Lemma
\ref{lem:supermartingale} to prove that
\begin{equation*}
\eta (1+\alpha) \vert x \vert^{\alpha} \frac{x}{\vert x \vert} \cdot \nabla V(x) \geq - \eta C  \varepsilon^2 \varepsilon^{\tfrac{(1+\beta)(\beta-\alpha)}2} \vert x \vert^{\alpha-1},
\end{equation*}
for $\vert x \vert \leq  {\varepsilon^{\frac{2}{1+\beta}+\frac{\beta-\alpha}{2}}}$ and for {a constant $C:=C(\textbf{A})$. This says that, for $\varepsilon \leq \varepsilon_{\star}(\textbf{\bf A})$}
and whatever {the values of $\eta$ and $r_{\star}$}, 
for 
$\vert x \vert \in [\tfrac12 \varepsilon^{\frac{2}{1+\alpha}} r_{\star}, \varepsilon^{\frac{2}{1+\beta}+\frac{\beta-\alpha}{2}}]$, 
\begin{equation*}
\eta (1+\alpha) \vert x \vert^{\alpha} \frac{x}{\vert x \vert} \cdot \nabla V(x) 
+
\varepsilon^2
\eta (1+\alpha) 
\frac{d+\alpha-1}{4} 
\vert x \vert^{\alpha-1} 
\geq 0.
\end{equation*}
Hence, in order to 
 to prove 
\eqref{eq:condition:submartingale:0}, it suffices to prove
\begin{equation}
\label{eq:condition:submartingale}
\tfrac18 \vert \nabla V(x) \vert^2
+\tfrac12
\varepsilon^2 \Delta V(x)
+ 
\varepsilon^2
\eta (1+\alpha) 
\frac{d+\alpha-1}{4} 
\vert x \vert^{\alpha-1} 
\geq 0,
\end{equation}
for  {$x$ as therein (and with an appropriate value of $r_{\star}$)}.

We shall distinguish three cases in order to check condition \eqref{eq:condition:submartingale}. 
\vskip 4pt

\textit{First case.}
We first work on the domain $g^{-1}(0,a_{0})=\{ g \in (0,a_{0})\}$, for $a_{0}$ as in the assumption. 
By assumption, we know that $\Delta V(x) \geq 0$, for $g(x) \in (0,a_{0})$. In that case, 
\eqref{eq:condition:submartingale} is obvious. 
\vskip 4pt

\textit{Second case.}
Take now $x$ such that $g(x) \geq a_{0}$ and $\vert x \vert \geq \tfrac12 \varepsilon^{\frac2{1+\alpha}} r_{\star}$. Then, by assumption, we know that 
$\vert \nabla V(x) \vert^2 \geq c_{0} \vert x \vert^{2 \alpha}$. Meanwhile we have, for $\vert x \vert \geq \tfrac12 \varepsilon^{\frac{2}{1+\alpha}} r_{\star}$,
\begin{equation*}
\varepsilon^2 \vert \Delta V(x) \vert \leq C_{0} \Bigl( \frac{2 \vert x \vert}{r_{\star}} \Bigr)^{1+\alpha} \vert x \vert^{\alpha-1} = C_02^{1+\alpha} \frac{1}{{r_{\star}}^{1+\alpha}} \vert x\vert^{2\alpha} .
\end{equation*}
So choosing {$r_{\star}:=r_{\star}(\textbf{\bf A})$} large enough, we get
\begin{equation*}
\tfrac18 \vert \nabla V(x) \vert^2
+
\tfrac12
\varepsilon^2 \Delta V(x)
\geq 0,
\end{equation*}
which obviously suffices to complete the proof.
\vskip 4pt

\textit{Third case.} It remains to see what happens for $x$ such that $g(x) =0$ and 
 {$\vert x \vert \leq \varepsilon^{\frac{2}{1+\beta}+\frac{\beta-\alpha}{2}}$}. Observe that, so far, we have not used $\eta$ yet; we make its role explicit in this step.
{In this regard, we invoke Lemma \ref{lem:boundary:zero:measure} below,
from which  
 the boundary of the set $\{Êg = 0\}$ has zero Lebesgue measure.}
 Hence, it suffices to check 
\eqref{eq:condition:submartingale} on a full Lebesgue subset, {because $X^\varepsilon_t$ has absolutely continuous law with respect to the Lebesgue measure. Therefore,} we may assume that $x$ belongs to the interior of the set $g^{-1}(0)$, in which case we have
\begin{equation*}
\varepsilon^2 \vert \Delta V(x) \vert \leq C_{0} \varepsilon^2 \vert x \vert^{\beta-1}.
\end{equation*}
So, for $x \in \overset{\circ}{g^{-1}(0)}$ (which denotes the interior of $g^{-1}(0)$), with ${\vert x \vert \leq \varepsilon^{\frac{2}{1+\beta}+\frac{\beta-\alpha}{2}}}$, we have
\begin{equation*}
\begin{split}
\tfrac12
\varepsilon^2 \Delta V(x) 
+ \varepsilon^2 \eta 
\bigl(1 + \alpha \bigr)
\frac{d+\alpha-1}{2}
 \vert x \vert^{\alpha-1}
&\geq - \tfrac12 C_{0} \varepsilon^2 \vert x \vert^{\beta-1}
+ \varepsilon^2 \eta 
\bigl(1 + \alpha \bigr) 
\frac{d+\alpha-1}{2}
\vert x \vert^{\alpha-1}
\\
&\geq  \varepsilon^2 
\vert x \vert^{\alpha-1}
\Bigl( - \tfrac12 C_{0} {\varepsilon^{\tfrac{2(\beta-\alpha)}{1+\beta}+\tfrac{(\beta-\alpha)^2}{2}}}
+ \eta 
\bigl(1 + \alpha \bigr) 
\frac{d+\alpha-1}{2} \Bigr).
\end{split}
\end{equation*}
For a given {$\varepsilon$}, we can choose {$\eta:=\eta(\textbf{\bf A},\varepsilon)$} such that the right-hand side is non-negative.
Obviously we can choose $\eta$ as small as we want by choosing ${\varepsilon}$ as small as needed.  
Similarly, we can assume that  
the function $x \mapsto V(x) + \tfrac12 \eta \vert x \vert^{1+\alpha}$ is non-negative on the 
ball of center $0$ and of radius ${\varepsilon^{\frac{2}{1+\beta}+\frac{\beta-\alpha}{2}}}$. 
\vskip 4pt

\textit{Conclusion.} By combining the three cases, we get that 
\eqref{eq:condition:submartingale:0} is satisfied almost everywhere on 
the region $\{x : \vert x \vert \in 
[\tfrac12 \varepsilon^{\frac2{1+\alpha}} r_{\star}, {\varepsilon^{\frac{2}{1+\beta}+\frac{\beta-\alpha}{2}}}]\}$. 
Since $(X_{t}^{\varepsilon})_{t \geq 0}$ does not see sets of zero measure, we can duplicate the proof of Lemma  \ref{lem:supermartingale} to conclude. 
\end{proof}

\section{Close to zero}
\label{se:3}
The first part of our analysis is devoted to the study of the process $(X_{t}^{\varepsilon})_{t \geq 0}$ when $(V_{t}^{\varepsilon})_{t \geq 0}$ is less than $v_{0} \varepsilon^2$ for some $v_{0} >0$. 
Basically, we show that, as long as the potential
(or more precisely the perturbation of the potential as we considered in the previous section)
remains less than $v_{0} \varepsilon^2$, the diffusion process behaves like $(\varepsilon B_{t})_{t \geq 0}$.
This fact reads as a rewriting of the transition property exhibited in the one-dimensional 
setting in \cite{DelarueFlandoli}. In particular, it permits to prove next that, in infinitesimal time, 
the potential reaches values of order $\varepsilon^2$.

Using the same notations as in Lemmas \ref{lem:supermartingale} and 
\ref{lem:supermartingale:2}, we introduce the following three stopping times (for a given value of $\delta >0$):
\begin{equation}
\label{eq:3:1}
\begin{split}
&\tau^{\varepsilon}(v_{0})
= \inf \bigl\{ t \geq 0 : V_{t}^{\varepsilon} \geq v_{0} \varepsilon^2 \bigr\},
\\
&\nu^{\varepsilon}(v_{0}) = \inf \bigl\{ t \geq 0 : V_{t}^{\varepsilon} + \eta \bigl( R_{t}^{\varepsilon} \bigr)^{1+\alpha} \geq v_{0} \varepsilon^2 \bigr\},
\quad \nu^{\varepsilon,\prime}(v_{0}) = \inf \bigl\{ t \geq 0 : V_{t}^{\varepsilon} + \eta_{\varepsilon} \bigl( R_{t}^{\varepsilon} \bigr)^{1+\alpha} \geq v_{0} \varepsilon^2 \bigr\}.
\end{split}
\end{equation}
In the second line, $\eta$ is chosen as in the statement of Lemma \ref{lem:supermartingale}. 
In the third line, $\eta_{\varepsilon}$ is chosen as in the statement of Lemma \ref{lem:supermartingale:2}.

\subsection{BMO martingale}
In order to prove that $X^{\varepsilon}$ behaves like $(\varepsilon B_{t})_{t \geq 0}$ (as long as the perturbed potential remains small enough), we shall use Girsanov's theorem. Of course, the difficulty is that, the 
diffusion coefficient getting smaller and smaller with $\varepsilon$, it becomes more and more difficulty to  bound (from above and from below) the density of the law of $X^{\varepsilon}$ with respect to 
the law of $\varepsilon B$. This is precisely where the assumption that 
the potential remains small comes in: We manage to control the BMO norm (see below for the definition) of the martingale entering the Girsanov density in terms of the sole (perturbed) potential. 

In this regard, we recall that a martingale $(\int_{0}^t Z_{s}
\cdot dB_{s})_{t \geq 0}$ is said to be BMO 
if there exists a constant $K \geq 0$ such that, for any stopping time $\sigma$, it holds with probability 1: 
\begin{equation*}
\EE 
\biggl[ \int_{\sigma}^{\infty} \vert Z_{s} \vert^2 ds \, \big\vert \, {\mathcal F}_{\sigma}
\biggr] \leq K^2.
\end{equation*}
The smallest constant $K$ that achieves the above condition is called the BMO norm of 
$(\int_{0}^t Z_{s}
\cdot \ud B_{s})_{t \geq 0}$. We refer to the textbook \cite{Kazamaki} for an overview.

\begin{lemma}
\label{lem:BMO:perturbed:1}
Assume that, for any $\varepsilon >0$, $X_{0}^{\varepsilon} = 0$. Then,  {for any $\varepsilon \in (0,\varepsilon_\star)$ (with the notation of Lemma \ref{lem:supermartingale}), }for any $v_{0} >0$ and any $T>0$, 
the martingale $\bigl( \varepsilon^{-1}\int_{0}^{\nu^{\varepsilon}(v_{0}) \wedge \kappa^{\varepsilon} \wedge T \wedge t} \nabla V_{s}^{\varepsilon}\cdot \ud B_{s} \bigr)_{t \geq 0}$ has a BMO norm less than $\sqrt{v_{0}}$.
\end{lemma}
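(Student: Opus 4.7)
The plan is to reduce the BMO estimate to an application of optional stopping to the submartingale identified in Lemma~\ref{lem:supermartingale}. Fix a stopping time $\sigma$ and set $\tau := \nu^{\varepsilon}(v_{0}) \wedge \kappa^{\varepsilon} \wedge T$. Since the integrand vanishes past $\tau$, one has
\begin{equation*}
\int_{\sigma}^{\infty} \mathbf{1}_{\{s \leq \tau\}} \vert \nabla V_{s}^{\varepsilon} \vert^{2}\,\ud s \;=\; \int_{\sigma \wedge \tau}^{\tau} \vert \nabla V_{s}^{\varepsilon} \vert^{2}\,\ud s,
\end{equation*}
and on $\{\sigma > \tau\}$ the conditional expectation is trivially zero, so it suffices to show
\begin{equation*}
\EE\biggl[ \varepsilon^{-2}\int_{\sigma \wedge \tau}^{\tau} \vert \nabla V_{s}^{\varepsilon} \vert^{2}\,\ud s \,\Big\vert\, \mathcal{F}_{\sigma \wedge \tau}\biggr] \leq v_{0}.
\end{equation*}

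To obtain this, I would apply the optional stopping theorem at the two bounded stopping times $\sigma \wedge \tau \leq \tau$ to the submartingale
\begin{equation*}
M_{t} := V_{t \wedge \kappa^{\varepsilon}}^{\varepsilon} + \eta\bigl( R_{t \wedge \kappa^{\varepsilon}}^{\varepsilon} \bigr)^{1+\alpha} - \int_{0}^{t \wedge \kappa^{\varepsilon}} \vert \nabla V_{s}^{\varepsilon} \vert^{2}\,\ud s
\end{equation*}
provided by Lemma~\ref{lem:supermartingale}. Rearranging the resulting inequality yields
\begin{equation*}
\EE\biggl[\int_{\sigma \wedge \tau}^{\tau}\vert \nabla V_{s}^{\varepsilon}\vert^{2}\,\ud s \,\Big\vert\, \mathcal{F}_{\sigma \wedge \tau}\biggr] \leq \EE\bigl[ V_{\tau}^{\varepsilon} + \eta (R_{\tau}^{\varepsilon})^{1+\alpha} \,\big\vert\, \mathcal{F}_{\sigma \wedge \tau}\bigr] - \bigl(V_{\sigma \wedge \tau}^{\varepsilon} + \eta (R_{\sigma \wedge \tau}^{\varepsilon})^{1+\alpha}\bigr).
\end{equation*}

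The two boundary terms are then controlled separately. Since $\tau \leq \nu^{\varepsilon}(v_{0})$ and the process $V^{\varepsilon} + \eta (R^{\varepsilon})^{1+\alpha}$ is continuous, the stopped value at $\tau$ is at most $v_{0}\varepsilon^{2}$ almost surely. For the subtracted term I would exploit the freedom in the choice of $\eta$ left open by Lemma~\ref{lem:supermartingale}: choosing $\eta$ large enough that $V(x) + \tfrac{1}{2}\eta \vert x\vert^{1+\alpha} \geq 0$ on the unit ball, and observing that before $\kappa^{\varepsilon}$ one has $R_{s}^{\varepsilon} \leq \varepsilon^{2/(1+\beta)+(\beta-\alpha)/2} \leq 1$ for $\varepsilon \leq \varepsilon_{\star}$, gives $V_{\sigma \wedge \tau}^{\varepsilon} + \eta(R_{\sigma \wedge \tau}^{\varepsilon})^{1+\alpha} \geq 0$. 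Combining the two bounds and dividing by $\varepsilon^{2}$ yields the desired estimate, whence the BMO norm is at most $\sqrt{v_{0}}$.

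There is no real obstacle here beyond the bookkeeping: Lemma~\ref{lem:supermartingale} does all the analytical work, and the only thing to be careful about is that the BMO definition requires the bound for \emph{any} stopping time $\sigma$, not only those dominated by $\tau$. That is handled by the preliminary observation that $\int_{\sigma}^{\infty}$ collapses to $\int_{\sigma \wedge \tau}^{\tau}$ once the stopping at $\tau$ is built into the integrand.
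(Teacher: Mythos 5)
Your proof is correct and follows the same route as the paper: optional stopping of the submartingale from Lemma \ref{lem:supermartingale} between $\sigma\wedge\tau$ and $\tau$, discarding the boundary term at $\sigma\wedge\tau$ by the non-negativity of $V+\eta\vert\cdot\vert^{1+\alpha}$ (the choice of $\eta$ allowed in that lemma, noting $R^{\varepsilon}\leq 1$ before $\kappa^{\varepsilon}$), and bounding the term at $\tau\leq\nu^{\varepsilon}(v_{0})$ by $v_{0}\varepsilon^{2}$. The only difference is cosmetic (you condition on ${\mathcal F}_{\sigma\wedge\tau}$ and then pass to ${\mathcal F}_{\sigma}$ on $\{\sigma\leq\tau\}$, while the paper works directly on that event), so nothing further is needed.
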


\begin{proof}
It suffices to observe that, from Lemma 
\ref{lem:supermartingale} (and with the same notations as therein), for any $t \geq 0$ and for any stopping time $\sigma$,  on the event $\{ \sigma < T \wedge \nu^{\varepsilon}(v_{0}) \wedge \kappa^{\varepsilon} \}$,
\begin{equation*}
{\mathbb E} \Bigl[ \Bigl( V_{T \wedge \nu^{\varepsilon}(v_{0}) \wedge \kappa^{\varepsilon} }^{\varepsilon}
+ \eta \Bigl( R_{T \wedge \nu^{\varepsilon}(v_{0})  \wedge \kappa^{\varepsilon}}^{\varepsilon} \Bigr)^{1+\alpha}
\Bigr) \vert {\mathcal F}_{\sigma}
\Bigr] \geq {\mathbb E}
\biggl[ \int_{\sigma \wedge T \wedge \nu^{\varepsilon}(v_{0})  \wedge \kappa^{\varepsilon}}^{T \wedge \nu^{\varepsilon}(v_{0})  \wedge \kappa^{\varepsilon}}
\vert \nabla V_{s}^{\varepsilon} \vert^2 ds \, \vert \, {\mathcal F}_{\sigma}
\biggr],
\end{equation*}
which completes the proof, since the right-hand side is less than $v_{0} \varepsilon^2$. 
Above, we used a value of $\eta$ such that the process 
$( V_{t \wedge \kappa^{\varepsilon}}^{\varepsilon}
+ \eta ( R_{t \wedge \kappa^{\varepsilon}}^{\varepsilon} )^{1+\alpha}
)_{t \geq 0}$ is non-negative.  
\end{proof}

In a similar manner, we have the following statement:
\begin{lemma}
\label{lem:3:2}
For a given $r_{0}\geq r_{\star}$,
with $r_{\star}$ as in the statement of Lemma 
\ref{lem:supermartingale:2}, assume that, for any $\varepsilon >0$, 
the starting point $X_{0}^{\varepsilon}$ of the diffusion process is located 
at distance $R_{0}^{\varepsilon} = \varepsilon^{2/(1+\alpha)} r_{0}$ from the origin. 

Then, 
with the same notations as in the statement of Lemma 
\ref{lem:supermartingale:2}, for any $\varepsilon \in (0,\varepsilon_\star)$, 
for any $v_{0} >0$ and any $T>0$, 
the martingale $\bigl( \varepsilon^{-1}\int_{0}^{\nu^{\varepsilon,\prime}(v_{0}) \wedge \kappa^{\varepsilon,\prime} \wedge T \wedge t} \nabla V_{s}^{\varepsilon}\cdot \ud B_{s} \bigr)_{t \geq 0}$ has a BMO norm less than 
 {$\sqrt{2v_0}$}.
\end{lemma}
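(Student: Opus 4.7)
The plan is to duplicate the short argument used for Lemma~\ref{lem:BMO:perturbed:1}, substituting the sub-martingale provided by Lemma~\ref{lem:supermartingale} by the one given in Lemma~\ref{lem:supermartingale:2}. The only structural difference between the two is that the integral $\int_0^{\cdot \wedge \kappa^{\varepsilon,\prime}} \vert \nabla V_s^\varepsilon \vert^2 \ud s$ now carries the extra factor $\tfrac12$; it is exactly this factor that upgrades the final bound from $\sqrt{v_0}$ to $\sqrt{2 v_0}$.

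Concretely, for a generic stopping time $\sigma$, I would set $\tau := T \wedge \nu^{\varepsilon,\prime}(v_0) \wedge \kappa^{\varepsilon,\prime}$ and apply the sub-martingale inequality of Lemma~\ref{lem:supermartingale:2} between $\sigma \wedge \tau$ and $\tau$. A simple rearrangement then yields, on the event $\{\sigma \leq \tau\}$,
\begin{equation*}
\tfrac12 \, {\mathbb E}\Bigl[ \int_{\sigma \wedge \tau}^{\tau} \vert \nabla V_s^\varepsilon \vert^2 \ud s \,\Big\vert\, {\mathcal F}_\sigma \Bigr]
\leq
{\mathbb E}\Bigl[ V_{\tau}^\varepsilon + \eta_\varepsilon (R_{\tau}^\varepsilon)^{1+\alpha} \,\Big\vert\, {\mathcal F}_\sigma \Bigr]
- \bigl( V_{\sigma \wedge \tau}^\varepsilon + \eta_\varepsilon (R_{\sigma \wedge \tau}^\varepsilon)^{1+\alpha} \bigr).
\end{equation*}
The first conditional expectation on the right-hand side is bounded by $v_0 \varepsilon^2$ by the very definition of $\nu^{\varepsilon,\prime}(v_0)$ together with path-continuity of $V(X^\varepsilon)$, while the subtracted quantity has to be shown to be non-negative. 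Dividing through by $\tfrac12 \varepsilon^2$ would then give the announced BMO bound, since the quadratic variation of the martingale under consideration is precisely $\varepsilon^{-2} \int_0^{\cdot \wedge \tau} \vert \nabla V_s^\varepsilon \vert^2 \ud s$.

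The delicate point---which I regard as the main (and only) genuine obstacle---is the non-negativity of $V + \eta_\varepsilon \vert \cdot \vert^{1+\alpha}$ at $\sigma \wedge \tau$, because the coefficient $\eta_\varepsilon$ from Lemma~\ref{lem:supermartingale:2} is now forced to vanish with $\varepsilon$ and is therefore too small to dominate $\vert V \vert$ on arbitrary scales. I would address this by invoking the very last claim of Lemma~\ref{lem:supermartingale:2}, which guarantees that $\eta_\varepsilon$ can be chosen so that $V + \tfrac12 \eta_\varepsilon \vert \cdot \vert^{1+\alpha} \geq 0$ on the ball of radius $\varepsilon^{\frac{2}{1+\beta}+\frac{\beta-\alpha}{2}}$. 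Since $\eta_\varepsilon > 0$, \emph{a fortiori} $V + \eta_\varepsilon \vert \cdot \vert^{1+\alpha} \geq 0$ on the same ball, and by the very definition of $\kappa^{\varepsilon,\prime}$ the process $X^\varepsilon$ is confined to this ball up to time $\tau$. This closes the argument and establishes the announced bound $\sqrt{2 v_0}$.
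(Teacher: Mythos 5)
Your proof is correct and follows exactly the route the paper intends: it mirrors the argument of Lemma \ref{lem:BMO:perturbed:1}, replacing the sub-martingale of Lemma \ref{lem:supermartingale} by the one of Lemma \ref{lem:supermartingale:2}, whose factor $\tfrac12$ accounts for the bound $\sqrt{2v_0}$, and whose final claim (combined with the definition of $\kappa^{\varepsilon,\prime}$) gives the needed non-negativity of $V_{\sigma\wedge\tau}^{\varepsilon}+\eta_{\varepsilon}(R_{\sigma\wedge\tau}^{\varepsilon})^{1+\alpha}$. Nothing is missing.
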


 \subsection{Change of measure}
For a fixed time $T>0$, an intensity $\varepsilon >0$ and a stopping time $\sigma$ with values in 
$[0,T]$, define the probability measure ${\mathbb Q}^{\varepsilon,\sigma}$ by:
\begin{equation*}
\frac{\ud {\mathbb Q}^{\varepsilon,\sigma}}{\ud {\mathbb P}}{\vert}_{{\mathcal F}_{t}}
= \exp \biggl( - \varepsilon^{-1} \int_{0}^{t \wedge \sigma} \nabla V_{s}^{\varepsilon} \cdot \ud B_{s} - \frac12
 \varepsilon^{-2} \int_{0}^{t \wedge \sigma} \vert 
 \nabla V_{s}^{\varepsilon} \vert^2 
 \ud s \biggr), \quad t \geq 0.
\end{equation*} 
By the BMO property proved right above, we obtain the following lemma:

\begin{lemma}
\label{lem:appli:BMO}
Take a collection of stopping times $(\sigma^{\varepsilon})_{\varepsilon >0}$ such the collection of BMO norms of the martingales
$\bigl( \varepsilon^{-1} \int_{0}^{t \wedge \sigma^{\varepsilon}} \nabla V_{s}^{\varepsilon} \cdot \ud B_{s}
\bigr)_{t \geq 0}$ 
is bounded by a constant $K$. Then, there exist two positive constants $\lambda(K)$ and $\Lambda(K)$
and two exponents $p(K) >1$ and $q(K)>0$, {all of them only depending on $K$},  such that:
\begin{equation}
\label{lem:BMO:1}
\forall \varepsilon >0, \quad 
\EE \biggl[ 
\Bigl( \frac{\ud {\mathbb Q}^{\varepsilon,\sigma^{\varepsilon}}}{\ud {\mathbb P}} \Bigr)^{p(K)} \biggr] \leq \lambda(K)
\quad \textrm{\rm and}
\quad 
\EE \biggl[ 
\Bigl( \frac{\ud {\mathbb P}}{\ud {\mathbb Q}^{\varepsilon,\sigma^{\varepsilon}}} \Bigr)^{q(K)} \biggr] \leq \Lambda(K).
\end{equation}

In particular, if, for any $\varepsilon >0$, $\sigma^{\varepsilon}$ is a stopping time with respect to the filtration 
generated by $X^{\varepsilon}$ and, hence, can be put in the form 
$S^{\varepsilon}(X^{\varepsilon})$, then, for any Borel subset $A \subset \cC([0,T];\RR^d)$,
\begin{equation}
\label{lem:BMO:2}
\begin{split}
&\lambda(K)^{-\tfrac1{p(K)-1}}
\PP \Bigl( (X_{0}^{\varepsilon} + \varepsilon B_{t \wedge S^{\varepsilon}(X_{0}^{\varepsilon} +\varepsilon B)})_{0 \leq t \leq T} \in A 
\Bigr)^{\tfrac{p(K)}{p(K)-1}}
\\
&\hspace{15pt} \leq 
\PP \Bigl( (X_{t \wedge \sigma^{\varepsilon}}^{\varepsilon})_{0 \leq t \leq T} \in A 
\Bigr)   \leq \Lambda(K)^{\tfrac1{1+q(K)}}
{\mathbb P} \Bigl( (X_{0}^{\varepsilon} + {\boldsymbol \epsilon} B_{t \wedge S^{\varepsilon}(X_{0}^{\varepsilon} +\varepsilon B)})_{0 \le t \le T} \in A 
\Bigr)^{\tfrac{q(K)}{q(K)+1}}.
\end{split}
\end{equation}
\end{lemma}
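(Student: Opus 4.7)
The plan is to reduce both inequalities in \eqref{lem:BMO:1} to the classical \emph{reverse H\"older inequality} for stochastic exponentials of BMO martingales, and then to deduce \eqref{lem:BMO:2} by a standard H\"older argument, using that under ${\mathbb Q}^{\varepsilon,\sigma^{\varepsilon}}$ the process $X^{\varepsilon}$ is (up to $\sigma^{\varepsilon}$) a Brownian motion of intensity $\varepsilon$ starting from $X_{0}^{\varepsilon}$.

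First I would treat the density $d{\mathbb Q}^{\varepsilon,\sigma^{\varepsilon}}/d{\mathbb P}$. Set
$M_{t}^{\varepsilon} := -\varepsilon^{-1}\int_{0}^{t\wedge\sigma^{\varepsilon}} \nabla V_{s}^{\varepsilon}\cdot \ud B_{s}$. By hypothesis, $(M^{\varepsilon})_{\varepsilon>0}$ is a family of martingales whose BMO norms are bounded by $K$, and $d{\mathbb Q}^{\varepsilon,\sigma^{\varepsilon}}/d{\mathbb P} = {\mathcal E}(M^{\varepsilon})_{\infty}$. By Kazamaki's reverse H\"older theorem (see \cite{Kazamaki}), there exist $p(K)>1$ and $\lambda(K)<\infty$, depending only on $K$, such that every stochastic exponential of a BMO martingale with norm at most $K$ satisfies ${\mathbb E}[{\mathcal E}(M)_{\infty}^{p(K)}]\le \lambda(K)$. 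This gives the first bound. For the second bound, Girsanov's theorem ensures that under ${\mathbb Q}^{\varepsilon,\sigma^{\varepsilon}}$, the process $\tilde B_{t} := B_{t} + \varepsilon^{-1}\int_{0}^{t\wedge\sigma^{\varepsilon}}\nabla V_{s}^{\varepsilon}\ud s$ is a Brownian motion; inverting the density yields
\begin{equation*}
\frac{\ud {\mathbb P}}{\ud {\mathbb Q}^{\varepsilon,\sigma^{\varepsilon}}}
= \exp \biggl( \varepsilon^{-1}\int_{0}^{\sigma^{\varepsilon}} \nabla V_{s}^{\varepsilon}\cdot \ud \tilde B_{s} - \tfrac12 \varepsilon^{-2} \int_{0}^{\sigma^{\varepsilon}} \vert \nabla V_{s}^{\varepsilon}\vert^{2}\ud s \biggr),
\end{equation*}
which is the stochastic exponential of a martingale that is BMO under ${\mathbb Q}^{\varepsilon,\sigma^{\varepsilon}}$ with the \emph{same} quadratic variation process. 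The BMO norm under the new measure is controlled by the original one via Kazamaki's stability result for BMO norms under the change of measure induced by a BMO density (see \cite[Ch.~3]{Kazamaki}), so it is bounded by some $K'=K'(K)$. Applying the reverse H\"older theorem a second time under ${\mathbb Q}^{\varepsilon,\sigma^{\varepsilon}}$ yields $q(K)>0$ and $\Lambda(K)<\infty$ with ${\mathbb E}^{{\mathbb Q}^{\varepsilon,\sigma^{\varepsilon}}}[(\ud{\mathbb P}/\ud{\mathbb Q}^{\varepsilon,\sigma^{\varepsilon}})^{1+q(K)}]\le \Lambda(K)$, which is the same as ${\mathbb E}[(\ud{\mathbb P}/\ud{\mathbb Q}^{\varepsilon,\sigma^{\varepsilon}})^{q(K)}]\le \Lambda(K)$.

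To obtain \eqref{lem:BMO:2}, I would use the Girsanov identification of laws: since $\sigma^{\varepsilon}=S^{\varepsilon}(X^{\varepsilon})$ is adapted to the filtration of $X^{\varepsilon}$, and under ${\mathbb Q}^{\varepsilon,\sigma^{\varepsilon}}$ the process $(X^{\varepsilon}_{t\wedge \sigma^{\varepsilon}})_{0\le t\le T}$ solves $\ud X = \varepsilon \ud \tilde B$ stopped at $S^{\varepsilon}(X)$, we have, for any Borel subset $A\subset {\mathcal C}([0,T];\RR^d)$,
\begin{equation*}
{\mathbb Q}^{\varepsilon,\sigma^{\varepsilon}}\bigl( (X^{\varepsilon}_{t\wedge\sigma^{\varepsilon}})_{0\le t\le T} \in A \bigr) = {\mathbb P}\bigl( (X_{0}^{\varepsilon} + \varepsilon B_{t\wedge S^{\varepsilon}(X_{0}^{\varepsilon}+\varepsilon B)})_{0\le t\le T}\in A \bigr).
\end{equation*}
Call this common quantity $\pi^{\varepsilon}(A)$. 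For the upper bound in \eqref{lem:BMO:2}, write ${\mathbb P}(X^{\varepsilon}\in A) = {\mathbb E}^{{\mathbb Q}^{\varepsilon,\sigma^{\varepsilon}}}[\mathbf 1_{A}(X^{\varepsilon})\,\ud{\mathbb P}/\ud{\mathbb Q}^{\varepsilon,\sigma^{\varepsilon}}]$ and apply H\"older with exponents $1+q(K)$ and $(1+q(K))/q(K)$ to get ${\mathbb P}(X^{\varepsilon}\in A)\le \Lambda(K)^{1/(1+q(K))} \pi^{\varepsilon}(A)^{q(K)/(1+q(K))}$. For the lower bound, write $\pi^{\varepsilon}(A) = {\mathbb E}[\mathbf 1_{A}(X^{\varepsilon})\,\ud{\mathbb Q}^{\varepsilon,\sigma^{\varepsilon}}/\ud{\mathbb P}]$ and apply H\"older with exponents $p(K)$ and $p(K)/(p(K)-1)$; rearranging yields ${\mathbb P}(X^{\varepsilon}\in A)\ge \lambda(K)^{-1/(p(K)-1)}\pi^{\varepsilon}(A)^{p(K)/(p(K)-1)}$.

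The only genuinely delicate step is the stability of the BMO property under the change of measure used for $\ud{\mathbb P}/\ud{\mathbb Q}^{\varepsilon,\sigma^{\varepsilon}}$, which needs to be quantitative with constants depending only on $K$ (not on $\varepsilon$); this is where the monograph \cite{Kazamaki} is invoked. The rest is bookkeeping of H\"older exponents.
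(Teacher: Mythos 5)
Your proof is correct and follows essentially the same route as the paper: the bounds \eqref{lem:BMO:1} are obtained from Kazamaki's reverse H\"older inequality together with the stability of BMO under the Girsanov change of measure (the paper simply cites \cite[Chapter 3, Theorems 3.1 and 3.3]{Kazamaki} for exactly this), and \eqref{lem:BMO:2} is then derived, as in the paper, from the Girsanov identification of the law of $(X^{\varepsilon}_{t\wedge\sigma^{\varepsilon}})_{0\le t\le T}$ under ${\mathbb Q}^{\varepsilon,\sigma^{\varepsilon}}$ with that of the stopped Brownian path, plus H\"older's inequality with the exponents $p(K)$ and $1+q(K)$. Your bookkeeping of the exponents matches the statement, so nothing is missing.
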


\begin{proof}
The first part of the statement, i.e. \eqref{lem:BMO:1}, is a direct consequence of the theory of BMO martingales, see \cite[Chapter 3, Theorems 3.1 and 3.3]{Kazamaki}. 

We turn to the second part of the proof.
By Girsanov's theorem, the law of 
$(X_{t \wedge \sigma^{\varepsilon}}^{\varepsilon})_{0 \leq t \leq T}$ under 
{${\mathbb Q}^{\varepsilon,\sigma^{\varepsilon}}$}
 is the same as the law of 
$(X_{0}^{\varepsilon} + \varepsilon B_{t \wedge S^{\varepsilon}(X_{0}^{\varepsilon} + \varepsilon B)})_{0 \le t \le T}$ under 
{$\PP$}. Hence, for any $A$ as in the statement, 
\begin{equation*}
\begin{split}
&\PP \Bigl( \bigl(X_{0}^{\varepsilon} + \varepsilon B_{t \wedge S^{\varepsilon}(X_{0}^{\varepsilon} + \varepsilon B)}\bigr)_{0 \le t \le T} \in A 
\Bigr) 
\\
&=
{\mathbb Q}^{\varepsilon,\sigma^{\varepsilon}} \Bigl( \bigl(X_{t \wedge \sigma^{\varepsilon}}^{\varepsilon} \bigr)_{0 \le t \leq T} \in A 
\Bigr) 
= \EE \Bigl[ \frac{\ud {\mathbb Q}^{\varepsilon,\sigma^{\varepsilon}}}{\ud \PP}
{\mathbf 1}_{A}
\bigl(
(X_{t \wedge \sigma^{\varepsilon}})_{0 \le t \le T}\bigr) \Bigr]
\leq \lambda(K)^{\tfrac{1}{p(K)}} 
{\mathbb P} \Bigl( \bigl(X_{t \wedge \sigma^{\varepsilon}}^{\varepsilon}\bigr)_{0 \le t \leq T} \in A 
\Bigr)^{\tfrac{p(K)-1}{p(K)}}.
\end{split}
\end{equation*}
Similarly,
\begin{equation*}
\begin{split}
&\PP \Bigl( \bigl(X^{\varepsilon}_{t \wedge \sigma^{\varepsilon}})_{0 \le t \le T} \in A 
\Bigr) 
 = \EE^{{\mathbb Q}^{\varepsilon,\sigma^{\varepsilon}}} \Bigl[ \frac{\ud\PP}{\ud  {\mathbb Q}^{\varepsilon,\sigma^{\varepsilon}}}
{\mathbf 1}_{A}
\Bigl(
\bigl(X_{t \wedge \sigma^{\varepsilon}}^{{\varepsilon}}\bigr)_{0\le t \le T}\Bigr) \Bigr]
\\
&\leq 
\EE^{{\mathbb Q}^{\varepsilon,\sigma^{\varepsilon}}} \Bigl[ \Bigl( \frac{\ud {\mathbb P}}{\ud  {\mathbb Q}^{\varepsilon,\sigma^{\varepsilon}}} \Bigr)^{1+q(K)} \Bigr]^{\tfrac1{1+q(K)}}
{\mathbb P} \Bigl( \bigl(X_{0}^{\varepsilon} + \varepsilon B_{t \wedge S^{\varepsilon}(X_{0}^{\varepsilon} + \varepsilon B)}\bigr)_{0 \leq t \le T} \in A 
\Bigr)^{\tfrac{q(K)}{q(K)+1}}
\\
&\leq \Lambda(K)^{\tfrac1{1+q(K)}}
{\mathbb P} \Bigl( \bigl(X_{0}^{\varepsilon} + \varepsilon B_{t \wedge \sigma^{\varepsilon}(X_{0}^{\varepsilon} + \varepsilon B)}\bigr)_{0 \le t \le T} \in A 
\Bigr)^{\tfrac{q(K)}{q(K)+1}}.
\end{split}
\end{equation*}
\end{proof}

\subsection{Reaching a sufficiently high radius}
As we already explained, our goal in this section is to show that the potential reaches any level of order 
$\varepsilon^2$ in infinitesimal time. To do so, we proceed in two steps: The first one is to prove that the radius reaches any level of order $\varepsilon^{\frac{2}{1+\alpha}}$ and the second one is to prove that, restarting from a radius of order $\varepsilon^{\frac2{1+\alpha}}$, the particle can reach a potential of order $\varepsilon^{2}$. 
%
%

This subsection is dedicated to the first part of the proof. We show that the particle can indeed reach a radius of order $\varepsilon^2$ in infinitesimal time:
\begin{lemma}
\label{lem:hitting:first:barrier}
Assume that, for any $\varepsilon >0$, $X_{0}^{\varepsilon} = 0$.
For any $r_{0}>0$, we have:
\begin{equation*}
\liminf_{A \nearrow \infty}
\liminf_{\varepsilon \searrow 0}
\PP \biggl( \sup_{0\le t \le A \varepsilon^{2(1-\alpha)/(1+\alpha)}} \vert X_{t}^{\varepsilon} \vert \geq r_{0} \varepsilon^{\tfrac{2}{1+\alpha}}  \biggr) = 1.
\end{equation*}
\end{lemma}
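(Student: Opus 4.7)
The strategy is to transfer the hitting-time estimate from $(X^{\varepsilon}_{t})_{t\geq 0}$ to the reference process $(\varepsilon B_{t})_{t\geq 0}$ via the change of measure in Lemma \ref{lem:appli:BMO}, whose applicability is guaranteed by the BMO bound in Lemma \ref{lem:BMO:perturbed:1}. The deterministic rescaling $T^{\varepsilon}:=A\varepsilon^{2(1-\alpha)/(1+\alpha)}$ and the target radius $r_{0}\varepsilon^{2/(1+\alpha)}$ fit together by Brownian scaling, since
\begin{equation*}
\sup_{0\leq t \leq T^{\varepsilon}} \bigl\vert \varepsilon B_{t} \bigr\vert
\stackrel{\text{law}}{=}
\sqrt{A}\, \varepsilon^{2/(1+\alpha)} \sup_{0\leq s \leq 1} \vert B_{s} \vert,
\end{equation*}
so that $\PP(\sup_{t\leq T^{\varepsilon}}|\varepsilon B_{t}|\geq r_{0}\varepsilon^{2/(1+\alpha)})=\PP(\sup_{s\leq 1}|B_{s}|\geq r_{0}/\sqrt{A})$, which tends to $1$ as $A\nearrow\infty$, \emph{uniformly} in $\varepsilon$.

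The second step is to couple this probability with the analogous one for $X^{\varepsilon}$. Fix $v_{0}>0$ to be chosen large, set $\rho^{\varepsilon}:=\inf\{t\geq 0:|X^{\varepsilon}_{t}|\geq r_{0}\varepsilon^{2/(1+\alpha)}\}$, and let $\sigma^{\varepsilon}:=\nu^{\varepsilon}(v_{0})\wedge \kappa^{\varepsilon}\wedge T^{\varepsilon}$, which is a functional $S^{\varepsilon}(X^{\varepsilon})$ of the path of $X^{\varepsilon}$ up to time $T^{\varepsilon}$. Trivially,
\begin{equation*}
\Bigl\{\sup_{0\leq t\leq T^{\varepsilon}}\vert X^{\varepsilon}_{t}\vert \geq r_{0}\varepsilon^{2/(1+\alpha)}\Bigr\} \supset \bigl\{ \rho^{\varepsilon}\leq \sigma^{\varepsilon}\bigr\},
\end{equation*}
and the event on the right-hand side depends only on the stopped trajectory $(X^{\varepsilon}_{t\wedge\sigma^{\varepsilon}})_{t\leq T^{\varepsilon}}$. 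By Lemma \ref{lem:BMO:perturbed:1}, the BMO norm of $\varepsilon^{-1}\int_{0}^{\cdot\wedge \sigma^{\varepsilon}}\nabla V^{\varepsilon}_{s}\cdot\ud B_{s}$ is bounded by $\sqrt{v_{0}}$, independently of $\varepsilon$, so the lower bound in \eqref{lem:BMO:2} applies with the same exponents and constants uniformly in $\varepsilon$.

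The third step is to check that, for the reference Brownian motion, hitting radius $r_{0}\varepsilon^{2/(1+\alpha)}$ occurs strictly before each of the other stopping times entering $\sigma^{\varepsilon}$. Using $V(x)\leq C_{0}|x|^{1+\alpha}$ from \textbf{(A2)}, we have $V(\varepsilon B_{t})+\eta|\varepsilon B_{t}|^{1+\alpha}\leq (C_{0}+\eta)|\varepsilon B_{t}|^{1+\alpha}$, so choosing $v_{0}$ large enough that $(v_{0}/(C_{0}+\eta))^{1/(1+\alpha)}\geq 2 r_{0}$ ensures $\nu^{\varepsilon}(v_{0})(\varepsilon B)> \rho^{\varepsilon}(\varepsilon B)$ on $\{\rho^{\varepsilon}(\varepsilon B)<\infty\}$. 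Likewise $\kappa^{\varepsilon}(\varepsilon B)>\rho^{\varepsilon}(\varepsilon B)$ on that event for $\varepsilon$ small, by the ordering $\varepsilon^{2/(1+\alpha)}=o(\varepsilon^{2/(1+\beta)+(\beta-\alpha)/2})$ recorded in \eqref{eq:comparison_barriers}. Hence, for all sufficiently small $\varepsilon$,
\begin{equation*}
\PP\bigl(\rho^{\varepsilon}(\varepsilon B)\leq \sigma^{\varepsilon}(\varepsilon B)\bigr)
= \PP\bigl(\rho^{\varepsilon}(\varepsilon B)\leq T^{\varepsilon}\bigr)
= \PP\bigl(\sup_{0\leq s\leq 1}|B_{s}|\geq r_{0}/\sqrt{A}\bigr),
\end{equation*}
which tends to $1$ as $A\nearrow\infty$. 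Combining this with the lower bound in \eqref{lem:BMO:2} applied to the event $\{\rho^{\varepsilon}\leq\sigma^{\varepsilon}\}$ yields the desired liminf.

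The main obstacle is purely bookkeeping: one must choose $v_{0}$ (depending on $r_{0}$ but not on $\varepsilon$ or $A$) so that the event of hitting radius $r_{0}\varepsilon^{2/(1+\alpha)}$ occurs strictly inside the region where the BMO estimate has been established, and then verify that both auxiliary barriers $\nu^{\varepsilon}(v_{0})$ and $\kappa^{\varepsilon}$ sit at radii much larger than the target $r_{0}\varepsilon^{2/(1+\alpha)}$. Once the choice of $v_{0}$ is fixed, the BMO constants in Lemma \ref{lem:appli:BMO} are independent of $\varepsilon$, so the Brownian estimate transfers without loss, and letting first $\varepsilon\searrow 0$ and then $A\nearrow\infty$ closes the argument.
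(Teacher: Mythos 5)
Your skeleton follows the paper's machinery closely (the BMO bound of Lemma \ref{lem:BMO:perturbed:1}, the change of measure of Lemma \ref{lem:appli:BMO}, Brownian scaling, and the barrier comparison \eqref{eq:comparison_barriers}), but the final transfer step is the wrong way round and the argument does not close. The inequalities in \eqref{lem:BMO:2} are only a polynomial equivalence: the lower bound reads
\begin{equation*}
\PP \Bigl( (X_{t \wedge \sigma^{\varepsilon}}^{\varepsilon})_{0 \leq t \leq T} \in A \Bigr)
\;\geq\;
\lambda(K)^{-\tfrac1{p(K)-1}}\,
\PP \Bigl( (X_{0}^{\varepsilon} + \varepsilon B_{t \wedge S^{\varepsilon}(X_{0}^{\varepsilon}+\varepsilon B)})_{0 \leq t \leq T} \in A \Bigr)^{\tfrac{p(K)}{p(K)-1}},
\end{equation*}
with $\lambda(K) \geq 1$ (and in general $\lambda(K)>1$), and $K=\sqrt{v_{0}}$ is forced to be large in your setup because you need $v_{0} \geq (C_{0}+\eta)(2r_{0})^{1+\alpha}$ to order $\rho^{\varepsilon}$ before $\nu^{\varepsilon}(v_{0})$ along Brownian paths. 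Consequently, even though $\PP(\rho^{\varepsilon}(\varepsilon B)\leq T^{\varepsilon}) \to 1$, all you obtain is
$\liminf_{A}\liminf_{\varepsilon}\PP(\rho^{\varepsilon}\leq\sigma^{\varepsilon}) \geq \lambda(K)^{-1/(p(K)-1)}$,
a constant strictly below $1$ for fixed $K$; the Girsanov--BMO bounds cannot upgrade ``Brownian probability close to $1$'' into ``$X^{\varepsilon}$ probability close to $1$''.

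The correct use of Lemma \ref{lem:appli:BMO} is by complementation, which is what the paper does: choose $v_{0}=(C_{0}+\eta)r_{0}^{1+\alpha}$ (small, not large), so that reaching the perturbed-potential level $v_{0}\varepsilon^{2}$ already forces $R^{\varepsilon}\geq r_{0}\varepsilon^{2/(1+\alpha)}$, and note that $\kappa^{\varepsilon}\leq t_{\varepsilon}^{A}$ also forces success by \eqref{eq:comparison_barriers}. Then the failure event is contained in $\{\nu^{\varepsilon}(v_{0})>t_{\varepsilon}^{A},\ \kappa^{\varepsilon}>t_{\varepsilon}^{A}\}$, which is a functional of the path stopped at $\sigma^{\varepsilon}=\nu^{\varepsilon}(v_{0})\wedge\kappa^{\varepsilon}\wedge T$; the \emph{upper} bound in \eqref{lem:BMO:2} transfers its probability to the analogous event for $\varepsilon B$ raised to the power $q(K)/(q(K)+1)$, and Brownian scaling shows that probability tends to $0$ (first $\varepsilon\searrow 0$, then $A\nearrow\infty$), which is exactly what is needed since a vanishing bound survives the power and the constant $\Lambda(K)^{1/(1+q(K))}$. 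So the missing idea is not bookkeeping about $v_{0}$: it is that the statement must be proved through the complement and the upper (not lower) Girsanov--BMO inequality.
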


\begin{proof}
The proof is based on the perturbed potential introduced in the statement of Lemma 
\ref{lem:supermartingale}. Indeed, we observe that, for $\eta$ as therein,
\begin{equation*}
V_{t}^{\varepsilon} + \eta \bigl( R_{t}^{\varepsilon} \bigr)^{1+\alpha}
\leq \bigl( C_{0} + \eta \bigr) \bigl( R_{t}^{\varepsilon} \bigr)^{1+\alpha}.
\end{equation*}
In particular, if the left-hand side is greater than ${v_{0}} \varepsilon^2$ for some
$v_{0} >0$, then 
\begin{equation*}
 \bigl( R_{t}^{\varepsilon} \bigr)^{1+\alpha} \geq 
 \frac{{v_{0}}}{ C_{0} + \eta} \varepsilon^2. 
\end{equation*}
Moreover, by definition of $\kappa^\varepsilon$ and the bound \eqref{eq:comparison_barriers}, if $\kappa^\varepsilon\leq t_\varepsilon^A:= {A \varepsilon^{\frac{2(1-\alpha)}{1+\alpha}}}
$, then
\begin{equation*}
\sup_{0\leq t\leq t_\varepsilon^A}R^\varepsilon_t \geq  \varepsilon^{\tfrac{2}{1+\beta}+\tfrac{\beta-\alpha}{2}} > r_{0} \varepsilon^{\tfrac{2}{1+\alpha}}
\end{equation*}
for $\varepsilon$ sufficiently small. Hence, in order to prove the statement, it suffices to prove that (with the same notation as in \eqref{eq:3:1}):
\begin{equation*}
\liminf_{A \nearrow \infty} \liminf_{\varepsilon \searrow 0}{\mathbb P}
\bigl( \nu^{\varepsilon}(v_{0}) \leq 
t_{\varepsilon}^A \text{ or } \kappa^\varepsilon\leq t^\varepsilon_A \bigr)  = 1,
\end{equation*}
with
${v_{0}} = \bigl( C_{0}+ \eta \bigr) r_{0}^{1+\alpha}$.  
Obviously, it suffices to prove
\begin{equation*}
\limsup_{A \nearrow \infty} \limsup_{\varepsilon \searrow 0}{\mathbb P}
\Bigl( \nu^{\varepsilon}(v_{0}) > 
t_{\varepsilon}^A, \ \kappa^\varepsilon > t_\varepsilon^A \Bigr)  = 0,
\end{equation*}
{that is, by definition of $\kappa^\varepsilon$,
\begin{equation*}
\limsup_{A \nearrow \infty} \limsup_{\varepsilon \searrow 0}{\mathbb P}
\Bigl( \nu^{\varepsilon}(v_{0}) > 
t_{\varepsilon}^A, \ \sup_{0\leq t\leq t_\varepsilon^A}R^\varepsilon_t < \varepsilon^{\tfrac{2}{1+\beta}+\tfrac{\beta-\alpha}{2}} \Bigr)  = 0.
\end{equation*}
}Therefore,
by Lemmas
\ref{lem:BMO:perturbed:1}
and 
\ref{lem:appli:BMO} with {$\sigma^{\varepsilon} = \nu^{\varepsilon}(v_{0})\wedge \kappa^\varepsilon\wedge T$} and $T=1$, we just need to prove
that
\begin{equation*}
\limsup_{A \nearrow \infty}
\limsup_{\varepsilon \searrow 0}
{\mathbb P} \Bigl(\forall t \leq t_{\varepsilon}^A,
\ V(\varepsilon B_{t}) + \eta \vert \varepsilon B_{t} \vert^{1+\alpha} < {v_{0}} \varepsilon^{2},\ \sup_{0 \leq t \leq t_{\varepsilon}^{A}} \vert \varepsilon B_{t}^{\varepsilon}
\vert < \varepsilon^{\tfrac{2}{1+\beta}+\tfrac{\beta-\alpha}{2}}
 \Bigr) =0.
\end{equation*}
Notice 
 that a crucial fact to pass from the law of $X^{\varepsilon}$ to the law 
of $\varepsilon B$ is the fact that, for 
$t_{\varepsilon}^A \leq 1$ (which is true when $\varepsilon$ is small enough with respect to 
$A$), the BMO norm of {$\varepsilon^{-1} \bigl( \int_{0}^{t \wedge t_{\varepsilon}^A \wedge \nu^{\varepsilon}(v_{0}) \wedge \kappa^\varepsilon} \nabla V_{s}^{\varepsilon}\cdot \ud 
B_{s} \bigr)_{t \geq 0}$} is controlled independently of $A$ and $\varepsilon$.

Since $V(x) + (\eta/2) \vert x \vert^2$ is non-negative on the ball of center $0$ and radius 1, it is sufficient to prove 
that
\begin{equation*}
\limsup_{A \nearrow \infty}
\limsup_{\varepsilon \searrow 0}
{\mathbb P} \Bigl(\forall t \leq t_{\varepsilon}^A, \ \vert \varepsilon B_{t} \vert^{1+\alpha} < 2\eta^{-1} v_{0}\varepsilon^{2} \Bigr) =0,
\end{equation*}
Using the fact that $(B_{t})_{t \geq 0}$
has the same law as $\bigl(A^{\frac12} \varepsilon^{\frac{1-\alpha}{1+\alpha}}
 B_{A^{-1}\varepsilon^{-2(1-\alpha)/(1+\alpha)} t}\bigr)_{t \geq 0}$, we have:
\begin{equation*}
{\mathbb P} \Bigl(\forall t \leq t_{\varepsilon}^A,\  \vert \varepsilon B_{t} \vert^{1+\alpha} \leq
2\eta^{-1} v_{0}\varepsilon^{2} \Bigr)
=
{\mathbb P} \Bigl(\forall t \leq 1,\ \vert  B_{t} \vert^{1+\alpha} \leq 
2\eta^{-1} v_{0}
A^{-\tfrac{1+\alpha}2}
\Bigr).
\end{equation*}
Obviously, the right hand side tends to 1 as $A$ tends to $\infty$. 
\end{proof}

\subsection{Reaching a sufficiently high potential}
Now that the particle is known to reach
$\varepsilon^{2/(1+\alpha)} r_{0}$ in infinitesimal time, we can prove the announced result:

\begin{proposition}
\label{prop:hitting:potential}
Assume that, for any $\varepsilon >0$, $X_{0}^{\varepsilon} = 0$.
Then, 
for any $v_{0}>0$, 
\begin{equation}
\label{eq:hitting:proof:step:1}
\liminf_{A \nearrow \infty}
\liminf_{\varepsilon \searrow 0}
\PP \Bigl( \sup_{0 \leq 
t \leq A  \varepsilon^{2(1-\alpha)/(1+\alpha)
}} V_{t}^{\varepsilon} \geq  v_{0} \varepsilon^2 \Bigr) = 1.
\end{equation}
In particular, 
for any collection $(A_{\varepsilon})_{\varepsilon >0}$ that converges to $\infty$ as $\varepsilon$ tends to $0$, we have:
\begin{equation}
\label{eq:hitting:proof:step:2}
\liminf_{\varepsilon \searrow 0}
\PP \Bigl( \sup_{0 \leq 
t \leq A_{\varepsilon}  \varepsilon^{2(1-\alpha)/(1+\alpha)
}} V_{t}^{\varepsilon} \geq v_{0} \varepsilon^2 \Bigr) = 1.
\end{equation}
\end{proposition}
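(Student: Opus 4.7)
The plan is a two-stage argument. Stage 1 invokes Lemma \ref{lem:hitting:first:barrier} to bring the particle to a ``macroscopic'' radius in the natural rescaling $\vert x \vert \sim \varepsilon^{2/(1+\alpha)}$, while stage 2 restarts at that radius and transfers the remaining question about $V$ to a question about a $d$-dimensional Brownian motion, via the BMO/Girsanov comparison of Lemmas \ref{lem:3:2} and \ref{lem:appli:BMO}. The geometric input making the stage-2 analysis succeed is Assumption (\textbf{A3}): on the cone $\mathcal{C}$ one has $g \geq c_{0}$, while $h \equiv 0$ there by the disjoint-support condition of (\textbf{A1}); hence every $x \in \mathcal{C}$ with $\vert x \vert \geq r \varepsilon^{2/(1+\alpha)}$ automatically satisfies $V(x) \geq c_{0} r^{1+\alpha} \varepsilon^{2}$.

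Fix $\delta > 0$ and choose the auxiliary parameters in the order $r^{*}, A_{2}, r_{0}, A_{1}$. Set $r^{*} := (v_{0}/c_{0})^{1/(1+\alpha)}$. Pick $A_{2}$ so large that, for any $u_{0} \in \mathbb{S}^{d-1}$, the Brownian motion $\tilde{Y}_{s} := r_{0} u_{0} + \tilde{B}_{s}$ enters $\mathcal{C} \cap \{\vert \cdot \vert \geq r^{*}\}$ by time $A_{2}$ with probability at least $1-\delta/3$: this is possible uniformly in $u_{0}$, since in $d \geq 2$ a Brownian motion almost surely visits any non-empty open cone at arbitrarily large radii, and the corresponding tail probabilities depend continuously on the starting point, hence uniformly over the compact sphere. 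Take $r_{0}$ large enough (of order $r_{\star} + K \sqrt{A_{2}}$ for sufficiently large $K = K(\delta)$, where $r_{\star}$ is from Lemma \ref{lem:supermartingale:2}) so that $\vert \tilde{Y}_{s} \vert > r_{\star}/2$ throughout $[0,A_{2}]$ with probability $\geq 1 - \delta/6$. Finally, apply Lemma \ref{lem:hitting:first:barrier} to select $A_{1}$ with $\PP(\zeta^{\varepsilon} \leq A_{1} \varepsilon^{2(1-\alpha)/(1+\alpha)}) \geq 1 - \delta/3$ for $\varepsilon$ small, where $\zeta^{\varepsilon} := \inf\{t : R_{t}^{\varepsilon} \geq r_{0} \varepsilon^{2/(1+\alpha)}\}$.

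On the good event of stage 1, let $u_{0} := X_{\zeta^{\varepsilon}}^{\varepsilon}/\vert X_{\zeta^{\varepsilon}}^{\varepsilon} \vert$. If $u_{0} \in \mathcal{C}$, then immediately $V(X_{\zeta^{\varepsilon}}^{\varepsilon}) \geq c_{0} r_{0}^{1+\alpha} \varepsilon^{2} \geq v_{0} \varepsilon^{2}$ and the claim holds at $t = \zeta^{\varepsilon}$. Otherwise I apply the strong Markov property at $\zeta^{\varepsilon}$ to the restarted diffusion, which begins at $x_{0} := X_{\zeta^{\varepsilon}}^{\varepsilon}$ with $\vert x_{0} \vert = r_{0} \varepsilon^{2/(1+\alpha)}$ and $r_{0} \geq r_{\star}$. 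Lemmas \ref{lem:supermartingale:2} and \ref{lem:3:2} then yield a BMO bound, uniform in $\varepsilon$, for the exponential martingale up to $\nu^{\varepsilon,\prime}(v_{0}') \wedge \kappa^{\varepsilon,\prime} \wedge A_{2} \varepsilon^{2(1-\alpha)/(1+\alpha)}$, for some $v_{0}' > v_{0}$ chosen so that reaching the perturbed threshold $V + \eta_{\varepsilon} R^{1+\alpha} = v_{0}' \varepsilon^{2}$ forces $V \geq v_{0} \varepsilon^{2}$ once $\varepsilon$ is small; this works because $\eta_{\varepsilon} R^{1+\alpha}$ is $o(\varepsilon^{2})$ on the region $\{R \leq \varepsilon^{2/(1+\beta)+(\beta-\alpha)/2}\}$, by the explicit rate of decay of $\eta_{\varepsilon}$ extracted from the proof of Lemma \ref{lem:supermartingale:2}. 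Lemma \ref{lem:appli:BMO} then bounds the $\PP$-probability of the bad event (``$V$ stays below $v_{0} \varepsilon^{2}$ on the whole of stage 2'') by a positive power of the analogous probability for the shifted Brownian motion $(x_{0} + \varepsilon B_{s})_{0 \leq s \leq A_{2} \varepsilon^{2(1-\alpha)/(1+\alpha)}}$. Via the Brownian scaling $\varepsilon B_{\varepsilon^{2(1-\alpha)/(1+\alpha)} s} \stackrel{(d)}{=} \varepsilon^{2/(1+\alpha)} \tilde{B}_{s}$, this reduces to the probability that $\tilde{Y}_{s}$ avoids $\mathcal{C} \cap \{\vert \cdot \vert \geq r^{*}\}$ on $[0, A_{2}]$, which is at most $\delta/3$ by construction.

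The main technical difficulty lies in the careful juggling of the various stopping times ($\nu^{\varepsilon,\prime}$, $\kappa^{\varepsilon,\prime}$, the entrance into the cone, the cutoff $A_{2}\varepsilon^{2(1-\alpha)/(1+\alpha)}$) inside the Girsanov change of measure; in particular one has to check that (i) the upper cutoff in $\kappa^{\varepsilon,\prime}$ is not triggered on the stage-2 timescale, which follows from the strict inequality \eqref{eq:comparison_barriers} (the rescaled upper barrier diverges as $\varepsilon \to 0$); (ii) the lower cutoff is avoided with high probability, guaranteed by the choice of $r_{0}$; and (iii) the $h(x) \vert x \vert^{1+\beta}$ contribution to $V$ is asymptotically negligible under the rescaling, since $(1+\beta)/(1+\alpha) > 1$. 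Summing the $\delta/3$ bounds gives \eqref{eq:hitting:proof:step:1} with $A := A_{1} + A_{2}$. Statement \eqref{eq:hitting:proof:step:2} is then an immediate consequence of \eqref{eq:hitting:proof:step:1}: for any fixed $A$, one has $A_{\varepsilon} \geq A$ for all sufficiently small $\varepsilon$, so the probability in \eqref{eq:hitting:proof:step:2} dominates that in \eqref{eq:hitting:proof:step:1}; letting $A \nearrow \infty$ finishes the argument.
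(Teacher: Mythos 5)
Your overall strategy (stage 1: Lemma \ref{lem:hitting:first:barrier} to reach radius $r_{0}\varepsilon^{2/(1+\alpha)}$; stage 2: restart via the strong Markov property and use Lemmas \ref{lem:3:2} and \ref{lem:appli:BMO} to reduce to a cone-hitting question for Brownian motion, the potential bound coming from $g\geq c_{0}$ on $\mathcal{C}$) is exactly the paper's, where stage 2 is isolated as Lemma \ref{lem:restarting}. However, there is a genuine gap in your choice of parameters in stage 2, and it is precisely the point where the argument is delicate. You choose $A_{2}$ so that a Brownian motion started at $r_{0}u_{0}$, for an arbitrary $u_{0}\in\mathbb{S}^{d-1}$, enters the truncated cone $\mathcal{C}\cap\{\vert\cdot\vert\geq r^{*}\}$ by time $A_{2}$ with probability $\geq 1-\delta/3$, and only afterwards choose $r_{0}$ of order $r_{\star}+K(\delta)\sqrt{A_{2}}$ to keep the radius above $r_{\star}/2$ on $[0,A_{2}]$. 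Besides the circularity ($A_{2}$'s defining property involves $r_{0}$, which is fixed later), the two requirements are quantitatively incompatible under your coupling: if $u_{0}$ points away from $\mathcal{C}$, the distance from $r_{0}u_{0}$ to the cone is of order $r_{0}\sim K\sqrt{A_{2}}$, so hitting the cone within time $A_{2}$ forces a Brownian displacement of order $K\sqrt{A_{2}}$, an event of probability of order $e^{-cK^{2}}$ — small, not close to $1$ — while your radius-avoidance bound needs $K=K(\delta)$ large. No choice of $A_{2}$ repairs this, because both requirements live at the same spatial scale $\sqrt{A_{2}}$ and pull $r_{0}/\sqrt{A_{2}}$ in opposite directions.

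The paper resolves exactly this tension in the proof of Lemma \ref{lem:restarting} by decoupling the two scales. First, the target event is not "hit the truncated cone" but the conjunction "hit the cone $\mathcal{C}$ at some time" and "the radius never drops below $\lambda_{0}r_{\star}\varepsilon^{2/(1+\alpha)}$" (with $\lambda_{0}$ fixed by $v_{0}$ and $c_{0}$), which together give $V\geq v_{0}\varepsilon^{2}$ because the cone is scale invariant. Second, the parameters are coupled diagonally: starting radius $\lambda_{S}r_{\star}\varepsilon^{2/(1+\alpha)}$ and horizon $A_{S}=S\lambda_{S}^{2}$, so that in units of $\sqrt{A_{S}}$ the rescaled starting point $\lambda_{S}A_{S}^{-1/2}=S^{-1/2}$ tends to $0$ and cone-hitting has probability tending to $1$ (Portmanteau plus Blumenthal's zero--one law), while in units of $\lambda_{S}$ the horizon is the fixed time $S$ and the forbidden ball has radius $\lambda_{0}r_{\star}/\lambda_{S}\to 0$, so the lower cutoff is avoided by polarity of small sets in $d\geq 2$. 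Your proof needs this (or an equivalent) two-scale construction; as written, the stage-2 reduction to "$\tilde Y$ avoids $\mathcal{C}\cap\{\vert\cdot\vert\geq r^{*}\}$ on $[0,A_{2}]$ has probability $\leq\delta/3$" is unsubstantiated, and with it point (ii) of your stopping-time bookkeeping (avoiding the lower cutoff in $\kappa^{\varepsilon,\prime}$) also collapses, since it rests on the same choice of $r_{0}$. The remaining ingredients (the $o(\varepsilon^{2})$ control of $\eta_{\varepsilon}R^{1+\alpha}$ below the upper barrier, the negligibility of the upper cutoff via \eqref{eq:comparison_barriers}, and the deduction of \eqref{eq:hitting:proof:step:2} from \eqref{eq:hitting:proof:step:1}) are fine.
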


The proof is split in several steps. Most of the difficulty is to show the following lemma:

\begin{lemma}
\label{lem:restarting}
For any $v_{0} >0$, there exists a collection $(\lambda_{S},A_{S})_{S >0}$ such that 
$\lim_{S \nearrow \infty} \lambda_{S} = \lim_{S \nearrow \infty} A_{S} = \infty$
and 
\begin{equation}
\label{eq:restarting:proof:step:1}
\liminf_{S \nearrow \infty}
\liminf_{\varepsilon \searrow 0}
\PP \Bigl( \sup_{0 \leq 
t \leq A_{S}   \varepsilon^{2(1-\alpha)/(1+\alpha)
}} V_{t}^{\varepsilon} \geq v_{0} \varepsilon^2
\, \big\vert \, 
R_{0}^{\varepsilon} = \varepsilon^{\tfrac{2}{1+\alpha}} \lambda_{S} r_{\star}
 \Bigr) = 1,
\end{equation}
where we used the notation $\PP ( \cdot \, \vert \, 
R_{0}^{\varepsilon} = \varepsilon^{\frac{2}{1+\alpha}} \lambda_{S} r_{\star})$ to indicate the fact that the initial condition is forced to satisfy
$R_{0}^{\varepsilon} = \varepsilon^{\frac{2}{1+\alpha}} \lambda_{S} r_{\star}$, 
{with $r_{\star}$ as in the statements of Lemmas \ref{lem:supermartingale:2}
and
\ref{lem:3:2}.}
\end{lemma}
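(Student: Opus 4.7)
We mimic the strategy of Lemma \ref{lem:hitting:first:barrier}: the BMO--Girsanov machinery reduces the problem to the corresponding one for the driving Brownian motion, which is then resolved by a diffusive rescaling argument. Set $t_\varepsilon^A := A \varepsilon^{2(1-\alpha)/(1+\alpha)}$ and introduce
\begin{equation*}
\sigma^\varepsilon := \nu^{\varepsilon,\prime}(2 v_0) \wedge \kappa^{\varepsilon,\prime} \wedge t_\varepsilon^{A_S}.
\end{equation*}
By Lemmas \ref{lem:supermartingale:2} and \ref{lem:3:2}, the BMO norm of the stopped martingale $\varepsilon^{-1}\int_0^{\cdot \wedge \sigma^\varepsilon} \nabla V_s^\varepsilon \cdot \ud B_s$ is at most $2\sqrt{v_0}$, uniformly in $\varepsilon$ and $S$. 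Since $\eta_\varepsilon \searrow 0$ and $(R_t^\varepsilon)^{1+\alpha} \le \varepsilon^{(1+\alpha)(2/(1+\beta)+(\beta-\alpha)/2)}$ for $t < \kappa^{\varepsilon,\prime}$, a short computation shows $\eta_\varepsilon (R_t^\varepsilon)^{1+\alpha} = o(\varepsilon^2)$, so that for $\varepsilon$ small the event $\{\tau^\varepsilon(v_0) > t_\varepsilon^{A_S}\}$ is essentially included in $\{\nu^{\varepsilon,\prime}(3v_0/2) \ge t_\varepsilon^{A_S},\ \kappa^{\varepsilon,\prime} \ge t_\varepsilon^{A_S}\}$, the exit of $R^\varepsilon$ through the outer threshold being easily handled since the potential there is already of order $\varepsilon^{(1+\alpha)(2/(1+\beta)+(\beta-\alpha)/2)} \gg \varepsilon^2$ within the sector $\cC$. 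Lemma \ref{lem:appli:BMO} then bounds $\PP(\tau^\varepsilon(v_0) > t_\varepsilon^{A_S})$ by a positive power of the analogous probability written for $y_0 + \varepsilon B$ instead of $X^\varepsilon$, where $y_0 \in \RR^d$ is an arbitrary vector with $|y_0| = \varepsilon^{2/(1+\alpha)} \lambda_S r_\star$.

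Introduce the rescaling $Y_s := \varepsilon^{-2/(1+\alpha)}(y_0 + \varepsilon B_{\varepsilon^{2(1-\alpha)/(1+\alpha)} s})$: then $Y$ is a standard $d$-dimensional Brownian motion with $|Y_0| = \lambda_S r_\star$, evolving on the time horizon $[0, A_S]$; the outer radial threshold becomes $\varepsilon^{2/(1+\beta)+(\beta-\alpha)/2-2/(1+\alpha)} \to \infty$ as $\varepsilon \searrow 0$ (cf. \eqref{eq:comparison_barriers}), while the inner threshold remains at $\tfrac12 r_\star$. On the cone $\cC$ from \textbf{(A3)}, $g \ge c_0$, and by the disjointness in \textbf{(A1)} we have $h \equiv 0$ there; since $\cC$ is invariant under positive scaling, this gives $\varepsilon^{-2} V(\varepsilon^{2/(1+\alpha)} y) \ge c_0 |y|^{1+\alpha}$ for $y \in \cC$. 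Setting $M_0 := (2v_0/c_0)^{1/(1+\alpha)}$, any $y \in \cC \cap \{|y| \ge M_0\}$ forces $V(\varepsilon^{2/(1+\alpha)} y) \ge 2 v_0 \varepsilon^2$. The proof thus reduces to showing that, uniformly in the direction of $y_0$,
\begin{equation*}
\PP_{y_0}\Bigl( Y \text{ enters } \cC \cap \{|y| \ge M_0\} \text{ in } [0, A_S] \text{ and } \inf_{s \le A_S} |Y_s| > \tfrac12 r_\star \Bigr) \longrightarrow 1
\end{equation*}
as $S \to \infty$.

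To close the argument we make the explicit choice $\lambda_S := S$ and $A_S := (\lambda_S r_\star)^2 \log S$, both diverging as $S \to \infty$. A further diffusive rescaling $W_s := (\lambda_S r_\star)^{-1} Y_{(\lambda_S r_\star)^2 s}$ produces a standard Brownian motion starting on the unit sphere, with time horizon $\log S \to \infty$ and inner threshold $1/(2\lambda_S) \to 0$; the target set $\cC \cap \{|y| \ge M_0/(\lambda_S r_\star)\}$ converges to the open cone $\cC$ itself. By the skew-product decomposition of $d$-dimensional Brownian motion, the angular component of $W$ is a time-changed Brownian motion on $\mathbb{S}^{d-1}$ whose clock $\int_0^\cdot |W_s|^{-2} \ud s$ diverges almost surely for $d \ge 2$; consequently the angular process is dense in $\mathbb{S}^{d-1}$, and the probability of entering $\cC$ before time $\log S$ tends to $1$ uniformly in the starting point on $\mathbb{S}^{d-1}$. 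Simultaneously, classical hitting estimates for $d$-dimensional Brownian motion (logarithmic in $d = 2$, polynomial $\mu^{d-2}$ in $d \ge 3$) yield $\PP(\inf_{s \le \log S} |W_s| \le 1/(2\lambda_S)) \to 0$. The main obstacle is to calibrate $\lambda_S$ and $A_S$ so that these two estimates are simultaneously effective: $A_S$ must exceed $(\lambda_S r_\star)^2$ so that the angular diffusion has time to sweep out $\cC$, yet it must not grow so much faster that $W$ descends into the inner threshold; our logarithmic choice sits precisely in this window.
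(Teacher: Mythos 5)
Your proposal follows the paper's skeleton — the same BMO--Girsanov reduction via Lemmas \ref{lem:supermartingale:2}, \ref{lem:3:2} and \ref{lem:appli:BMO}, the same use of the cone of {\bf (A3)} with the inner/outer radial barriers of $\kappa^{\varepsilon,\prime}$, and the same diffusive rescaling to a Brownian problem — but the final Brownian estimate is genuinely different. The paper rescales by $t_\varepsilon^A$ so that the starting point shrinks like $\lambda_S A_S^{-1/2}=S^{-1/2}\to 0$ on a fixed horizon $[0,1]$, gets the cone entry for free from the Portmanteau theorem and Blumenthal's zero--one law, and treats the inner barrier on a horizon $\lambda_S^{-2}A_S=S$ with a soft diagonal choice of $\lambda_S$ ($A_S=S\lambda_S^2$, no quantitative hitting estimate). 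You instead keep the starting point on the unit sphere, let the horizon diverge ($\log S$), and prove cone entry via the skew-product decomposition (divergent clock, recurrence of the spherical Brownian motion), while the inner barrier is handled by quantitative small-ball hitting estimates ($\mu^{d-2}$ for $d\ge 3$, $\log\log S/\log S$ for $d=2$), with the explicit choice $\lambda_S=S$, $A_S=(\lambda_S r_\star)^2\log S$. Both routes work; yours buys explicit parameters at the cost of the quantitative $d=2$ estimate and of the recurrence argument, whereas the paper's is softer and needs no hitting estimate at all.

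Two points of bookkeeping should be tightened. First, the inclusion ``$\{\tau^{\varepsilon}(v_0)>t_\varepsilon^{A_S}\}$ is essentially included in $\{\nu^{\varepsilon,\prime}(3v_0/2)\ge t_\varepsilon^{A_S},\ \kappa^{\varepsilon,\prime}\ge t_\varepsilon^{A_S}\}$'' is false as stated: smallness of the potential does not prevent $\kappa^{\varepsilon,\prime}$ from occurring, either through the inner barrier or through the outer barrier outside the sector ${\mathcal C}$ (where $V\sim R^{1+\beta}\ll\varepsilon^2$), so your parenthetical about the outer exit ``within the sector'' does not cover the relevant cases. The inclusion only holds after intersecting with $\{\kappa^{\varepsilon,\prime}\ge t_\varepsilon^{A_S}\}$; the barrier-crossing events must be carried along into the bad event transferred to the Brownian side and then killed by exactly the inner-barrier and outer-barrier estimates you prove afterwards (this is how the paper organizes its second and fifth steps), so the repair is routine but must be made. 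Second, your uniformity ``in the starting point on ${\mathbb S}^{d-1}$'' for the cone entry requires the rotation-plus-finite-covering argument (the paper's family ${\mathcal C}^{0,i}$), and the entry must occur at radius $\ge M_0$ in the $Y$-scale; the cleanest fix is to raise the inner threshold to $\lambda_0 r_\star$ with $\lambda_0=\max(1,M_0/r_\star)$, as the paper does, so that staying above the inner barrier automatically guarantees the potential level upon entering ${\mathcal C}$ (your hitting estimates are unaffected by this change of constant).
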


Taking Lemma 
\ref{lem:restarting} for granted for a while, we prove:

\begin{proof}[Proof of Proposition \ref{prop:hitting:potential}]
\textit{First step.}
We first check that 
\eqref{eq:hitting:proof:step:1}
implies 
\eqref{eq:hitting:proof:step:2}. 
Indeed, if \eqref{eq:hitting:proof:step:1} is true, then, for any {$\pi>0$}, we have, for some $A(\pi) >0$, 
\begin{equation*}
\liminf_{\varepsilon \searrow 0}
\PP \Bigl( \sup_{0 \leq 
t \leq A(\pi)  \varepsilon^{2(1-\alpha)/(1+\alpha)
}} V_{t}^{\varepsilon} \geq v_{0} \varepsilon^2 \Bigr) \geq 1 - \pi. 
\end{equation*}
Hence, we can find $\varepsilon(\pi) >0$ such that, for $\varepsilon \in (0,\varepsilon(\pi))$, 
\begin{equation*}
\PP \Bigl( \sup_{0 \leq 
t \leq A(\pi)  \varepsilon^{2(1-\alpha)/(1+\alpha)
}} V_{t}^{\varepsilon} \geq v_{0} \varepsilon^2 \Bigr) \geq 1 - 2\pi.
\end{equation*}
Modifying $\varepsilon(\pi)$ if necessary, we can assume that $A_{\varepsilon} > A(\pi)$ 
for $\varepsilon \in (0,\varepsilon(\pi))$ (with $A_\varepsilon$ as in \eqref{eq:hitting:proof:step:2}). This proves that, for $\varepsilon \in (0,\varepsilon(\pi))$, 
\begin{equation*}
\PP \Bigl( \sup_{0 \leq 
t \leq A_{\varepsilon}  \varepsilon^{2(1-\alpha)/(1+\alpha)
}} V_{t}^{\varepsilon} \geq v_{0} \varepsilon^2 \Bigr) \geq 1 - 2\pi,
\end{equation*}
which is 
\eqref{eq:hitting:proof:step:2}. 

\vskip 2pt

\textit{Second step.}
We now prove 
\eqref{eq:hitting:proof:step:1}. 
We make use of 
\eqref{eq:restarting:proof:step:1}. For a given $\pi >0$, we can find $A(\pi) >0$ and 
$\lambda(\pi) >0$ such that
\begin{equation}
\label{lem:markov:1}
\liminf_{\varepsilon \searrow 0}
\PP\Bigl( \sup_{0 \leq 
t \leq A(\pi)   \varepsilon^{2(1-\alpha)/(1+\alpha)
}} V_{t}^{\varepsilon} \geq v_{0} \varepsilon^2 \, \vert \, 
R_{0}^{\varepsilon} = \lambda(\pi) r_{\star}
\varepsilon^{\tfrac2{1+\alpha}} 
\Bigr) \geq 1 - \pi.
\end{equation}

We now invoke Lemma 
\ref{lem:hitting:first:barrier} with $r_{0} = \lambda(\pi) r_{\star}$ therein. 
It says that there exists $A'(\pi) >0$ such that, whenever $X_{0}^{\varepsilon}=0$ for all $\varepsilon >0$,
\begin{equation}
\label{lem:markov:2}
\liminf_{\varepsilon \searrow 0}
\PP \biggl( \sup_{0\le t \le A'(\pi) \varepsilon^{2(1-\alpha)/(1+\alpha)}} \vert X_{t}^{\varepsilon} \vert \geq \lambda(\pi) r_\star \varepsilon^{\tfrac{2}{1+\alpha}}  \biggr) \geq 1- \pi.
\end{equation}

Combining 
\eqref{lem:markov:1}
and 
\eqref{lem:markov:2} together with Markov property, we deduce that, under the prescription that
$X_{0}^{\varepsilon}=0$ for all $\varepsilon >0$, 
\begin{equation*}
\liminf_{\varepsilon \searrow 0}
\PP\Bigl( \sup_{0 \leq 
t \leq (A(\pi) + A'(\pi))  \varepsilon^{2(1-\alpha)/(1+\alpha)
}} V_{t}^{\varepsilon} \geq v_{0} \varepsilon^2 \Bigr) \geq \bigl( 1 -  \pi \bigr)^2. 
\end{equation*} 
Therefore, for any $A > A(\pi)+A'(\pi)$, 
\begin{equation*}
\liminf_{\varepsilon \searrow 0}
\PP\Bigl( \sup_{0 \leq 
t \leq A  \varepsilon^{2(1-\alpha)/(1+\alpha)
}} V_{t}^{\varepsilon} \geq v_{0} \varepsilon^2 \Bigr) \geq \bigl( 1 -  \pi \bigr)^2. 
\end{equation*} 
In particular, for any $\pi  >0$, 
\begin{equation*}
\liminf_{A \nearrow \infty} \liminf_{\varepsilon \searrow 0}
\PP\Bigl( \sup_{0 \leq 
t \leq A  \varepsilon^{2(1-\alpha)/(1+\alpha)
}} V_{t}^{\varepsilon} \geq v_{0} \varepsilon^2 \Bigr) \geq \bigl( 1 - \pi \bigr)^2,
\end{equation*} 
under the initial condition $X_{0}^{\varepsilon} = 0$, for all $\varepsilon >0$. 
The conclusion easily follows by sending $\pi$ to $0$. 
\end{proof}

\begin{proof}[Proof of Lemma \ref{lem:restarting}]\vskip 4pt

\textit{First step.}
We consider the event 
$\bigl\{ \sup_{0 \leq 
t \leq A  \varepsilon^{2(1-\alpha)/(1+\alpha)
}} V_{t}^{\varepsilon} \geq  v_{0} \varepsilon^2 \bigr\}$,
with the constraint that $R_{0}^{\varepsilon} = \lambda r_{\star} \varepsilon^{\frac2{1+\alpha}}$, {for some $\lambda \geq 2$}.
We recall that there exists a cone ${\mathcal C}$ such that $g$ is above $c_{0}$ on
${\mathcal C}$. In particular, if $x \in {\mathcal C}$ and $V(x) + \eta_{\varepsilon} \vert x \vert^{1+\alpha} \geq 2 v_{0} \varepsilon^2$, for {$\varepsilon$} small enough such that $\eta_{\varepsilon} < c_{0}$, with $\eta_{\varepsilon}$
as in the statement of 
Lemma \ref{lem:supermartingale:2}, then $V(x) \geq v_{0} \varepsilon^2$. 
As a consequence, the above event 
contains the following one:
\begin{equation*}
\Bigl\{ \exists t \leq t^A_{\varepsilon} : V_{t}^{\varepsilon} + \eta_{\varepsilon} \bigl( R_{t}^{\varepsilon} \bigr)^{1+\alpha} \geq 2  v_{0} \varepsilon^2, \  X_{t}^{\varepsilon} \in {\mathcal C} \Bigr\} \cap \Bigl\{ \inf_{s \leq t^A_{\varepsilon}} R_{s}^{\varepsilon}
\geq r_{\star} \varepsilon^{\tfrac{2}{1+\alpha}}
\Bigr\} \cap 
\Bigl\{ \sup_{s \leq t_{\varepsilon}^A} R_{s}^{\varepsilon} 
\leq \varepsilon^{\tfrac{2}{1+\beta}+\tfrac{\beta-\alpha}{2}}
 \Bigr\},
\end{equation*}
where we recall the notation
$t_{\varepsilon}^A = A \varepsilon^{\frac{2(1-\alpha)}{(1+\alpha)}}$. 
Therefore Lemmas
\ref{lem:3:2}
and
\ref{lem:appli:BMO} with {$\sigma^{\varepsilon} = \nu^{\varepsilon,\prime}(2v_{0}) \wedge \kappa^{\varepsilon,\prime}$} and $T=1$
say that, in order to prove
\begin{equation*}
\limsup_{(A,\lambda) \nearrow (\infty,\infty)}
\liminf_{\varepsilon \searrow 0}
\PP\Bigl( \sup_{0 \leq 
t \leq t_{\varepsilon}^A} V_{t}^{\varepsilon} \geq  v_{0} \varepsilon^2 
\Bigr)=1,
\end{equation*}
it is sufficient to prove that 
 $\limsup_{(A,\lambda) \nearrow (\infty,\infty)} \liminf_{\varepsilon \searrow 0} p^{\varepsilon,A,\lambda} =1$, 
with
\begin{equation*}
\begin{split}
p^{\varepsilon,A,\lambda} :={\mathbb P} \biggl( &\Bigl\{ \exists  t \leq t_{\varepsilon}^A : 
V\bigl(X_{0}^{\varepsilon} + \varepsilon B_{t}\bigr) + \eta_{\varepsilon}
\bigl\vert X_{0}^{\varepsilon} + \varepsilon B_{t}
 \bigr\vert^{1+\alpha} \geq 2v_{0}\varepsilon^2, \ X_{0}^{\varepsilon} + \varepsilon B_{t} \in {\mathcal C} 
\Bigr\}
\\
&\hspace{10pt}
 \cap \Bigl\{ \inf_{s \leq t^A_{\varepsilon}} \bigl\vert X_{0}^{\varepsilon} + \varepsilon B_{s}
 \bigr\vert
\geq r_{\star} \varepsilon^{\tfrac{2}{1+\alpha}}
\Bigr\}
\cap 
\Bigl\{ \sup_{s \leq t_{\varepsilon}^A} \bigl\vert X_{0}^{\varepsilon} + \varepsilon B_{s}
 \bigr\vert
\leq \varepsilon^{\tfrac{2}{1+\beta}+\tfrac{\beta-\alpha}{2}}
 \Bigr\}
\biggr),
\end{split}
\end{equation*}
where $R_{0}^{\varepsilon} = \lambda r_{\star} \varepsilon^{\frac2{1+\alpha}}${ (and $\lambda\ge 2$)}.
Notice that, as in the proof of Lemma 
\ref{lem:hitting:first:barrier}, a crucial fact to pass from the law of $X^{\varepsilon}$ to the law 
of $X_{0}^{\varepsilon} + \varepsilon B$ is the fact that, for 
$t_{\varepsilon}^A \leq 1$ (which is true when $\varepsilon$ is small enough with respect to 
$A$), the BMO norm of $\varepsilon^{-1} \bigl( \int_{0}^{t
{\wedge \kappa^{\varepsilon,\prime}}  \wedge t_{\varepsilon}^A \wedge \nu^{\varepsilon,\prime}(2v_{0})} \nabla V_{s}^{\varepsilon} \cdot \ud 
B_{s} \bigr)_{t \geq 0}$ is controlled independently of $A$, $\varepsilon$ and $\lambda$.
\vskip 4pt

\textit{Second step.}
We first aim at lower bounding the probability
$p^{\varepsilon,A,\lambda}$. To do so, we use again the fact that $g$ is greater than $c_{0}$ in the cone
${\mathcal C}$. Hence, whenever $X_{0}^{\varepsilon}
+ \varepsilon B_{t}
 \in {\mathcal C}$, 
 $V\bigl( X_{0}^{\varepsilon} + \varepsilon B_{t} \bigr) \geq c_{0} \bigl\vert X_{0}^{\varepsilon}
+ \varepsilon B_{t}
 \bigr\vert^{1+\alpha}$. 
In particular, it suffices to lower bound the probability
\begin{equation*}
\PP \Bigl( \exists t \leq t^A_{\varepsilon} : X_{0}^{\varepsilon}
+ \varepsilon B_{t}
 \in {\mathcal C}, \quad \inf_{s \leq t^A_{\varepsilon}} 
 \bigl\vert 
 X_{0}^{\varepsilon}
+ \varepsilon B_{s}
\bigr\vert
\geq \lambda_{0} r_{\star} \varepsilon^{\tfrac{2}{1+\alpha}}, \quad \sup_{s \leq t^A_{\varepsilon}} 
\bigl\vert 
 X_{0}^{\varepsilon}
+ \varepsilon B_{s}
\bigr\vert
\leq \varepsilon^{\tfrac{2}{1+\beta}+\tfrac{\beta-\alpha}{2}}
 \Bigr),
\end{equation*}
with 
$\lambda_{0} = \max \Bigl( 1, \frac1{r_{\star}} \bigl( \frac{2v_{0}}{c_{0}} \bigr)^{\frac1{1+\alpha}} \Bigr)$.
Hence,
\begin{equation}
\label{eq:pepsilonA}
\begin{split}
p^{\varepsilon,A,\lambda} &\geq \PP \Bigl(\exists 
t \in [0,t^A_{\varepsilon}] : X_{0}^{\varepsilon} + \varepsilon B_{t} \in {\mathcal C} \Bigr)
- \PP \Bigl(\inf_{s \leq t_{\varepsilon}^A}
\vert X_{0}^{\varepsilon} + \varepsilon B_{s} \vert < \lambda_{0} r_{\star} \varepsilon^{\tfrac{2}{1+\alpha}}
\Bigr)
\\
&\hspace{15pt}
 - \PP 
\Bigl( \sup_{s \leq t_{\varepsilon}^A} 
\vert X_{0}^{\varepsilon} + \varepsilon B_{s} \vert
> \varepsilon^{\tfrac{2}{1+\beta}+\tfrac{\beta-\alpha}{2}}
 \Bigr).
\end{split}
\end{equation}
\vskip 4pt

\textit{Third step.} Now, using the fact that 
$X_{0}^{\varepsilon}$ is independent of
$(B_{t})_{t \geq 0}$
and that 
$(B_{t})_{t \geq 0}$
has the same law as $((t_{\varepsilon}^A)^{1/2}
B_{(t_{\varepsilon}^A)^{-1} t})_{t \geq 0}$, we have
\begin{equation*}
\begin{split}
\PP \Bigl(\exists 
t \in [0,t_{\varepsilon}^A] : X_{0}^{\varepsilon} + \varepsilon B_{t} \in {\mathcal C} \Bigr)
&= {\mathbb P}
\Bigl(\exists 
t \in [0,1] : X_{0}^{\varepsilon} + A^{1/2}
\varepsilon^{\tfrac{2}{1+\alpha}} B_{t} \in {\mathcal C}
\Bigr)
\\
&=
{\mathbb P}
\Bigl(\exists 
t \in [0,1] :  \lambda A^{-1/2}  \tilde X_{0}^{\varepsilon} + 
 B_{t} \in {\mathcal C}
\Bigr),
\end{split}
\end{equation*}
where we have let
$\tilde X_{0}^{\varepsilon} := \lambda^{-1} \varepsilon^{-\frac{2}{1+\alpha}} X_{0}^{\varepsilon}$ 
{(recalling that $\vert X_{0}^{\varepsilon} \vert = \lambda r_{\star} \varepsilon^{\frac2{1+\alpha}}$)}.

Using the invariance in law of $B$ by rotation, we get
\begin{equation*}
\begin{split}
\PP \Bigl(\exists 
t \in [0,t_{\varepsilon}^A] : X_{0}^{\varepsilon} + \varepsilon B_{t} \in {\mathcal C} \Bigr)
&\geq \inf_{\vert u \vert = r_{\star}}
{\mathbb P}
\Bigl(\exists 
t \in [0,1] : \lambda A^{-1/2} u + 
 B_{t} \in {\mathcal C}
\Bigr)
\\
&= \inf_{\vert u \vert = r_{\star}}
{\mathbb P}
\Bigl(\exists 
t \in [0,1] : \lambda A^{-1/2} e + 
 B_{t} \in {\mathcal C}^u
\Bigr),
\end{split}
\end{equation*}
where ${\mathcal C}^u$ is a new cone obtained by rotating ${\mathcal C}$ by a rotation that permits to pass from $u$ to $e$ where $e$ is a fixed vector such that $\vert e \vert =r_{\star}$ (the rotation matrix can be constructed in a canonical way by a Gram Schmidt procedure). Since all the $({\mathcal C}^u)_{\vert u \vert = r_{\star}}$ are isometric, we can find a finite covering of $\RR^d$ by closed cones $({\mathcal C}^{0,i})_{i=1,\cdots,N}$ with $0$ as common vertex such that 
each ${\mathcal C}^u$ contains at least one ${\mathcal C}^{0,i}$. Hence,
\begin{equation*}
\begin{split}
\PP \Bigl(\exists 
t \in [0,t^A_{\varepsilon}] : X_{0}^{\varepsilon} + \varepsilon B_{t} \in {\mathcal C} \Bigr)
&\geq \inf_{i=1,\cdots,N}
{\mathbb P}
\Bigl(\exists 
t \in [0,1] : \lambda  A^{-1/2} e + 
 B_{t} \in {\mathcal C}^{0,i}
\Bigr)
\\
&\geq \inf_{i=1,\cdots,N}
{\mathbb P}
\Bigl(\exists 
t \in [0,1] : \lambda A^{-1/2} e + 
 B_{t} \in \overset{\circ}{{\mathcal C}^{0,i}}
\Bigr),
\end{split}
\end{equation*}
where $\overset{\circ}{{\mathcal C}^{0,i}}$ is the interior of 
${{\mathcal C}^{0,i}}$.

For a given $i \in \{1,\cdots,N\}$, 
the set of continuous functions $f : [0,1] \rightarrow {\mathbb R}^d$ such that there exists $t \in [0,1]$ for which $f_{t} \in \overset{\circ}{{\mathcal C}^{0,i}}$ is open for the uniform topology on $[0,1]$. 
Hence, by the Portemanteau theorem,
\begin{equation*}
\begin{split}
\liminf_{\lambda A^{-1/2} \searrow 0}
{\mathbb P}
\Bigl(\exists 
t \in [0,1] : \lambda A^{-1/2} e + 
 B_{t} \in \overset{\circ}{{\mathcal C}^{0,i}}
\Bigr)
\geq
{\mathbb P}
\Bigl(\exists 
t \in [0,1] :  B_{t} \in \overset{\circ}{{\mathcal C}^{0,i}}
\Bigr)
= 1.
\end{split}
\end{equation*}
{To derive the second equality, we used the fact that, for any $a>0$, 
$${\mathbb P}
\bigl(\exists 
t \in [0,1] :  B_{t} \in \overset{\circ}{{\mathcal C}^{0,i}}
\bigr)
={\mathbb P}
\bigl(\exists 
t \in [0,a] :  B_{t} \in \overset{\circ}{{\mathcal C}^{0,i}}
\bigr) 
={\mathbb P}
\Bigl( \bigcap_{a'>0} \{ \exists 
t \in [0,a'] :  B_{t} \in \overset{\circ}{{\mathcal C}^{0,i}}
\}
\Bigr).$$ By Blumenthal's zero-one law, the last term (hence the first term as well) is equal to $0$ or $1$. Since it is obviously non-zero, we deduce that it is equal to $1$.} We deduce that
\begin{equation*}
\liminf_{\lambda A^{-1/2} \searrow 0}
\liminf_{\varepsilon \searrow 0}
\PP \Bigl(\exists 
t \in [0,t^A_{\varepsilon}] : X_{0}^{\varepsilon} + \varepsilon B_{t} \in {\mathcal C} \Bigr)
=1.
\end{equation*}
\vskip 4pt

\textit{Fourth step.} 
We now have a look at the second term in the right-hand side of 
\eqref{eq:pepsilonA}. By a new scaling argument, we  {get}:
\begin{equation*}
\begin{split}
\PP \Bigl(\inf_{0 \leq t \leq t^A_{\varepsilon}}
\vert X_{0}^{\varepsilon} + \varepsilon B_{t} \vert < \lambda_{0}  r_{\star}
\varepsilon^{\tfrac{2}{1+\alpha}}
\Bigr)
&= \PP \Bigl(\inf_{0 \leq t \leq \lambda^{-2} A}
\vert 
\tilde X_{0}^{\varepsilon}
+
 B_{t} \vert  <
\lambda_{0}  
\lambda^{-1}
 r_{\star}
\Bigr).
\end{split}
\end{equation*}
By a rotation argument (using the fact that $\tilde X_{0}^{\varepsilon}$
and $(B_{t})_{t \geq 0}$ are independent {together with the identity $\vert \tilde X_{0}^{\varepsilon} \vert = r_{\star}$}),
\begin{equation*}
\begin{split}
\PP \Bigl(\inf_{0 \leq t \leq t^A_{\varepsilon}}
\vert X_{0}^{\varepsilon} + \varepsilon B_{t} \vert  < \lambda_{0}
r_{\star}
\varepsilon^{\tfrac{2}{1+\alpha}}
\Bigr)
&\leq \PP \Bigl(\inf_{0 \leq t \leq \lambda^{-2} A }
\vert 
r_{\star} e
+
 B_{t} \vert < \lambda_{0} \lambda^{-1}r_{\star}
\Bigr)
\end{split}
\end{equation*}
where $e$ is an arbitrary unit vector. 
Observe that, for a fixed $S >0$, 
\begin{equation*}
\lim_{\lambda \nearrow \infty}
\PP \Bigl(\inf_{0 \leq t \leq S}
\vert 
r_{\star} e
+
 B_{t} \vert^{1+\alpha} < \lambda_{0} \lambda^{-1}r_{\star}
\Bigr) = 0.
\end{equation*}
Hence, we can find a collection $(\lambda_{S})_{S>0}$, with 
$\lim_{S \nearrow \infty} \lambda_{S} = \infty$ such that 
\begin{equation*}
\lim_{S \nearrow \infty}
\PP \Bigl(\inf_{0 \leq t \leq S}
\vert 
r_{\star} e
+
 B_{t} \vert^{1+\alpha} < \lambda_{0} \lambda_{S}^{-1}r_{\star}
\Bigr) = 0.
\end{equation*}
So, letting $A_{S} = S \lambda_{S}^2$, we get 
\begin{equation*}
\begin{split}
\lim_{S \nearrow \infty} \PP \Bigl(\inf_{0 \leq t \leq t^{A_{S}}_{\varepsilon}}
\vert X_{0}^{\varepsilon} + \varepsilon B_{t} \vert < \lambda_{0}  r_{\star}
\varepsilon^{\tfrac{2}{1+\alpha}}
\,
\Big\vert \, R_{0}^{\varepsilon} = \varepsilon^{\tfrac{2}{1+\alpha}} \lambda_{S}
r_{\star}
\Bigr)
&= 0.
\end{split}
\end{equation*}
%
%
%
%

\vskip 4pt

\textit{Fifth step.}
We now have a look at the last term in the right-hand side of 
\eqref{eq:pepsilonA}.
As in the previous step, we have:
\begin{equation*}
{\mathbb P} \Bigl( 
 \sup_{0 \le t \leq t_{\varepsilon}^A} \bigl\vert 
 X_{0}^{\varepsilon} + \varepsilon B_{t} \bigr\vert 
> \varepsilon^{\frac{2}{1+\beta}+\frac{\beta-\alpha}{2}}
\Bigr) 
=
{\mathbb P} \Bigl( 
 \sup_{0 \le t\leq \lambda^{-2} A} \bigl\vert 
r_{\star} e+ B_{t} \bigr\vert 
>  \lambda^{-1 }\varepsilon^{\tfrac{2}{1+\beta}+\tfrac{\beta-\alpha}{2}}
\varepsilon^{-\tfrac{2}{1+\alpha}}
\Bigr).
\end{equation*}
{By formula \eqref{eq:comparison_barriers}, $\varepsilon^{\frac{2}{1+\beta}+\frac{\beta-\alpha}{2}} \varepsilon^{-\frac{2}{1+\alpha}}$ tends to $\infty$ as $\varepsilon$ tends to $0$, hence the above probability tends to $0$}. 
This shows that 
\begin{equation*}
\liminf_{\lambda A^{-1/2} \searrow 0}
\liminf_{\varepsilon \searrow 0}
{\mathbb P} \Bigl( 
 \sup_{0 \le t \leq t_{\varepsilon}^A} \bigl\vert 
 X_{0}^{\varepsilon} + \varepsilon B_{t} \bigr\vert 
> \varepsilon^{\frac{2}{1+\beta}+\frac{\beta-\alpha}{2}}
\Bigr) 
=0.
\end{equation*}
\vskip 4pt

\textit{Conclusion.}
Collecting all the five steps and choosing the pair $(A,\lambda)$ in the third and fifth steps as
$(A,\lambda)=(S \lambda_{S}^2,\lambda_{S})$, for $S$ large, we complete the proof.
\end{proof}
 
\section{Hitting points of the level sets}
\label{se:4}

The conclusion of the previous section is that the particle hits in small time the level sets of the potential of order $\varepsilon^2$. We now prove that
the law of the hitting point of the level set $\{ V \geq \varepsilon^2 v_{0}\}$ is asymptotically supported by the region where
$g$ is positive, which is in fact well-understood: Given a value of the potential, the larger $g$, the closer
the point to the origin.

Here is the precise statement:

\begin{proposition}
\label{prop:hitting:g>0}
Assume that, for any $\varepsilon >0$, $X_{0}^{\varepsilon} = 0$.
Then, for a given $v_{0} >0$ and with the same notations as in 
\eqref{eq:3:1},
\begin{equation*}
\liminf_{\mu \searrow 0}
\liminf_{\varepsilon \searrow 0} \PP \bigl( g( X^{\varepsilon}_{\tau^{\varepsilon}(v_{0})}) > \mu
\bigr) = 1.
\end{equation*}
\end{proposition}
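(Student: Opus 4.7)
The plan is to combine the infinitesimal-time hitting estimate from Proposition \ref{prop:hitting:potential} with a quantitative comparison between $X^{\varepsilon}$ and $\varepsilon B$ via the BMO machinery of Lemmas \ref{lem:BMO:perturbed:1} and \ref{lem:appli:BMO}. The driving geometric observation is that, because $g$ and $h$ have disjoint supports, any point $x$ with $V(x) = v_{0} \varepsilon^2$ and $g(x) \leq \mu$ must lie far from the origin on the scale $\varepsilon^{2/(1+\alpha)}$. Indeed, if $x \in \{g > 0\}$ then $\mu |x|^{1+\alpha} \geq g(x)|x|^{1+\alpha} = v_{0} \varepsilon^2$ yields $|x| \geq (v_{0}/\mu)^{1/(1+\alpha)} \varepsilon^{2/(1+\alpha)}$, while if $g(x) = 0$ then necessarily $h(x) > 0$ with $h(x)|x|^{1+\beta} = v_{0} \varepsilon^2$, so that $|x| \geq (v_{0}/C_{0})^{1/(1+\beta)} \varepsilon^{2/(1+\beta)}$, a quantity asymptotically much larger than $\varepsilon^{2/(1+\alpha)}$ by \eqref{eq:comparison_barriers}. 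Setting $K_{\mu} := (v_{0}/\mu)^{1/(1+\alpha)}$, the event $\{g(X^{\varepsilon}_{\tau^{\varepsilon}(v_{0})}) \leq \mu\}$ is therefore contained, for $\varepsilon$ small, in $\{|X^{\varepsilon}_{\tau^{\varepsilon}(v_{0})}| \geq K_{\mu} \varepsilon^{2/(1+\alpha)}\}$, with $K_{\mu} \nearrow \infty$ as $\mu \searrow 0$.

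Fixing $\pi \in (0,1)$, I would first invoke Proposition \ref{prop:hitting:potential} to choose $A := A(\pi)$ so that $\limsup_{\varepsilon \searrow 0}\PP(\tau^{\varepsilon}(v_{0}) > A \varepsilon^{2(1-\alpha)/(1+\alpha)}) \leq \pi$, and introduce the auxiliary stopping time $\kappa^{\varepsilon,\mu} := \inf\{t \geq 0 : R^{\varepsilon}_{t} \geq K_{\mu} \varepsilon^{2/(1+\alpha)}\}$. The inclusion above then reduces matters to controlling $\PP(\kappa^{\varepsilon,\mu} \leq A \varepsilon^{2(1-\alpha)/(1+\alpha)})$. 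Up to time $\kappa^{\varepsilon,\mu}$, the perturbed potential $V^{\varepsilon}_{t} + \eta (R^{\varepsilon}_{t})^{1+\alpha}$ does not exceed $(C_{0} + \eta) K_{\mu}^{1+\alpha} \varepsilon^2$, so $\kappa^{\varepsilon,\mu} \leq \nu^{\varepsilon}((C_{0} + \eta) K_{\mu}^{1+\alpha})$, and similarly $\kappa^{\varepsilon,\mu} \leq \kappa^{\varepsilon}$ for $\varepsilon$ small by \eqref{eq:comparison_barriers}. Lemma \ref{lem:BMO:perturbed:1} will then furnish a BMO bound of order $\sqrt{K_{\mu}^{1+\alpha}}$ for $\varepsilon^{-1} \int_{0}^{\cdot \wedge \kappa^{\varepsilon,\mu} \wedge 1} \nabla V^{\varepsilon}_{s} \cdot \ud B_{s}$, depending on $\mu$ but not on $\varepsilon$.

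The final step is to apply Lemma \ref{lem:appli:BMO} to transfer the estimate to $\varepsilon B$ and use Brownian scaling:
\[
\PP\Bigl(\sup_{0 \leq t \leq A \varepsilon^{2(1-\alpha)/(1+\alpha)}} |\varepsilon B_{t}| \geq K_{\mu} \varepsilon^{2/(1+\alpha)}\Bigr) = \PP\Bigl(\sup_{0 \leq t \leq 1} |B_{t}| \geq K_{\mu}/\sqrt{A}\Bigr),
\]
which admits a Gaussian tail bound of order $\exp(-c K_{\mu}^2/A)$. The hard part will be controlling the tension between this Gaussian decay and the degradation of the Girsanov weights as the BMO norm $K_{\mu}^{(1+\alpha)/2}$ blows up (with $q(K_{\mu}) \searrow 0$ and $\Lambda(K_{\mu}) \nearrow \infty$). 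However, the quantitative BMO estimate $q(K) \gtrsim 1/K^2$ from \cite{Kazamaki} together with the assumption $\alpha < 1$ gives $q(K_{\mu}) K_{\mu}^2 \gtrsim K_{\mu}^{1-\alpha} \to \infty$ as $\mu \searrow 0$, so the Gaussian decay strictly overwhelms any polynomial blow-up in the Girsanov constants. For fixed $A = A(\pi)$ the resulting bound therefore tends to $0$ as $\mu \searrow 0$; picking $\mu$ small and then letting $\pi \searrow 0$ concludes the proof.
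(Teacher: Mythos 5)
Your geometric reduction (if $g(X^{\varepsilon}_{\tau^{\varepsilon}(v_{0})})\leq\mu$ then, using the disjointness of $\{g\neq0\}$ and $\{h\neq0\}$, the radius at time $\tau^{\varepsilon}(v_{0})$ must exceed $K_{\mu}\varepsilon^{2/(1+\alpha)}$ with $K_{\mu}\to\infty$) and the reduction to the window $[0,A\varepsilon^{2(1-\alpha)/(1+\alpha)}]$ via Proposition \ref{prop:hitting:potential} coincide with the paper's first two steps. The gap is in your final Girsanov step. You let the BMO norm grow like $K_{\mu}^{(1+\alpha)/2}$ and then assert a bound ``$q(K)\gtrsim 1/K^{2}$ with only polynomial blow-up of the constants'' — this is not what \cite{Kazamaki} gives. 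In the reverse H\"older and Muckenhoupt-type theorems used in Lemma \ref{lem:appli:BMO}, the admissible exponents attached to a BMO norm $K$ degenerate \emph{exponentially} in $K^{2}$: the usable $p(K)-1$ and $q(K)$ behave like $e^{-cK^{2}}$ (the threshold function relating the exponent to the norm behaves like $\sqrt{\log(1/(p-1))}$ as $p\downarrow1$), and $\lambda(K),\Lambda(K)$ are not polynomially controlled either. With the true dependence, $q(K_{\mu})K_{\mu}^{2}\approx K_{\mu}^{2}e^{-cK_{\mu}^{1+\alpha}}\to0$, so the transferred bound $\Lambda(K_{\mu})^{1/(1+q)}\exp\bigl(-c\,q(K_{\mu})K_{\mu}^{2}/A\bigr)$ does not tend to $0$ as $\mu\searrow0$: the Gaussian tail is wiped out by the degradation of the change-of-measure constants, and the proof does not close. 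The inequality you rely on would itself have to be proved, and it is not available from the cited theory.

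This is exactly the difficulty the paper's proof is built to avoid: it keeps the BMO norm of size $\sqrt{4v_{0}}$, uniformly in $\mu$, by (i) noting that on the bad event the radius reaches $r_{0}\varepsilon^{2/(1+\alpha)}$ \emph{before} the perturbed potential $V^{\varepsilon}+\eta_{\varepsilon}(R^{\varepsilon})^{1+\alpha}$ exceeds $2v_{0}\varepsilon^{2}$ (since $\eta_{\varepsilon}<\mu$), so that the comparison with Brownian motion only needs to run up to $\nu^{\varepsilon,\prime}(2v_{0})\wedge\kappa^{\varepsilon,\prime}$, and (ii) restarting the process by the Markov property from the intermediate radius $\tfrac12(r_{0}+r_{\star})\varepsilon^{2/(1+\alpha)}$, which makes Lemmas \ref{lem:3:2} and \ref{lem:appli:BMO} applicable with constants depending only on $v_{0}$, not on $\mu$; a separate crude moment estimate handles the excursion above the barrier $\varepsilon^{\frac{2}{1+\beta}+\frac{\beta-\alpha}{2}}$. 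Then only the Brownian probabilities (escape to radius $(r_{0}-r_{\star})/2$, or return below $2r_{\star}/(r_{0}+r_{\star})$, within time $A$) depend on $\mu$, and raising quantities tending to $0$ to the \emph{fixed} power $q(v_{0})/(1+q(v_{0}))$ still yields $0$ in the limit $\mu\searrow0$. To repair your argument you would need either this uniform-in-$\mu$ BMO bound or a genuinely quantitative Girsanov estimate of the type you assert, which you would have to establish separately.
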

\vskip 4pt

\begin{proof}
\textit{First step.} We start with the following remark. If we assume
\begin{equation}
\label{eq:lower:bound:g}
g \bigl( X_{\tau^{\varepsilon}(v_{0})} \bigr) \leq \mu,
\end{equation}
then,
\begin{equation}
\varepsilon^{2} v_{0} \leq \mu \bigl( R^{\varepsilon}_{\tau^{\varepsilon}(v_{0})} \bigr)^{1+\alpha}, 
\quad \textrm{\rm and thus} \quad
R^{\varepsilon}_{\tau^{\varepsilon}(v_{0})} \geq r_{0} \varepsilon^{\tfrac{2}{1+\alpha}},
\label{eq:lower:bound:g:2}
\end{equation}
with 
\begin{equation}
\label{eq:additional:label:r0}
r_{0} = \bigl( \frac{v_{0}}{\mu} \bigr)^{\tfrac{1}{1+\alpha}}.$$
\end{equation}
We also have, for any $\eta >0$ and any $t \in [0,\tau^{\varepsilon}(v_{0})]$,
\begin{equation*}
V_{t}^{\varepsilon} + 
\eta \bigl( R_{t}^{\varepsilon}
\bigr)^{1+\alpha} \leq \varepsilon^2 v_{0} + 
\eta \bigl( R_{t}^{\varepsilon}
\bigr)^{1+\alpha}.
\end{equation*}
In particular, if the left-hand side is greater than $2 \varepsilon^2 v_{0}$, then 
 $\bigl( R_{t}^{\varepsilon}
\bigr)^{1+\alpha} \geq \varepsilon^2 v_{0}/{\eta}$. 

With the same notations as in 
\eqref{eq:3:1}, choose now {$\eta=\eta_\varepsilon$ from Lemma \ref{lem:supermartingale:2}, with $\varepsilon$ small enough to have $\eta_{\varepsilon} < \mu$}. Then, with the same notation for $r_{0}$ as above, we 
also have
\begin{equation}
\label{eq:lower:bound:g:3}
R^{\varepsilon}_{t} \geq r_{0} \varepsilon^{\tfrac{2}{1+\alpha}},
\end{equation}
We deduce from 
\eqref{eq:lower:bound:g:2}
and
\eqref{eq:lower:bound:g:3}
that, under 
\eqref{eq:lower:bound:g}, 
at time $\tau^{\varepsilon}(v_{0}) \wedge \nu^{\varepsilon,\prime}(2v_{0})$,
\begin{equation*}
R^{\varepsilon}_{\tau^{\varepsilon}(v_{0}) \wedge \nu^{\varepsilon,\prime}(2v_{0})} \geq r_{0} \varepsilon^{\tfrac{2}{1+\alpha}}.
\end{equation*}
Here, the key fact is that {we can choose} $\mu(v_{0}) >0$ such that, for $\mu \leq \mu(v_{0})$ {and $\varepsilon$ such that $\eta_\varepsilon<\mu$}, $r_{0} \geq 2 r_{\star}$, 
with $r_{\star}$ as in the statement of Lemma 
\ref{lem:supermartingale:2}.
\vskip 4pt

\textit{Second step.}
Another point is that, {by Proposition \ref{prop:hitting:potential}}, for a given ${\pi} >0$, we can choose $A>0$ such that 
\begin{equation}
\label{eq:g:not:small:1}
\liminf_{\varepsilon \searrow 0}
{\mathbb P}
\bigl( 
\tau^{\varepsilon}(v_{0}) \leq t_{\varepsilon}^A
\bigr) \geq 1 - \pi, 
\end{equation}
{where we recall that $t_{\varepsilon}^A=A \varepsilon^{\frac{2(1-\alpha)}{1+\alpha}}$}.
Combined 
with the conclusion of the first step, this leads to
\begin{equation*}
\begin{split}
{\mathbb P}
\Bigl( g \bigl( X_{\tau^{\varepsilon}(v_{0})} \bigr) \leq \mu
\Bigr)
&\leq 
{\mathbb P}
\Bigl( g \bigl( X_{\tau^{\varepsilon}(v_{0})} \bigr) \leq \mu , 
\tau^{\varepsilon}(v_{0}) \wedge \nu^{\varepsilon,\prime}(2v_{0}) \leq t_{\varepsilon}^A
\Bigr) + {\mathbb P}
\Bigl( 
\tau^{\varepsilon}(v_{0}) \wedge \nu^{\varepsilon,\prime}(2v_{0}) > t_{\varepsilon}^A
\Bigr)
\\
&\leq {\mathbb P}
\Bigl( 
R^{\varepsilon}_{\tau^{\varepsilon}(v_{0}) \wedge \nu^{\varepsilon,\prime}(2v_{0})} \geq r_{0} \varepsilon^{\tfrac{2}{1+\alpha}}, 
\tau^{\varepsilon}(v_{0}) \leq t_{\varepsilon}^A
\Bigr) + {\mathbb P}
\Bigl( 
\tau^{\varepsilon}(v_{0}) > t_{\varepsilon}^A
\Bigr),
\end{split}
\end{equation*}
where $r_{0} \geq 2 r_{\star}$, for $\mu \leq \mu(v_{0})$ {and $\eta_\varepsilon<\mu$}.
Therefore,  
\begin{equation}
\label{eq:conclusion:second:step:proposition:4:1}
\begin{split}
\limsup_{\varepsilon \searrow 0}{\mathbb P}
\Bigl( g \bigl( X_{\tau^{\varepsilon}(v_{0})} \bigr) \leq \mu
\Bigr)
&\leq \limsup_{\varepsilon \searrow 0} 
{\mathbb P}
\Bigl( 
R^{\varepsilon}_{\tau^{\varepsilon}(v_{0}) \wedge \nu^{\varepsilon,\prime}(2v_{0})} \geq r_{0} \varepsilon^{\tfrac{2}{1+\alpha}}, 
\tau^{\varepsilon}(v_{0}) \wedge \nu^{\varepsilon,\prime}(2v_{0}) \leq t_{\varepsilon}^A
\Bigr)
 + \pi.
\end{split}
\end{equation}
\vskip 2pt

\textit{Third step.} For $r_{0}$ as in the previous step, we aim at upper bounding
\begin{equation*}
\limsup_{\varepsilon \searrow 0} {\mathbb P}
\Bigl( 
R^{\varepsilon}_{\tau^{\varepsilon}(v_{0})
\wedge 
 \nu^{\varepsilon,\prime}(2v_{0})} \geq r_{0} \varepsilon^{\tfrac{2}{1+\alpha}}, 
\tau^{\varepsilon}(v_{0}) \wedge \nu^{\varepsilon,\prime}(2v_{0}) \leq t_{\varepsilon}^A
\Bigr).
\end{equation*}
Here $X_{0}^{\varepsilon}=0$. 
Since $r_{0}> 2 r_{\star}$, we deduce from Markov property that 
the above term is less than
\begin{equation*}
\limsup_{\varepsilon \searrow 0} {\mathbb P}
\Bigl( 
R^{\varepsilon}_{\tau^{\varepsilon}(v_{0}) \wedge \nu^{\varepsilon,\prime}(2v_{0})} \geq r_{0} \varepsilon^{\tfrac{2}{1+\alpha}}, 
\tau^{\varepsilon}(v_{0}) \wedge \nu^{\varepsilon,\prime}(2v_{0}) \leq t_{\varepsilon}^A
\, \big\vert \, 
R_{0}^{\varepsilon} = \tfrac{1}{2} (r_{0}+ r_{\star}) \varepsilon^{\tfrac2{1+\alpha}}
\Bigr).
\end{equation*}
Notice indeed that, on the event $\{
R^{\varepsilon}_{\tau^{\varepsilon}(v_{0}) \wedge \nu^{\varepsilon,\prime}(2v_{0})} \geq r_{0} \varepsilon^{{2}/(1+\alpha)}, 
\tau^{\varepsilon}(v_{0}) \leq t_{\varepsilon}^A
\}$, the first time when $R^{\varepsilon}$ hits 
$\tfrac{1}{2} (r_{0}+ r_{\star}) \varepsilon^{2/(1+\alpha)}$
is strictly less than $\tau^{\varepsilon}(v_{0}) \wedge \nu^{\varepsilon,\prime}(2v_0)$.

Hence, 
with the prescription 
that
$R_{0}^{\varepsilon} = \tfrac{1}{2} (r_{0}+ r_{\star}) \varepsilon^{2/(1+\alpha)}$
for all $\varepsilon >0$, we use the same strategy as in the proof of Lemma \ref{lem:restarting}. 
\begin{align}
&\limsup_{\varepsilon \searrow 0} {\mathbb P}
\Bigl( 
R^{\varepsilon}_{\tau^{\varepsilon}(v_{0}) \wedge \nu^{\varepsilon,\prime}(2v_{0})} \geq r_{0} \varepsilon^{\tfrac{2}{1+\alpha}}, 
\tau^{\varepsilon}(v_{0}) 
\wedge \nu^{\varepsilon,\prime}(2v_{0})
\leq t_{\varepsilon}^A
\Bigr) 
\nonumber 
\\
&\leq \limsup_{\varepsilon \searrow 0} {\mathbb P}
\Bigl( 
R^{\varepsilon}_{\tau^{\varepsilon}(v_{0}) \wedge  \nu^{\varepsilon,\prime}(2v_{0})} \geq r_{0} \varepsilon^{\tfrac{2}{1+\alpha}}, 
\tau^{\varepsilon}(v_{0}) \wedge \nu^{\varepsilon,\prime}(2v_{0}) \leq t_{\varepsilon}^A, \inf_{0 \leq t \leq  \tau^{\varepsilon}(v_{0}) \wedge \nu^{\varepsilon,\prime}(2v_{0})}
R_{t}^{\varepsilon} \geq r_{\star} \varepsilon^{\tfrac{2}{1+\alpha}},\nonumber
\\
&\hspace{300pt} \sup_{0 \leq t \leq t_{\varepsilon}^A} R_{t}^{\varepsilon} \leq \varepsilon^{\tfrac{2}{1+\beta}+\tfrac{\beta-\alpha}{2}}
\Bigr) 
\nonumber
\\
&\hspace{15pt}
+\limsup_{\varepsilon \searrow 0} {\mathbb P}
\Bigl( 
  \inf_{0 \leq t \leq t_{\varepsilon}^A \wedge \tau^{\varepsilon}(v_{0}) \wedge  \nu^{\varepsilon,\prime}(2v_{0})}
R_{t}^{\varepsilon} < r_{\star} \varepsilon^{\tfrac{2}{1+\alpha}},
\sup_{0 \leq t \leq t_{\varepsilon}^A} R_{t}^{\varepsilon} \leq \varepsilon^{\tfrac{2}{1+\beta}+\tfrac{\beta-\alpha}{2}}
\Bigr)\label{eq:mu:cestpetit:1}
\\
&\hspace{15pt} +
\limsup_{\varepsilon \searrow 0} {\mathbb P}
\Bigl(
\sup_{0 \leq t \leq t_{\varepsilon}^A} R_{t}^{\varepsilon} >\varepsilon^{\tfrac{2}{1+\beta}+\tfrac{\beta-\alpha}{2}}
\Bigr).\nonumber
\end{align}
{As for the last term in the right-hand side, we use the equation for $X^\varepsilon$ and Assumption ({\textbf{A2}}) and get:
\begin{align*}
\sup_{0\le t\le t_\varepsilon^A} R_{t}^{\varepsilon} & \leq \frac{1}{2} (r_{0}+ r_{\star}) \varepsilon^{\tfrac2{1+\alpha}} +C_0\int^{t_\varepsilon^A}_0 (R_s^\varepsilon)^\alpha ds + \varepsilon \sup_{0\le t\le t_\varepsilon^A}|B_t|\\
& \leq \frac{1}{2} (r_{0}+ r_{\star}) \varepsilon^{\frac2{1+\alpha}} +\frac12 \sup_{0\le t\le t_\varepsilon^A} R_{t}^{\varepsilon} +\bigl(2^{\alpha}C_0\bigr)^{\tfrac1{1-\alpha}} \bigl( t_\varepsilon^A\bigr)^{\tfrac{1}{1-\alpha}} + \varepsilon \sup_{0\le t\le t_\varepsilon^A}|B_t|.
\end{align*}
Therefore, we have
\begin{align*}
{\mathbb E}\Bigl[\sup_{0\le t\le t_\varepsilon^A} R_{t}^{\varepsilon}\Bigr] &\leq \bigl(r_{0}+ r_{\star} +
 (2C_0 A)^{\tfrac1{1-\alpha}}\bigr) \varepsilon^{\tfrac{2}{1+\alpha}} +4\varepsilon \E[B_{t_\varepsilon^A}^2]^{1/2}
 \leq \bigl(r_{0}+ r_{\star} +2(2C_0 A)^{\tfrac1{1-\alpha}}+4 A^{\tfrac12} \bigr) \varepsilon^{\tfrac{2}{1+\alpha}}.
\end{align*}
Applying Markov inequality and recalling that $\varepsilon^{\frac{2}{1+\alpha}}=o\bigl( \varepsilon^{\frac{2}{1+\beta}+\frac{\beta-\alpha}{2}}\bigr)$ by formula \eqref{eq:comparison_barriers}, we get that, for $A$ fixed,
\begin{equation}\label{eq:mu:cestpetit:4}
\limsup_{\varepsilon \searrow 0} {\mathbb P}
\Bigl(
\sup_{0 \leq t \leq t_{\varepsilon}^A} R_{t}^{\varepsilon} >\varepsilon^{\tfrac{2}{1+\beta}+\tfrac{\beta-\alpha}{2}}
\Bigr) = 0.
\end{equation}
}

As for the second term in the right-hand side {of \eqref{eq:mu:cestpetit:1}}, we can invoke Lemmas
\ref{lem:3:2}
and
\ref{lem:appli:BMO} with $\sigma^{\varepsilon} = \nu^{\varepsilon,\prime}(2v_{0}) \wedge \kappa^{\varepsilon,\prime}$ and $T=1$.
We deduce that there exists a constant $C$ (only depending on $v_{0}$)
such that
\begin{equation*}
\begin{split}
&\limsup_{\varepsilon \searrow 0} {\mathbb P}
\Bigl( 
  \inf_{0 \leq t \leq t_{\varepsilon}^A \wedge \tau^{\varepsilon}(v_{0}) \wedge  \nu^{\varepsilon,\prime}(2v_{0})}
R_{t}^{\varepsilon} < r_{\star} \varepsilon^{\tfrac{2}{1+\alpha}},
\sup_{0 \leq t \leq t_{\varepsilon}^A} R_{t}^{\varepsilon} \leq \varepsilon^{\tfrac{2}{1+\beta}+\tfrac{\beta-\alpha}{2}}
\Bigr)
\\
&\leq C \limsup_{\varepsilon \searrow 0} {\mathbb P}
\Bigl( 
  \inf_{0 \leq t \leq t_{\varepsilon}^A}
\bigl\vert X_{0}^{\varepsilon}
+ \varepsilon B_{t}
\bigr\vert
 < r_{\star} \varepsilon^{\tfrac{2}{1+\alpha}} 
\Bigr),
\end{split}
\end{equation*}
with $\vert X_{0}^{\varepsilon} \vert = [(r_{0}+r_{\star})/2] \varepsilon^{\frac2{1+\alpha}}$. 

Following the 
fourth step in the proof of 
Lemma \ref{lem:restarting}, we claim (at least for $r_{0}>2$):
\begin{equation*}
\begin{split}
\PP \Bigl(\inf_{0 \leq t \leq t^A_{\varepsilon}}
\vert X_{0}^{\varepsilon} + \varepsilon B_{t} \vert <    r_{\star}
\varepsilon^{\tfrac{2}{1+\alpha}}
\Bigr)
&\leq \PP \Bigl(\inf_{0 \leq t \leq A}
\bigl\vert 
\tfrac2{r_{0}+r_{\star}}
\varepsilon^{-\tfrac2{1+\alpha}}
X_{0}^{\varepsilon}
+
 B_{t} \bigr\vert  <
\tfrac2{r_{0}+r_{\star}}
 r_{\star}
\Bigr)
\\
&= \PP \Bigl(\inf_{0 \leq t \leq A }
\vert 
 e
+
 B_{t} \vert < \tfrac2{r_{0}+r_{\star}} r_{\star}
\Bigr),
\end{split}
\end{equation*}
where $e$ an arbitrary unit vector. 
When $\mu$ in \eqref{eq:lower:bound:g} tends to $0$, $r_{0}$ 
{in 
\eqref{eq:additional:label:r0} (and in the prescription 
$R_{0}^{\varepsilon} = \tfrac{1}{2} (r_{0}+ r_{\star}) \varepsilon^{2/(1+\alpha)}$)}
tends to $\infty$ and the right hand side tends to $0$. 
We deduce 
\begin{equation}
\label{eq:mu:cestpetit:2}
\limsup_{\mu \searrow 0}
\limsup_{\varepsilon \searrow 0} {\mathbb P}
\Bigl( 
  \inf_{0 \leq t \leq t_{\varepsilon}^A \wedge \tau^{\varepsilon}(v_{0}) \wedge  \nu^{\varepsilon,\prime}(2v_{0})}
R_{t}^{\varepsilon} < r_{\star} \varepsilon^{\tfrac{2}{1+\alpha}},
\sup_{0 \leq t \leq t_{\varepsilon}^A} R_{t}^{\varepsilon} \leq \varepsilon^{\tfrac{2}{1+\beta}+\tfrac{\beta-\alpha}{2}}
\Bigr)
= 0.
\end{equation}

By a similar argument, {the first term in the right-hand side of \eqref{eq:mu:cestpetit:1} satisfies:}
\begin{align*}
&\limsup_{\varepsilon \searrow 0} {\mathbb P}
\Bigl(  
R^{\varepsilon}_{\tau^{\varepsilon}(v_{0}) \wedge  \nu^{\varepsilon,\prime}(2v_{0})} \geq r_{0} \varepsilon^{\tfrac{2}{1+\alpha}}, 
\tau^{\varepsilon}(v_{0}) \wedge \nu^{\varepsilon,\prime}(2v_{0}) \leq t_{\varepsilon}^A, \ \inf_{0 \leq t \leq  \tau^{\varepsilon}(v_{0}) \wedge \nu^{\varepsilon,\prime}(2v_{0})}
R_{t}^{\varepsilon} \geq r_{\star} \varepsilon^{\tfrac{2}{1+\alpha}}, 
\\
&\hspace{300pt} \sup_{0 \leq t \leq t_{\varepsilon}^A} R_{t}^{\varepsilon} \leq \varepsilon^{\tfrac{2}{1+\beta}+\tfrac{\beta-\alpha}{2}}
\Bigr)
\\
&\leq C \limsup_{\varepsilon \searrow 0} {\mathbb P}
\Bigl( 
  \sup_{0 \leq t \leq t_{\varepsilon}^A}
\bigl\vert X_{0}^{\varepsilon}
+ \varepsilon B_{t}
\bigr\vert
 \geq r_{0} \varepsilon^{\tfrac{2}{1+\alpha}} 
\Bigr)
\\
&\leq C \limsup_{\varepsilon \searrow 0} {\mathbb P}
\Bigl( 
  \sup_{0 \leq t \leq t_{\varepsilon}^A}
\bigl\vert  \varepsilon B_{t}
\bigr\vert
 \geq \tfrac{r_{0}-r_{\star}}2 \varepsilon^{\tfrac{2}{1+\alpha}} 
\Bigr)
= C {\mathbb P}
\Bigl( 
  \sup_{0 \leq t \leq A}
\vert  B_{t}
\vert
 \geq \tfrac{r_{0}-r_{\star}}2  
\Bigr),
\end{align*}
and the last term in the right-hand side tends to $0$ as $\mu$ tends to $0$. 

We deduce
\begin{equation}
\label{eq:mu:cestpetit:3}
\begin{split}
&\limsup_{\mu \searrow 0}
\limsup_{\varepsilon \searrow 0} {\mathbb P}
\Bigl(  
R^{\varepsilon}_{\tau^{\varepsilon}(v_{0}) \wedge  \nu^{\varepsilon,\prime}(2v_{0})} \geq r_{0} \varepsilon^{\tfrac{2}{1+\alpha}}, 
\tau^{\varepsilon}(v_{0}) \leq t_{\varepsilon}^A, \ \inf_{0 \leq t \leq  \tau^{\varepsilon}(v_{0})}
R_{t}^{\varepsilon} \geq r_{\star} \varepsilon^{\tfrac{2}{1+\alpha}}, 
\\
&\hspace{250pt} \sup_{0 \leq t \leq t_{\varepsilon}^A} R_{t}^{\varepsilon} \leq \varepsilon^{\frac{2}{1+\beta}+\frac{\beta-\alpha}{2}}
\Bigr)  = 0.
\end{split}
\end{equation}
\vskip 2pt

\textit{Conclusion.}
Collecting 
\eqref{eq:conclusion:second:step:proposition:4:1}, 
\eqref{eq:mu:cestpetit:1},  {\eqref{eq:mu:cestpetit:4}, }\eqref{eq:mu:cestpetit:2}
and
\eqref{eq:mu:cestpetit:3}
and inserting in the conclusion of the second step, we deduce 
\begin{equation*}
\begin{split}
\limsup_{\mu \searrow 0}
\limsup_{\varepsilon \searrow 0}{\mathbb P}
\Bigl( g \bigl( X_{\tau^{\varepsilon}(v_{0})} \bigr) \leq \mu
\Bigr)
&\leq  \pi.
\end{split}
\end{equation*}
Since $\pi$ is arbitrary, this completes the proof.
\end{proof}

\section{Escaping from zero: Proof of Theorem \ref{thm:2:1}}
\label{se:escaping:0}

The goal of this section is to prove that the particle does leave the origin and that it does so by staying {with high probability} within the region where 
$g$ is positive. Throughout, we assume that Assumption \textbf{A} is in force.

\subsection{Lower bound for the potential}
\label{subse:lower:bound:pot}
The first step is to get a lower bound for the potential in terms of the radius, which is the precise purpose of this section. In order to state the result properly, we introduce the new notations:
\begin{equation}
\label{eq:5:1}
\begin{split}
&g_{t}^{\varepsilon} := g(X_{t}^{\varepsilon}), \quad t \geq 0,
\\
&\gamma^{\varepsilon} := \inf \bigl\{ t \geq 0 : g_{t}^{\varepsilon} = 0 \bigr\} \ ; \quad  
\xi^{\varepsilon} := \inf \Bigl\{ t \geq 0 : V_{t}^{\varepsilon} \leq \frac{V_{0}^{\varepsilon}}{2}  \Bigr\}, \ ; 
\quad 
\Xi^{\varepsilon} := \inf \bigl\{t \geq 0 : V_{t}^{\varepsilon} \geq 1 \bigr\}. 
\end{split}
\end{equation}

We start with the following statement:

\begin{lemma}
\label{lem:5:1}
Consider a collection of initial conditions $(X_{0}^{\varepsilon})_{\varepsilon >0}$ 
such that, for some $v_{0} >0$ {and some} $\varepsilon >0$,
\begin{equation*}
V_{0}^{\varepsilon} \geq v_{0} \varepsilon^2, \quad g_{0}^{\varepsilon} >0.
\end{equation*}
Then, there exists 
$v_{\star}:=v_{\star}(\textbf{\bf A})$
such that, for $v_{0} \geq v_{\star}(\textbf{\bf A})$,
\begin{equation*}
{\mathbb P} \biggl( \exists t \in (0,\gamma^{\varepsilon}] : V_{t}^{\varepsilon} \leq \frac{V_{0}^{\varepsilon}}{2} \biggr) 
\leq 
\exp \bigl( - \frac{v_{0}}{2}
\bigr). 
\end{equation*}
In particular, 
\begin{equation*}
{\mathbb P} \bigl( \gamma^{\varepsilon} < \infty \bigr) 
\leq 
\exp \bigl( - \frac{v_{0}}{2}
\bigr),
\quad \textrm{\rm and} \quad 
{\mathbb P} \biggl( \forall t >0, \ V_{t}^{\varepsilon} \geq \frac{V_{0}^{\varepsilon}}{2}, \ g_{t}^{\varepsilon} >0 \biggr) 
\geq
1- 
\exp \bigl( - \frac{v_{0}}{2}
\bigr). 
\end{equation*}
\end{lemma}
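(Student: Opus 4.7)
\vskip 4pt
\noindent \textbf{Proof plan.}
The key is to construct an exponential supermartingale. I would set $Z_t := \exp(-V_t^\varepsilon/\varepsilon^2)$. Since the law of $X^\varepsilon$ is equivalent to that of $\varepsilon B$, we have $X_t^\varepsilon \neq 0$ for all $t > 0$ almost surely, so It\^o's formula applies to $Z$ on any time interval of the form $[\delta,T]$ with $0 < \delta < T$ and gives, using the expansion of $dV_t^\varepsilon$ already derived in \eqref{eq:V:R2},
\begin{equation*}
dZ_t = -\frac{Z_t}{2\varepsilon^2}\bigl( \vert \nabla V_t^\varepsilon \vert^2 + \varepsilon^2 \Delta V_t^\varepsilon \bigr) dt - \frac{Z_t}{\varepsilon} \nabla V_t^\varepsilon \cdot dB_t.
\end{equation*}
Hence, once stopped at $\tau^\varepsilon := \gamma^\varepsilon \wedge \xi^\varepsilon$, the process $(Z_{t \wedge \tau^\varepsilon})_{t > 0}$ is a non-negative local supermartingale provided one verifies the almost-everywhere inequality $\vert \nabla V(x) \vert^2 + \varepsilon^2 \Delta V(x) \geq 0$ at points $x$ visited by the trajectory before $\tau^\varepsilon$. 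Extension to $t=0$ is by continuity.

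For $t < \tau^\varepsilon$, we have $g_t^\varepsilon > 0$, and the disjoint supports of $g$ and $h$ force $V_t^\varepsilon = g_t^\varepsilon \vert X_t^\varepsilon \vert^{1+\alpha}$; moreover $V_t^\varepsilon \geq V_0^\varepsilon/2 \geq v_0 \varepsilon^2/2$. I then split the analysis into two regimes. If $g_t^\varepsilon \in (0, a_0)$, Assumption (\textbf{A2}) directly yields $\Delta V \geq 0$ almost everywhere, and there is nothing more to prove. If $g_t^\varepsilon \geq a_0$, then (\textbf{A2}) yields $\varepsilon^2 \vert \Delta V \vert \leq C \varepsilon^2 \vert X_t^\varepsilon \vert^{\alpha-1}$ for a constant $C := C(\textbf{A})$ absorbing the dimension, while (\textbf{A3}) combined with $V = g \vert x \vert^{1+\alpha}$ on $\{g>0\}$ yields $\vert \nabla V \vert^2 \geq c_0^2 V^2/\vert X_t^\varepsilon \vert^2 = c_0^2 g V \vert X_t^\varepsilon \vert^{\alpha - 1}$; hence
\begin{equation*}
\frac{\vert \nabla V \vert^2}{\varepsilon^2 \vert \Delta V \vert} \geq \frac{c_0^2}{C}\cdot \frac{g V}{\varepsilon^2} \geq \frac{c_0^2 a_0 v_0}{2 C},
\end{equation*}
which is at least one as soon as $v_0 \geq v_\star := 2 C /(c_0^2 a_0)$. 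This choice of $v_\star$ is the main technical point and is where the assumption on the size of $v_0$ enters.

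Given the supermartingale property, Doob's maximal inequality for non-negative supermartingales gives
\begin{equation*}
\PP\Bigl( \sup_{0 < t \leq \tau^\varepsilon} Z_t \geq e^{-V_0^\varepsilon/(2\varepsilon^2)} \Bigr) \leq \frac{Z_0}{e^{-V_0^\varepsilon/(2\varepsilon^2)}} = e^{-V_0^\varepsilon/(2\varepsilon^2)} \leq e^{-v_0/2}.
\end{equation*}
The event on the left coincides with $\{\xi^\varepsilon \leq \gamma^\varepsilon\} = \{\exists t \in (0,\gamma^\varepsilon] : V_t^\varepsilon \leq V_0^\varepsilon/2\}$, which is the first claim. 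For the second, on $\{\gamma^\varepsilon < \infty\}$ the identity $V_t^\varepsilon = g(X_t^\varepsilon) \vert X_t^\varepsilon \vert^{1+\alpha}$ valid on $[0,\gamma^\varepsilon)$, together with continuity of $V$ and of $g$ away from $0$, forces $V_{\gamma^\varepsilon}^\varepsilon = 0$, so $\xi^\varepsilon \leq \gamma^\varepsilon$ on that event; the third inequality is then the complement of the first.
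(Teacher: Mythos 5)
Your proof is correct, and it reaches the same exponential bound $\exp(-v_{0}/2)$ by the same underlying mechanism (an exponential supermartingale controlled by Doob's maximal inequality, with a two-regime verification that the Laplacian term cannot spoil the drift), but the execution differs from the paper's in two respects. First, you apply It\^o's formula directly to $\exp(-V_{t}^{\varepsilon}/\varepsilon^2)$ and only need the drift condition $\vert \nabla V \vert^2 + \varepsilon^2 \Delta V \geq 0$ a.e. along the path; the paper instead first derives the pathwise inequality \eqref{eq:minoration:V:varepsilon:78}, i.e. $\ud V_{t}^{\varepsilon} \geq \tfrac78 \vert \nabla V_{t}^{\varepsilon}\vert^2 \ud t + \varepsilon \nabla V_{t}^{\varepsilon}\cdot \ud B_{t}$ up to $\xi^{\varepsilon}\wedge\gamma^{\varepsilon}$, and then applies Doob to the Dol\'eans--Dade exponential of $-\varepsilon^{-1}\int \nabla V_{s}^{\varepsilon}\cdot \ud B_{s}$. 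Second, on the region $\{g \geq a_{0}\}$ you dominate $\varepsilon^2\vert \Delta V\vert$ by writing $\vert\nabla V\vert^2 \geq c_{0}^2 gV\vert x\vert^{\alpha-1}$ and using $V_{t}^{\varepsilon} \geq v_{0}\varepsilon^2/2$ directly, whereas the paper first converts the potential bound into the radius bound $R_{t}^{\varepsilon} \geq (v_{0}/2C)^{1/(1+\alpha)}\varepsilon^{2/(1+\alpha)}$ and then reuses the second case of the proof of Lemma \ref{lem:supermartingale:2}; your version is a bit more direct and gives an explicit $v_{\star}$. What the paper's route buys is the intermediate inequality \eqref{eq:minoration:V:varepsilon:78} itself, which is recycled later (in Propositions \ref{prop:5:2} and \ref{prop:5:6}), so your argument, while sufficient for this lemma, would not replace that byproduct. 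Two small points to tighten: the lemma allows random initial data, so the threshold $e^{-V_{0}^{\varepsilon}/(2\varepsilon^2)}$ in your Doob estimate is ${\mathcal F}_{0}$-measurable rather than deterministic — apply Doob conditionally on ${\mathcal F}_{0}$ (or note that the paper's normalization, with the exponential martingale started at $1$ and the deterministic level $e^{v_{0}/2}$, avoids this); and "coincides with" should be "is contained in" the supremum event, which is all you need since the inclusion goes the right way for the upper bound.
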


\begin{proof}
Recalling that there exists a constant $C:=C(\textbf{\bf A})$ such that
$V_{t}^{\varepsilon} \leq C \bigl( R_{t}^{\varepsilon} \bigr)^{1+\alpha}$, $t \geq 0$,
we deduce that, for $t \in [0,\xi^{\varepsilon}]$, 
\begin{equation*}
 R_{t}^{\varepsilon} \geq \bigl( \frac{v_{0}}{2 C} \bigr)^{\frac1{1+\alpha}} \varepsilon^{\frac{2}{1+\alpha}}. 
\end{equation*}
In particular, for $v_{0} \geq v_{\star}:=v_{\star}(\textbf{\bf A})$, we can follow the second case in the proof of 
Lemma \ref{lem:supermartingale:2} {(note that, {thanks to $\gamma^{\varepsilon}$}, we only need to work on the region $g>0$, in particular we do not need the additional term $\eta_\varepsilon (R^\varepsilon_t)^{1+\alpha}$)}. Combined with the first half of the same proof, we deduce that,
for $v_{0} \geq v_{\star}$, for 
$t \in [0,\xi^{\varepsilon} \wedge \gamma^{\varepsilon}]$, 
\begin{equation}
\label{eq:minoration:V:varepsilon:78}
V_{t}^{\varepsilon} \geq  V_{0}^{\varepsilon} + \int_{0}^t \frac78 \vert \nabla V_{s}^{\varepsilon} \vert^2 \ud s + \varepsilon \int_{0}^t \nabla V_{s}^{\varepsilon} \cdot \ud B_{s}.
\end{equation}
So, 
\begin{equation*}
\varepsilon^{-2} \Bigl( V_{t}^{\varepsilon} - \frac{V_{0}^{\varepsilon}}{2} \Bigr) \geq \frac{v_{0}}{2}
+ \frac{1}2 \varepsilon^{-2} \int_{0}^t \vert \nabla V_{s}^{\varepsilon} \vert^2 \ud s + \varepsilon^{-1} \int_{0}^t \nabla V_{s}^{\varepsilon} \cdot \ud B_{s}.
\end{equation*}
Therefore, for any $t >0$,
\begin{equation*}
\begin{split}
{\mathbb P} \Bigl( \xi^{\varepsilon} \leq t \wedge \gamma^{\varepsilon}  \Bigr) 
&\leq {\mathbb P}
\Bigl(
\sup_{s \in [0,t \wedge \gamma^{\varepsilon}]}\exp \biggl( - \varepsilon^{-1} \int_{0}^s \nabla V_{r}^{\varepsilon} \cdot 
\ud B_{r} - \frac12 \varepsilon^{-2} \int_{0}^s \vert \nabla V_{r}^{\varepsilon} \vert^2 \ud r \biggr) \geq \exp \bigl( \frac{v_{0}}{2} \bigr)
\Bigr)
\leq \exp \bigl( - \frac{v_{0}}{2} \bigr),
\end{split}
\end{equation*}
where we used Doob's {inequality} for the Dol\'eans-Dade martingale of $(- \varepsilon^{-1} \int_{0}^s \nabla V_{r}^{\varepsilon}
\cdot \ud B_{r} )_{s \geq 0}$. Letting $t$ tend to $\infty$, we deduce that 
\begin{equation*}
 {\mathbb P} \bigl( \xi^{\varepsilon} \leq \gamma^{\varepsilon} \bigr) 
\leq \exp \bigl( - \frac{v_{0}}{2} \bigr), 
\end{equation*}
which completes the first part of the proof.

The second part of the proof is to observe that, on the event $\{ \forall t \in (0,\gamma^{\varepsilon}],  \ 
V_{t}^{\varepsilon} \geq \tfrac12 V_{0}^{\varepsilon} \}$, the stopping time $\gamma^{\varepsilon}$
is infinite. If not, we have $0 = V_{\gamma^{\varepsilon}}^{\varepsilon} \geq \tfrac12 {V_{0}^{\varepsilon}}>0$, which is a contradiction. The last {two assertions easily follow}. 
\end{proof}

We now provide another lower bound for the potential in terms of the radius. 

\begin{proposition}
\label{prop:5:2}
Consider a sequence of initial positions $(X_{0}^{\varepsilon})_{\varepsilon >0}$ such that, for some 
$\varphi>0$, some $v_{0}>0$ {and some} $\varepsilon >0$, 
\begin{equation*}
V_{0}^{\varepsilon} \geq  \max \Bigl( v_{0} \varepsilon^2, \varphi \bigl(R_{0}^{\varepsilon}\bigr)^2 \Bigr), \quad 
g_{0}^{\varepsilon} >0. 
\end{equation*}
Then, there exist $v_\star:=v_{\star}(\textbf{\bf A})>0$, $\varphi_\star:=\varphi_{\star}(\textbf{\bf A})>0$ and $\varpi:=\varpi(\textbf{\bf A}) >0$, such that, for $v_{0} \geq v_\star$ and $\varphi \geq \varphi_{\star}$, 
\begin{equation*}
\begin{split}
\PP \biggl( \exists t \in [0, 1] : 
 V_{t}^{\varepsilon} \leq  \varpi  (R_{t}^{\varepsilon})^2 
\biggr)\leq 2 \exp \bigl( - \frac{v_{0}}{2} \bigr).
\end{split}
\end{equation*}
\end{proposition}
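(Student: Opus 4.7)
The plan is to reduce the estimate to two complementary high-probability events, each of $\PP$-mass at least $1 - \exp(-v_0/2)$. The first event is the one supplied by Lemma~\ref{lem:5:1}, namely $\mathcal{G}^\varepsilon := \{V_t^\varepsilon \geq V_0^\varepsilon/2$ and $g_t^\varepsilon > 0$ for all $t > 0\}$, which holds with probability at least $1 - \exp(-v_0/2)$ as soon as $v_0 \geq v_\star(\textbf{\bf A})$; this accounts for one of the two exponential terms in the statement. On $\mathcal{G}^\varepsilon$ it then suffices to control the stopping time $\zeta^\varepsilon := \inf\{t \geq 0 : V_t^\varepsilon \leq \varpi (R_t^\varepsilon)^2\}$ and prove that $\PP(\{\zeta^\varepsilon \leq 1\}\cap \mathcal{G}^\varepsilon) \leq \exp(-v_0/2)$.

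To this end, I would apply It\^o's formula to $\Psi_t := V_t^\varepsilon - \varpi (R_t^\varepsilon)^2$; from the semimartingale decompositions in~\eqref{eq:V:R2}, the drift equals
\begin{equation*}
\vert \nabla V_t^\varepsilon \vert^2 - 2\varpi X_t^\varepsilon \cdot \nabla V_t^\varepsilon + \tfrac{\varepsilon^2}{2}\bigl( \Delta V_t^\varepsilon - 2\varpi d \bigr),
\end{equation*}
and the diffusion coefficient is $\varepsilon(\nabla V_t^\varepsilon - 2\varpi X_t^\varepsilon)$. On $\mathcal{G}^\varepsilon \cap \{t < \zeta^\varepsilon\}$, both $g_t^\varepsilon > 0$ and $V_t^\varepsilon > \varpi (R_t^\varepsilon)^2$ are in force, so~(\textbf{\bf A3}) gives $\vert \nabla V_t^\varepsilon \vert^2 \geq c_0^2 (V_t^\varepsilon)^2/(R_t^\varepsilon)^2 \geq c_0^2 \varpi V_t^\varepsilon$. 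Combining this with the Cauchy--Schwarz bound $\vert X_t^\varepsilon \cdot \nabla V_t^\varepsilon\vert \leq R_t^\varepsilon\vert \nabla V_t^\varepsilon\vert$, choosing $\varpi$ small enough in terms of $c_0$, $C_0$ and $d$ so that the cross-term $-2\varpi X\cdot \nabla V$ can be absorbed into a fraction of $\vert \nabla V\vert^2$, and absorbing the Laplacian correction $\tfrac{\varepsilon^2}{2}(\Delta V - 2\varpi d)$ via the case analysis from the proof of Lemma~\ref{lem:supermartingale:2} (treating $\{g \in (0,a_0)\}$, where $\Delta V \geq 0$ by~(\textbf{\bf A2}), separately from $\{g \geq a_0\}$, where $\vert \nabla V\vert$ is lower bounded by a constant multiple of $R^\alpha$), one shows that the drift is bounded below by $\tfrac{1}{2}\vert \nabla V_t^\varepsilon - 2\varpi X_t^\varepsilon\vert^2$.

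The remainder follows the exponential supermartingale scheme used in the proof of Lemma~\ref{lem:5:1}: integrating the drift inequality yields
\begin{equation*}
\Psi_{t\wedge \zeta^\varepsilon} \geq \Psi_0 + \tfrac{1}{2}\int_0^{t\wedge \zeta^\varepsilon} \vert \nabla V_s^\varepsilon - 2\varpi X_s^\varepsilon \vert^2 \ud s + \varepsilon \int_0^{t\wedge \zeta^\varepsilon} \bigl(\nabla V_s^\varepsilon - 2\varpi X_s^\varepsilon\bigr) \cdot \ud B_s,
\end{equation*}
and evaluating at time $\zeta^\varepsilon$ (where $\Psi_{\zeta^\varepsilon} = 0$ by continuity on $\{\zeta^\varepsilon \leq 1\}$) shows that the Dol\'eans--Dade exponential of $-\varepsilon^{-1}\int(\nabla V - 2\varpi X)\cdot \ud B$ is bounded below at time $\zeta^\varepsilon$ by $\exp(\Psi_0/\varepsilon^2)$. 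Choosing $\varphi \geq \varphi_\star := 2\varpi$ ensures $\Psi_0 = V_0^\varepsilon - \varpi R_0^2 \geq V_0^\varepsilon/2 \geq v_0 \varepsilon^2/2$, so this lower bound is at least $\exp(v_0/2)$; Doob's maximal inequality then gives $\PP(\{\zeta^\varepsilon \leq 1\} \cap \mathcal{G}^\varepsilon) \leq \exp(-v_0/2)$, and summing with the contribution from Lemma~\ref{lem:5:1} produces the bound $2\exp(-v_0/2)$ asserted in the statement. The main obstacle is the drift estimate in the second paragraph: the calibration of $\varpi$ relative to $c_0$ (so as to absorb $-2\varpi X\cdot \nabla V$ into $|\nabla V|^2$) together with the control of the Laplacian correction must be done carefully, and this is where the case analysis used in the proof of Lemma~\ref{lem:supermartingale:2} does the essential work.
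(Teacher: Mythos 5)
Your overall skeleton (split off the event of Lemma~\ref{lem:5:1}, then run an exponential supermartingale/Dol\'eans--Dade argument with Doob's inequality to get the second factor $\exp(-v_0/2)$) is exactly the paper's strategy, and your treatment of the Laplacian correction and of $\varphi_\star$ is fine. The gap is in the central drift estimate. With the undiscounted comparison function $\Psi_t=V_t^\varepsilon-\varpi(R_t^\varepsilon)^2$, the inequality you need, namely drift $\geq \tfrac12\vert\nabla V_t^\varepsilon-2\varpi X_t^\varepsilon\vert^2$, reduces (after the exact cancellation of the cross terms) to
\begin{equation*}
\tfrac12\vert\nabla V_t^\varepsilon\vert^2+\tfrac{\varepsilon^2}{2}\Delta V_t^\varepsilon-\varpi d\varepsilon^2-2\varpi^2 (R_t^\varepsilon)^2\ \geq\ 0 ,
\end{equation*}
and the problematic term is $2\varpi^2 (R_t^\varepsilon)^2$ (equivalently, the $R^2$ residue you create when you Cauchy--Schwarz the cross term). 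On $\{t<\zeta^\varepsilon\}$ the only lower bound available from (\textbf{A3}) is $\vert\nabla V\vert^2\geq c_0^2 V^2/R^2> c_0^2\varpi^2R^2$ (or $\geq c_0^2\varpi V$), which is of the \emph{same order in $\varpi$} as the nuisance term; absorption would therefore require $c_0^2\geq 4$ (roughly), a largeness condition on $c_0$ that Assumption \textbf{A} does not provide, and taking $\varpi$ small does not help since both sides scale like $\varpi^2R^2$. Concretely, just before $\zeta^\varepsilon$ one can have $V\approx\varpi R^2$ with $\vert\nabla V\vert\approx c_0 V/R\approx c_0\varpi R$ and $R$ of order one, and then your claimed pointwise bound fails for small $c_0$.

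This is precisely what the paper's two extra devices are for: the exponential discount, i.e.\ comparing $V_t^\varepsilon$ with $\varpi e^{-2t}(R_t^\varepsilon)^2$, injects an additional positive drift $2\varpi e^{-2t}(R_t^\varepsilon)^2\,\ud t$ which is \emph{linear} in $\varpi$ and hence dominates both the $R^2$ residue of the cross term and the $C\varpi^2 e^{-2t}R^2$ piece of the quadratic variation as soon as $C\varpi\leq1$ (the discount costs only the harmless factor $e^{-2}$ in the final $\varpi$ since $t\leq1$); and the integrating factor $\Pi_t^\varepsilon$ handles the sign-indefinite term $\tfrac{\varpi v_0}{2}\varepsilon^2 (V_t^\varepsilon-\varpi e^{-2t}(R_t^\varepsilon)^2)/(R_t^\varepsilon)^2$ (in your formulation, working before $\zeta^\varepsilon$ partially substitutes for $\Pi^\varepsilon$, but nothing in your argument substitutes for the missing $+\varpi R^2$ drift). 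To repair your proof you would need to reintroduce a device of this kind; as written, the step ``the drift is bounded below by $\tfrac12\vert\nabla V-2\varpi X\vert^2$'' does not follow from the assumptions.
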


\begin{proof}
\textit{First step.}
We use the same notation as in \eqref{eq:5:1}.
Then, we recall that, for $v_{0} \geq v_{\star}(\textbf{\bf A})$, for $0 < t < \xi^{\varepsilon} \wedge \gamma^{\varepsilon}$, 
\begin{equation*}
\ud V_{t}^{\varepsilon} \geq \frac78 \vert \nabla V_{t}^{\varepsilon} \vert^2 \ud t + 
\varepsilon \nabla V_{t}^{\varepsilon} \cdot \ud B_{t}.
\end{equation*}
(Here and below, we use the shorten notation $\ud C_{t} \geq 0$ for 
 {a non-decreasing process $(C_{t})_{t}$}; the notation $\ud C_{t}^1 \geq \ud C_{t}^2$ is understood as $\ud (C_{t}^1 - C_{t}^2) \geq 0$.)
As for $(R^{\varepsilon})^2$, we have
\begin{equation*}
\ud (R_{t}^{\varepsilon})^2 = 2 \nabla V_{t}^{\varepsilon} \cdot X_{t}^{\varepsilon} \ud t + 
d \varepsilon^2  \ud t + 2 \varepsilon X_{t}^{\varepsilon} \cdot \ud B_{t},
\end{equation*}
where the first $d$ {in the second term of the right-hand side} denotes the dimension. 
We now use the fact that
\begin{equation*}
\begin{split}
2 \nabla V_{t}^{\varepsilon} \cdot X_{t}^{\varepsilon} &\leq 2 \vert \nabla V_{t}^{\varepsilon} \vert R_{t}^{\varepsilon} 
\leq  \vert \nabla V_{t}^{\varepsilon} \vert^2 +  (R_{t}^{\varepsilon})^2.
\end{split}
\end{equation*}
Therefore,
\begin{equation*}
\ud (R_{t}^{\varepsilon})^2 \leq  \vert \nabla V_{t}^{\varepsilon} \vert^2 
\ud t +  (R_{t}^{\varepsilon})^2 \ud t +
d \varepsilon^2  \ud t + 2 \varepsilon X_{t}^{\varepsilon} \cdot \ud B_{t},
\end{equation*}
and then
\begin{equation*}
\ud \Bigl[ \exp(-2t) (R_{t}^{\varepsilon})^2 \Bigr] \leq   \vert \nabla V_{t}^{\varepsilon} \vert^2 
\ud t   -
 \exp(-2t)
 (R_{t}^{\varepsilon})^2
  \ud t +
d \varepsilon^2 \exp(-2t)
  \ud t + 2 \varepsilon \exp(-2t)  X_{t}^{\varepsilon} \cdot \ud B_{t}.
\end{equation*}
Hence, for any $\varpi >0$ and for $0 < t < \xi^{\varepsilon} \wedge \gamma^{\varepsilon}$, 
\begin{equation*}
\begin{split}
\ud \bigl( V_{t}^{\varepsilon} - \varpi \exp(-2t) (R_{t}^{\varepsilon})^2 \bigr) 
& \geq 
(\tfrac78 - \varpi) \vert \nabla V_{t}^{\varepsilon} \vert^2 \ud t 
+
 \varpi \exp(-2t)
 (R_{t}^{\varepsilon})^2
  \ud t 
-  d \varpi \varepsilon^2 \exp(-2t)  \ud t 
\\
&\hspace{15pt} + \varepsilon\bigl( \nabla V_{t}^{\varepsilon} - 2 \varpi \exp(-2t) X_{t}^{\varepsilon} \bigr) \cdot \ud B_{t}.
\end{split}
\end{equation*}
We now recall that, for $0 < t < \xi^{\varepsilon} \wedge \gamma^{\varepsilon}$,
\begin{equation*}
\vert \nabla V_{t}^{\varepsilon} \vert^2 \geq c_{0} \frac{(V_{t}^{\varepsilon})^2}{(R_{t}^{\varepsilon})^2}
\geq \frac{c_{0} v_{0} \varepsilon^2}{2} \frac{V_{t}^{\varepsilon}}{(R_{t}^{\varepsilon})^2},
\end{equation*}
from which we get
\begin{equation*}
\begin{split}
&\ud \bigl( V_{t}^{\varepsilon} - \varpi 
\exp(- {2}t)
(R_{t}^{\varepsilon})^2 \bigr) 
\\
&\geq 
\bigl(\tfrac78 - (1+c_{0}^{-1})\varpi \bigr)
\vert \nabla V_{t}^{\varepsilon} \vert^2  \ud t +  \frac{ \varpi v_{0}}{2}
\varepsilon^2
 \frac{V_{t}^{\varepsilon}}{(R_{t}^{\varepsilon})^2} \ud t
 +
 \varpi
 \exp(-2t)
 (R_{t}^{\varepsilon})^2
  \ud t 
  - d \varpi \varepsilon^2  \exp(-2t)\ud t 
 \\
&\hspace{15pt}  +  \varepsilon \bigl( \nabla V_{t}^{\varepsilon} - 2 \varpi \exp(-2t) X_{t}^{\varepsilon} \bigr) \cdot \ud B_{t}
\\
&\geq \bigl(\tfrac78 - (1+c_{0}^{-1})\varpi\bigr)
\vert \nabla V_{t}^{\varepsilon} \vert^2 \ud t + 
 \frac{ \varpi v_{0}}{2}
\varepsilon^2
 \frac{V_{t}^{\varepsilon} - \varpi \exp(-2t)
 (R_{t}^{\varepsilon})^2}{(R_{t}^{\varepsilon})^2} \ud t  
 +
 \varpi
 \exp(-2t)
 (R_{t}^{\varepsilon})^2
  \ud t 
\\
 &\hspace{15pt} 
 + \varepsilon^2 \bigl(\frac{v_{0}}{2} \varpi^2  - d \varpi  \bigr)
 \exp(-2t) \ud t 
 +  \varepsilon \bigl( \nabla V_{t}^{\varepsilon} - 2 \varpi \exp(-2t) X_{t}^{\varepsilon} \bigr) \cdot \ud B_{t}.
\end{split}
\end{equation*}
Therefore, letting
\begin{equation*}
\Pi_{t}^{\varepsilon} := 
\exp \biggl( - \int_{0}^t \varepsilon^2 \frac{\varpi v_{0}}{2 (R_{s}^{\varepsilon})^2} \ud s \biggr),
\end{equation*}
we get, for $0 < t < \xi^{\varepsilon} \wedge \gamma^{\varepsilon}$, 
\begin{equation*}
\begin{split}
&\ud \Bigl[ \Pi_{t}^{\varepsilon}   \bigl( V_{t}^{\varepsilon} -  \varpi \exp(-2t) (R_{t}^{\varepsilon})^2 \bigr) \Bigr] 
\\
&\geq 
\Pi_{t}^{\varepsilon} \bigl(\tfrac78 - (1+c_{0}^{-1})\varpi\bigr)
\vert \nabla V_{t}^{\varepsilon} \vert^2 \ud t + \varepsilon^2 \Pi_{t}^{\varepsilon}
\bigl( \frac{v_{0}}{2} \varpi^2 - d \varpi   \bigr) \exp(-2t)  \ud t 
+
 \varpi \exp(-2t) 
\Pi_{t}^{\varepsilon}
 (R_{t}^{\varepsilon})^2
  \ud t 
\\
&\hspace{15pt} +  \varepsilon \Pi_{t}^{\varepsilon} \bigl( \nabla V_{t}^{\varepsilon} - 2 \varpi \exp(-2t) X_{t}^{\varepsilon} \bigr) \cdot \ud B_{t}.
\end{split}
\end{equation*}
\vskip 4pt

\textit{Second step.}
We now let
\begin{equation*}
M_{t}^{\varepsilon} := \int_{0}^t \Pi_{s}^{\varepsilon}\bigl( \nabla V_{s}^{\varepsilon} - 2 \varpi \exp(-2s) X_{s}^{\varepsilon} \bigr) \cdot \ud B_{s}.
\end{equation*}
Clearly,
\begin{equation*}
\begin{split}
\frac12 \frac{\ud}{\ud t} [M^{\varepsilon}]_{t} &= \frac12 	\bigl( \Pi_{t}^{\varepsilon} \bigr)^2 \bigl\vert \nabla V_{t}^{\varepsilon} - 2 \varpi \exp(-2t) X_{t}^{\varepsilon} \bigr\vert^2
\\
&= (\Pi_{t}^{\varepsilon})^2 \Bigl( \frac12 \vert \nabla V_{t}^{\varepsilon} \vert^2 + 2 \varpi^2 \exp(-4t) (R_{t}^{\varepsilon})^2 - 2 \varpi  \exp(-2t) \nabla V_{t}^{\varepsilon} \cdot X_{t}^{\varepsilon} \Bigr)
\\
&\leq (\Pi_{t}^{\varepsilon})^2 \Bigl( \frac34\vert \nabla V_{t}^{\varepsilon} \vert^2 + C \varpi^2 \exp(-2t) (R_{t}^{\varepsilon})^2 \Bigr)
\leq \Pi_{t}^{\varepsilon} \Bigl( \frac34\vert \nabla V_{t}^{\varepsilon} \vert^2 + C \varpi^2 \exp(-2t) (R_{t}^{\varepsilon})^2 \Bigr), 
\end{split}
\end{equation*}
for a {universal} constant $C>0$. To derive the last bound, we used the obvious fact that $\Pi_{t}^{\varepsilon} \leq 1$. 

In the conclusion of the first step, choose now {$\varpi >0$} such that $\tfrac78 - (1+c_{0}^{-1}) \varpi > \frac34$
and $C \varpi \leq 1$, and then 
$v_{0}$ large enough such that $ \frac{v_{0}}{2} \varpi^2 - d \varpi \geq 0$
{(in other words, 
$\varpi:=\varpi(\textbf{\bf A})$
and
$v_{0} \geq v_{\star}(\textbf{\bf A})$)
}. We get, for $0 < t < \xi^{\varepsilon} \wedge \gamma^{\varepsilon}$,
\begin{equation*}
\begin{split}
&\ud \Bigl[ \Pi_{t}^{\varepsilon}   \Bigl( V_{t}^{\varepsilon} -  \varpi \exp(-2t) (R_{t}^{\varepsilon})^2 \Bigr) \Bigr] 
\geq 
\frac12 \ud [M^{\varepsilon}]_{t}   +  \varepsilon \ud M_{t}^{\varepsilon}.
\end{split}
\end{equation*}
In particular, 
\begin{equation*}
\begin{split}
&\PP \biggl( \inf_{t \in [0,\xi^{\varepsilon} \wedge \gamma^{\varepsilon}]}
\Bigl[ \Pi_{t}^{\varepsilon}   \Bigl( V_{t}^{\varepsilon} -  \varpi \exp(-2t) (R_{t}^{\varepsilon})^2 \Bigr) \Bigr]
\leq 0
\biggr)
\leq \PP \biggl( 
\inf_{t \geq 0}
\biggl[ \frac{V_{0}^{\varepsilon} - \varpi( R_{0}^{\varepsilon})^2}{\varepsilon^2} + \frac1{2 \varepsilon^2} [M^{\varepsilon}]_{t} + \frac1{\varepsilon} M_{t}^{\varepsilon} \biggr]  \leq 0 \biggr).
\end{split}
\end{equation*}
\vskip 4pt

\textit{Third step.}
We now claim that
\begin{equation*}
\begin{split}
&\PP \biggl( \inf_{t \geq 0}
\biggl[ \frac{V_{0}^{\varepsilon} - \varpi( R_{0}^{\varepsilon})^2}{\varepsilon^2} + \frac1{2 \varepsilon^2} [M^{\varepsilon}]_{t} + \frac1{\varepsilon} M_{t}^{\varepsilon} \biggr]  \leq 0 \biggr)
\\
&=
\PP \biggl( \sup_{t \geq 0}
\biggl( - \frac{1}{2 \varepsilon^2} [M^{\varepsilon}]_{t} - 
\frac1{\varepsilon} M_{t}^{\varepsilon} \biggr) \geq \frac{V_{0}^{\varepsilon} - \varpi (R_{0}^{\varepsilon})^2}{\varepsilon^2} \biggr) 
\\
&= 
\PP \biggl( \sup_{t \geq 0}
\biggl[
\exp \biggl( - \frac{1}{2 \varepsilon^2} [M^{\varepsilon}]_{t} - 
\frac1{\varepsilon} M_{t}^{\varepsilon} \biggr) \biggr] \geq 
\exp \Bigl( \frac{V_{0}^{\varepsilon} - \varpi (R_{0}^{\varepsilon})^2}{\varepsilon^2} \Bigr) \biggr)
\\
&= 
\PP \biggl( \sup_{t \geq 0}
\biggl[
\exp \biggl( - \frac{1}{2 \varepsilon^2} [M^{\varepsilon}]_{t} - 
\frac1{\varepsilon} M_{t}^{\varepsilon} \biggr) \biggr] \geq 
\exp \bigl( \frac{v_{0}}{2} \bigr) \biggr),  
\end{split}
\end{equation*}
where we used the fact that ({recalling that $\varphi$ is given in the statement})
\begin{equation*}
V_{0}^{\varepsilon} - \varpi (R_{0}^{\varepsilon})^2 \geq V_{0}^{\varepsilon} \bigl( 1 - \varphi^{-1} \varpi \bigr)
\geq \varepsilon^2 v_{0}\bigl( 1 - \varphi^{-1} \varpi \bigr) \geq \tfrac{1}2 \varepsilon^2 v_{0},
\end{equation*}
the last inequality being true for {$\varphi \geq \varphi_{\star}(\textbf{\bf A})$}. 
We conclude from Doob's theorem that 
\begin{equation*}
\PP \biggl( \inf_{t \geq 0}
\biggl[ \frac{V_{0}^{\varepsilon} - \varpi( R_{0}^{\varepsilon})^2}{\varepsilon^2} + \frac1{2 \varepsilon^2} [M^{\varepsilon}]_{t} + \frac1{\varepsilon} M_{t}^{\varepsilon} \biggr]  \leq 0 \biggr)
\leq \exp \bigl( - \frac{v_{0}}{2} \bigr).
\end{equation*}
\vskip 4pt

\textit{Fourth step.} Combining the second and third step, we have
\begin{equation*}
\begin{split}
\PP \biggl( \inf_{t \in [0,\xi^{\varepsilon} \wedge \gamma^{\varepsilon}]}
\Bigl[ \Pi_{t}^{\varepsilon}   \Bigl( V_{t}^{\varepsilon} -  \varpi \exp(-2t) (R_{t}^{\varepsilon})^2 \Bigr) \Bigr]
\leq 0
\biggr)\leq \exp \bigl( - \frac{v_{0}}{2} \bigr).
\end{split}
\end{equation*}
By Lemma 
\ref{lem:5:1}, we deduce that 
\begin{equation*}
\begin{split}
\PP \biggl( \inf_{t \geq 0}
\Bigl[ \Pi_{t}^{\varepsilon}   \Bigl( V_{t}^{\varepsilon} -  \varpi \exp(-2t) (R_{t}^{\varepsilon})^2 \Bigr) \Bigr]
\leq 0
\biggr)\leq 2 \exp \bigl( - \frac{v_{0}}{2} \bigr),
\end{split}
\end{equation*}
and then
\begin{equation*}
\begin{split}
\PP \biggl( \exists t \in [0,1] : 
 V_{t}^{\varepsilon} \leq  \exp(-2) \varpi  (R_{t}^{\varepsilon})^2 
\biggr)\leq 2 \exp \bigl( - \frac{v_{0}}{2} \bigr),
\end{split}
\end{equation*}
which completes the proof.
\end{proof}

\subsection{Escape rate of the potential}
Here is the core of the proof of Theorem 
\ref{thm:2:1}. 
We start with the following two technical lemmas on the shape of the potential:

\begin{lemma}
\label{lem:5:3}
There exists a constant $c:=c(\textbf{\bf A})>0$ such that, for any $x$ such that $g(x) \geq a_{0}$, with $a_{0}$ as in Assumption \textbf{\bf A},
\begin{equation*}
\vert \nabla V(x) \vert^2 \geq c V(x)^{\tfrac{2\alpha}{1+\alpha}}.
\end{equation*}
\end{lemma}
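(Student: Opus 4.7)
The plan is to exploit the radial gradient lower bound from assumption \textbf{(A3)}, namely $\vert \nabla V(x) \vert \geq c_{0} V(x)/\vert x \vert$ on the region $\{g >0\}$, and to convert the factor $1/\vert x \vert$ into a quantity controlled by $V(x)$ using the hypothesis $g(x) \geq a_{0}$.

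First I would observe that, by \textbf{(A1)}, the sets $\{g \not = 0\}$ and $\{h \not = 0\}$ are disjoint, so whenever $g(x) \geq a_{0} > 0$ we have $h(x) = 0$ and therefore $V(x) = g(x) \vert x \vert^{1+\alpha}$. In particular, the lower bound $g(x) \geq a_{0}$ yields
\begin{equation*}
a_{0} \vert x \vert^{1+\alpha} \leq V(x), \quad \text{i.e.}, \quad \vert x \vert \leq \bigl( V(x)/a_{0} \bigr)^{\tfrac{1}{1+\alpha}}.
\end{equation*}

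Plugging this into the radial gradient bound of \textbf{(A3)} gives
\begin{equation*}
\vert \nabla V(x) \vert \geq c_{0} \frac{V(x)}{\vert x \vert} \geq c_{0}\, a_{0}^{\tfrac{1}{1+\alpha}} V(x)^{1 - \tfrac{1}{1+\alpha}} = c_{0}\, a_{0}^{\tfrac{1}{1+\alpha}} V(x)^{\tfrac{\alpha}{1+\alpha}},
\end{equation*}
and squaring yields the claim with $c := c_{0}^2 a_{0}^{2/(1+\alpha)}$, which depends only on the parameters appearing in Assumption \textbf{A}.

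There is no serious obstacle here: the lemma is essentially an algebraic manipulation exploiting the homogeneity $V(x)=g(x)\vert x\vert^{1+\alpha}$ made available by the disjointness of the supports of $g$ and $h$, together with the gradient lower bound already built into \textbf{(A3)}. The only point to be careful about is checking that one indeed has $V(x) = g(x)\vert x\vert^{1+\alpha}$ (not the full expression involving $h$) under the assumption $g(x)\geq a_{0}$, which is precisely what the disjointness clause in \textbf{(A1)} guarantees.
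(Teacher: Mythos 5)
Your proof is correct and follows essentially the same route as the paper: combine the lower bound $\vert \nabla V(x) \vert \geq c_{0} V(x)/\vert x\vert$ from \textbf{(A3)} with $V(x) \geq a_{0}\vert x \vert^{1+\alpha}$ (valid on $\{g \geq a_{0}\}$) to trade $\vert x \vert^{-1}$ for a power of $V(x)$. Your explicit remark that disjointness of the supports of $g$ and $h$ forces $V(x)=g(x)\vert x\vert^{1+\alpha}$ there is a small justification the paper leaves implicit, but the argument is identical.
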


\begin{proof}
We recall from 
(\textbf{A3})
 that, for $g(x) >0$,
$\vert \nabla V(x) \vert^2 \geq c_{0}^2 V(x)^2/\vert x \vert^2$. 
Now, for $g(x) \geq a_{0}$, 
$V(x) \geq a_{0} \vert x \vert^{1+\alpha}$,
that is 
${\vert x \vert^{-2}} \geq a_{0}^{\frac{2}{1+\alpha}} {V(x)^{-\frac{2}{1+\alpha}}}$,
from which the proof  is easily completed. 
\end{proof}

\begin{lemma}
\label{lem:5:4}
For any $w>0$, there exists a constant $c:=c(\textbf{\bf A},w)>0$, depending on $w$, such that, for any $x$ such that 
$V(x) \geq w \vert x \vert^2$ and 
$g(x) \in (0,a_{0})$, with $a_{0}$ and $p$ as Assumption \textbf{\bf A}, 
\begin{equation*}
\vert \nabla V(x) \vert^2 \geq c V(x)^{\tfrac{2\alpha}{1+\alpha}
+ \tfrac{(1-\alpha)p}{p+1} },
\qquad \textit{\it and,\qquad if} \  \alpha<p, \quad
\vert \nabla V(x) \vert^2 \geq c V(x)^{\tfrac{\alpha + p}{p+1}}.
\end{equation*}
\end{lemma}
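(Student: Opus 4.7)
The starting point is the two-sided estimate in \textbf{(A4)}: on the region $\{g \in (0, a_{0})\}$ one has both $|\nabla V(x)| \geq c_{0} L(x)^p |x|^{\alpha}$ and $V(x) \leq C_{0} L(x)^{p+1} |x|^{1+\alpha}$. Squaring the lower bound on $|\nabla V|$ and using the upper bound on $V$ to eliminate $L$ by means of $L(x)^{2p} \geq [V(x)/(C_{0} |x|^{1+\alpha})]^{2p/(p+1)}$ produces the intermediate estimate
\begin{equation*}
|\nabla V(x)|^2 \geq c\, V(x)^{2p/(p+1)}\, |x|^{2(\alpha-p)/(p+1)},
\end{equation*}
with $c = c(\textbf{A}) > 0$. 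Both claimed bounds then reduce to rewriting the factor $|x|^{2(\alpha-p)/(p+1)}$ in terms of $V(x)$ alone.

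For the second, sharper bound, valid only under $\alpha < p$, the exponent $2(\alpha-p)/(p+1)$ is strictly negative, so a single use of the hypothesis $V(x) \geq w |x|^2$ (in the form $|x|^2 \leq V(x)/w$) gives $|x|^{2(\alpha-p)/(p+1)} \geq w^{(p-\alpha)/(p+1)} V(x)^{(\alpha-p)/(p+1)}$. Substituting into the intermediate estimate collapses the exponent to $(p+\alpha)/(p+1)$, with a constant depending on \textbf{A} and a positive power of $w$.

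For the first bound, which must hold irrespective of the sign of $\alpha - p$, a single inequality on $|x|$ will not suffice, because when $\alpha \geq p$ the power of $|x|$ in the intermediate estimate is non-negative and the hypothesis $|x|^2 \leq V/w$ points the wrong way. The remedy is to bring in the opposing inequality $V(x) \leq C |x|^{1+\alpha}$ (valid on $\{g \in (0,a_{0})\}$ because $h$ vanishes there by the disjoint support condition of \textbf{(A1)} and $g$ is bounded; equivalently, by integrating the bound of \textbf{(A2)} along the ray $0 \to x$) and to split
\begin{equation*}
|x|^{2(\alpha-p)/(p+1)} = |x|^{-2p(1+\alpha)/(p+1)} \cdot |x|^{2\alpha},
\end{equation*}
bounding the first factor below by $w^{p(1+\alpha)/(p+1)} V(x)^{-p(1+\alpha)/(p+1)}$ via $|x|^2 \leq V/w$, and the second below by $c\, V(x)^{2\alpha/(1+\alpha)}$ via $|x|^{1+\alpha} \geq V/C$. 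Multiplying by the leftover $V(x)^{2p/(p+1)}$ and simplifying,
\begin{equation*}
\frac{2p}{p+1} - \frac{p(1+\alpha)}{p+1} + \frac{2\alpha}{1+\alpha} = \frac{p(1-\alpha)}{p+1} + \frac{2\alpha}{1+\alpha},
\end{equation*}
which is exactly the claimed exponent.

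The only step that requires a moment of thought is the split $2(\alpha-p)/(p+1) = -2p(1+\alpha)/(p+1) + 2\alpha$: the two partial exponents are uniquely pinned down by the requirement that one be $\leq 0$ and the other $\geq 0$, so that each can be matched with the appropriate (upper or lower) control on $|x|$. Once this split is in hand, everything else is algebraic bookkeeping, and the constant $c(\textbf{A},w)$ depends on \textbf{A} only through $c_{0}, C_{0}, p, \alpha$ and on $w$ through explicit positive powers.
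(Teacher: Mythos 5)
Your proposal is correct and follows essentially the same route as the paper: the intermediate bound $\vert \nabla V \vert^2 \geq c\, V^{2p/(p+1)} \vert x \vert^{2(\alpha-p)/(p+1)}$ from (\textbf{A4}), then $\vert x \vert^2 \leq V/w$ alone for the case $\alpha < p$, and the combination of $\vert x \vert^2 \leq V/w$ with $V \leq C \vert x \vert^{1+\alpha}$ (valid on $\{g \in (0,a_0)\}$) for the first exponent. Your split of the $\vert x\vert$-power is just an equivalent rewriting of the paper's factorization $(V/\vert x\vert^{1+\alpha})^{2p/(p+1)} \cdot \vert x \vert^{2\alpha}$, so nothing substantively new or missing.
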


\begin{proof}
By assumption (\textbf{A4}), we know that, for $g(x) \in (0,a_{0})$, 
\begin{equation*}
\vert \nabla V(x) \vert^2 \geq c_{0 }^2 L(x)^{2p} \vert x \vert^{2\alpha}, \quad 
V(x) \leq C_{0} L(x)^{p+1} \vert x \vert^{1+\alpha}.
\end{equation*}
Therefore, for a constant $c:=c(\textbf{\bf A})>0$, 
\begin{equation}
\vert \nabla V(x) \vert^2 \geq c \Bigl( \frac{V(x)}{\vert x \vert^{1+\alpha}}
\Bigr)^{\tfrac{2p}{p+1}} \vert x \vert^{2\alpha} = 
c V(x)^{\tfrac{2p}{p+1}} \vert x \vert^{2 \alpha - \tfrac{2p (1+\alpha)}{p+1}}
 =
c V(x)^{\tfrac{2p}{p+1}} \vert x \vert^{\tfrac{2 (\alpha-p)}{p+1}}. \label{eq:escape_rate_lemma2}
\end{equation}
Since $V(x) \leq a_{0} \vert x \vert^{1+\alpha}$, the first inequality above yields, for a new value of the constant $c$,
\begin{equation*}
\vert \nabla V(x) \vert^2 \geq c \Bigl( \frac{V(x)}{\vert x \vert^{1+\alpha}}
\Bigr)^{\tfrac{2p}{p+1}} V(x)^{\tfrac{2\alpha}{1+\alpha}}.
\end{equation*}
Now, by assumption, $\vert x \vert \leq w^{-1/2} V(x)^{1/2}$. Hence, allowing $c$ to depend on $w$, we get
\begin{equation*}
\vert \nabla V(x) \vert^2 \geq c \Bigl( \frac{V(x)}{V(x)^{(1+\alpha)/2}}
\Bigr)^{\tfrac{2p}{p+1}} V(x)^{\tfrac{2\alpha}{1+\alpha}} = 
c V(x)^{\tfrac{(1-\alpha)p}{p+1}  + \tfrac{2\alpha}{1+\alpha}}.
\end{equation*}
If $\alpha < p$, we go back to the inequality \eqref{eq:escape_rate_lemma2} and use the fact that
$\vert x \vert^{-1} \geq w^{1/2} V(x)^{-1/2}$. We get
\begin{equation*}
\vert \nabla V(x) \vert^2 \geq 
c V(x)^{\tfrac{2p}{p+1}} \vert x \vert^{\tfrac{2 (\alpha-p)}{p+1}}
\geq  c w^{\tfrac{p-\alpha}{p+1}}
 V(x)^{\tfrac{2p}{p+1}} V(x)^{\tfrac{\alpha-p}{p+1}}
= 
c w^{\tfrac{p-\alpha}{p+1}} V(x)^{\tfrac{\alpha + p}{p+1}}.
\end{equation*}
The proof is complete.
\end{proof}

Combining the two previous lemmas, 
we get the following result:
\begin{proposition}
\label{prop:5:6}
{Recall the notations \eqref{eq:5:1}}
and 
consider a sequence of initial positions $(X_{0}^{\varepsilon})_{\varepsilon >0}$ such that, for some 
$\varphi>0$, some $v_{0}>0$ {and some} $\varepsilon >0$,
\begin{equation*}
V_{0}^{\varepsilon} \geq  \max \Bigl( v_{0} \varepsilon^2, \varphi \bigl(R_{0}^{\varepsilon}\bigr)^2 \Bigr), \quad 
g_{0}^{\varepsilon} >0. 
\end{equation*}
{Then, there exist $v_\star:=v_\star(\textbf{\bf A})>0$, $\varphi_\star:=\varphi_\star(\textbf{\bf A})>0$,
$\psi :=\psi(\alpha,p) \in (0,1)$
 such that, for $v_{0} \geq v_\star$ and $\varphi \geq \varphi_{\star}$,
 we can find $c:=c(\textbf{\bf A},\varphi)$
satisfying}
\begin{equation*}
{\mathbb P}
\biggl(\forall 
t\in [0, 1 \wedge   \Xi^{\varepsilon}], 
\ 
V_{t}^\varepsilon  \geq 
\Bigl( (1-\psi) ct \Bigr)^{\tfrac{1}{1-\psi}}
\biggr) \geq 1 - 4\exp \bigl( - \frac{v_{0}}{2} \bigr).
\end{equation*}.
\end{proposition}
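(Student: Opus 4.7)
The strategy is to mimic the Dol\'eans--Dade supermartingale argument from the proof of Proposition~\ref{prop:5:2}, after first obtaining a uniform lower bound of the form $|\nabla V(x)|^2 \geq c_1 V(x)^\psi$ with $\psi \in (0,1)$ on a good event. I define
\begin{equation*}
E := \bigl\{\forall t > 0,\ V_t^\varepsilon \geq V_0^\varepsilon/2,\ g_t^\varepsilon > 0\bigr\} \cap \bigl\{\forall t \in [0,1],\ V_t^\varepsilon \geq \varpi (R_t^\varepsilon)^2\bigr\},
\end{equation*}
with $\varpi$ inherited from Proposition~\ref{prop:5:2}, so that $\mathbb{P}(E^c) \leq 3 \exp(-v_0/2)$ whenever $v_0 \geq v_\star$ and $\varphi \geq \varphi_\star$ (by Lemma~\ref{lem:5:1} and Proposition~\ref{prop:5:2}). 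On $E$, the bound $V \geq \varpi |x|^2$ allows one to apply Lemma~\ref{lem:5:4} in the region $\{g \in (0, a_0)\}$, while Lemma~\ref{lem:5:3} applies in $\{g \geq a_0\}$. Taking the larger of the two exponents, and using the alternative bound $(\alpha+p)/(p+1)$ of Lemma~\ref{lem:5:4} in the regime $\alpha p \geq 1$ (where $\alpha < 1 \leq 1/\alpha \leq p$ forces $\alpha < p$, so that the alternative is available), one obtains $\psi = \psi(\alpha,p) \in (0,1)$ and a constant $c_1 = c_1(\textbf{\bf A},\varphi) > 0$ with $|\nabla V(x)|^2 \geq c_1 V(x)^\psi$ uniformly on $E$. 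Since $\xi^\varepsilon = \gamma^\varepsilon = +\infty$ on $E$, the first-step computation of the proof of Lemma~\ref{lem:5:1} extends to all $t \geq 0$ and yields
\begin{equation*}
V_t^\varepsilon \geq V_0^\varepsilon + \tfrac{7}{8} [M^\varepsilon]_t + \varepsilon M_t^\varepsilon, \quad M_t^\varepsilon := \int_0^t \nabla V_s^\varepsilon \cdot \ud B_s.
\end{equation*}

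Next, set $c := c_1/2$ and $\phi(t) := ((1-\psi)ct)^{1/(1-\psi)}$; this $\phi$ satisfies $\dot\phi = c \phi^\psi$ and $\int_0^t \phi(s)^\psi \ud s = \phi(t)/c$. Introduce $\tau := \inf\{t \geq 0 : V_t^\varepsilon < \phi(t)\}$: for $t \leq \tau$ and on $E$ one has $V_s^\varepsilon \geq \phi(s)$ for every $s \leq t$, whence
\begin{equation*}
[M^\varepsilon]_t \geq c_1 \int_0^t (V_s^\varepsilon)^\psi \ud s \geq c_1 \int_0^t \phi(s)^\psi \ud s = 2 \phi(t).
\end{equation*}
Substituting $\phi(t) \leq [M^\varepsilon]_t/2$ into the SDE lower bound produces the key inequality, valid on $\{t \leq \tau \wedge 1 \wedge \Xi^\varepsilon\} \cap E$:
\begin{equation*}
V_t^\varepsilon - \phi(t) \geq V_0^\varepsilon + \tfrac{3}{8} [M^\varepsilon]_t + \varepsilon M_t^\varepsilon.
\end{equation*}

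If $\tau \leq 1 \wedge \Xi^\varepsilon$, path continuity gives $V_\tau^\varepsilon = \phi(\tau)$, so the left-hand side at $t = \tau$ vanishes, forcing $-\varepsilon M_\tau^\varepsilon - (3/8) [M^\varepsilon]_\tau \geq V_0^\varepsilon \geq v_0 \varepsilon^2$. Setting $\mu := 3/(4\varepsilon)$, the identity $\mu \cdot (3/(8\varepsilon)) = \mu^2/2$ rearranges this to $-\mu M_\tau^\varepsilon - (\mu^2/2) [M^\varepsilon]_\tau \geq 3 v_0/4$, which forces the nonnegative supermartingale $\exp(-\mu M_t^\varepsilon - (\mu^2/2) [M^\varepsilon]_t)$ to exceed $\exp(3 v_0/4)$ at some $t \leq 1 \wedge \Xi^\varepsilon$. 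Doob's maximal inequality bounds the corresponding probability by $\exp(-3 v_0/4) \leq \exp(-v_0/2)$; combined with $\mathbb{P}(E^c) \leq 3 \exp(-v_0/2)$ this delivers the announced $4 \exp(-v_0/2)$.

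The hardest part is the first step, namely patching Lemmas~\ref{lem:5:3} and~\ref{lem:5:4} into a single uniform estimate $|\nabla V|^2 \geq c_1 V^\psi$ with $\psi \in (0,1)$ throughout $E$: this forces one to combine Proposition~\ref{prop:5:2} (ensuring $V \geq \varpi |x|^2$, the hypothesis of Lemma~\ref{lem:5:4}) with a careful case analysis on the product $\alpha p$ so as to select the right exponent among the two provided by Lemma~\ref{lem:5:4}.
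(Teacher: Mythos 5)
Your proof is, in substance, the paper's own argument, lightly reorganized: the same ingredients appear in the same roles (the $\tfrac78$ drift inequality from the proof of Lemma \ref{lem:5:1}, the gradient lower bound patched together from Lemmas \ref{lem:5:3} and \ref{lem:5:4} with Proposition \ref{prop:5:2} supplying the hypothesis $V\geq \varpi R^2$, and a Dol\'eans--Dade/Doob estimate), the only cosmetic differences being that you front-load the good event $E$ instead of removing the stopping times $\xi^{\varepsilon},\chi^{\varepsilon},\gamma^{\varepsilon}$ sequentially, and that you replace the explicit comparison with the ODE $\dot y=cy^{\psi}$ by a first-crossing argument for the curve $\phi(t)=((1-\psi)ct)^{1/(1-\psi)}$. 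Your case split $\alpha p<1$ versus $\alpha p\geq 1$ is equivalent to the paper's split $\alpha\geq p$ versus $\alpha<p$ (indeed $\tfrac{2\alpha}{1+\alpha}+\tfrac{(1-\alpha)p}{p+1}<1$ exactly when $\alpha p<1$), and the bookkeeping leading to $4\exp(-v_{0}/2)$ is sound.

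One intermediate claim needs repair: the estimate $\vert \nabla V(x)\vert^2\geq c_{1}V(x)^{\psi}$ does \emph{not} hold ``uniformly on $E$''. In the region $\{g\geq a_{0}\}$, Lemma \ref{lem:5:3} only provides the exponent $\tfrac{2\alpha}{1+\alpha}$, which is strictly smaller than your $\psi$ in both of your cases; since there $\vert\nabla V\vert^2\leq C_{0}^2\vert x\vert^{2\alpha}\leq C\,V^{2\alpha/(1+\alpha)}$, the inequality with exponent $\psi$ genuinely fails once $V$ is large, and nothing in the definition of $E$ prevents $V_{t}^{\varepsilon}$ from exceeding $1$ at later times. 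The two regional exponents can only be merged into a single $\psi<1$ on the set $\{V\leq 1\}$, i.e.\ for times $t\leq \Xi^{\varepsilon}$ --- this is precisely why $\Xi^{\varepsilon}$ appears in the statement, and the paper singles this out as the crucial use of the stopping at level $1$. Your subsequent application only invokes the bound for $s\leq t\leq \tau\wedge 1\wedge\Xi^{\varepsilon}$, where indeed $V_{s}^{\varepsilon}\leq 1$, so the proof survives once the estimate is stated with that restriction; but as written (and in your description of ``the hardest part'') the role of $V\leq 1$ in reconciling the exponents of Lemmas \ref{lem:5:3} and \ref{lem:5:4} is missing and should be made explicit.
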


\begin{remark}
\label{rem:5:6}
Observe that the simple fact that $V^{\varepsilon}$ remains positive on the event appearing in the above statement implies that 
\begin{equation*}
{\mathbb P}
\biggl(\forall 
t\in [0, 1 \wedge   \Xi^{\varepsilon}], 
\ 
V_{t}^\varepsilon  \geq 
\Bigl( (1-\psi) ct \Bigr)^{\tfrac{1}{1-\psi}}, \ g_{t}^{\varepsilon} >0
\biggr) \geq 1 - 4\exp \bigl( - \frac{v_{0}}{2} \bigr),
\end{equation*} 
\end{remark}

\begin{proof}
\textit{First step.}
Following the statement of Proposition \ref{prop:5:2}, we consider $v_{\star}$ and $\varphi_{\star}$
and then $\varpi$ {as therein}.  

Inspired by the proof of 
Proposition
\ref{prop:5:2}, we
 let
\begin{equation*}
\begin{split}
\chi^{\varepsilon} := \inf \Bigl\{ t \geq 0 : V_{t}^{\varepsilon} \leq \tfrac{\varpi}2 (R_{t}^{\varepsilon})^2 \Bigr\} \wedge 
\inf \Bigl\{t \geq 0 : V_{t}^{\varepsilon} \geq 1 \Bigr\}.
\end{split}
\end{equation*} 
Recalling the definition of $\gamma^{\varepsilon}$ in \eqref{eq:5:1}, we deduce from 
Lemmas \ref{lem:5:3} and \ref{lem:5:4} that, for $t \leq \gamma^{\varepsilon} \wedge \chi^{\varepsilon}$, 
\begin{equation}
\label{eq:5:5:1}
\bigl\vert \nabla V_{t}^{\varepsilon}
\bigr\vert^2 \geq c \bigl( V_{t}^{\varepsilon} \bigr)^\psi,
\end{equation}
where {$c:=c(\textbf{\bf A},\varpi)$ and with}
\begin{equation*}
\psi := \left\{
\begin{array}{lll}
\frac{2 \alpha}{1+\alpha} + \frac{(1-\alpha)p}{p+1} 
&<
\frac{2 \alpha}{1+ \alpha} + \frac{1 - \alpha}{2} = \frac{1 - \alpha^2 + 4\alpha}{2+2\alpha} \le 1
&\text{if } \alpha \in [ p,1) \\
\max \left\{ \frac{2 \alpha}{1+\alpha}, \frac{\alpha+p}{p+1} \right\} = \frac{\alpha+p}{p+1} &<1 &\text{if } \alpha<p.
\end{array}
\right.
\end{equation*}
(Notice that we here used the crucial fact that, 
up to time $\chi^{\varepsilon}$, $V^{\varepsilon}$ remains less than $1$.)
\vskip 4pt

\textit{Second step.}
Also, recall from the proof of 
Proposition \ref{prop:5:2}
 that, for 
{$v_{0} \geq v_{\star}(\textbf{\bf A})$}, and for 
$t \leq \gamma^{\varepsilon} \wedge \xi^{\varepsilon}$, 
\begin{equation*}
\ud V_{t}^{\varepsilon} \geq \frac78 \vert \nabla V_{t}^{\varepsilon} \vert^2 \ud t + 
\varepsilon \nabla V_{t}^{\varepsilon} \cdot \ud B_{t}.
\end{equation*}
So, for $v_{0}$ large enough and for a new value of the constant $c$ in 
\eqref{eq:5:5:1}, we deduce that, for $t \leq \gamma^{\varepsilon} \wedge \xi^{\varepsilon} \wedge \chi^{\varepsilon}$, 
\begin{equation*}
\ud V_{t}^\varepsilon  \geq \Bigl( \frac12 \vert \nabla V_{t}^\varepsilon  \vert^2 + 
c  (V_{t}^\varepsilon )^{\psi}\Bigr) \ud t + 
\varepsilon \nabla V_{t}^\varepsilon  \cdot \ud B_{t}.
\end{equation*}
In order to conclude, we let
{(following the proof of Proposition \ref{prop:5:2})}:
\begin{equation*}
N_{t}^\varepsilon  := \int_{0}^t  \nabla V_{s}^\varepsilon  \cdot \ud B_{s}.
\end{equation*}
On the event
\begin{equation}
\label{eq:5:5:2}
\biggl\{ \inf_{t \geq 0}
\biggl[
\frac{v_{0}}2 + \frac{1}{\varepsilon}N_{t}^\varepsilon  + \frac{1}{2\varepsilon^2} [N^\varepsilon ]_{t} \biggr] \geq 0 \biggr\},
\end{equation}
we get, for $t \leq \gamma^{\varepsilon} \wedge \xi^{\varepsilon} \wedge \chi^{\varepsilon}$, 
\begin{equation*}
V_{t}^\varepsilon  \geq \frac{\varepsilon^2 v_{0}}{2}  + \int_{0}^t 
c  (V_{s}^\varepsilon )^{\psi} \ud s.
\end{equation*}
We then compare $V^{\varepsilon}$ with the solution of the ODE
$\dot{y}_{t} = c y_{t}^{\psi}$, $y_{0}= \frac{\varepsilon^2 v_{0}}2$,
whose solution is given by 
\begin{equation*}
 y_{t} =  \Bigl( y_{0}^{1-\psi} + (1-\psi) ct \Bigr)^{\frac{1}{1-\psi}}, \quad t \geq 0.
\end{equation*}
By a standard comparison argument, we get that, under the condition 
\eqref{eq:5:5:2}, 
for all $t  \leq \gamma^{\varepsilon} \wedge \xi^{\varepsilon} \wedge \chi^{\varepsilon}$,
\begin{equation*}
V_{t}^\varepsilon  \geq
\Bigl( (1-\psi) ct \Bigr)^{\frac{1}{1-\psi}}. 
\end{equation*}
In particular, under the condition 
\eqref{eq:5:5:2}, we must have $\xi^{\varepsilon} \wedge \chi^{\varepsilon} <
\gamma^{\varepsilon}$. And so, the above inequality holds for
$t\leq  \xi^{\varepsilon} \wedge \chi^{\varepsilon}$. 
\vskip 4pt

\textit{Third step.}
We know that 
\begin{equation*}
{\mathbb P}
\biggl( \exists t \geq 0 : 
\frac{v_{0}}2 + \frac{1}{\varepsilon}N_{t}^\varepsilon  + \frac{1}{2\varepsilon^2} [N^\varepsilon ]_{t} \leq 0
\biggr)
=
{\mathbb P}
\biggl( \exists t \geq 0 : - \frac{1}{\varepsilon}N_{t}^\varepsilon  - \frac{1}{2\varepsilon^2} [N^\varepsilon ]_{t} \geq \frac{v_{0}}{2}
\biggr) \leq \exp \bigl( - \frac{v_{0}}{2} \bigr). 
\end{equation*}
and then, by the second step, 
\begin{equation*}
{\mathbb P}
\biggl(
\forall 
t\in [0, \xi^{\varepsilon} \wedge \chi^{\varepsilon}], 
\ 
V_{t}^\varepsilon  \geq 
\Bigl( (1-\psi) ct \Bigr)^{\frac{1}{1-\psi}}
\biggr) \geq 1 - \exp \bigl( - \frac{v_{0}}{2} \bigr).
\end{equation*}
By Lemma 
\ref{lem:5:1} (recalling that $v_{0}$ can be taken as large as needed), 
\begin{equation*}
{\mathbb P}
\biggl(
\forall 
t\in [0,   \chi^{\varepsilon}], 
\ 
V_{t}^\varepsilon  \geq 
\Bigl( (1-\psi) ct \Bigr)^{\frac{1}{1-\psi}}
\biggr) \geq 1 - 2\exp \bigl( - \frac{v_{0}}{2} \bigr).
\end{equation*}
Then,
by Proposition 
\ref{prop:5:2} ({choosing $\varphi \geq \varphi_{\star}(\textbf{\bf A})$ large enough}),
\begin{equation*}
{\mathbb P}
\biggl(\forall 
t\in [0, 1 \wedge   \chi^{\varepsilon}], 
\ 
V_{t}^\varepsilon  \geq 
\Bigl( (1-\psi) ct \Bigr)^{\frac{1}{1-\psi}}, 
\ 
V_{t}^\varepsilon  \geq 
\varpi (R_{t}^{\varepsilon})^2
\biggr) \geq 1 - 4\exp \bigl( - \frac{v_{0}}{2} \bigr).
\end{equation*}
{On the above event, it holds $1 \wedge   \chi^{\varepsilon} = 1 \wedge   \Xi^{\varepsilon}$ and so}
\begin{equation*}
{\mathbb P}
\biggl(\forall 
t\in [0, 1 \wedge   \Xi^{\varepsilon}], 
\ 
V_{t}^\varepsilon  \geq 
\Bigl( (1-\psi) ct \Bigr)^{\frac{1}{1-\psi}}
\biggr) \geq 1 - 4\exp \bigl( - \frac{v_{0}}{2} \bigr).
\end{equation*}
{The proof is complete}. 
\end{proof}

\subsection{Conclusion}
\label{subse:5:5}
We now end up the proof of Theorem \ref{thm:2:1}. 

To do so,
we consider $v_\star>0$ and $\varphi_\star>0$  as in the statement of Proposition 
\ref{prop:5:6}. 
We then invoke 
Propositions \ref{prop:hitting:potential} and 
\ref{prop:hitting:g>0} for $v_{0} \geq v_{\star}$. For $(A_{\varepsilon} := \vert \ln(\varepsilon) \vert)_{\varepsilon >0}$, we let 
$(t_{\varepsilon} := A_{\varepsilon} \varepsilon^{2(1-\alpha)/(1+\alpha)})_{\varepsilon >0}$. 
We know that 
\begin{equation*}
\liminf_{\varepsilon \searrow 0} {\mathbb P}
\bigl( \tau^{\varepsilon}(v_{0}) \leq t_{\varepsilon} \bigr) = 1,
\end{equation*} 
with $\tau^{\varepsilon}(v_{0})$ as in 
\eqref{eq:3:1}. Now, for a given ${\pi} >0$, we know that, for $\mu \leq \mu_{\star}:=\mu_{\star}(\pi,v_{0})$, 
\begin{equation*}
\liminf_{\varepsilon \searrow 0} {\mathbb P}
\bigl( g(X_{\tau^{\varepsilon}(v_{0})}^{\varepsilon}) \geq \mu \bigr) \geq 1 - \pi. 
\end{equation*} 
Hence, 
\begin{equation*}
\liminf_{\varepsilon \searrow 0} {\mathbb P}
\bigl( \tau^{\varepsilon}(v_{0}) \leq t_{\varepsilon}, \ g(X_{\tau^{\varepsilon}(v_{0})}^{\varepsilon}) \geq \mu \bigr) \geq 1 - \pi. 
\end{equation*} 
Notice now that, on the event $\{ \tau^{\varepsilon}(v_{0}) \leq t_{\varepsilon},  g(X_{\tau^{\varepsilon}(v_{0})}^{\varepsilon}) \geq \mu\}$, 
\begin{equation*}
\varepsilon^2 v_{0} = V_{\tau^{\varepsilon}(v_{0})}^{\varepsilon} \geq \mu \bigl(R_{\tau^{\varepsilon}(v_{0})}^{\varepsilon}\bigr)^{1+\alpha}.
\end{equation*}
In particular, 
\begin{equation*}
\bigl( R_{\tau^{\varepsilon}(v_{0})}^{\varepsilon} \bigr)^{-1} \geq \Bigl(  \frac{\mu}{\varepsilon^{2} v_{0}}\Bigr)^{\tfrac1{1+\alpha}},
\end{equation*}
so that
\begin{equation*}
V^{\varepsilon}_{\tau^{\varepsilon}(v_{0})} \geq \mu \bigl(R_{\tau^{\varepsilon}(v_{0})}^{\varepsilon}\bigr)^{\alpha-1} \bigl(R_{\tau^{\varepsilon}(v_{0})}^{\varepsilon}\bigr)^{2}
\geq \mu \Bigl(  \frac{\mu}{\varepsilon^{2} v_{0}}\Bigr)^{\tfrac{1-\alpha}{1+\alpha}} \bigl(R_{\tau^{\varepsilon}(v_{0})}^{\varepsilon}\bigr)^{2}.
\end{equation*}
Hence, {for 
$v_{0} \geq v_{\star}$, 
$\mu \leq \mu_{\star}$} and $\varphi>0$, 
\begin{equation*}
\liminf_{\varepsilon \searrow 0} {\mathbb P}
\Bigl( 
 \tau^{\varepsilon}(v_{0}) \leq t_{\varepsilon} , \
 g(X_{\tau^{\varepsilon}(v_{0})}^{\varepsilon}) \geq \mu, \ 
V^{\varepsilon}_{\tau^{\varepsilon}(v_{0})}\geq \varphi  \bigl(R_{\tau^{\varepsilon}(v_{0})}^{\varepsilon}\bigr)^{2}
\Bigr) \geq 1 - \pi. 
\end{equation*}
In particular, we can choose $\varphi=\varphi_{\star}$ in the above inequality. 
Combining with Proposition \ref{prop:5:6} through Markov property, this leads to 
{\begin{equation*}
\begin{split}
&\liminf_{\varepsilon \searrow 0}
{\mathbb P}
\biggl(\forall 
t\in [\tau^{\varepsilon}(v_{0}), (1+\tau^{\varepsilon}(v_{0})) \wedge   \Xi^{\varepsilon}], 
\ 
V_{t}^\varepsilon  \geq 
\Bigl( (1-\psi) c(t - \tau^{\varepsilon}(v_{0}))_{+} \Bigr)^{\frac{1}{1-\psi}}, 
\ 
g_{t}^{\varepsilon} >0
\biggr) 
\\
&\geq (1- \pi) \Bigl( 1 - 4\exp \bigl( - \frac{v_{0}}{2} \bigr) \Bigr),
\end{split}
\end{equation*}
and so $(1+t_{\varepsilon}) \wedge   \Xi^{\varepsilon}$ with $1 \wedge   \Xi^{\varepsilon}$)}
\begin{equation*}
\liminf_{\varepsilon \searrow 0}
{\mathbb P}
\biggl(\forall 
t\in [t_{\varepsilon}, 1 \wedge   \Xi^{\varepsilon}], 
\ 
V_{t}^\varepsilon  \geq 
\Bigl( (1-\psi) c(t - t_{\varepsilon})_{+} \Bigr)^{\frac{1}{1-\psi}}, 
\ 
g_{t}^{\varepsilon} >0
\biggr) \geq (1- \pi) \Bigl( 1 - 4\exp \bigl( - \frac{v_{0}}{2} \bigr) \Bigr),
\end{equation*}
where $c:=c(\textbf{\bf A})$ and where, as before,
$\Xi^{\varepsilon} = \inf \bigl\{t \geq 0 : V_{t}^{\varepsilon} \geq 1 \bigr\}$. 
Since $c$ is independent of $v_{0}$ and $\pi$ is arbitrary, we conclude that 
\begin{equation*}
\liminf_{\varepsilon \searrow 0}
{\mathbb P}
\biggl(\forall 
t\in [t_{\varepsilon}, 1 \wedge   \Xi^{\varepsilon}], 
\ 
V_{t}^\varepsilon  \geq 
\Bigl( (1-\psi) c(t - t_{\varepsilon})_{+} \Bigr)^{\frac{1}{1-\psi}}, 
\ 
g_{t}^{\varepsilon} >0
\biggr) =1. 
\end{equation*}
Reapplying Lemma 
\ref{lem:5:1}, we see that there are infinitesimal chances (as $\varepsilon \searrow 0$) for 
the potential to pass below $1/2$ after $\Xi^{\varepsilon}$. This suffices to say that 
\begin{equation*}
\liminf_{\varepsilon \searrow 0}
{\mathbb P}
\biggl(\forall 
t\in [t_{\varepsilon},1], 
\ 
V_{t}^\varepsilon  \geq 
\min \Bigl[ \tfrac12 , \Bigl( (1-\psi) c(t - t_{\varepsilon})_{+} \Bigr)^{\frac{1}{1-\psi}} \Bigr], 
\ 
g_{t}^{\varepsilon} >0
\biggr) =1. 
\end{equation*}
The conclusion easily follows.

\section{Following steepest lines}
\label{se:6}

The conclusion of the previous section is that the particle must escape from the origin and that it does 
so by staying inside the region 
$\{ g>0\}$. Meanwhile, it does not say anything regarding the typical sites of the region $\{g >0\}$ that the particle visits.
This is the question that we address in this section. To make it precise, we focus here on the typical directions that the particle follows. 
Throughout, we assume that both Assumptions \textbf{A} and \textbf{B} are in force.

\subsection{Distance to the boundary}
\label{subse:distance:boundary}
We recall the useful notation
$\bigl(
g_{t}^{\varepsilon} = g(X_{t}^{\varepsilon})
\bigr)_{t \geq 0}$. 
Similarly, we let
\begin{equation*}
\bigl(
\nabla g_{t}^{\varepsilon} := \nabla g(X_{t}^{\varepsilon}),
\Delta g_{t}^{\varepsilon} := \Delta g(X_{t}^{\varepsilon})
\bigr)_{t \geq 0}.
\end{equation*}
Since $g$ is ${\mathcal C}^{1,1}$ on any compact subset not containing $0$ and 
since the process $X^{\varepsilon}$ does not come back to $0$ with probability 1, It\^o's formula yields
\begin{equation*}
\begin{split}
\ud g_{t}^{\varepsilon} = \Bigl( \nabla g_{t}^{\varepsilon} \cdot \nabla V_{t}^{\varepsilon}
+ \frac{ \varepsilon^2}{2} \Delta g_{t}^{\varepsilon} \Bigr) \ud t 
+ \varepsilon \nabla g_{t}^{\varepsilon} \cdot \ud B_{t}, \quad t >0. 
\end{split}
\end{equation*}
Recalling the definition 
$\gamma^{\varepsilon} = \inf \bigl\{ t \geq 0 : g_{t}^{\varepsilon} = 0 \bigr\}$, 
we know that, for $t \in {(}0,\gamma^{\varepsilon})$, 
\begin{equation}
\label{eq:6:1}
\begin{split}
\nabla V_{t}^{\varepsilon} &= 
 \bigl( R_{t}^{\varepsilon} \bigr)^{1+\alpha}
\nabla g_{t}^{\varepsilon} 
+ (1+\alpha)  \bigl( R_{t}^{\varepsilon} \bigr)^{\alpha}  g_{t}^{\varepsilon} \frac{X_{t}^{\varepsilon}}{R_{t}^{\varepsilon}}
= \bigl( R_{t}^{\varepsilon} \bigr)^{1+\alpha}
\nabla g_{t}^{\varepsilon} 
+ (1+\alpha)  \bigl( R_{t}^{\varepsilon} \bigr)^{\alpha}  g_{t}^{\varepsilon} \theta_{t}^{\varepsilon},
\end{split} 
\end{equation}
where we let
\begin{equation}
\label{eq:def:thetatvarepsilon}
\theta_{t}^{\varepsilon} := \frac{X_{t}^{\varepsilon}}{R_{t}^{\varepsilon}} {\mathbf 1}_{\{ R_{t}^{\varepsilon} >0\}}, 
\quad t \geq 0.   
\end{equation}
We end up with
\begin{lemma}
\label{lem:decomposition:gt}
For any $t \in (0,\gamma^{\varepsilon})$,
\begin{equation}
\label{eq:decomposition:gt}
\begin{split}
\ud g_{t}^{\varepsilon} =  \Bigl[ \bigl( R_{t}^{\varepsilon} \bigr)^{1+\alpha} \bigl\vert \nabla g_{t}^{\varepsilon} \bigr\vert^2 
+ 
(1+\alpha)  \bigl( R_{t}^{\varepsilon} \bigr)^{\alpha}  g_{t}^{\varepsilon} 
\bigl( \nabla g_{t}^{\varepsilon} \cdot
\theta_{t}^{\varepsilon}
\bigr)
+ \frac{\varepsilon^2}{2}
\Delta g_{t}^{\varepsilon}
\Bigr] \ud t 
+ \varepsilon \nabla g_{t}^{\varepsilon} \cdot \ud B_{t}. 
\end{split}
\end{equation}
\end{lemma}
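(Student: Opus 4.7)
The proof is essentially a bookkeeping exercise combining It\^o's formula with the identity \eqref{eq:6:1} already stated in the text.

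My plan is as follows. First, since $g$ is only ${\mathcal C}^{1,1}$ on compact subsets of $\RR^d \setminus \{0\}$, I cannot apply It\^o's formula globally. However, the text already observes that (by the Girsanov equivalence of $X^\varepsilon$ with $\varepsilon B$) the process $X^\varepsilon$ avoids the origin after time $0$ almost surely. Hence for any $t_0 > 0$, on $[t_0, \gamma^\varepsilon)$ the process stays in a region where $g$ is locally ${\mathcal C}^{1,1}$. A standard localization argument via the stopping times $\tau_n := \inf\{t \geq t_0 : R_t^\varepsilon \leq 1/n\} \wedge n$ then allows to apply the It\^o-Krylov formula (or the classical It\^o formula on approximating smooth regions) to $g(X_t^\varepsilon)$, yielding
\begin{equation*}
\ud g_t^\varepsilon = \Bigl( \nabla g_t^\varepsilon \cdot \nabla V_t^\varepsilon + \frac{\varepsilon^2}{2} \Delta g_t^\varepsilon \Bigr) \ud t + \varepsilon \nabla g_t^\varepsilon \cdot \ud B_t, \quad t \in (0,\gamma^\varepsilon).
\end{equation*}

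Second, I simply substitute the identity \eqref{eq:6:1} into the drift term. Indeed, for $t \in (0,\gamma^\varepsilon)$ we have $g_t^\varepsilon > 0$ and $R_t^\varepsilon > 0$, so writing $V(x) = g(x)|x|^{1+\alpha}$ (the $h$-part vanishes on $\{g>0\}$ by Assumption \textbf{A1}) one differentiates to obtain
\begin{equation*}
\nabla V_t^\varepsilon = (R_t^\varepsilon)^{1+\alpha} \nabla g_t^\varepsilon + (1+\alpha)(R_t^\varepsilon)^{\alpha} g_t^\varepsilon \, \theta_t^\varepsilon,
\end{equation*}
with $\theta_t^\varepsilon$ as in \eqref{eq:def:thetatvarepsilon}. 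Taking the inner product with $\nabla g_t^\varepsilon$ gives
\begin{equation*}
\nabla g_t^\varepsilon \cdot \nabla V_t^\varepsilon = (R_t^\varepsilon)^{1+\alpha} |\nabla g_t^\varepsilon|^2 + (1+\alpha)(R_t^\varepsilon)^{\alpha} g_t^\varepsilon \bigl( \nabla g_t^\varepsilon \cdot \theta_t^\varepsilon \bigr),
\end{equation*}
which is exactly the bracket appearing in \eqref{eq:decomposition:gt}. Combining with the stochastic integral and the $\frac12 \varepsilon^2 \Delta g_t^\varepsilon$ term gives the result.

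There is no real obstacle here: the only mild technical point is the justification of It\^o's formula for a function that is not globally ${\mathcal C}^2$, handled via the localization sketched above, together with the fact that $g$ agrees locally (on $\{g>0\}$) with a function that is ${\mathcal C}^{1,1}$ on compact subsets of $\RR^d \setminus \{0\}$. Everything else is algebra, and the computation of $\nabla V$ on $\{g>0\}$ was already carried out in \eqref{eq:6:1} immediately preceding the lemma statement.
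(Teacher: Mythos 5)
Your proposal is correct and follows essentially the same route as the paper: apply It\^o's formula to $g(X^{\varepsilon})$ (licit because $X^{\varepsilon}$ almost surely avoids the origin after time $0$ and $g$ is ${\mathcal C}^{1,1}$ away from $0$), then substitute the decomposition \eqref{eq:6:1} of $\nabla V$ on $\{g>0\}$ into the drift. The only cosmetic difference is that you spell out the localization and the vanishing of the $h$-term, which the paper leaves implicit.
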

{Obviously, Lemma \ref{lem:decomposition:gt} extends to any random initial time
(given in the form of a stopping time) provided that $\gamma^{\varepsilon}$ is defined accordingly.}
The next lemma identifies the behavior of the drift in the above decomposition when the particle is inside the region 
$\{g >0\}$ but close to the boundary.

\begin{lemma}
\label{lem:tau:g}
Given $v_{0}>0$ and $a>0$, {we can find a constant $C:= {C(\textbf{\bf A},\textbf{\bf B}) }>1$ such that, 
for $a \leq a_{\star}(\textbf{\bf A},\textbf{\bf B})$ and 
 $v_{0} \geq v_{\star}(\textbf{\bf A},\textbf{\bf B})$, and for any $\varepsilon >0$ and  
$t >0$ such that 
\begin{equation*}
V_{t}^{\varepsilon} \geq  {\tfrac12} \varepsilon^2 v_{0}, \quad
\text{and}
\quad 
g_{t}^{\varepsilon} \leq a,
\end{equation*}
it holds that}
  \begin{equation*}
\begin{split}
\frac1{C} \frac{V_{t}^{\varepsilon}}{g_{t}^{\varepsilon}}  \bigl\vert \nabla g_{t}^{\varepsilon} \bigr\vert^2
&\leq
 \bigl( R_{t}^{\varepsilon} \bigr)^{1+\alpha} \bigl\vert \nabla g_{t}^{\varepsilon} \bigr\vert^2 
+ 
(1+\alpha)  \bigl( R_{t}^{\varepsilon} \bigr)^{\alpha}  g_{t}^{\varepsilon} 
\bigl( \nabla g_{t}^{\varepsilon} \cdot {\theta_{t}^{\varepsilon}}
\bigr)
+ \frac{\varepsilon^2}{2}
\Delta g_{t}^{\varepsilon} 
\leq {C} 
\frac{V_{t}^{\varepsilon}}{g_{t}^{\varepsilon}}   \bigl\vert \nabla g_{t}^{\varepsilon} \bigr\vert^2.
\end{split}
\end{equation*}
\end{lemma}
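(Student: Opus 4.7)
The plan is to exploit the product structure $V(x)=g(x)|x|^{1+\alpha}$ available on $\{g>0\}$ and to show, under the hypotheses, that the first term in the left--hand side of the decomposition \eqref{eq:decomposition:gt} dominates. Once this is done, the claim follows from the identity $V_t^\varepsilon/g_t^\varepsilon=(R_t^\varepsilon)^{1+\alpha}$, which rewrites the first term as $(V_t^\varepsilon/g_t^\varepsilon)|\nabla g_t^\varepsilon|^2$.

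First, I would derive two-sided bounds on $|\nabla g_t^\varepsilon|$. Since $g_t^\varepsilon \le a \le a_0$, Assumption (\textbf{A4}) gives $g \asymp L^{p+1}$ and $|\nabla V|\asymp L^p |x|^\alpha$ (with implied constants depending only on $\textbf{A}$). Solving \eqref{eq:6:1} for $\nabla g_t^\varepsilon$ yields
\[
(R_t^\varepsilon)^{1+\alpha}\nabla g_t^\varepsilon = \nabla V_t^\varepsilon -(1+\alpha)(R_t^\varepsilon)^\alpha g_t^\varepsilon \theta_t^\varepsilon,
\]
so that, using $g_t^\varepsilon \le C_0 L^{p+1}$ together with the lower bound on $|\nabla V_t^\varepsilon|$ and taking $a$ small enough,
\[
\tfrac{c_0}{2} L^p (R_t^\varepsilon)^{-1} \le |\nabla g_t^\varepsilon| \le 2 C_0 L^p (R_t^\varepsilon)^{-1}.
\]
Consequently the first term in \eqref{eq:decomposition:gt} is of order $L^{2p}(R_t^\varepsilon)^{\alpha-1}$.

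Next, I would bound the second and third terms relative to this. For the second term, $|(1{+}\alpha)(R_t^\varepsilon)^\alpha g_t^\varepsilon(\nabla g_t^\varepsilon\cdot\theta_t^\varepsilon)|\le C L^{2p+1}(R_t^\varepsilon)^{\alpha-1}$, so its ratio to the first term is $\lesssim L\lesssim g_t^{\varepsilon, 1/(p+1)}\le a^{1/(p+1)}$, which becomes arbitrarily small by choosing $a$ small enough. For the third term I would compute $\Delta g$ from $V=g|x|^{1+\alpha}$. Using $\Delta(|x|^{1+\alpha})=(1+\alpha)(d-1+\alpha)|x|^{\alpha-1}$ and taking the trace in the product rule gives
\[
(R_t^\varepsilon)^{1+\alpha}\Delta g_t^\varepsilon
=\Delta V_t^\varepsilon -2(1+\alpha)(R_t^\varepsilon)^{\alpha}(\nabla g_t^\varepsilon\cdot\theta_t^\varepsilon) -(1+\alpha)(d{+}\alpha{-}1) g_t^\varepsilon (R_t^\varepsilon)^{\alpha-1}.
\]
Plugging in $|\Delta V|\le C L^{p-1}|x|^{\alpha-1}$ from (\textbf{B1}) (here I use $p\ge 1$) and the estimates just obtained, the first contribution dominates, giving $|\Delta g_t^\varepsilon|\le C L^{p-1}(R_t^\varepsilon)^{-2}$. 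The ratio of the third term to the first is then
\[
\frac{\varepsilon^2 |\Delta g_t^\varepsilon|}{2(R_t^\varepsilon)^{1+\alpha}|\nabla g_t^\varepsilon|^2}
\le \frac{C\varepsilon^2}{L^{p+1}(R_t^\varepsilon)^{1+\alpha}}
\le \frac{C' \varepsilon^2}{V_t^\varepsilon}\le \frac{2 C'}{v_0},
\]
by the hypothesis $V_t^\varepsilon\ge \tfrac12 \varepsilon^2 v_0$ and $g\asymp L^{p+1}$. Choosing $v_0\ge v_\star$ large enough makes this ratio as small as needed.

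Combining the three estimates, one can choose $a_\star$ and $v_\star$ so that the second and third terms each contribute at most $1/4$ of the first in absolute value, which yields the two-sided comparison with some $C:=C(\textbf{A},\textbf{B})$. The only mild care needed is that $\Delta g_t^\varepsilon$ is defined only almost everywhere; this is harmless since $X_t^\varepsilon$ admits an absolutely continuous law for $t>0$, exactly as in the proof of Lemma \ref{lem:supermartingale:2}. The main technical step is the computation of $|\nabla g|$ and $|\Delta g|$ from Assumptions (\textbf{A4}) and (\textbf{B1}); the rest is bookkeeping.
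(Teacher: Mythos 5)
Your proof is correct and follows essentially the same route as the paper's: two-sided bounds on $\vert \nabla g \vert$ and a bound on $\vert \Delta g \vert$ obtained from \eqref{eq:6:1}, (\textbf{A4}) and (\textbf{B1}) via the product structure $V=g\vert x\vert^{1+\alpha}$, followed by absorbing the cross term through the factor $L\lesssim a^{1/(p+1)}$ (small $a$) and the Laplacian term through the factor $\varepsilon^2/V_t^{\varepsilon}\lesssim 1/v_0$ (large $v_0$). The only cosmetic difference is that you phrase the intermediate estimates in terms of $L$ rather than in terms of powers $g^{p/(p+1)}$, $g^{(p-1)/(p+1)}$ as the paper does, which is equivalent since $g\asymp L^{p+1}$ by (\textbf{A4}).
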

{Notice that the term in the middle in the above inequality exactly matches} the drift in the decomposition 
\eqref{eq:decomposition:gt}. 

\begin{proof}
We recall the following two bounds from (\textbf{A4}):
\begin{equation*}
\begin{split}
&c_{0} L(x)^{p+1}  \vert x \vert^{1+\alpha} \leq V(x) \leq C_{0}
 L(x)^{p+1}  \vert x \vert^{1+\alpha},
 \quad  c_{0} L(x)^p  \vert x \vert^{\alpha} \leq \bigl\vert \nabla V(x)
 \bigr\vert \leq C_{0}
 L(x)^p  \vert x \vert^{\alpha},
\end{split}
\end{equation*}
for  {$x \not =0$} such that $g(x) \in (0,a_{0})$. Obviously, the first equation says that 
\begin{equation}
\label{eq:6:2}
c_{0} L(x)^{p+1}  
 \leq g(x) \leq C_{0}
L(x)^{p+1},
\end{equation}
and, inserting {the corresponding form of} \eqref{eq:6:1}, the second one yields 
 \begin{equation*}
 \begin{split}
&\Bigl( c_{0} 
L(x)^p  
- C_{0} (1+\alpha) 
L(x)^{p+1}  
\Bigr) \vert x \vert^{\alpha}
\leq \bigl\vert \nabla g(x)
 \bigr\vert 
 \vert x \vert^{1+\alpha}
\leq 
\Bigl( C_{0} L(x)^p  
+
C_{0} (1+\alpha) 
L(x)^{p+1}  
\Bigr) \vert x \vert^{\alpha},
\end{split}
\end{equation*}
for $x$ such that $g(x) \in (0,a_{0})$. By downsizing the value of $a_{0}$, 
we can render 
$L(x)$
as small as needed. Modifying $c_{0}$ and $C_{0}$ accordingly, we obtain
 \begin{equation*}
 \begin{split}
& c_{0} 
L(x)^p  
 \vert x \vert^{\alpha}
\leq \bigl\vert \nabla g(x)
 \bigr\vert 
 \vert x \vert^{1+\alpha}
\leq 
 C_{0} L(x)^p  
 \vert x \vert^{\alpha},
 \end{split}
\end{equation*}
for $g(x) \in (0,a_{0})$. Using \eqref{eq:6:2} again, 
we get, for a constant $C=C(\textbf{A},\textbf{B}) \geq 1$,
 \begin{equation}
 \label{eq:6:4}
 \begin{split}
&\frac1{C}
\frac{1}{\vert x \vert}
g^{\tfrac{p}{p+1}}(x) 
\leq \bigl\vert \nabla g(x)
 \bigr\vert 
\leq 
 C
 \frac{1}{\vert x \vert}
g^{\tfrac{p}{p+1}}(x),
 \end{split}
\end{equation}
for $g(x) \in (0,a_{0})$, and then, allowing the constant $C$ to increase from line to line,
 \begin{equation}
 \label{eq:6:3}
 \begin{split}
&\frac1{C}
\frac{1}{\vert x \vert^2}
g^{\tfrac{2p}{p+1}}(x) 
\leq \bigl\vert \nabla g(x)
 \bigr\vert^2 
\leq 
 C
 \frac{1}{\vert x \vert^2}
g^{\tfrac{2p}{p+1}}(x) . 
 \end{split}
\end{equation}
Recalling that $V(x) = g(x) \vert x \vert^{1+\alpha}$, we deduce
 \begin{equation*}
 \begin{split}
&\frac1{C}
\frac{V(x)}{\vert x \vert^2}
g^{\tfrac{p-1}{p+1}}(x) 
\leq \bigl\vert \nabla g(x)
 \bigr\vert^2 \vert x \vert^{1+\alpha}
\leq 
 C
 \frac{V(x)}{\vert x \vert^2}
g^{\tfrac{p-1}{p+1}}(x),
 \end{split}
\end{equation*}
from which we get, by choosing $a$ accordingly in the statement, 
\begin{equation}
\label{eq:6:5}
\frac1{C} \frac{V_{t}^{\varepsilon}}{(R_{t}^{\varepsilon})^2} \bigl( g_{t}^{\varepsilon} \bigr)^{\tfrac{p-1}{p+1}}
\leq 
\bigl( R_{t}^{\varepsilon} \bigr)^{1+\alpha} \bigl\vert \nabla g_{t}^{\varepsilon} \bigr\vert^2 
\leq 
C
 \frac{V_{t}^{\varepsilon}}{(R_{t}^{\varepsilon})^2} \bigl( g_{t}^{\varepsilon} \bigr)^{\tfrac{p-1}{p+1}}, 
 \quad
 {\text{for }t >0 \, : \, 
 {g_{t}^{\varepsilon} \leq a}.}
\end{equation}
Also, by \eqref{eq:6:4}, we obtain
\begin{equation}
\label{eq:6:6}
\begin{split}
\bigl( R_{t}^{\varepsilon} \bigr)^{\alpha}  g_{t}^{\varepsilon} 
\bigl\vert \nabla g_{t}^{\varepsilon} \cdot
\theta_{t}^{\varepsilon}
\bigr\vert 
\leq 
\bigl( R_{t}^{\varepsilon} \bigr)^{\alpha}  g_{t}^{\varepsilon} 
\bigl\vert \nabla g_{t}^{\varepsilon} 
\bigr\vert 
&\leq 
C 
\frac{V_{t}^{\varepsilon}}{(R_{t}^{\varepsilon})^2}
\bigl( g_{t}^{\varepsilon} \bigr)^{\tfrac{p}{p+1}}
\leq
C
\bigl( g_{t}^{\varepsilon} \bigr)^{\tfrac{1}{p+1}}
\bigl( R_{t}^{\varepsilon} \bigr)^{1+\alpha} \bigl\vert \nabla g_{t}^{\varepsilon} \bigr\vert^2, 
\end{split}
\end{equation}
for  {$t>0$ such that 
$g_{t}^{\varepsilon} \leq a$}.
Lastly, by following \eqref{eq:6:1} and using  \eqref{eq:6:4}  {again}, we have
\begin{equation*}
\begin{split}
\Bigl\vert \Delta V(x) -
 \vert x \vert^{1+\alpha}
\Delta g(x) \Bigr\vert &\leq C \vert x \vert^{\alpha}
\bigl\vert 
\nabla g(x)
\bigr\vert  + C \vert x \vert^{\alpha-1} g(x)
\leq C  \vert x \vert^{\alpha-1} g(x)^{\tfrac{p}{p+1}},
\end{split}
\end{equation*}
for $g(x) \in (0,a_{0})$. 
By assumption (\textbf{B1}) and equation \eqref{eq:6:2}, we deduce that 
\begin{equation*}
\begin{split}
 \vert x \vert^{1+\alpha}\bigl\vert 
\Delta g(x) \bigr\vert &\leq C \vert x \vert^{\alpha-1} \Bigl(  g(x)^{\tfrac{p-1}{p+1}} +   g(x)^{\tfrac{p}{p+1}}\Bigr)
\leq C \vert x \vert^{\alpha-1}  g(x)^{\tfrac{p-1}{p+1}},
\end{split}
\end{equation*}
and then, for 
 ${t >0 \
 \text{such that} \ 
 V_{t}^{\varepsilon} \geq  {\tfrac12} \varepsilon^2 v_{0}
\ 
\text{and}
\
g_{t}^{\varepsilon} \leq a}$, 
\begin{equation*}
\begin{split}
\varepsilon^2 \, \bigl\vert \Delta g_{t}^{\varepsilon} \bigr\vert \leq C \frac{\varepsilon^2}{(R_{t}^{\varepsilon})^2}
\bigl( g_{t}^{\varepsilon} \bigr)^{\tfrac{p-1}{p+1}}
&= C \frac{\varepsilon^2}{V_{t}^{\varepsilon}} \frac{V_{t}^{\varepsilon}}{(R_{t}^{\varepsilon})^2}
\bigl( g_{t}^{\varepsilon} \bigr)^{\tfrac{p-1}{p+1}}
\leq \frac{C}{v_{0}} \frac{V_{t}^{\varepsilon}}{(R_{t}^{\varepsilon})^2}
\bigl( g_{t}^{\varepsilon} \bigr)^{\tfrac{p-1}{p+1}}.
\end{split}
\end{equation*}
By \eqref{eq:6:5}, we deduce that 
\begin{equation}
\label{eq:6:7}
\begin{split}
\varepsilon^2 \, \bigl\vert \Delta g_{t}^{\varepsilon}\bigr\vert \leq \frac{C}{v_{0}} \bigl( R_{t}^{\varepsilon} \bigr)^{1+\alpha} \bigl\vert \nabla g_{t}^{\varepsilon} \bigr\vert^2.
\end{split}
\end{equation}
We complete the proof by collecting 
\eqref{eq:6:6} and \eqref{eq:6:7}, by choosing $v_{0}$ large enough 
{and $a$ small enough}
and by using the fact that $(V_{t}^{\varepsilon}/g_{t}^{\varepsilon}) \vert \nabla g_{t}^{\varepsilon} \vert^2 = (R_{t}^{\varepsilon})^{1+\alpha} \vert \nabla g_{t}^{\varepsilon} \vert^2$. 
\end{proof}

The next step is to prove that 
the probability that the process $(g_{t}^{\varepsilon})_{t \geq 0}$ 
reaches a given threshold in infinitesimal time converges to 1 as 
$\varepsilon$ to $0$. 

\begin{proposition}
\label{prop:hitting:g:a}
Consider a tight collection of initial conditions 
$(X_{0}^{\varepsilon})_{\varepsilon >0}$ (in the sense that the collection of the laws of these random variables is tight) such that, for some $v_{0} >0$, for all $\varepsilon >0$,
$V_{0}^{\varepsilon} \geq v_{0} \varepsilon^2$ and 
$g_{0}^{\varepsilon} >0$. 
Assume also that, for any $A>0$, 
\begin{equation}
\label{eq:se:6:condition:initial:radius}
\lim_{\varepsilon \searrow 0}
{\mathbb P}
\Bigl( 
V_{0}^{\varepsilon} \geq A (R_{0}^{\varepsilon})^2 \Bigr)=1.
\end{equation}
Then, for any $\pi >0$, we can find  {two positive thresholds} $a_{\star}:=a_{\star}(\textbf{\bf A},\textbf{\bf B}) >0$ and $v_{\star}:=v_{\star}(\textbf{\bf A},\textbf{\bf B},\pi) >0$ 
and {a sequence of infinitesimal times $(t_{\varepsilon})_{\varepsilon >0}$, only depending on the parameters in assumptions \textbf{\bf A} and \textbf{\bf B}}, 
such that, for $a \leq a_{\star}$
and $v_{0} \geq v_{\star}$,
\begin{equation*}
 \liminf_{\varepsilon \searrow 0} {\mathbb P} \bigl( \exists t \in [0,t_{\varepsilon}] : g_{t}^{\varepsilon} \geq a 
  \bigr) \geq  {1-\pi}. 
 \end{equation*}
\end{proposition}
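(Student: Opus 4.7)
The plan is to combine a reduction step based on the assumption~\eqref{eq:se:6:condition:initial:radius}, a freezing of $R_t^\varepsilon$ and $V_t^\varepsilon$ on an infinitesimal window $[0,t_\varepsilon]$, and a drift--martingale analysis of the semimartingale $g_t^\varepsilon$ using Lemma~\ref{lem:tau:g}.

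First, the assumption~\eqref{eq:se:6:condition:initial:radius} together with tightness of $(X_0^\varepsilon)_\varepsilon$ yields, via a diagonal extraction, a slowly diverging sequence $A_\varepsilon \to \infty$ such that $\PP(V_0^\varepsilon \geq A_\varepsilon (R_0^\varepsilon)^2,\ \vert X_0^\varepsilon\vert \leq M) \to 1$. Combining with $V_0^\varepsilon \leq C_0 (R_0^\varepsilon)^{1+\alpha}$ from (\textbf{\bf A2}) forces, on this event, $R_0^\varepsilon \leq \rho_\varepsilon := (C_0/A_\varepsilon)^{1/(1-\alpha)}$, which is infinitesimal. We may choose $A_\varepsilon$ slow enough that $\rho_\varepsilon \gg \varepsilon^{2/(1+\alpha)}$ (consistent with the lower bound $R_0^\varepsilon \geq (v_0\varepsilon^2/C_0)^{1/(1+\alpha)}$ forced by $V_0^\varepsilon \geq v_0\varepsilon^2$) and we may assume $g_0^\varepsilon < a$ (otherwise the conclusion is immediate at $t=0$).

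Next, I would set $t_\varepsilon$ as a small multiple of $\rho_\varepsilon^{1-\alpha}$, which is infinitesimal, and show that on $[0,t_\varepsilon]$ one has $R_t^\varepsilon \leq 2\rho_\varepsilon$ and $V_t^\varepsilon \geq V_0^\varepsilon/2 \geq v_0\varepsilon^2/2$ with probability tending to $1$. The first bound follows from the SDE for $X^\varepsilon$, the gradient estimate $\vert \nabla V\vert \leq C_0 R^\alpha$ of (\textbf{\bf A2}) (so the drift contribution is at most $C_0(2\rho_\varepsilon)^\alpha t_\varepsilon \ll \rho_\varepsilon$), and a Doob maximal inequality showing that $\varepsilon \sup_{t \leq t_\varepsilon}\vert B_t\vert = o(\rho_\varepsilon)$, thanks to $\varepsilon\sqrt{t_\varepsilon} = o(\rho_\varepsilon)$. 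The second bound is a direct consequence of Lemma~\ref{lem:5:1} (restarted at $X_0^\varepsilon$), with exceptional probability at most $\exp(-v_0/2)$.

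On this good event, provided $a \leq a_\star(\textbf{\bf A},\textbf{\bf B})$ and $v_0 \geq v_\star(\textbf{\bf A},\textbf{\bf B})$ so that Lemma~\ref{lem:tau:g} is applicable, the decomposition~\eqref{eq:decomposition:gt} exhibits a drift for $g_t^\varepsilon$ bounded below by $(1/C)(V_t^\varepsilon/g_t^\varepsilon)\vert \nabla g_t^\varepsilon\vert^2 = (1/C)(R_t^\varepsilon)^{1+\alpha}\vert \nabla g_t^\varepsilon\vert^2$. By \eqref{eq:6:3} and $R_t^\varepsilon \leq 2\rho_\varepsilon$, this majorises $c'(2\rho_\varepsilon)^{\alpha-1}(g_t^\varepsilon)^{2p/(p+1)}$, which diverges as $\varepsilon \to 0$; more importantly, the ratio between this drift and the quadratic variation $\varepsilon^2\vert \nabla g_t^\varepsilon\vert^2$ of the martingale part equals $V_t^\varepsilon/(Cg_t^\varepsilon\varepsilon^2) \geq v_0/(2Ca)$, which is large once $v_0$ is chosen large relative to $a$. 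I would then apply a Dambis--Dubins--Schwarz time change with clock $\sigma(t) = \int_0^t \vert \nabla g_s^\varepsilon\vert^2\,ds$: in the new clock, $g$ satisfies $dg \geq \beta\,d\sigma + \varepsilon\,dW_\sigma$ with $\beta \geq v_0\varepsilon^2/(2Ca)$ and $W$ a standard Brownian motion, so a classical Brownian maximal estimate yields $\PP\bigl(\exists \sigma \leq \bar\sigma : g \geq a\bigr) \geq 1-\pi$ at clock time $\bar\sigma = O(a\varepsilon^{-2}/v_0)$. Inverting the time change via the deterministic majorant ODE $\dot y = c'(2\rho_\varepsilon)^{\alpha-1} y^{2p/(p+1)}$ (which reaches $a$ in real time of order $a^{2/(p+1)}\rho_\varepsilon^{1-\alpha}$) translates this clock bound into an infinitesimal real time contained in $[0,t_\varepsilon]$.

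The main obstacle is the simultaneous control of drift and martingale when $g_0^\varepsilon$ may itself be infinitesimal: both the drift lower bound and the quadratic variation are proportional to $\vert \nabla g_t^\varepsilon\vert^2$, so a direct Burkholder--Davis--Gundy argument cannot separate them. The Dambis--Dubins--Schwarz step is tailor-made for this situation, since it factors out the common $\vert \nabla g\vert^2$ and reduces drift dominance to the single quantitative condition $v_0 \gg Ca$, which is exactly what the choice $v_0 \geq v_\star(\textbf{\bf A},\textbf{\bf B},\pi)$ secures.
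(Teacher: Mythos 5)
Your reduction step (using \eqref{eq:se:6:condition:initial:radius} to force $R_{0}^{\varepsilon}\leq\rho_{\varepsilon}$ infinitesimal with $\rho_{\varepsilon}\gg\varepsilon^{2/(1+\alpha)}$), the confinement $R_{t}^{\varepsilon}\leq 2\rho_{\varepsilon}$ on a window of length $\sim\rho_{\varepsilon}^{1-\alpha}$, the use of Lemma \ref{lem:5:1} to keep $V_{t}^{\varepsilon}\geq\tfrac12 v_{0}\varepsilon^{2}$ and $g_{t}^{\varepsilon}>0$, and the observation that the drift-to-quadratic-variation ratio in \eqref{eq:decomposition:gt} is $\geq v_{0}/(2Ca)$ via Lemma \ref{lem:tau:g} are all sound. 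The gap is in the last step, the conversion of the Dambis--Dubins--Schwarz clock bound into a real-time bound. The DDS argument only controls the clock $\sigma(t)=\int_{0}^{t}\vert\nabla g_{s}^{\varepsilon}\vert^{2}\,ds$ at which $g$ reaches $a$; it says nothing about how much real time is needed for $\sigma$ to accumulate. Since $\vert\nabla g\vert^{2}\asymp g^{2p/(p+1)}/\vert x\vert^{2}$ by \eqref{eq:6:3}, the clock degenerates as $g\downarrow 0$, and the hypothesis only gives $g_{0}^{\varepsilon}>0$ — it may be, say, $\exp(-\varepsilon^{-10})$. Your proposed inversion through the majorant ODE $\dot y=c'(2\rho_{\varepsilon})^{\alpha-1}y^{2p/(p+1)}$ fails precisely here: Assumption (\textbf{B1}) imposes $p\geq 1$, so the exponent $2p/(p+1)\geq 1$, and the time for this ODE to go from $y_{0}$ to $a$ is of order $\rho_{\varepsilon}^{1-\alpha}\,y_{0}^{(1-p)/(1+p)}$ (or $\rho_{\varepsilon}^{1-\alpha}\log(a/y_{0})$ when $p=1$), which is \emph{not} uniformly infinitesimal as $y_{0}=g_{0}^{\varepsilon}\downarrow 0$; the claimed bound $a^{2/(p+1)}\rho_{\varepsilon}^{1-\alpha}$ would only be correct for $p<1$. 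The same degeneracy kills any attempt to lower bound the clock accrual directly (even granting $\tilde g_{\sigma}\gtrsim\beta\sigma$ in the clock, the real time $\int_{0}^{\bar\sigma}\vert\nabla g\vert^{-2}d\sigma\gtrsim\rho_{\varepsilon}^{2}\beta^{-2p/(p+1)}\int_{0}^{\bar\sigma}\sigma^{-2p/(p+1)}d\sigma$ diverges at $\sigma=0$). In short, the drift of $g$ is too degenerate near $g=0$ for a direct comparison/time-change argument to produce a deterministic infinitesimal $t_{\varepsilon}$.

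This is exactly the difficulty the paper's proof is built to avoid: it never bounds the hitting time of $a$ through the dynamics of $g$ itself. Instead, it shows (Gronwall on \eqref{eq:Vtvarepsilon:near:boundary}) that as long as $g^{\varepsilon}\leq a$ the potential satisfies $V_{t}^{\varepsilon}\geq\tfrac12 V_{0}^{\varepsilon}\exp\bigl(c\,a^{-1/(p+1)}\int_{0}^{t}\vert\nabla V_{s}^{\varepsilon}\vert/R_{s}^{\varepsilon}\,ds\bigr)$ — note the factor $a^{-1/(p+1)}$, which gets \emph{stronger}, not weaker, when $a$ is small — while $(R^{\varepsilon})^{2}+k\varepsilon^{2}$ grows only like a small power of the same exponential. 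Combining the two, feeding in the macroscopic lower bound $V_{t}^{\varepsilon}\gtrsim t^{\psi'}$ from Proposition \ref{prop:5:6}, one gets $\int_{0}^{t_{\varepsilon}}V_{s}^{\varepsilon}/(R_{s}^{\varepsilon})^{2}\,ds\to\infty$, hence $V_{t_{\varepsilon}}^{\varepsilon}$ would have to blow up, contradicting the trivial pathwise bound $V_{t_{\varepsilon}}^{\varepsilon}\leq V_{0}^{\varepsilon}+Ct_{\varepsilon}+\varepsilon\sup_{t\leq t_{\varepsilon}}\vert B_{t}\vert$. That contradiction argument is insensitive to how small $g_{0}^{\varepsilon}$ is, which is the feature your approach is missing; to repair your proof you would need some mechanism of this type (or an a priori lower bound on $g$ after an infinitesimal time), not merely a larger $v_{0}$.
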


\begin{proof}
\textit{First step.}
{Let 
\begin{equation*}
\Gamma^{\varepsilon} := \inf \bigl\{ t \geq 0 : V_{t}^{\varepsilon} \leq \tfrac{1}{2} V_{0}^{\varepsilon} \bigr\} 
\wedge \inf \bigl\{ t \geq 0 : g_{t}^{\varepsilon} \geq a \bigr\}.
\end{equation*}
Recall also}
from the proof of Lemma 
\ref{lem:5:1}
 that there exists a constant {$c:=c(\textbf{\bf A},\textbf{\bf B})>0$ such that, 
for $v_{0} \geq v_{\star}(\textbf{\bf A},\textbf{\bf B})$ 
and 
$a \leq a_{\star}(\textbf{\bf A},\textbf{\bf B})$}, 
and  for $0 < t < \Gamma^{\varepsilon} \wedge \gamma^{\varepsilon}$,  
\begin{equation}
\label{eq:Vtvarepsilon:near:boundary}
\begin{split}
\ud V_{t}^{\varepsilon} &\geq \tfrac78 \vert \nabla V_{t}^{\varepsilon} \vert^2 \ud t  + \varepsilon \nabla V_{t}^{\varepsilon} \cdot \ud B_{t}
\geq c \bigl( g_{t}^{\varepsilon} \bigr)^{-\tfrac{1}{p+1}} V_{t}^{\varepsilon} \frac{\vert \nabla V_{t}^{\varepsilon} \vert }{R_{t}^{\varepsilon}} \ud t + 
\tfrac12 \vert \nabla V_{t}^{\varepsilon} \vert^2 dt + \varepsilon \nabla V_{t}^{\varepsilon} \cdot \ud B_{t},
\end{split}
\end{equation}
where we used 
(\textbf{A4}) and \eqref{eq:6:2} to pass from the first to the second line. 

We rewrite
\eqref{eq:Vtvarepsilon:near:boundary} in the form:
\begin{equation*}
\begin{split}
V_{t}^{\varepsilon} \geq \tfrac12 V_{0}^{\varepsilon} + c \int_{0}^t 
\bigl( g_{s}^{\varepsilon} \bigr)^{-\tfrac{1}{p+1}}
   \frac{\vert \nabla V_{s}^{\varepsilon} \vert}{R_{s}^{\varepsilon}} V_{s}^{\varepsilon} \ud s + {\mathcal V}_{t}^{\varepsilon},
\end{split}
\end{equation*}
with
\begin{equation*}
{\mathcal V}_{t}^{\varepsilon} := \tfrac12 V_{0}^{\varepsilon} 
 + \tfrac12
 \int_{0}^t \vert \nabla V_{s}^{\varepsilon} \vert^2 \ud s + 
 \varepsilon \int_{0}^t \nabla V_{s}^{\varepsilon}\cdot \ud B_{s}. 
\end{equation*}
Following the proof of Lemma \ref{lem:5:1}, we recall that
\begin{equation*}
{\mathbb P} \biggl( \forall t \geq 0, \ {\mathcal V}_{t}^{\varepsilon} \geq 0 \biggr) 
\geq 1 - {\mathbb E} \Bigl[ \exp \Bigl( - \frac{V_{0}^{\varepsilon}}{2 \varepsilon^2} \Bigr)
\Bigr] \geq  
1 - \exp \bigl( - \frac{v_{0}}{2} \bigr). 
\end{equation*}
So, with probability greater than $1 - \exp( - v_{0}/2)$, we have
\begin{equation*}
V_{t}^{\varepsilon} \geq \tfrac12 V_{0}^{\varepsilon} + c \int_{0}^t 
\bigl( g_{s}^{\varepsilon} \bigr)^{-\tfrac{1}{p+1}}
   \frac{\vert \nabla V_{s}^{\varepsilon} \vert}{R_{s}^{\varepsilon}} V_{s}^{\varepsilon} \ud s,
\end{equation*}
which, by a standard variant of Gronwall's lemma, yields
\begin{equation*}
V_{t}^{\varepsilon} \geq \tfrac12 V_{0}^{\varepsilon}
\exp \biggl( c \int_{0}^t 
\bigl( g_{s}^{\varepsilon} \bigr)^{-\tfrac{1}{p+1}} \frac{\vert \nabla V_{s}^{\varepsilon} \vert}{R_{s}^{\varepsilon}}
\ud s \biggr), \quad t  \in[0, \Gamma^{\varepsilon} \wedge \gamma^{\varepsilon}].
\end{equation*}
Notice from (\textbf{A4}) and 
\eqref{eq:6:2}
 and from the fact that the potential (and hence the radius) is lower bounded up until $\Gamma^{\varepsilon}$ together with the fact that $p \geq 1$ that the integrand in the above integral is well-integrable.
\vspace{5pt}

\textit{Second step.}
Recall the expansion of 
$\ud \bigl( R_{t}^{\varepsilon} \bigr)^2$
from \eqref{eq:V:R2}
and {deduce that, for two large enough positive constants $k:=k(\textbf{\bf A},\textbf{\bf B})$ and $c':=c'(\textbf{\bf A},\textbf{\bf B})>0$, we can find a constant $C:=C(\textbf{\bf A},\textbf{\bf B}) \geq 0$
 such that}
\begin{equation*}
\begin{split}
&\ud \Bigl( \exp(-c' t)  \bigl[ \bigl( R_{t}^{\varepsilon} \bigr)^2 + k \varepsilon^2 \bigr] \Bigr)
\\
&\leq 2  \exp(- c' t) X_{t}^{\varepsilon} \cdot \nabla V_{t}^{\varepsilon} \ud t
- \tfrac{1}{2} c' \exp( - c' t)  \bigl[ \bigl( R_{t}^{\varepsilon} \bigr)^2 + k \varepsilon^2 \bigr]   \ud t
+
2 \varepsilon \exp(- c' t) X_{t}^{\varepsilon} \cdot \ud B_{t}
\\
&\leq C \exp(- c' t)   \bigl( R_{t}^{\varepsilon} \bigr)^2    \frac{\vert \nabla V_{t}^{\varepsilon} \vert}{R_{t}^{\varepsilon}}  \ud t
 - \tfrac{1}{2} c' \exp( - c' t)  \bigl[ \bigl( R_{t}^{\varepsilon} \bigr)^2 + k \varepsilon^2 \bigr]   \ud t
+
2 \varepsilon \exp(- c' t) X_{t}^{\varepsilon} \cdot \ud B_{t},
\end{split}
\end{equation*}
which we rewrite in the form
\begin{equation*}
\begin{split}
\exp(-c' t)  \bigl[ \bigl( R_{t}^{\varepsilon} \bigr)^2 + k \varepsilon^2 \bigr] 
&\leq 
 2  \bigl[ \bigl( R_{0}^{\varepsilon} \bigr)^2 + k \varepsilon^2 \bigr] 
+ \int_{0}^t 
C \exp(- c' s)  \bigl[ \bigl( R_{s}^{\varepsilon} \bigr)^2 + k \varepsilon^2 \bigr]     \frac{\vert \nabla V_{s}^{\varepsilon} \vert}{R_{s}^{\varepsilon}}  \ud s
+
{\mathcal R}_{t}^{\varepsilon},
\end{split}
\end{equation*}
where
\begin{equation*}
\begin{split}
{\mathcal R}_{t}^{\varepsilon} &= -  
\bigl[ \bigl( R_{0}^{\varepsilon} \bigr)^2 + k \varepsilon^2 \bigr] - \tfrac{1}{2} c' \int_{0}^t \exp( - c' s)   \bigl[ \bigl( R_{s}^{\varepsilon} \bigr)^2 + k \varepsilon^2 \bigr]   \ud s
  + 2
\varepsilon \int_{0}^t \exp(- c' s) X_{s}^{\varepsilon} \cdot \ud B_{s}.
\end{split}
\end{equation*}
As before, we can {modify $c'$ so} that
\begin{equation*}
{\mathbb P} \biggl( \exists t \geq 0, \ {\mathcal R}_{t}^{\varepsilon} \geq 0 \biggr)
\leq {\mathbb E} \Bigl[ \exp \Bigl( - \frac{(R_{0}^{\varepsilon})^2}{\varepsilon^2} - k  \Bigr) \Bigr] \leq \exp(-k). 
\end{equation*}
On the complementary of the event in the above left-hand side, 
\begin{equation*}
\begin{split}
\exp(-c' t)  \bigl[ \bigl( R_{t}^{\varepsilon} \bigr)^2 + k \varepsilon^2 \bigr] 
&\leq 
 2 
 \bigl[ \bigl( R_{0}^{\varepsilon} \bigr)^2 + k \varepsilon^2 \bigr] 
+ \int_{0}^t 
C \exp(- c' s)   \bigl[ \bigl( R_{s}^{\varepsilon} \bigr)^2 + k \varepsilon^2 \bigr]     \frac{\vert \nabla V_{s}^{\varepsilon} \vert}{R_{s}^{\varepsilon}}  \ud s
\\
&\leq  2  \bigl[ \bigl( R_{0}^{\varepsilon} \bigr)^2 + k \varepsilon^2 \bigr] 
\exp \biggl( C \int_{0}^t 
\frac{\vert \nabla V_{s}^{\varepsilon} \vert}{R_{s}^{\varepsilon}}  \ud s
\biggr), 
\quad t  \in[0, \Gamma^{\varepsilon} \wedge \gamma^{\varepsilon}].
\end{split}
\end{equation*}
We deduce that, for any $\pi >0$, for {$k \geq k_{\star}(\textbf{\bf A},\textbf{\bf B},\pi)$}, the above is true with probability greater than $1-\pi$. 
\vspace{2pt}

\textit{Third step.}
As long as $t \leq \Gamma^{\varepsilon}$, $g^{\varepsilon}$ remains less than $a$. 
Therefore, the conclusion of the first step is that, for
$a \leq a_{\star}(\textbf{\bf A},\textbf{\bf B})$
and  
$v_{0} \geq v_{\star}(\textbf{\bf A},\textbf{\bf B})$,   
\begin{equation*}
V_{t}^{\varepsilon} \geq \tfrac12 V_{0}^{\varepsilon}
\exp \biggl( c 
a^{-\tfrac{1}{p+1}}
\int_{0}^t 
 \frac{\vert \nabla V_{s}^{\varepsilon} \vert}{R_{s}^{\varepsilon}}
ds \biggr),
\quad t  \in[0, \Gamma^{\varepsilon} \wedge \gamma^{\varepsilon}],
\end{equation*}
with probability greater than $1-\exp(-v_{0}/2)$, 
whilst the conclusion of the second step is that, 
for any $\pi >0$, we can choose $k:=k(\textbf{\bf A},\textbf{\bf B},\pi)$ and $c':=c'(\textbf{\bf A},\textbf{\bf B})$ such that 
  the following holds true with probability greater than 
$1-\pi$:
\begin{equation*}
\begin{split}
\exp(-c' t)  \bigl[ \bigl( R_{t}^{\varepsilon} \bigr)^2 + k \varepsilon^2 \bigr] 
&\leq  2  \bigl[ \bigl( R_{0}^{\varepsilon} \bigr)^2 + k \varepsilon^2 \bigr] 
\exp \biggl( C \int_{0}^t 
\frac{\vert \nabla V_{s}^{\varepsilon} \vert}{R_{s}^{\varepsilon}}  \ud s
\biggr) 
\\
&\leq 
2  \bigl[ \bigl( R_{0}^{\varepsilon} \bigr)^2 + k \varepsilon^2 \bigr] 
\exp \biggl( c a^{-\tfrac{1}{p+1}} \int_{0}^t 
\frac{\vert \nabla V_{s}^{\varepsilon} \vert}{R_{s}^{\varepsilon}}  \ud s
\biggr)^{\tfrac{C}{c} a^{1/{(p+1)}}},
\quad t  \in[0, \Gamma^{\varepsilon} \wedge \gamma^{\varepsilon}], 
\end{split}
\end{equation*}
and then, with probability greater than $1 - \pi - \exp(-v_{0}/2)$, 
\begin{equation*}
\begin{split}
\exp(-c' t)  \bigl[ \bigl( R_{t}^{\varepsilon} \bigr)^2 + k \varepsilon^2 \bigr] 
\leq 
2   \bigl[ \bigl( R_{0}^{\varepsilon} \bigr)^2 + k \varepsilon^2 \bigr] 
 \Bigl( \frac{2V_{t}^{\varepsilon}}{V_{0}^{\varepsilon}}
\Bigr)^{\tfrac{C}{c} a^{1/({p+1})}},
\quad t  \in[0, \Gamma^{\varepsilon} \wedge \gamma^{\varepsilon}].
\end{split}
\end{equation*}
Using the fact that, up until $\Gamma^{\varepsilon} \wedge \gamma^{\varepsilon}$, 
$2V^{\varepsilon}$ remains above $V_{0}^{\varepsilon}$, 
we deduce that, for any 
given $q,\pi >0$, we can 
choose 
{$c':=c'(\textbf{\bf A},\textbf{\bf B})$, 
$k:=k(\textbf{\bf A},\textbf{\bf B},\pi)$, 
$a \leq a_{\star}(\textbf{\bf A},\textbf{\bf B},q)$
and
$v_{0} \geq v_{\star}(\textbf{\bf A},\textbf{\bf B},\pi)$}
such that, with probability
greater than $1-\pi$, for $t \in [0,(\ln(2)/c')\wedge \Gamma^{\varepsilon} \wedge \gamma^{\varepsilon} ]$, 
\begin{equation*}
V_{t}^{\varepsilon} \geq \frac12 V_{0}^{\varepsilon} \biggl( 
\frac1{4} \frac{( R_{t}^{\varepsilon} )^2 + k \varepsilon^2  }{( R_{0}^{\varepsilon} )^2 + k \varepsilon^2   }\biggr)^{q},
\end{equation*}
and then, for any exponent $q' >0$, 
\begin{equation*}
\frac{V_{t}^{\varepsilon}}{(R_{t}^{\varepsilon})^{q'}} \geq
\bigl( \frac14 \bigr)^{1+q}
\bigl( R_{t}^{\varepsilon} \bigr)^{2q-q'}
 \frac{V_{0}^{\varepsilon}}{\bigl[( R_{0}^{\varepsilon} )^2 + k \varepsilon^2  \bigr]^{{q}} }.
\end{equation*}
Now, recall that, for a new value of the {constant $c$}, 
$R_{t}^{\varepsilon} \geq c \bigl( V_{t}^{\varepsilon} \bigr)^{\frac{1}{1+\alpha}}$, 
from which we deduce (allowing the value of the constant $c$ to vary from line to line
{and to depend on $q$, 
namely 
$c:=c(\textbf{\bf A},\textbf{\bf B},q, {q'})$})
\begin{equation*}
\begin{split}
\frac{V_{t}^{\varepsilon}}{(R_{t}^{\varepsilon})^{q'}} &\geq c
\bigl( V_{t}^{\varepsilon} \bigr)^{\tfrac{2q-q'}{1+\alpha}}
 \frac{V_{0}^{\varepsilon}}{\bigl[ ( R_{0}^{\varepsilon} )^2 + k \varepsilon^2   \bigr]^q}, \quad 
 t \in \bigl[0,\tfrac{\ln(2)}{c'}\wedge \Gamma^{\varepsilon} \wedge \gamma^{\varepsilon} \bigr],
\end{split}
\end{equation*}
with probability greater than $1-\pi$,  {at least if $2q-q' \geq 0$}. 
Therefore, invoking
\eqref{eq:se:6:condition:initial:radius}
together with the conclusion of Proposition
\ref{prop:5:6}
 and assuming that $2q -q' \geq 1$, 
we deduce that,
for any $\pi >0$,
 {for $v_{0} \geq v_{\star}(\textbf{\bf A},\textbf{\bf B},k,\pi)$
and for 
$\varepsilon$
small enough},  
 with probability greater than {$1 - 2\pi$},
for $t \in [0, 1 \wedge (\ln(2)/c')\wedge \Gamma^{\varepsilon} \wedge \gamma^{\varepsilon} \wedge 
\Xi^{\varepsilon}]$, 
\begin{equation*}
\begin{split}
\frac{V_{t}^{\varepsilon}}{(R_{t}^{\varepsilon})^{q'}} &\geq c t^{c(q,q')} \bigl( V_{0}^{\varepsilon}\bigr)^{1-q}, 
 \end{split}
\end{equation*}
for a constant $c(q,q'):=c(\textbf{\bf A},\textbf{\bf B},{\pi},q,q')>0$. And, then, for any $\eta >0$, 
for any $t \in [0, 1 \wedge (\ln(2)/c')\wedge \Gamma^{\varepsilon} \wedge \gamma^{\varepsilon} \wedge 
\Xi^{\varepsilon}]$,
\begin{equation*}
\begin{split}
\int_{0}^t 
\Bigl( 
\frac{V_{s}^{\varepsilon}}{(R_{s}^{\varepsilon})^{q'}} 
\Bigr)^{\eta}
\ud s &\geq c 
t^{1 + \eta c(q,q')} \bigl( V_{0}^{\varepsilon}\bigr)^{\eta(1-q)}
\geq c t^{1 + \eta c(q,q')} \bigl( v_{0} {\varepsilon}^2 \bigr)^{\eta(1-q)}.
 \end{split}
\end{equation*}
Without any loss of generality, we can assume that $q'<2$ and then choose $\eta >1$ such that 
$q' \eta =2$. Since $t \leq \Xi^{\varepsilon}$, we have 
$(V_{s}^{\varepsilon})^{\eta} \leq V_{s}^{\varepsilon}$ for 
$s \in [0,t]$, with $t$ as above. We deduce that, 
with probability greater than ${1- 2\pi}$, 
for any $t \in [0, 1 \wedge (\ln(2)/c')\wedge \Gamma^{\varepsilon} \wedge \gamma^{\varepsilon} \wedge 
\Xi^{\varepsilon}]$,
\begin{equation*}
\begin{split}
\int_{0}^t 
\frac{V_{s}^{\varepsilon}}{(R_{s}^{\varepsilon})^{2}} 
\ud s &\geq  c t^{1 + \tfrac{2 c(q,q')}{q'}} \bigl( v_{0} {\varepsilon}^2 \bigr)^{\tfrac{2(1-q)}{q'}}.
 \end{split}
\end{equation*}
By choosing $q>1$, we can easily find a collection $(t_{\varepsilon})_{\varepsilon >0}$, converging to zero with $\varepsilon$, such that, for $\varepsilon \leq \varepsilon_{\star} := \varepsilon_{\star}(c,v_{\star})$,  
\begin{equation*}
 {\int_{0}^{t_{\varepsilon}}
\frac{V_{s}^{\varepsilon}}{(R_{s}^{\varepsilon})^2} 
\ud s \geq 
 \varepsilon^{\tfrac{1-q}{q'}}},
\end{equation*}
on an event of the form $\{t_{\varepsilon} \leq \Gamma^{\varepsilon} \wedge \gamma^{\varepsilon} \wedge \Xi^{\varepsilon}\} \cap A_{\varepsilon}$, with $\PP(A_{\varepsilon}) \geq {1 - 
2\pi}$, {and for the same range of values for the various parameters as before}. For instance, we can choose
$$t_{\varepsilon} = \varepsilon^{\tfrac{q-1}{q'+2c(q,q')}},$$ 
for $\varepsilon >0$. 
\vskip 4pt

\textit{Conclusion.}
{Recall from the first lines in the previous step that,}
with probability greater than $1 - \exp(-v_{0}/2)$, 
\begin{equation*}
V_{t}^{\varepsilon} \geq \tfrac12 
v_{0} \varepsilon^2
\exp \biggl( c 
a^{-\tfrac{1}{p+1}}
\int_{0}^t 
 \frac{V_{s}^{\varepsilon}}{( R_{s}^{\varepsilon} )^2}
\ud s \biggr),
\quad t  \in[0, \Gamma^{\varepsilon} \wedge \gamma^{\varepsilon}].
\end{equation*}
Hence, the previous step says that, on an event of the form $\{ t_{\varepsilon} \leq \Gamma^{\varepsilon} \wedge \gamma^{\varepsilon} \wedge \Xi^{\varepsilon} \} \cap A_{\varepsilon}$,
with ${\mathbb P}(A_{\varepsilon}) \geq  {1 - 3\pi}$
({and for the same values of parameters as before}), 
\begin{equation*}
V_{t_{\varepsilon}}^{\varepsilon}
\geq 
\tfrac12 
v_{0} \varepsilon^2
\exp \Bigl( c 
a^{-\tfrac{1}{p+1}}
\varepsilon^{\tfrac{1-q}{q'}} \Bigr),
\end{equation*}
and so
\begin{equation*}
V_{0}^{\varepsilon} + C t_{\varepsilon} + \varepsilon
\sup_{t \in [0, t_{\varepsilon}]} \vert B_{t} \vert 
\geq 
\tfrac12 
v_{0} \varepsilon^2
\exp \Bigl( c 
a^{-\tfrac{1}{p+1}}
\varepsilon^{\tfrac{1-q}{q'}} \Bigr).
\end{equation*}
As the right hand side tends to $\infty$ as $\varepsilon$ tends to $0$ and since 
the collection of the laws of the $(V_{0}^{\varepsilon})_{\varepsilon >0}$'s is tight, we obviously have
\begin{equation*}
\lim_{\varepsilon \searrow 0}
{\mathbb P} \biggl( V_{0}^{ \varepsilon} + C t_{\varepsilon} + \varepsilon
\sup_{t \in [0, t_{\varepsilon}]} \vert B_{t} \vert 
\geq 
\tfrac12 
v_{0} \varepsilon^2
\exp \Bigl( c 
a^{-\tfrac{1}{p+1}}
\varepsilon^{\tfrac{1-q}{q'}} \Bigr) \biggr) =0,
\end{equation*}
and then
\begin{equation}
\label{eq:new:label:revision:section:6}
\limsup_{\varepsilon \searrow 0}
{\mathbb P}
\Bigl( t_{\varepsilon} \leq \Gamma^{\varepsilon} \wedge \gamma^{\varepsilon} \wedge 
\Xi^{\varepsilon} \Bigr) \leq  { 3\pi}.
\end{equation}
{Observe} now that 
$\Gamma^{\varepsilon} {=} \xi^{\varepsilon} \wedge {\zeta^{\varepsilon}}$, 
with $\xi^{\varepsilon}$ as in \eqref{eq:5:1} and 
\begin{equation*}
\begin{split}
&{\zeta^{\varepsilon}} := \inf \bigl\{ t \geq 0 : g_{t}^{\varepsilon} \geq a \bigr\}.
\end{split}
\end{equation*}
By Lemma 
\ref{lem:5:1}, we have, {for $v_{0}$ large enough,}
\begin{equation*}
\limsup_{\varepsilon \searrow 0}
{\mathbb P}
\Bigl( \xi^{\varepsilon} \vee \Xi^{\varepsilon} \vee \gamma^{\varepsilon} \leq t_{\varepsilon}
\Bigr)  \leq \exp\bigl( - \frac{v_{0}}{4} \bigr),
\end{equation*}
because $\lim_{\varepsilon \searrow 0}
{\mathbb P}(\Xi^{\varepsilon} \leq t_{\varepsilon}) = 0$, 
which shows 
{from 
\eqref{eq:new:label:revision:section:6}}
that ({for the same ranges of parameters as before})
\begin{equation*}
\liminf_{\varepsilon \searrow 0}
{\mathbb P}
\Bigl(  {\zeta}^{\varepsilon} \leq t_{\varepsilon}
\Bigr)  \ge  {1 -4\pi}.
\end{equation*}
 {This completes the proof}.

\end{proof}

We now prove more. We prove that, in fact, {once it has reached 
a given threshold 
$a$, the process 
 $g^{\varepsilon}$ remains above $a/2$ with high probability provided that $V^\varepsilon$ itself is large enough on the scale of order $\varepsilon^2$}.

\begin{lemma}
\label{lem:staying:g:above:a}
Consider a collection of initial conditions 
$(X_{0}^{\varepsilon})_{\varepsilon >0}$ such that, 
{for some $a>0$ and $v_{0}>0$, for all $\varepsilon >0$, $V_{0}^{\varepsilon} \geq {v_{0} \varepsilon^{2}}$
and $g_{0}^{\varepsilon} \geq a$. Then, 
there exist $a_{\star} := a_{\star}(\textbf{\bf A},\textbf{\bf B}) >0$
and $v_{\star} := v_{\star}(\textbf{\bf A},\textbf{\bf B})>0$
 such that, whenever $a \leq a_{\star}$ and $v_{0} \geq v_{\star}$, it holds, for a constant $c:=c(\textbf{\bf A},\textbf{\bf B}) \geq 1$,}
\begin{equation*}
{ {\mathbb P} \bigl( \forall t \geq 0, \ g_{t}^{\varepsilon} \geq \frac{a}2
\bigr) \geq 1 -  
\exp \bigl( - \frac{v_{0}}{c} \bigr).}
\end{equation*}
\end{lemma}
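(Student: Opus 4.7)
The plan is to combine a localization argument to a good event on which $V^\varepsilon$ stays of order $\varepsilon^2$, the drift lower bound for $g^\varepsilon$ near the boundary of $\{g>0\}$ provided by Lemma~\ref{lem:tau:g}, and an exponential supermartingale estimate applied over suitable excursions of $g^\varepsilon$ below the threshold $a$.

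\textit{Step 1 (good event for $V^\varepsilon$).} By Lemma~\ref{lem:5:1}, the event
$E^\varepsilon := \{ \forall t\geq 0,\ V_t^\varepsilon \geq \tfrac12 V_0^\varepsilon,\ g_t^\varepsilon>0\}$
has probability $\geq 1-\exp(-v_0/2)$ for $v_0\geq v_\star(\textbf{A})$. On $E^\varepsilon$, whenever $g_t^\varepsilon \leq a$ with $a\leq a_\star(\textbf{A},\textbf{B})$, Lemma~\ref{lem:tau:g} together with Lemma~\ref{lem:decomposition:gt} produces
\[
\ud g_t^\varepsilon \ \geq\ \tfrac{1}{C}\tfrac{V_t^\varepsilon}{g_t^\varepsilon}\,|\nabla g_t^\varepsilon|^2\,\ud t + \varepsilon\,\nabla g_t^\varepsilon\cdot \ud B_t \ \geq\ \tfrac{v_0 \varepsilon^2}{2 C\,g_t^\varepsilon}\,|\nabla g_t^\varepsilon|^2\,\ud t + \varepsilon\,\nabla g_t^\varepsilon\cdot \ud B_t.
\]

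\textit{Step 2 (single excursion).} I would apply the strong Markov property at the first hitting time of $a$ to reduce to $g_0^\varepsilon=a$ (if $g^\varepsilon$ never returns to $a$, then $g^\varepsilon>a>a/2$ trivially). Set $\rho := \inf\{t\geq 0 : g_t^\varepsilon \in \{a/2,\,2a\}\}$, requiring $2a\leq a_\star$ so that the drift bound of Step~1 holds on $[0,\rho]$, and write $M_t := \varepsilon\int_0^{t\wedge\rho} \nabla g_s^\varepsilon\cdot \ud B_s$, so that $\ud[M]_t = \varepsilon^2 |\nabla g_t^\varepsilon|^2\,\ud t$ and the drift bound reads $\ud g_t^\varepsilon \geq \lambda\,\ud[M]_t + \ud M_t$ with $\lambda := v_0/(4Ca)$. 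On $\{g_\rho^\varepsilon = a/2\}$, integration gives $-a/2 \geq \lambda[M]_\rho + M_\rho$, so that the Dol\'eans--Dade supermartingale $\exp(-2\lambda M_t - 2\lambda^2[M]_t)$ attains value $\geq \exp(\lambda a) = \exp(v_0/(4C))$ at $t=\rho$. Doob's inequality, as used throughout Section~\ref{subse:lower:bound:pot}, yields $\PP(g_\rho^\varepsilon = a/2 \mid g_0^\varepsilon = a) \leq \exp(-v_0/(4C))$.

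\textit{Step 3 (iterating over doublings).} When $g_\rho^\varepsilon = 2a$, restart by strong Markov. Repeating Step~2 starting from $g_0^\varepsilon = 2^k a$ with the upper barrier at $2^{k+1}a$ (or capped at $a_\star$ once $2^{k+1}a$ would exceed it) produces the same bound: the probability to drop to $a/2$ before the upper barrier is at most $p := \exp(-v_0/c)$ for a constant $c:=c(\textbf{A},\textbf{B})$ independent of $k$, because the decrement required, $(2^k-\tfrac12)a$, grows while the coefficient $\lambda_k$ in the drift bound scales like $v_0/(C\cdot 2^{k+1}a)$. The discrete chain $k_n := \log_2(g_{T_n}^\varepsilon/a)$, indexed by the halving/doubling stopping times, is therefore stochastically dominated below by a biased random walk with down-probability $\leq p$ and up-probability $\geq 1-p$ starting at $k_0=0$. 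Gambler's ruin gives $\PP(\inf_n k_n \leq -1) \leq p/(1-p) \leq 2p$ for $v_0$ large enough; since $g^\varepsilon$ stays in $[g_{T_n}^\varepsilon/2,\,2 g_{T_n}^\varepsilon]\subset[a/2,\infty)$ between successive $T_n$ as long as $k_n\geq 0$, this controls $g^\varepsilon$ in continuous time. Combining with the bound on $(E^\varepsilon)^c$ from Step~1 gives the desired $1 - \exp(-v_0/c)$ bound.

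\textit{Main obstacle.} The delicate point is not the single-excursion estimate of Step~2, which is a direct exponential-martingale application mirroring Lemma~\ref{lem:5:1}, but the handling of a potentially infinite number of excursions without the exponential tail blowing up in a union bound. The random-walk domination in Step~3 is the crux: it exploits that the drift inequality of Lemma~\ref{lem:tau:g} is strong enough that the up-probability per scale is $\geq 1-p$ with the \emph{same} $p$ at each level, so that the gambler's-ruin ratio $p/(1-p)$ gives a geometric tail. The second subtlety is the upper truncation once $g^\varepsilon$ would leave the regime $g\leq a_\star$ in which Lemma~\ref{lem:tau:g} applies: one capes the upper barrier of the $n$-th excursion at $a_\star\wedge 2^{k_n+1}a$ and observes that whenever $g_{T_n}^\varepsilon\geq a$ the entire next excursion stays $\geq a/2$ regardless, so the random-walk argument need only track the lower escape.
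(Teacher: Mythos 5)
Your Steps 1--2 are sound and coincide with the core of the paper's own argument: the drift bound of Lemmas \ref{lem:decomposition:gt}--\ref{lem:tau:g} on an interval where $g^{\varepsilon}\leq a$ and $V^{\varepsilon}\gtrsim v_{0}\varepsilon^{2}$, turned into a Dol\'eans--Dade supermartingale estimate via Doob, does give a per-crossing probability $\exp(-v_{0}/c)$, with the measurability of the ``good $V$-event'' handled by stopping at $\xi^{\varepsilon}$ as in Lemma \ref{lem:5:1}. The genuine gap is in Step 3. Your gambler's-ruin domination needs the upward bias $\geq 1-p$ at \emph{every} level of the doubling chain, i.e.\ transience to $+\infty$ of the comparison walk. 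But the drift estimate of Lemma \ref{lem:tau:g} is only available while the whole excursion stays in $\{g\leq a_{\star}\}$; once you cap the upper barrier at $a_{\star}$ you have no information whatsoever on the sign of the drift of $g^{\varepsilon}$ above that level (and (\textbf{B4}) explicitly allows interior local minima of $\Theta$, so $g^{\varepsilon}$ can genuinely be pushed back down). In the worst case the capped level acts as a reflecting top for the comparison chain, and for a finite chain with down-probability $p>0$ in the controlled range and reflection at the top, the probability of eventually reaching the bottom level is $1$, not $p/(1-p)$: writing $h(k)$ for the hitting probability of the bottom one checks $h(m)=h(m-1)=\dots=h(-1)=1$. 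Since the statement is over the infinite horizon ($\forall t\geq 0$), you cannot discard these arbitrarily many re-entries from above $a_{\star}$, so the union/ruin bound in Step 3 does not close as written; your remark that ``the random-walk argument need only track the lower escape'' does not repair this, because the issue is precisely the absence of upward bias at the top, not the behaviour between stopping times.

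This is also where the paper's route is structurally different and avoids the problem: it never restarts. It fixes $\varrho^{\varepsilon}:=\inf\{t: g_{t}^{\varepsilon}\leq a/2\}$ and, on $\{\varrho^{\varepsilon}<\infty\}$, looks only at the \emph{last} time $s<\varrho^{\varepsilon}$ at which $g_{s}^{\varepsilon}=a$ (not a stopping time), on which interval $g^{\varepsilon}\in[a/2,a]$ so the drift bound applies with the uniform factor $V^{\varepsilon}/g^{\varepsilon}\geq v_{0}\varepsilon^{2}/(2a)$; the resulting inequality forces the running supremum of the single global exponential supermartingale $\exp\bigl(-\tfrac{cv_{0}}{a}\varepsilon\int_{0}^{t}\nabla g_{r}^{\varepsilon}\cdot \ud B_{r}-\tfrac{c^{2}v_{0}^{2}}{2a^{2}}\varepsilon^{2}\int_{0}^{t}\vert\nabla g_{r}^{\varepsilon}\vert^{2}\ud r\bigr)$ to exceed $\exp(cv_{0}/4)$, and one application of Doob's maximal inequality concludes. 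In other words, the last-crossing device packages all possible returns from the uncontrolled region $\{g>a_{\star}\}$ into a single event, which is exactly the ingredient your iteration is missing. To salvage your scheme you would need an additional mechanism (e.g.\ the growth of $V^{\varepsilon}$ from Proposition \ref{prop:5:6}, making later crossing attempts have summable probabilities, or the paper's one-shot supermartingale argument); as it stands, Step 3 is a wrong approach for the infinite-horizon claim.
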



\begin{proof}
By Lemma \ref{lem:5:1}, 
we already know that, for  {$\varepsilon >0$} and 
for {$v_{0} \geq v_{\star}:=v_{\star}(\textbf{\bf A},\textbf{\bf B})$ (the value of $v_{\star}$ being allowed to increase from line to line)}, 
\begin{equation*}
{{\mathbb P} \Bigl( \forall t \geq 0, \ V_{t}^{\varepsilon} \geq \tfrac12 V_{0}^{\varepsilon}  \Bigr) 
\geq 1 - 
\exp \bigl( - \frac{v_{0}}{2}
\bigr).}
\end{equation*}
So, up to a small event, we can work on the event 
 {$\{ \forall t \geq 0, V_{t}^{\varepsilon}
\geq  \varepsilon^2 v_{0}/2 \} 
$}, for a given $v_{0} \geq {v_{\star}}$. We then call $\varrho^{\varepsilon} := \inf \{ t >0 : g_{t}^{\varepsilon} \leq \tfrac{a}{2} \} $. 

By Lemmas 
\ref{lem:decomposition:gt}
and
\ref{lem:tau:g}, we know that, {for 
$v_{0} \geq v_{\star}$ and $a
\leq a_{\star}:=a_{\star}(\textbf{\bf A},\textbf{\bf B})$} and if 
$\varrho^{\varepsilon} < \infty$, there exist two time instants
$s,t \in [0,\varrho^{\varepsilon}]$ ({choose $t=\varrho^{\varepsilon}$ and 
$s$ as the last time before $t$ when $g_{s}^{\varepsilon}=a$; of course, $s$ is not a stopping time}), such that 
\begin{equation*}
- \frac{a}{2}  \geq
c \int_{s}^t \frac{V_{r}^{\varepsilon}}{g_{r}^{\varepsilon}}  \bigl\vert \nabla g_{r}^{\varepsilon} \bigr\vert^2
\ud r + 
  \varepsilon \int_{s}^t 
\nabla g_{r}^{\varepsilon} \cdot \ud B_{r}
\geq 
\frac{c  v_0}{2 a}\varepsilon^2 \int_{s}^t   \bigl\vert \nabla g_{r}^{\varepsilon} \bigr\vert^2
\ud r + 
  \varepsilon \int_{s}^t 
\nabla g_{r}^{\varepsilon} \cdot \ud B_{r},
\end{equation*}
for a constant $c:=c(\textbf{\bf A},\textbf{\bf B}) >0$. 
In particular, the following holds true on the event $\{ \varrho^{\varepsilon}  < \infty\}$:
\begin{equation*}
 \sup_{t \geq 0} \biggl\{
 - 
\frac{c v_{0}}{2a} \varepsilon^2 \int_{0}^t  \bigl\vert \nabla g_{r}^{\varepsilon} \bigr\vert^2
\ud r
-
\varepsilon \int_{0}^t \nabla g_{r}^{\varepsilon} \cdot \ud B_{r}
\biggr\} \geq \frac{a}{4},
\end{equation*}
and then, multiplying by $c v_{0}/a$, 
\begin{equation*}
 \sup_{t \geq 0} \biggl\{
 - 
\frac{c^2 v_{0}^2}{2a^2} \varepsilon^2 \int_{0}^t \bigl\vert \nabla g_{r}^{\varepsilon} \bigr\vert^2
\ud r
-
\frac{c v_{0}}{a} \varepsilon \int_{0}^t \nabla g_{r}^{\varepsilon} \cdot \ud B_{r}
\biggr\} \geq \frac{c v_{0}}{4}.
\end{equation*}

Taking exponential  {in both sides} in the above inequality 
and using Doob's {inequality} for the Dol\'eans-Dade martingale
of 
$(-
a^{-1} c v_{0} \varepsilon \int_{0}^t \nabla g_{r}^{\varepsilon} \cdot \ud B_{r})_{t \geq 0}$
(see for instance Lemma \ref{lem:5:1} for a similar use), we complete the proof. 
\end{proof}

So, the conclusion of this subsection is pretty clear: From Proposition 
\ref{prop:hitting:g:a}, the particle, {whenever it starts from a potential $v_{0} \varepsilon^2$ with $v_{0}$ large enough}, reaches (with high probability) the level 
$a$ in infinitesimal time. 
Then, Lemma
\ref{lem:staying:g:above:a}
says that, 
when restarting from this point, there is almost no chance that 
$g^{\varepsilon}$ passes below $a/2$. 
\vspace{4pt}

\subsection{Deterministic approximation of the angle}
{Recall from \eqref{eq:def:thetatvarepsilon}
the definition of $\theta^{\varepsilon}$, which we call the angle of the particle.} We claim: 

\begin{lemma}
\label{lem:decomposition:angle}
Consider a collection of non-zero initial conditions 
$(X_{0}^{\varepsilon})_{\varepsilon >0}$. Then,
for any $\varepsilon >0$, 
with probability 1, 
the process $( \theta_{t}^{\varepsilon})_{0 \leq t \leq {\boldsymbol \rho}_{0}^{\varepsilon}}$, {with ${\boldsymbol \rho}_{0}^{\varepsilon} := \inf \{ t \geq 0 : R_{t}^{\varepsilon} = 0\}$}, has the decomposition:
\begin{equation}
\label{eq:ito:angle}
\begin{split}
\ud \theta_{t}^{\varepsilon}  &= 
\frac{1}{R_{t}^{\varepsilon}}
 \bigl[
  \nabla V_{t}^{\varepsilon}
- \bigl( \nabla V_{t}^{\varepsilon}
\cdot \theta_t^{\varepsilon}\bigr)
\theta_t^{\varepsilon} \bigr] \ud t -  \frac{d-1}{2} \bigl( \frac{\varepsilon}{R_{t}^{\varepsilon}}
\bigr)^2 
\theta_t^{\varepsilon}  \ud t
 +  \frac{\varepsilon}{R_{t}^{\varepsilon}} \bigl[ 
dB_{t} -
\theta_t^{\varepsilon}
\bigl( \theta_t^{\varepsilon} \cdot \ud B_{t} \bigr)
\bigr], \quad t \geq 0. 
\end{split}
\end{equation} 
\end{lemma}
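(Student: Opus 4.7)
The plan is a direct application of It\^o's formula to the vector-valued map $f : \RR^d \setminus \{0\} \to \RR^d$ defined by $f(x) = x/|x|$. Since the initial condition $X_{0}^{\varepsilon}$ is nonzero and since $f$ is ${\mathcal C}^{\infty}$ on $\RR^d \setminus \{0\}$, the composition $\theta_{t}^{\varepsilon} = f(X_{t}^{\varepsilon})$ is a well-defined semimartingale on the random interval $[0, {\boldsymbol \rho}_{0}^{\varepsilon})$, and we only need to compute the first and second derivatives of $f$ and plug into It\^o's formula for the semimartingale $\ud X_{t}^{\varepsilon} = \nabla V_{t}^{\varepsilon} \ud t + \varepsilon \ud B_{t}$.

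First, I would record the Jacobian
\begin{equation*}
\partial_{i} f_{j}(x) = \frac{\delta_{ij}}{|x|} - \frac{x_{i} x_{j}}{|x|^{3}},
\end{equation*}
which, acting on a vector $v$, gives $\nabla f(x) \cdot v = |x|^{-1} ( v - (v \cdot x/|x|)\, x/|x|)$. Applied to the drift and the diffusion coefficient of $X^{\varepsilon}$, this yields precisely the first term and the martingale term in \eqref{eq:ito:angle}, once one sets $\theta_{t}^{\varepsilon} = X_{t}^{\varepsilon}/R_{t}^{\varepsilon}$.

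Second, I would compute the It\^o correction, which only involves the Laplacian of $f$ because the diffusion coefficient of $X^{\varepsilon}$ is $\varepsilon I_{d}$. A straightforward calculation gives
\begin{equation*}
\partial_{k}^{2} f_{j}(x) = -\frac{\delta_{kj} x_{k}}{|x|^{3}} - \frac{\delta_{kk} x_{j} + \delta_{kj} x_{k}}{|x|^{3}} + \frac{3 x_{k}^{2} x_{j}}{|x|^{5}},
\end{equation*}
and summing over $k = 1, \dots, d$,
\begin{equation*}
\Delta f(x) = -(d-1)\, \frac{x}{|x|^{3}} = -(d-1)\, \frac{1}{|x|^{2}}\, \frac{x}{|x|}.
\end{equation*}
Multiplying by $\varepsilon^{2}/2$ produces exactly the second term in \eqref{eq:ito:angle}.

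There is no real obstacle here: the only point worth a word is the justification that these computations are valid over the whole stopping interval $[0, {\boldsymbol \rho}_{0}^{\varepsilon})$, since $f$ is not defined at the origin. This is handled by applying It\^o's formula on the stopping times $\sigma_{n}^{\varepsilon} := \inf\{t \geq 0 : R_{t}^{\varepsilon} \leq 1/n\} \wedge n$, for which $X^{\varepsilon}$ stays in a compact subset of $\RR^d \setminus \{0\}$ where $f$ is smooth, and then letting $n \to \infty$ so that $\sigma_{n}^{\varepsilon} \nearrow {\boldsymbol \rho}_{0}^{\varepsilon}$. Combining the three contributions (drift from $\nabla V$, It\^o correction, stochastic integral) yields \eqref{eq:ito:angle}.
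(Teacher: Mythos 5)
Your proposal is correct and follows essentially the same route as the paper: a direct It\^o computation for $\theta_{t}^{\varepsilon}=X_{t}^{\varepsilon}/R_{t}^{\varepsilon}$ on the interval where the radius stays positive, with the singularity at the origin handled by localization. The only (immaterial) difference is bookkeeping: the paper first expands $\ud (R_{t}^{\varepsilon})^{-1}$ by applying It\^o's formula to $x\mapsto x^{-1/2}$ composed with $(R^{\varepsilon})^{2}$ and then uses the It\^o product rule with $X^{\varepsilon}$, whereas you differentiate the map $x\mapsto x/\vert x\vert$ directly; the resulting terms coincide, including the Laplacian $-(d-1)x/\vert x\vert^{3}$ producing the correction $-\tfrac{d-1}{2}(\varepsilon/R_{t}^{\varepsilon})^{2}\theta_{t}^{\varepsilon}$.
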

The reader should not worry about the definition of the angle when the process 
$X^{\varepsilon}$ cancels. Indeed, with probability 1, the latter does not visit $0$.

Of course, {$\theta_{t}^{\varepsilon}=X_{t}^{\varepsilon}/R_{t}^{\varepsilon}$} is an element of the sphere ${\mathbb S}^{d-1}$. 
In this regard, the right-hand side in
\eqref{eq:ito:angle} must be regarded as the infinitesimal description of an It\^o process with values on the sphere. Observe in particular that the main term
\begin{equation*}
\frac{1}{R_{t}^{\varepsilon}}
 \bigl[
  \nabla V_{t}^{\varepsilon}
- \bigl( \nabla V_{t}^{\varepsilon}
\cdot \theta_t^{\varepsilon}\bigr)
\theta_t^{\varepsilon} \bigr]
\end{equation*}
is the component of 
$\nabla V_{t}^{\varepsilon}/R_{t}^{\varepsilon}$
in the tangent space to ${\mathbb S}^{d-1}$ at $\theta_{t}^{\varepsilon}$, 
which is precisely the term that appears in (\textbf{B2}). 
\begin{proof}[Proof of Lemma \ref{lem:decomposition:angle}]
{Recall the expansion of 
$\ud \bigl( R_{t}^{\varepsilon} \bigr)^2$
from \eqref{eq:V:R2}.}
Let now $f(x)=x^{-1/2}$, for $x>0$. Clearly, $f'(x)= - \frac12 x^{-3/2}$ and 
$f''(x) = \frac{3}{4} x^{-5/2}$. So, It\^o's formula yields (for 
$t \in [0,{\boldsymbol \rho}_{0}^{\varepsilon})$)
\begin{equation*}
\begin{split}
\ud (R_{t}^{\varepsilon})^{-1} &= - (R_{t}^{\varepsilon})^{-3} \bigl(  \nabla V_{t}^{\varepsilon}
\cdot X_{t}^{\varepsilon} + \frac{d}{2} \varepsilon^2 \bigr) \ud t
 + \frac{3}{2} \varepsilon^2 (R_{t}^{\varepsilon})^{-3} dt
 - \varepsilon (R_{t}^{\varepsilon})^{-3} X_{t}^{\varepsilon} \cdot \ud B_{t}
\\
&=  - (R_{t}^{\varepsilon})^{-3} \bigl(  \nabla V_{t}^{\varepsilon}
\cdot X_{t}^{\varepsilon} + \frac{d-3}{2} \varepsilon^2 \bigr) \ud t
- \varepsilon (R_{t}^{\varepsilon})^{-3} X_{t}^{\varepsilon} \cdot \ud B_{t}.
\end{split}
\end{equation*}
Hence, 
\begin{equation*}
\begin{split}
\ud \bigl( \theta_t^{\varepsilon} \bigr) &= 
 (R_{t}^{\varepsilon})^{-1} \bigl[
  \nabla V_{t}^{\varepsilon}
- \bigl( \nabla V_{t}^{\varepsilon}
\cdot \theta_t^{\varepsilon}\bigr)
\theta_t^{\varepsilon} \bigr]
 -  \frac{d-1}{2} \bigl( \frac{\varepsilon}{R_{t}^{\varepsilon}}
\bigr)^2 
\theta_t^{\varepsilon}  \ud t
+ \frac{\varepsilon}{R_{t}^{\varepsilon}} \bigl[ \ud B_{t} -
\theta_t^{\varepsilon}
\bigl( \theta_t^{\varepsilon} \cdot \ud B_{t} \bigr)
\bigr].
\end{split}
\end{equation*}
This completes the proof.
\end{proof}
%
%

Returning to Lemma
\ref{lem:decomposition:angle}, we see from (\textbf{B2}) that, when $g_{t}^{\varepsilon} >0$,  
 $  \nabla V_{t}^{\varepsilon}
- \bigl( \nabla V_{t}^{\varepsilon}
\cdot \theta_t^{\varepsilon}\bigr)
\theta_t^{\varepsilon} $
has the form
\begin{equation*}
{\DTheta} \bigl(   {\theta_{t}^{\varepsilon}}\bigr)
\bigl( R_{t}^{\varepsilon}
\bigr)^{\alpha} + {\eta_{t}^{\varepsilon,\prime}} \bigl( R_{t}^{\varepsilon} \bigr)^{\alpha},
\end{equation*} 
where {$\DTheta$} is Lipschitz continuous on ${\mathbb S}^{d-1}$
and 
where, {for $\delta_{\star} := \delta_{\star}(\textbf{A},\textbf{B})$}, there exists a constant $C:=C(\textbf{\bf A},\textbf{\bf B}) >0$ such that
 $\vert {\eta_{t}^{\varepsilon,\prime}} \vert \leq C  R_{t}^{\varepsilon}$, 
when $R_{t}^{\varepsilon} \leq \delta_{\star}$. 

We then rewrite the dynamics of the angle in the form:
\begin{equation}
\label{eq:theta:bib}
\begin{split}
\ud  \theta_t^{\varepsilon}  &= 
\frac{\DTheta \bigl( \theta_t^{\varepsilon}\bigr)}{g_{t}^{\varepsilon} }
\frac{V_{t}^{\varepsilon}}{\bigl( R_{t}^{\varepsilon} \bigr)^2}
  \ud t
+
\frac{\eta_{t}^{\varepsilon,\prime}}{g_{t}^{\varepsilon}}
\frac{V_{t}^{\varepsilon}}{\bigl( R_{t}^{\varepsilon} \bigr)^2}
  \ud t
   -  \frac{d-1}{2} \bigl( \frac{\varepsilon}{R_{t}^{\varepsilon}}
\bigr)^2 
\theta_t^{\varepsilon}  \ud t
  + \frac{\varepsilon}{R_{t}^{\varepsilon}} \bigl[ \ud B_{t} -
\theta_t^{\varepsilon}
\bigl( \theta_t^{\varepsilon} \cdot \ud B_{t} \bigr)
\bigr],
\end{split}
\end{equation}
as long as $g^{\varepsilon}$ remains positive. 
This prompts us to let
\begin{equation*}
\Sigma_{t}^{\varepsilon} := \int_{0}^t 
\frac{1}{g_{s}^{\varepsilon}}
\frac{V_{s}^{\varepsilon}}{ \bigl( R_{s}^{\varepsilon} \bigr)^2}
\ud s.
\end{equation*}
Our strategy is to compare the process $(\theta_{t}^{\varepsilon})_{t \geq 0}$ with 
 $ ( \phi_{\Sigma_{t}^{\varepsilon}} )_{t \geq 0}$, 
where $(\phi_{t})_{t \geq 0}$ is the solution of the ODE
\eqref{eq:ODE:2:2:sphere}.
{To make it easier, 
we denote by $(\phi_{t}^u)_{t \geq 0}$ the solution
to \eqref{eq:ODE:2:2:sphere} starting from $\phi_{0}^u=u \in {\mathbb S}^{d-1}$.}

\subsubsection{Local comparison between $\theta^{\varepsilon}$ and $\phi$}
Using the fact that $\DTheta$ is Lipschitz continuous, we get
\begin{proposition}
\label{prop:good:events}
Fix a positive real ${\boldsymbol \epsilon}>0$ together with an intensity $\varepsilon>0$. For a stopping time $\tau$ with values in $[0,{\boldsymbol \epsilon}]$, and  {for three reals $a>0$, $\delta \in (0,1)$ and $v_{0}>0$}, let 
\begin{equation*}
\begin{split}
&\Sigma_{\tau,t}^{\varepsilon} := \int_{0}^t 
{{\mathbf 1}_{\{s \geq \tau\}}}
\frac{V_{s}^{\varepsilon}}{g_{s}^{\varepsilon} (R_{s}^{\varepsilon})^2} \ud s
= 
{\int_{0}^t 
{\mathbf 1}_{\{s \geq \tau\}}
(R_{s}^{\varepsilon})^{\alpha-1} \ud s}, \quad t \in [0,{\boldsymbol \epsilon}] \ ; 
\\
&\varrho^{\varepsilon}(\tau) := \inf \bigl\{ t > \tau : g_{t}^{\varepsilon} \leq a \bigr\},
 \quad {\boldsymbol \rho}^{\varepsilon}(\tau) := \inf \bigl\{t > \tau : R_{t}^{\varepsilon} \geq \delta \bigr\}.
\end{split} 
\end{equation*}

i) We can find {a positive threshold 
$v_{\star}:=v_{\star}(\textbf{\bf A},\textbf{\bf B})$ and a constant $c:=c(\textbf{\bf A},\textbf{\bf B},a)$}
such that, for $v_{0} \geq v_{\star}$, the event 
\begin{equation*}
D^{0}(\tau)
:=
\Bigl\{ 
\forall t \in [\tau, 	{\varrho^{\varepsilon}(\tau)}], \quad 
\tfrac12 V_{\tau}^{\varepsilon}
\exp \bigl( c^{-1} \Sigma_{\tau,t}^{\varepsilon}
\bigr) 
\leq 
V_{t}^{\varepsilon}
\leq 
2 V_{\tau}^{\varepsilon}
\exp \bigl( c \Sigma_{\tau,t}^{\varepsilon}
\bigr)
\Bigr\}
\end{equation*}
has conditional probability
\begin{equation*}
{\mathbb P}
\Bigl( 
D^{0}(\tau)
 \, \vert \, 
{\mathcal F}_{\tau} \Bigr) \geq 
1 - {2} \exp \Bigl( - \frac{V_{\tau}^{\varepsilon}}{{2} \varepsilon^2} \Bigr),
\end{equation*}
on the event $\{ V_{\tau}^{\varepsilon} \geq v_{0} \varepsilon^2 \}$. 
\vspace{2pt}

ii) Let 
$\sigma_{0}^{\varepsilon}(\tau):=\tau$ and then, for a given $T>0$ and for all $j \geq 1$, define {the stopping time}:
\begin{equation*}
\sigma_{j}^{\varepsilon}(\tau)
:=  \inf \bigl\{ t > \sigma_{j-1}^{\varepsilon}(\tau) : \Sigma^{\varepsilon}_{\sigma_{j-1}^{\varepsilon},t}
= T \bigr\} \wedge {\boldsymbol \epsilon}.
\end{equation*} 
Then, {there exist a threshold 
$\delta_{\star}:=\delta_{\star}(\textbf{\bf A},\textbf{\bf B})$
and 
a constant $C:=C(\textbf{\bf A},\textbf{\bf B},a,T)$, such that, for 
$\delta \leq \delta_{\star}$
and
$v_{0} \geq v_{\star}$} and for any $j \geq 0$, the complementary of the event\footnote{{In $D^1_{j}(\tau)$, the supremum is null if 
$\sigma_{j}^{\varepsilon}(\tau) \geq \varrho^{\varepsilon}(\tau) \wedge {\boldsymbol \rho}^{\varepsilon}(\tau)$.}} 
\begin{equation*}
\begin{split}
D_{j}^1(\tau) &:= 
\biggl\{ \sup_{\sigma_{j}^{\varepsilon}(\tau) \leq t \leq \sigma_{j+1}^{\varepsilon}(\tau) \wedge \varrho^{\varepsilon}(\tau) \wedge {\boldsymbol \rho}^{{\boldsymbol \epsilon}}(\tau)} \biggl\vert \theta_{t}^{\varepsilon} - \phi_{\Sigma_{\sigma_{j}^{\varepsilon}(\tau),t}^{\varepsilon}}^{\theta_{\sigma_{j}^{\varepsilon}(\tau)}^{\varepsilon}} \biggr\vert \leq C \Bigl( \delta + \frac{\varepsilon^2}{V_{\tau}^{\varepsilon}} \Bigr)
\biggr\},
\end{split}
\end{equation*}
when intersected with $D^0(\tau)$,  has conditional probability  
\begin{equation*}
{\mathbb P} \Bigl(  D^0(\tau) \cap D_{j}^1(\tau)^{\complement} \, \vert  \,
{\mathcal F}_{ \tau}
\Bigr) \leq \frac{C}{\delta^4} \Bigl(  \frac{\varepsilon^2}{(j+1) V_{\tau}^{\varepsilon}} \Bigr)^2. 
\end{equation*}
\vskip 2pt

iii) Similarly, {for $\delta \leq \delta_{\star}$ and $v_{0} \geq v_{\star}$, for $j \geq 0$}, the complementary of the event 
\begin{equation*}
\begin{split}
D_{j}^2(\tau) 
& :=   \biggl\{
\forall t \in [\sigma_{j}^{\varepsilon}(\tau),\sigma_{j+1}^{\varepsilon}(\tau)],
\quad
 \Sigma_{\sigma_{j}^{\varepsilon}(\tau),t \wedge \varrho^{\varepsilon}(\tau)\wedge 
{\boldsymbol \rho}^{\varepsilon}(\tau)}^{\varepsilon} 
\\
&\hspace{-30pt} \geq {C^{-1} }
\Bigl( 
1 + \frac{\varepsilon^2}{V_{\tau}^{\varepsilon}}
\Bigr)^{-1}
\biggl(
\ln \Bigl[ (2 V_{\sigma_{j}^{\varepsilon}(\tau)}^{\varepsilon})^{\tfrac{1-\alpha}{1+\alpha}}
+ 
{C^{-1}} \bigl( t \wedge \varrho^{\boldsymbol \epsilon}(\tau)\wedge {\boldsymbol \rho}^{\varepsilon}(\tau) - \sigma_{j}^{\varepsilon}(\tau)\bigr)_{{+}}\Bigr] - \ln  \Bigl[ (2 V_{\sigma_{j}^{\varepsilon}(\tau)}^{\varepsilon})^{\tfrac{1-\alpha}{1+\alpha}} \Bigr]
\biggr)
\biggr\},
\end{split}
\end{equation*}
when intersected with $D^0(\tau)$, 
has conditional probability 
\begin{equation*}
{\mathbb P} \Bigl( D^0(\tau) \cap  D_{j}^2(\tau)^{\complement} \, \vert
\,  
{\mathcal F}_{\tau}
\Bigr) \leq C \Bigl( \frac{\varepsilon^2}{(j+1) V_{\tau}^{\varepsilon}} \Bigr)^2, 
\end{equation*}
on the event $\{ V_{\tau}^{\varepsilon} \geq v_{0} \varepsilon^2 \}$,
{where $C:=C(\textbf{\bf A},\textbf{\bf B},a,T)$}. 
\end{proposition}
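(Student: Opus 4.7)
The three statements are the natural quantitative counterparts of the semi\-martingale decompositions
already derived for $V^{\varepsilon}$, $\theta^{\varepsilon}$ and $R^{\varepsilon}$, once we observe the crucial algebraic identity
$V^{\varepsilon}_{t}/[g^{\varepsilon}_{t}(R^{\varepsilon}_{t})^{2}] = (R^{\varepsilon}_{t})^{\alpha-1}$.
My plan is to handle (i) first in order to work, in (ii) and (iii), on the good event $D^{0}(\tau)$
on which $V^{\varepsilon}$, and hence $R^{\varepsilon}$, are pinched between two prescribed values.

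For (i), on the window $[\tau,\varrho^{\varepsilon}(\tau)]$ one has $g^{\varepsilon}\ge a$, so that the lower bound in (\textbf{A3}) together with the upper bound in (\textbf{A2}) give, through the identity above,
\begin{equation*}
c_{1}(a)\,V^{\varepsilon}_{s}(R^{\varepsilon}_{s})^{\alpha-1}
\;\le\; |\nabla V^{\varepsilon}_{s}|^{2}
\;\le\; c_{2}(a)\,V^{\varepsilon}_{s}(R^{\varepsilon}_{s})^{\alpha-1}.
\end{equation*}
Plugging this in the expansion $\ud V^{\varepsilon}_{t}\ge \tfrac{7}{8}|\nabla V^{\varepsilon}_{t}|^{2}\ud t+\varepsilon\nabla V^{\varepsilon}_{t}\cdot \ud B_{t}$ established in Section \ref{subse:lower:bound:pot} (and in the symmetric upper bound coming from (\textbf{A2})), a standard Gronwall then gives the two-sided exponential control in $\Sigma^{\varepsilon}_{\tau,\cdot}$, conditional on the non-negativity of the Dol\'eans-Dade martingale $\varepsilon^{-1}\int_{\tau}^{\cdot}\nabla V^{\varepsilon}\cdot \ud B+\tfrac{1}{2}\varepsilon^{-2}\int_{\tau}^{\cdot}|\nabla V^{\varepsilon}|^{2}$, whose failure probability is bounded by $\exp(-V^{\varepsilon}_{\tau}/(2\varepsilon^{2}))$ via Doob's inequality, exactly as in the proof of Lemma \ref{lem:5:1}.

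For (ii), I rewrite \eqref{eq:theta:bib} in the time $S=\Sigma^{\varepsilon}_{\sigma_{j}^{\varepsilon}(\tau),\cdot}$, so that $\ud S=(R^{\varepsilon})^{\alpha-1}\ud t$ and the leading drift becomes exactly $\DTheta(\theta^{\varepsilon})\ud S$, i.e.\ that of the flow $\phi$. The three remaining contributions, after the time change, are: the perturbation $\eta'/g^{\varepsilon}\,\ud S$, which is $O(\delta/a)$ thanks to $|\eta'|\le CR$ and $R\le\delta$; the It\^o drift $-\tfrac{d-1}{2}(\varepsilon/R)^{2}\theta\,\ud t=-\tfrac{d-1}{2}\varepsilon^{2}R^{-(1+\alpha)}\theta\,\ud S$, which is $O(\varepsilon^{2}/V^{\varepsilon}_{\tau})$ on $D^{0}(\tau)$ since there $R^{1+\alpha}\ge V/C_{0}\gtrsim V^{\varepsilon}_{\tau}$; and the martingale whose quadratic variation in the $S$-clock equals $\varepsilon^{2}R^{-(1+\alpha)}\ud S$, again $O(\varepsilon^{2}/V^{\varepsilon}_{\tau})\ud S$. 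Comparing $\theta^{\varepsilon}$ with $\phi^{\theta^{\varepsilon}_{\sigma_{j}^{\varepsilon}(\tau)}}_{S}$ via Gronwall with the Lipschitz constant of $\DTheta$ coming from \eqref{eq:C11}, and bounding the supremum of the martingale by BDG/Doob in $L^{4}$ followed by Chebyshev, yields an error of size $\delta+\varepsilon^{2}/V^{\varepsilon}_{\tau}$ with exceptional probability $C\delta^{-4}(\varepsilon^{2}/V^{\varepsilon}_{\tau})^{2}$; the extra $(j+1)$ in the statement is absorbed by noting that, iterating (i), $V^{\varepsilon}_{\sigma_{j}^{\varepsilon}(\tau)}\gtrsim V^{\varepsilon}_{\tau}\bigl(\tfrac{1}{2}\exp(T/c)\bigr)^{j}\gtrsim (j+1)V^{\varepsilon}_{\tau}$ provided $T=T(\textbf{\bf A},\textbf{\bf B})$ is taken large enough (which we may freely assume since the statement is parametrised by $T$).

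For (iii), the idea is dual: I upper bound $(R^{\varepsilon})^{1-\alpha}$ so as to lower bound $\Sigma^{\varepsilon}_{\sigma_{j}^{\varepsilon}(\tau),\cdot}=\int R^{\alpha-1}\ud s$. Applying It\^o to $(R^{\varepsilon}_{t})^{1-\alpha}$ and using $|\nabla V|\le C_{0}R^{\alpha}$ together with the bound $R^{-(1+\alpha)}\le C/V^{\varepsilon}_{\tau}$ on $D^{0}(\tau)$ gives
\begin{equation*}
(R^{\varepsilon}_{t})^{1-\alpha}\;\le\; (R^{\varepsilon}_{\sigma_{j}^{\varepsilon}(\tau)})^{1-\alpha}
+C\bigl(1+\varepsilon^{2}/V^{\varepsilon}_{\tau}\bigr)(t-\sigma_{j}^{\varepsilon}(\tau))+N_{t},
\end{equation*}
where $N$ is a martingale whose quadratic variation over $[\sigma_{j}^{\varepsilon}(\tau),\sigma_{j+1}^{\varepsilon}(\tau)]$, obtained via $R^{-2\alpha}=R^{-(1+\alpha)}R^{\alpha-1}$ and $\int R^{\alpha-1}\ud s\le T$, is bounded by $C\varepsilon^{2}T/V^{\varepsilon}_{\tau}$. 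On the event $\{\sup|N|\le (R^{\varepsilon}_{\sigma_{j}^{\varepsilon}(\tau)})^{1-\alpha}\}$, which by BDG in $L^{4}$ and Chebyshev has exceptional probability $C(\varepsilon^{2}/((j+1)V^{\varepsilon}_{\tau}))^{2}$ (using again the growth of $V^{\varepsilon}_{\sigma_{j}^{\varepsilon}(\tau)}$), the elementary inequality $\int_{0}^{u}\ud r/(A+Br)=B^{-1}\log(1+Bu/A)$ then delivers the stated logarithmic lower bound, with $A\asymp (2V^{\varepsilon}_{\sigma_{j}^{\varepsilon}(\tau)})^{(1-\alpha)/(1+\alpha)}$ coming from $R^{1+\alpha}\le V/a$.

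The main technical obstacle is to perform the BDG/Chebyshev bookkeeping in (ii) and (iii) cleanly enough to end up with the specific prefactor $1/((j+1)V^{\varepsilon}_{\tau})$, which requires combining the exponential growth obtained by iterating (i) with the two stopping times $\varrho^{\varepsilon}(\tau)$ and $\boldsymbol\rho^{\varepsilon}(\tau)$ so that $g^{\varepsilon}\ge a$ and $R^{\varepsilon}\le\delta$ remain true throughout; everything else is a careful but routine application of the time-change $\ud S=(R^{\varepsilon})^{\alpha-1}\ud t$ that was built exactly to make $\phi$ appear as the leading behaviour of $\theta^{\varepsilon}$.
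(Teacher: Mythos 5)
Your part (i) and the core of part (ii) (time change $\ud S=(R^{\varepsilon})^{\alpha-1}\ud t$, Lipschitz--Gronwall against the flow, $L^4$--BDG plus Chebyshev for the stochastic integral, giving the $\delta^{-4}$) follow essentially the paper's own argument. Two corrections are needed, one minor and one substantive. The minor one concerns your absorption of the factor $(j+1)$ in (ii): you get it by ``iterating (i)'' and taking $T$ large, ``which we may freely assume''. You may not: the statement is asserted for every given $T>0$, with constants merely allowed to depend on $T$; moreover iterating (i) at each $\sigma_j^{\varepsilon}(\tau)$ both compounds the factor $\tfrac12$ (whence your spurious need for $T$ large) and multiplies the exceptional events. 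The correct route is the paper's: on $D^{0}(\tau)$ itself, $V_t^{\varepsilon}\ge\tfrac12 V_{\tau}^{\varepsilon}\exp(c^{-1}jT)$ for $t\in[\sigma_j^{\varepsilon}(\tau),\sigma_{j+1}^{\varepsilon}(\tau)\wedge\varrho^{\varepsilon}(\tau))$ because $\Sigma^{\varepsilon}_{\tau,\sigma_j^{\varepsilon}(\tau)}=jT$ there, and $\tfrac12\exp(c^{-1}jT)\ge\kappa(c,T)\,(j+1)$ for every fixed $T>0$; no iteration and no largeness of $T$ is required.

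The substantive gap is in (iii). You apply It\^o to $(R^{\varepsilon})^{1-\alpha}$ and control the martingale $N$ by BDG in $L^{4}$ against the threshold $(R^{\varepsilon}_{\sigma_j^{\varepsilon}(\tau)})^{1-\alpha}$, claiming exceptional probability $C(\varepsilon^{2}/((j+1)V_{\tau}^{\varepsilon}))^{2}$; this bookkeeping does not close. With your quadratic-variation bound $[N]\le C\varepsilon^{2}T/((j+1)V_{\tau}^{\varepsilon})$ (which is obtainable, via $R^{-2\alpha}\le R^{-2}=R^{-(1+\alpha)}R^{\alpha-1}$ for $R\le\delta\le1$ --- note that your displayed identity $R^{-2\alpha}=R^{-(1+\alpha)}R^{\alpha-1}$ is false as an identity), Chebyshev against $(R_{\sigma_j^{\varepsilon}(\tau)}^{\varepsilon})^{4(1-\alpha)}$ yields $C(\varepsilon^{2}/((j+1)V_{\tau}^{\varepsilon}))^{2}\,(R_{\sigma_j^{\varepsilon}(\tau)}^{\varepsilon})^{-4(1-\alpha)}$, and on $\{V_{\tau}^{\varepsilon}\asymp v_{0}\varepsilon^{2}\}$ the last factor is of order $(v_{0}\varepsilon^{2})^{-4(1-\alpha)/(1+\alpha)}$, which blows up as $\varepsilon\searrow0$: the bound is not uniform in $\varepsilon$ (equivalently, the typical size of $\sup\vert N\vert$ as you bound it is not small compared with $(R_{\sigma_j^{\varepsilon}(\tau)}^{\varepsilon})^{1-\alpha}$ at these radii). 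If instead you keep the sharp quadratic variation, of order $\varepsilon^{2}T\sup_{[\sigma_j^{\varepsilon}(\tau),\sigma_{j+1}^{\varepsilon}(\tau)]}R^{1-3\alpha}$, you must compare $R$ over the whole interval with $R_{\sigma_j^{\varepsilon}(\tau)}$, and the only two-sided control available on $D^{0}(\tau)$ has lower envelope $\tfrac12 V_{\tau}^{\varepsilon}e^{c^{-1}jT}$ and upper envelope $2V_{\tau}^{\varepsilon}e^{c(j+1)T}$, whose ratio grows exponentially in $j$; the resulting probability bound then fails to decay like $(j+1)^{-2}$, which is exactly what is needed for the summation over $j$ in Corollary \ref{coro:6:7}. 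The paper's proof of (iii) avoids any polynomial martingale estimate on the radius: it controls $V^{\varepsilon}$ from above on $[\sigma_j^{\varepsilon}(\tau),\sigma_{j+1}^{\varepsilon}(\tau)]$ through an exponential Dol\'eans--Dade event (probability $\exp(-V^{\varepsilon}_{\sigma_j^{\varepsilon}(\tau)}/\varepsilon^{2})$, hence $\le C(\varepsilon^{2}/((j+1)V_{\tau}^{\varepsilon}))^{2}$ by the lower bound of $D^{0}(\tau)$), compares with the ODE $\dot y=C(1+\varepsilon^{2}/V_{\tau}^{\varepsilon})\,y^{2\alpha/(1+\alpha)}$ to bound $V^{\varepsilon}$, and only then converts this into the logarithmic lower bound on $\Sigma$ via $R^{\alpha-1}\ge c\,V^{-(1-\alpha)/(1+\alpha)}$ (valid since $g^{\varepsilon}\ge a$). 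You would need to replace your $L^{4}$/Chebyshev step for $N$ by such an exponential control (or add a genuine localization/bootstrap on $R$) for (iii) to hold as stated.
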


%
%

\begin{proof}[Proof of  Proposition 
\ref{prop:good:events}] { \ }

\textit{First step.} We prove $i)$.  
We first let
\begin{equation*}
\xi^{\varepsilon}(\tau) := \inf \Bigl\{ t \geq \tau : V_{t}^{\varepsilon} \leq \frac{V_{\tau}^{\varepsilon}}{2}  \Bigr\}.
\end{equation*}
For a given constant $c>0$ and for $v_{0} \geq v_{\star}(\textbf{\bf A},\textbf{\bf B})$ (see for instance 
the proof of Lemma 
\ref{lem:5:1}), for any $t \in [\tau,\varrho^{\varepsilon}(\tau)  \wedge \xi^{\varepsilon}(\tau)]$, 
\begin{equation*}
\begin{split}
&\ud \bigl( V_{t}^{\varepsilon} \exp ( - c \Sigma_{\tau,t}^{\varepsilon})\bigr)
\geq \Bigl( \tfrac78 \vert \nabla V_{t}^{\varepsilon} \vert^2 - c 
\frac{(V_{t}^{\varepsilon})^2}{g_{t}^{\varepsilon} (R_{t}^{\varepsilon})^2} \Bigr) 
\exp ( - c \Sigma_{\tau,t}^{\varepsilon} )
\ud t
  + \varepsilon \exp ( - c \Sigma_{\tau,t}^{\varepsilon}) \nabla V_{t}^{\varepsilon}
\cdot \ud B_{t},
\end{split}
\end{equation*}
on the event $\{ V_{\tau}^{\varepsilon} \geq v_{0} \varepsilon^2 \}$. 
By assumption (\textbf{A3}), 
we can choose {$c := c({\mathbf A},{\mathbf B},a)$} such that, for $t \in [\tau,\varrho^{\varepsilon}(\tau) \wedge \xi^{\varepsilon}(\tau)]$, 
\begin{equation*}
\Bigl( \tfrac78 \vert \nabla V_{t}^{\varepsilon} \vert^2 - c 
\frac{(V_{t}^{\varepsilon})^2}{g_{t}^{\varepsilon} (R_{t}^{\varepsilon})^2} \Bigr) 
\exp ( - c \Sigma_{\tau,t}^{\varepsilon} )
\geq \tfrac12 
\vert \nabla V_{t}^{\varepsilon} \vert^2 \exp\bigl(- c \Sigma_{\tau,t}^{\varepsilon} \bigr). 
\end{equation*}
It then remains to see that 
\begin{equation*}
\begin{split}
&{\mathbb P}
\Bigl( \exists t \geq \tau : \frac12 V_{\tau}^{\varepsilon} +
\frac12 
\int_{\tau}^{t} 
\exp(-c \Sigma_{\tau,s}^{\varepsilon})
\vert \nabla V_{s}^{\varepsilon} \vert^2  \ud s
+
\varepsilon \int_{\tau}^t 
\exp(-c \Sigma_{\tau,s}^{\varepsilon})
\nabla V_{s}^{\varepsilon} \cdot \ud B_{s}
\leq 0 \, \vert \, {\mathcal F}_{\tau}
\Bigr) 
 \leq \exp \bigl( - 
\frac{ V_{\tau}^{\varepsilon}}{2 \varepsilon^2}
\bigr),
\end{split}
\end{equation*}
from which we get
\begin{equation}
\label{eq:proof:6:6:lower:bound:D0}
{\mathbb P}
\Bigl( 
\Bigl\{ 
\forall t \in [\tau,\varrho^{\varepsilon}(\tau)  \wedge \xi^{\varepsilon}(\tau)], \quad 
V_{t}^{\varepsilon}
\geq 
\tfrac12 V_{\tau}^{\varepsilon}
\exp \bigl( c \Sigma_{\tau,t}^{\varepsilon}
\bigr)
\Bigr\}
 \, \vert \, 
{\mathcal F}_{\tau} \Bigr) \geq 
1 - \exp \Bigl( - \frac{V_{\tau}^{\varepsilon}}{2 \varepsilon^2} \Bigr),
\end{equation}
on the event $\{ V_{\tau}^{\varepsilon} \geq v_{0} \varepsilon^2 \}$. 
{Clearly, we can remove the stopping time $\xi^{\varepsilon}$ appearing in the event in the left-hand side.} 

As for the upper bound {appearing in the definition of $D^0(\tau)$}, we must come back to the proof of 
Lemma 
\ref{lem:supermartingale}, {see in particular \eqref{eq:V:R2}}. {Working on} the event 
$\{ V_{\tau}^{\varepsilon} \geq v_{0} \varepsilon^2\}$
and using (\textbf{A2}), we deduce that 
\begin{equation}
\label{eq:additional:label:6.6:proof}
\varepsilon^2 \vert \Delta V_{t}^{\varepsilon} \vert  \leq \frac{2 {C_{0}}}{v_{0}} V_{t}^{\varepsilon} \bigl( R_{t}^{\varepsilon} 
\bigr)^{\alpha-1}, \quad t \in [{\tau},\xi^{\varepsilon}(\tau)]. 
\end{equation}
Up until $\varrho^{\varepsilon}(\tau) \wedge \xi^{\varepsilon}(\tau)$, we also have from 
(\textbf{A2})
and
(\textbf{A3}): 
\begin{equation}
\label{eq:label:en:plus:encadrement:nablaV}
{C^{-1} 
V_{t}^{\varepsilon} \bigl( R_{t}^{\varepsilon} \bigr)^{\alpha-1}
\leq 
C^{-1} \frac{\bigl( V_{t}^{\varepsilon} \bigr)^2}{\bigl( R_{t}^{\varepsilon} \bigr)^2}
\leq \vert \nabla V_{t}^{\varepsilon} \vert^2 \leq C \bigl( R_{t}^{\varepsilon} \bigr)^{2 \alpha} \leq 
C V_{t}^{\varepsilon} \bigl( R_{t}^{\varepsilon} \bigr)^{\alpha-1},}
\end{equation}
{for a constant $C=C(\textbf{\bf A},\textbf{\bf B},a)$}. For a new value of the constant 
{$c \geq c_{\star}(\textbf{\bf A},\textbf{\bf B},a,v_{\star})$}
({pay attention that the constant $c$ that appears in 
the upper bound in the definition of 
 $D^0(\tau)$ may be chosen independently of the constant that appears in the lower bound; in particular, 
 it may be chosen as large as needed)}, we deduce that, 
{on the event $\{ V_{\tau}^{\varepsilon} \geq v_{0} \varepsilon^2\}$},  
 for any $t \in [\tau,\varrho^{\varepsilon}(\tau)  \wedge \xi^{\varepsilon}(\tau)]$, 
\begin{equation*}
\begin{split}
&\ud \bigl( V_{t}^{\varepsilon} \exp ( - c \Sigma_{\tau,t}^{\varepsilon})\bigr)
\leq - \tfrac{1}{2} 
\vert \nabla V_{t}^{\varepsilon} \vert^2 
\exp ( - c \Sigma_{\tau,t}^{\varepsilon} )
\ud t
  + \varepsilon \exp ( - c \Sigma_{\tau,t}^{\varepsilon}) \nabla V_{t}^{\varepsilon}
\cdot \ud B_{t}.
\end{split}
\end{equation*}
Proceeding as for the lower bound, we see that 
\begin{equation*}
\begin{split}
&{\mathbb P}
\Bigl( \exists t \geq \tau : - V_{\tau}^{\varepsilon}- 
\frac12
\int_{\tau}^{t} 
\exp(-c \Sigma_{\tau,s}^{\varepsilon})
\vert \nabla V_{s}^{\varepsilon} \vert^2  \ud s
+
\varepsilon \int_{\tau}^t 
\exp(-c \Sigma_{\tau,s}^{\varepsilon})
\nabla V_{s}^{\varepsilon} \cdot \ud B_{s}
\geq 0 \, \vert \, {\mathcal F}_{\tau}
\Bigr) 
 \leq \exp \bigl( - 
\frac{ V_{\tau}^{\varepsilon}}{ \varepsilon^2}
\bigr),
\end{split}
\end{equation*}
and, as above, we get {the following upper bound}
\begin{equation*}
{\mathbb P}
\Bigl( 
\Bigl\{ 
\forall t \in [\tau,\varrho^{\varepsilon}(\tau)    \wedge
{\xi^{\varepsilon}(\tau)}], \quad 
V_{t}^{\varepsilon}
\leq 
2 V_{\tau}^{\varepsilon}
\exp \bigl( c \Sigma_{\tau,t}^{\varepsilon}
\bigr)
\Bigr\}
 \, \vert \, 
{\mathcal F}_{\tau}
 \Bigr)  \leq {\exp \bigl( - 
\frac{ V_{\tau}^{\varepsilon}}{ \varepsilon^2}
\bigr)}, 
\end{equation*}
{on the event $\{ V_{\tau}^{\varepsilon} \geq v_{0} \varepsilon^2\}$}. 
{By combining with \eqref{eq:proof:6:6:lower:bound:D0}},
we deduce  that, on the event $\{ V_{\tau}^{\varepsilon} \geq v_{0} \varepsilon^2 \}$,
\begin{equation*}
{\mathbb P}
\Bigl( D^0(\tau)
 \, \vert \, {\mathcal F}_{\tau}
 \Bigr) \geq 
1 - {2}\exp \Bigl( - \frac{V_{\tau}^{\varepsilon}}{{2} \varepsilon^2} \Bigr), 
\end{equation*}
which is the required inequality. 

Moreover, observe (for the sequel) that, on the event $D^0(\tau) \cap \{ \sigma_{j}^{\varepsilon}(\tau) < {\boldsymbol \epsilon}\}$, 
\begin{equation}
\label{eq:prop:6:6:proof:conclusion:first:step}
\tfrac12 V_{\tau}^{\varepsilon} \exp \bigl( c^{-1} j T \bigr) 
 \leq V_{t}^{\varepsilon} \leq
2 V_{\tau}^{\varepsilon} \exp \bigl( c (j+1) T \bigr), 
\quad t \in \bigl[ \sigma_{j}^{\varepsilon}(\tau),\sigma_{j+1}^{\varepsilon}(\tau) \wedge \varrho^{\varepsilon}(\tau)\bigr).
\end{equation}
{Of course, it must be stressed that the time interval on which the inequality holds true is empty
{if 
$\sigma_{j}^{\varepsilon}(\tau) > \varrho^{\varepsilon}(\tau)$}.}
\vspace{4pt}

\textit{Second step.} 
We now prove $ii)$. 
To simplify, we just write
$\Sigma^{j,\varepsilon}_{t}$ for 
$\Sigma^{\varepsilon}_{\sigma_{j}^{\varepsilon}(\tau),t}$
and $\phi_{t}^{j,\varepsilon}$ for 
 $\phi_{t}^{\theta^{\varepsilon}_{\sigma^{\varepsilon}_{j}(\tau)}}$. 
Then, letting 
\begin{equation*}
\hat{\phi}^{j,\varepsilon}_{t} := \phi^{j,\varepsilon}_{\Sigma_{t}^{j,\varepsilon}}, \quad t \in 
\bigl[ \sigma_{j}^{\varepsilon}(\tau),{\boldsymbol \epsilon} \bigr],
\end{equation*}
we have
\begin{equation*}
\hat{\phi}^{j,\varepsilon}_{t} = \theta^{\varepsilon}_{\sigma_{j}^{\varepsilon}(\tau)}
+ \int_{\sigma_{j}^{\varepsilon}(\tau)}^t
\DTheta\bigl(\hat{\phi}^{j,\varepsilon}_{s}\bigr)
\frac{V_{s}^{\varepsilon}}{g_{s}^{\varepsilon}(R_{s}^{\varepsilon})^2}
\ud s, 
\end{equation*}
for $t \in [\sigma_{j}^{\varepsilon}(\tau),{\boldsymbol \epsilon}]$. Then,
by \eqref{eq:theta:bib} and by the Lipschitz property of $\DTheta$,
for $t \in [\sigma_{j}^{\varepsilon}(\tau),
\varrho^{\varepsilon}(\tau) \wedge 
{\boldsymbol \rho}^{\varepsilon}(\tau)]$, 
\begin{equation*}
\begin{split}
\bigl\vert \theta_{t}^{\varepsilon} -
\hat{\phi}^{j,\varepsilon}_{t} \bigr\vert &\leq
C \int_{\sigma_{j}^{\varepsilon}(\tau)}^t
\bigl\vert \theta_{s}^{\varepsilon} -
\hat{\phi}^{j,\varepsilon}_{s} \bigr\vert
\frac{V_{s}^{\varepsilon}}{g_{s}^{\varepsilon}(R_{s}^{\varepsilon})^2}
ds
 + C \int_{\sigma_{j}^{\varepsilon}(\tau)}^t 
 {\eta_{s}^{\varepsilon,\prime}} 
\frac{V_{s}^{\varepsilon}}{g_{s}^{\varepsilon}(R_{s}^{\varepsilon})^2}
\ud s + C \int_{\sigma_{j}^{\varepsilon}(\tau)}^t
\frac{\varepsilon^2}{V_{s}^{\varepsilon}} \frac{V_{s}^{\varepsilon}}{g_{s}^{\varepsilon} (R_{s}^{\varepsilon})^2} \ud s
\\
&\hspace{15pt} + C \biggl\vert 
\int_{\sigma_{j}^{\varepsilon}(\tau)}^t 
\frac{\varepsilon}{\sqrt{V_{s}^{\varepsilon}}} 
\sqrt{\frac{V_{s}^{\varepsilon}}{(R_{s}^{\varepsilon})^2}} 
\Bigl[ \ud B_{s} -
\theta_s^{\varepsilon}
\Bigl( \theta_s^{\varepsilon} \cdot \ud B_{s} \Bigr)
\Bigr]
\biggr\vert,
\end{split}
\end{equation*}
where {$C:=C(\textbf{\bf A},\textbf{\bf B})$ is allowed to increase from line to line}. 
So,
{for $\delta \leq \delta_{\star}$ as in 
\eqref{eq:theta:bib}},
 on the event $D^{0}(\tau)$, 
for $t \in [\sigma_{j}^{\varepsilon}(\tau),
\varrho^{\varepsilon}(\tau) \wedge 
{\boldsymbol \rho}^{\varepsilon}(\tau)]$, 
\begin{equation*}
\begin{split}
\bigl\vert \theta_{t}^{\varepsilon} -
\hat{\phi}^{j,\varepsilon}_{t} \bigr\vert &\leq
C \int_{\sigma_{j}^{\varepsilon}(\tau)}^t
\bigl\vert \theta_{s}^{\varepsilon} -
\hat{\phi}^{j,\varepsilon}_{s} \bigr\vert
\frac{V_{s}^{\varepsilon}}{g_{s}^{\varepsilon}(R_{s}^{\varepsilon})^2}
\ud s
  + C  
 {  \bigl( \delta + \frac{\varepsilon^2}{V_{\tau}^{\varepsilon}}
  \bigr)}
 \int_{\sigma_{j}^{\varepsilon}(\tau)}^t 
\frac{V_{s}^{\varepsilon}}{g_{s}^{\varepsilon}(R_{s}^{\varepsilon})^2}
\ud s
\\
&\hspace{15pt} + C \biggl\vert 
\int_{\sigma_{j}^{\varepsilon}(\tau)}^t 
\frac{\varepsilon}{\sqrt{V_{s}^{\varepsilon}}} 
\sqrt{\frac{V_{s}^{\varepsilon}}{(R_{s}^{\varepsilon})^2}} 
\Bigl[ \ud B_{s} -
\theta_s^{\varepsilon}
\Bigl( \theta_s^{\varepsilon} \cdot \ud B_{s} \Bigr)
\Bigr]
\biggr\vert. 
\end{split}
\end{equation*}
Allowing the constant $C$ to depend on $T$ ({that is $C:=C(\textbf{\bf A},\textbf{\bf B},T)$}), 
using Gronwall's lemma and recalling that 
$\Sigma^{j,\varepsilon}_{t} \leq T$ for $t \in [\sigma_{j}^{\varepsilon}(\tau),\sigma_{j+1}^{\varepsilon}(\tau)]$, we get
\begin{equation*}
\begin{split}
&\sup_{\sigma_{j}^{\varepsilon}(\tau) \leq t \leq \sigma_{j+1}^{\varepsilon}(\tau) \wedge 
\varrho^{\varepsilon}(\tau) \wedge {\boldsymbol \rho}^{\varepsilon}(\tau)}
\bigl\vert \theta_{t}^{\varepsilon} -
\hat{\phi}^{j,\varepsilon}_{t} \bigr\vert \leq
C 
\bigl( 
\delta + \frac{\varepsilon^2}{V_{\tau}^{\varepsilon}}
\bigr) 
\\
&\hspace{60pt} + C
\sup_{\sigma_{j}^{\varepsilon}(\tau) \leq t \leq \sigma_{j+1}^{\varepsilon}(\tau) \wedge 
\varrho^{\varepsilon}(\tau) \wedge {\boldsymbol \rho}^{\varepsilon}(\tau)}
 \biggl\vert 
\int_{\sigma_{j}^{\varepsilon}(\tau)}^t 
\frac{\varepsilon}{\sqrt{V_{s}^{\varepsilon}}} 
\sqrt{\frac{V_{s}^{\varepsilon}}{(R_{s}^{\varepsilon})^2}} 
\Bigl[ \ud B_{s} -
\theta_s^{\varepsilon}
\Bigl( \theta_s^{\varepsilon} \cdot \ud B_{s} \Bigr)
\Bigr]
\biggr\vert. 
\end{split}
\end{equation*}
Now, by Markov inequality and by B\"urkholder-Davis-Gundy  inequality, we get, on the event 
$\{ \sigma_{j}^{\varepsilon}(\tau) < {\boldsymbol \epsilon}\}$, 
\begin{equation*}
\begin{split}
&{\mathbb P}
\biggl( 
\biggl\{
\sup_{\sigma_{j}^{\varepsilon}(\tau) \leq t \leq \sigma_{j+1}^{\varepsilon}(\tau) \wedge 
\varrho^{\varepsilon}(\tau) \wedge {\boldsymbol \rho}^{\varepsilon}(\tau)}
 \biggl\vert 
\int_{\sigma_{j}^{\varepsilon}(\tau)}^t 
\frac{\varepsilon}{\sqrt{V_{s}^{\varepsilon}}} 
\sqrt{\frac{V_{s}^{\varepsilon}}{(R_{s}^{\varepsilon})^2}} 
\Bigl[ \ud B_{s} -
\theta_s^{\varepsilon}
\Bigl( \theta_s^{\varepsilon} \cdot \ud B_{s} \Bigr)
\Bigr]
\biggr\vert \geq \delta \biggr\} \cap D^0(\tau) \, \vert \, {\mathcal F}_{\tau} \biggr)
\\
&\leq \frac{C}{\delta^4}{\mathbb E} \biggl[ \biggl( \int_{\sigma_{j}^{\varepsilon}(\tau)}^{\sigma_{j+1}^{\varepsilon}(\tau) \wedge 
\varrho^{\varepsilon}(\tau) \wedge 
{\boldsymbol \rho}^{\varepsilon}(\tau)}
{\mathbf 1}_{\{V_{s}^{\varepsilon} \geq V_{\tau}^{\varepsilon} \exp(C^{-1}j)\}}  
\frac{\varepsilon^2}{V_{s}^{\varepsilon}}
{\frac{V_{s}^{\varepsilon}}{(R_{s}^{\varepsilon})^2}} 
\ud s \biggr)^2 \, \vert \, {\mathcal F}_{\tau} \biggr]
\\
&\leq \frac{C}{\delta^4}\Bigl( \frac{\varepsilon^2}{V_{\tau}^{\varepsilon} \exp(C^{-1} j)}
\Bigr)^2
{\mathbb E} \biggl[ \biggl( \int_{\sigma_{j}^{\varepsilon}(\tau)}^{\sigma_{j+1}^{\varepsilon}(\tau) \wedge 
\varrho^{\varepsilon}(\tau) \wedge {\boldsymbol \rho}^{\varepsilon}(\tau)}
{\frac{V_{s}^{\varepsilon}}{(R_{s}^{\varepsilon})^2}} \ud s \biggr)^2 \, \vert \, {\mathcal F}_{\tau} \biggr]
\\
&\leq \frac{C}{\delta^4}\Bigl( \frac{\varepsilon^2}{V_{\tau}^{\varepsilon} \exp(C^{-1} j)}
\Bigr)^2,
\end{split}
\end{equation*}
{where we used \eqref{eq:prop:6:6:proof:conclusion:first:step} in the second line and $C:=C(\textbf{A},\textbf{\bf B},a,T)$.}
Obviously, the bound remains true when $\sigma_{j}^{\varepsilon}(\tau)={\boldsymbol \epsilon}$ because, in that case, the sup in the first line reduces to $0$. 
The upper bound for ${\mathbb P}(D^1_{j}(\tau)^{\complement} \cap D^0(\tau) \, \vert \, {\mathcal F}_{\tau})$
easily follows. 
\vskip 4pt

\textit{Third step.} We now proceed in the same way to upper bound 
${\mathbb P}( D^2_{j}(\tau)^{\complement} \cap D^0(\tau) \, \vert \, {\mathcal F}_{\tau})$. 
\vskip 4pt

We start with the following observation. 
Following 
\eqref{eq:additional:label:6.6:proof}
and
\eqref{eq:label:en:plus:encadrement:nablaV}, we can find a new constant $C:=C(\textbf{\bf A},\textbf{\bf B},a)$ such that, for $t \in [\tau,\varrho^{\varepsilon}(\tau)\wedge {\boldsymbol \rho}^{\varepsilon}(\tau)\wedge \xi^{\varepsilon}(\tau)]$, 
\begin{equation*}
\begin{split}
\ud V_{t}^{\varepsilon}  &\leq   \vert \nabla V_{t}^{\varepsilon} \vert^2  \ud t +
C \frac{\varepsilon^2}{V_{\tau}^{\varepsilon}}
 V_{t}^{\varepsilon}
\bigl( R_{t}^{\varepsilon} \bigr)^{\alpha-1} \ud t + \varepsilon \nabla V_{t}^{\varepsilon} \cdot \ud B_{t}
 \leq C \Bigl( 1+  
 \frac{\varepsilon^2}{ V_{\tau}^{\varepsilon}}
\Bigr)
\bigl( V_{t}^{\varepsilon} \bigr)^{\tfrac{2\alpha}{1+\alpha}} \ud t -
\tfrac12 \vert \nabla V_{t}^{\varepsilon} \vert^2  \ud t + \varepsilon \nabla V_{t}^{\varepsilon} \cdot \ud B_{t}.
\end{split}
\end{equation*}
We then observe that the event 
\begin{equation*}
D^3_{j}(\tau) :=
\biggl\{
\exists t \geq \sigma_{j}^{\varepsilon}(\tau) : \varepsilon \int_{\sigma_{j}^{\varepsilon}(\tau)}^t \nabla V_{s}^{\varepsilon} \cdot \ud B_{s} 
- \tfrac12 \int_{\sigma_{j}^{\varepsilon}(\tau)}^t \bigl\vert \nabla V_{s}^{\varepsilon} \bigr\vert^2 \ud s
\geq V_{\sigma_{j}^{\varepsilon}(\tau)}^{\varepsilon}
\biggr\},
\end{equation*}
has conditional probability
\begin{equation*}
{\mathbb P}
\bigl( D^3_{j}(\tau)  \, \vert  \, {\mathcal F}_{\sigma_{j}^{\varepsilon}(\tau)}
\bigr) \leq \exp\biggl(- \frac{V_{\sigma_{j}^{\varepsilon}(\tau)}^{\varepsilon}}{\varepsilon^2} \biggr).
\end{equation*}
On $D^0(\tau) \cap \{ \sigma_{j}^{\varepsilon}(\tau) < {\boldsymbol \epsilon} {
\wedge \varrho^{\varepsilon}(\tau)\wedge {\boldsymbol \rho}^{\varepsilon}(\tau)}\}$,
$V^{\varepsilon}_{\sigma^{\varepsilon}_{j}(\tau)} \geq C^{-1} {(j+1)} V_{\tau}^{\varepsilon}$
{(see \eqref{eq:prop:6:6:proof:conclusion:first:step})}, 
 {with $C:=C(\textbf{A},\textbf{\bf B},a,T)$. Then,}
\begin{equation}
\label{eq:P:D3j}
\begin{split}
&{\mathbb P}
\Bigl( D^3_{j}(\tau) \cap {D^0(\tau)} \cap \{ \sigma_{j}^{\varepsilon}(\tau) < {\boldsymbol \epsilon} 
\wedge \varrho^{\varepsilon}(\tau)\wedge {\boldsymbol \rho}^{\varepsilon}(\tau)\}  \, \vert  \, {\mathcal F}_{\tau}
\Bigr)
\\
&\leq {\mathbb P}
\Bigl( D^3_{j}(\tau) \cap \bigl\{ V^{\varepsilon}_{\sigma^{\varepsilon}_{j}(\tau)} \geq C^{-1} {(j+1)} V_{\tau}^{\varepsilon} \bigr\}  \, \vert  \, {\mathcal F}_{\tau}
\Bigr)
\\
&= {\mathbb E}
\Bigl[
{\mathbb P}
\bigl( D^3_{j}(\tau)  \, \vert  \, {\mathcal F}_{\sigma_{j}^{\varepsilon}(\tau)}
\bigr)
{\mathbf 1}_{\{
 V^{\varepsilon}_{\sigma^{\varepsilon}_{j}(\tau)} \geq C^{-1} {(j+1)} V_{\tau}^{\varepsilon}
\}}
\, \vert  \, {\mathcal F}_{\tau}
\Bigr]
\leq \exp\biggl(- C^{-1}\frac{{(j+1)} V_{\tau}^{\varepsilon}}{\varepsilon^2} \biggr)
\leq C \Bigl( \frac{\varepsilon^2}{(j+1) V_{\tau}^{\varepsilon}} \Bigr)^2.
\end{split}
\end{equation}
On $D^3_{j}(\tau)^{\complement} \cap D^0(\tau)$, we have
\begin{equation*}
\forall t \in [\sigma_{j}^{\varepsilon}(\tau),\sigma_{j+1}^{\varepsilon}(\tau) \wedge \varrho^{\varepsilon}(\tau)\wedge {\boldsymbol \rho}^{\varepsilon}(\tau) \wedge \xi^{\varepsilon}(\tau)], \quad V_{t}^{\varepsilon} \leq 2 V_{\sigma_{j}^{\varepsilon}(\tau)}^{\varepsilon} + C 
\Bigl( 1 + \frac{\varepsilon^2}{V_{\tau}^{\varepsilon}}
\Bigr)
\int_{\sigma_{j}^{\varepsilon}(\tau)}^t 
\bigl( V_{s}^{\varepsilon} \bigr)^{\tfrac{2\alpha}{1+\alpha}} \ud s, 
\end{equation*}
and then, by a standard comparison argument with the solution of the 
ODE $\dot y_{t} = C(1+\varepsilon^2/V_{\tau}^{\varepsilon}) y_{t}^{{2\alpha}/{1+\alpha}}$,
$t \geq \tau$, $y_{\tau} > 2 V_{\sigma_{j}^{\varepsilon}(\tau)}^{\varepsilon}$, 
we have
\begin{equation*}
V_{t}^{\varepsilon} \leq \Bigl[ \bigl(2 V_{\sigma_{j}^{\varepsilon}(\tau)}^{\varepsilon} \bigr)^{\tfrac{1-\alpha}{1+\alpha}}
+ C \frac{1-\alpha}{1+\alpha}
\Bigl( 1 + \frac{\varepsilon^2}{V_{\tau}^{\varepsilon}}
\Bigr) \bigl( t - \sigma_{j}^{\varepsilon}(\tau)\bigr) \Bigr]^{\tfrac{1+\alpha}{1-\alpha}}. 
\end{equation*}
And then, for $t \in [\sigma_{j}^{\varepsilon}(\tau),\sigma_{j+1}^{\varepsilon}(\tau) \wedge \varrho^{\varepsilon}(\tau)\wedge {\boldsymbol \rho}^{\varepsilon}(\tau) \wedge \xi^{\varepsilon}(\tau)]$, 
\begin{equation*}
\begin{split}
\int_{\sigma_{j}^{\varepsilon}(\tau)}^t \frac{V_{s}^{\varepsilon}}{(R_{s}^{\varepsilon})^2} ds
&\geq 
 C^{-1} \int_{\sigma_{j}^{\varepsilon}(\tau)}^t \bigl( V_{s}^{\varepsilon} \bigr)^{- \tfrac{1-\alpha}{1+\alpha}}
 ds
 \\
 &\geq C^{-1} \int_{\sigma_{j}^{\varepsilon}(\tau)}^t 
 \Bigl[ \Bigl(2 V_{\sigma_{j}^{\varepsilon}(\tau)}^{\varepsilon} \Bigr)^{\tfrac{1-\alpha}{1+\alpha}}
+ C \frac{1-\alpha}{1+\alpha}
\Bigl( 1 + \frac{\varepsilon^2}{V_{\tau}^{\varepsilon}}
\Bigr) \bigl( s - \sigma_{j}^{\varepsilon}(\tau)\bigr) \Bigr]^{-1} 
 ds
\\
&\geq C^{-1} 
\Bigl( 
1 + \frac{\varepsilon^2}{V_{\tau}^{\varepsilon}}
\Bigr)^{-1}
\biggl( \ln \Bigl[ \Bigl(2 V_{\sigma_{j}^{\varepsilon}(\tau)}^{\varepsilon} \Bigr)^{\tfrac{1-\alpha}{1+\alpha}}
+ C^{-1}
\bigl( t - \sigma_{j}^{\varepsilon}(\tau) \bigr) \Bigr] -
\ln \Bigl[ \Bigl(2 V_{\sigma_{j}^{\varepsilon}(\tau)}^{\varepsilon} \Bigr)^{\tfrac{1-\alpha}{1+\alpha}}
 \Bigr]
\biggr),
\end{split}
\end{equation*}
{with $C:=C(\textbf{A},\textbf{\bf B},a,T)$}.
In fact, it is easily checked that $\varrho^{\varepsilon}(\tau)\wedge {\boldsymbol \rho}^{\varepsilon}(\tau) \wedge \xi^{\varepsilon}(\tau) = 
\varrho^{\varepsilon}(\tau)\wedge {\boldsymbol \rho}^{\varepsilon}(\tau)
$ on $D^0(\tau)$. Hence, on $D^3_{j}(\tau)^{\complement} \cap D^0(\tau)$, the above is true 
for any $t \in [\sigma_{j}^{\varepsilon}(\tau),\sigma_{j+1}^{\varepsilon}(\tau) \wedge \varrho^{\varepsilon}(\tau)\wedge {\boldsymbol \rho}^{\varepsilon}(\tau) ]$. This says that 
$D^3_{j}(\tau)^{\complement} \cap D^0(\tau)$
is included in $D^2_{j}(\tau)$. 
In particular, 
$D^2_{j}(\tau)^{\complement} \cap D^0(\tau) \subset 
D^3_{j}(\tau) \cap D^0(\tau)$.
 In order to complete the proof,
it suffices to see
that {$D^2_{j}(\tau)^{\complement} \subset \{ \sigma_{j}^{\varepsilon}(\tau)
< {\boldsymbol \epsilon}
\wedge \varrho^{\varepsilon}(\tau)\wedge {\boldsymbol \rho}^{\varepsilon}(\tau)\}$} and then to use \eqref{eq:P:D3j}. 
\end{proof}

\subsubsection{Global comparison between $\theta^{\varepsilon}$ and $\phi$}
We have the following corollary of Proposition 
\ref{prop:good:events}:

\begin{corollary}
\label{coro:6:7}
With the same assumptions and notations as in the statement of Proposition \ref{prop:good:events}, there exists a constant 
{$C:=C(\textbf{\bf A},\textbf{\bf B},a,T)$}, such that, for 
any stopping time $\tau$ with values in $[0,{\boldsymbol \epsilon}]$
with ${\mathbb P}(V_{\tau}^{\varepsilon} \geq v_{\star} \varepsilon^2)>0$, the following holds true on the event $\{ V_{\tau}^{\varepsilon} \geq v_{\star} \varepsilon^2 \}$:
\begin{equation*}
{\mathbb P}
\biggl( D^0(\tau) \cap \Bigl(
\bigcup_{j \geq 0} D^1_{j}(\tau)^{\complement} \cup \bigcup_{j \geq 0} 
D^2_{j}(\tau)^{\complement} \Bigr) \, \vert \, {\mathcal F}_{\tau}
\biggr) \leq \frac{C \varepsilon^4}{\delta^4 (V_{\tau}^{\varepsilon})^2}. 
\end{equation*}
In particular, if we let 
\begin{equation*}
{\mathcal D}(\tau) := \biggl( \bigcap_{j \geq 0} D^1_{j}(\tau) 
\biggr) {\cap} \biggl( 
\bigcap_{j \geq 0} D^2_{j}(\tau) 
\biggr),
\end{equation*}
then, on the event $\{ V_{\tau}^{\varepsilon} \geq v_{\star} \varepsilon^2 \}$,
\begin{equation*}
{\mathbb P}\Bigl( {\mathcal D}(\tau) \, \vert \, {\mathcal F}_{\tau}
\Bigr) \geq 1 - \frac{C \varepsilon^4}{\delta^4 (V_{\tau}^{\varepsilon})^2}.
\end{equation*}
\end{corollary}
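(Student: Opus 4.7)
The plan is to combine a union bound over $j \geq 0$ with the per-$j$ conditional estimates already established in Proposition \ref{prop:good:events}. The key observation is that the bounds from parts $ii)$ and $iii)$ carry a factor $1/(j+1)^2$, so summation over $j$ introduces only a finite constant $\sum_{j \geq 0} (j+1)^{-2} = \pi^2/6$. No new stochastic analysis is required; the work is essentially bookkeeping.

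First, I would write, on the event $\{V_{\tau}^{\varepsilon} \geq v_{\star} \varepsilon^2\}$,
\begin{equation*}
\begin{split}
&{\mathbb P}\biggl( D^0(\tau) \cap \Bigl( \bigcup_{j \geq 0} D^1_{j}(\tau)^{\complement} \cup \bigcup_{j \geq 0} D^2_{j}(\tau)^{\complement} \Bigr) \, \vert \, {\mathcal F}_{\tau}\biggr)
\\
&\leq \sum_{j \geq 0} {\mathbb P}\bigl( D^0(\tau) \cap D^1_{j}(\tau)^{\complement} \, \vert \, {\mathcal F}_{\tau} \bigr)
+ \sum_{j \geq 0} {\mathbb P}\bigl( D^0(\tau) \cap D^2_{j}(\tau)^{\complement} \, \vert \, {\mathcal F}_{\tau} \bigr),
\end{split}
\end{equation*}
and then plug in the bounds from parts $ii)$ and $iii)$ of Proposition \ref{prop:good:events}. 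Since $\delta \in (0,1)$, the $1/\delta^4$ factor from the $D^1_{j}$ bounds dominates the $D^2_{j}$ ones, and we obtain, after summing the convergent series $\sum_{j \geq 0} (j+1)^{-2}$, the first estimate
\begin{equation*}
{\mathbb P}\biggl( D^0(\tau) \cap \Bigl( \bigcup_{j \geq 0} D^1_{j}(\tau)^{\complement} \cup \bigcup_{j \geq 0} D^2_{j}(\tau)^{\complement} \Bigr) \, \vert \, {\mathcal F}_{\tau}\biggr) \leq \frac{C \varepsilon^4}{\delta^4 (V_{\tau}^{\varepsilon})^2},
\end{equation*}
for a constant $C:=C(\textbf{\bf A},\textbf{\bf B},a,T)$.

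For the second assertion, I decompose the complementary event as
\begin{equation*}
{\mathcal D}(\tau)^{\complement} \subset D^0(\tau)^{\complement} \cup \Bigl( D^0(\tau) \cap \bigcup_{j} D^1_{j}(\tau)^{\complement} \Bigr) \cup \Bigl( D^0(\tau) \cap \bigcup_{j} D^2_{j}(\tau)^{\complement} \Bigr),
\end{equation*}
so that, on $\{V_{\tau}^{\varepsilon} \geq v_{\star} \varepsilon^2\}$,
\begin{equation*}
{\mathbb P}\bigl( {\mathcal D}(\tau)^{\complement} \, \vert \, {\mathcal F}_{\tau} \bigr) \leq {\mathbb P}\bigl( D^0(\tau)^{\complement} \, \vert \, {\mathcal F}_{\tau} \bigr) + \frac{C \varepsilon^4}{\delta^4 (V_{\tau}^{\varepsilon})^2}.
\end{equation*}
The first term on the right is bounded by $2 \exp(-V_{\tau}^{\varepsilon}/(2\varepsilon^2))$ thanks to part $i)$. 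The only mildly delicate point is that this is exponentially small in $V_{\tau}^{\varepsilon}/\varepsilon^2$, while we need a polynomial bound of order $\varepsilon^4/(V_{\tau}^{\varepsilon})^2$. Using the elementary inequality $\exp(-x/2) \leq C/x^2$ valid for all $x>0$ (with $C=16/e^2$), and applying it with $x = V_{\tau}^{\varepsilon}/\varepsilon^2 \geq v_{\star} > 0$, we obtain $2 \exp(-V_{\tau}^{\varepsilon}/(2\varepsilon^2)) \leq C \varepsilon^4/(V_{\tau}^{\varepsilon})^2$, and this absorbs into the $C\varepsilon^4/(\delta^4 (V_{\tau}^{\varepsilon})^2)$ term (again using $\delta<1$). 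The conclusion follows. The only real subtlety is this conversion from an exponential to a polynomial bound, which costs nothing since $v_{\star}$ is fixed; otherwise the argument is a direct union bound.
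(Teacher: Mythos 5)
Your proposal is correct and follows essentially the same route as the paper: sum the per-$j$ bounds of Proposition \ref{prop:good:events} $ii)$ and $iii)$ over $j$ (the $(j+1)^{-2}$ factors making the series converge), then combine with the lower bound on ${\mathbb P}(D^0(\tau)\,\vert\,{\mathcal F}_{\tau})$ from part $i)$ to get the bound on ${\mathcal D}(\tau)$. The only step the paper leaves implicit and you spell out is the absorption of the exponential term $2\exp(-V_{\tau}^{\varepsilon}/(2\varepsilon^2))$ into the polynomial bound via $\exp(-x/2)\leq C/x^2$ and $\delta<1$, which is exactly the right justification.
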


\begin{proof}
The first claim is obtained by summing over $j$ in the upper bounds for 
${\mathbb P}(D^1_{j}(\tau)^{\complement} \cap 
D^0(\tau) \, \vert \, {\mathcal F}_{\tau})$
and 
${\mathbb P}(D^2_{j}(\tau)^{\complement} \cap D^0(\tau) \, \vert \, {\mathcal F}_{\tau})$
{in the statement of Proposition \ref{prop:good:events}}.
By taking the complementary, 
we get 
\begin{equation*}
{\mathbb P}
\Bigl( D^0(\tau)^{\complement} \cup 
{\mathcal D}(\tau)  \, \vert \, {\mathcal F}_{\tau} \Bigr) \geq 1 - \frac{C}{\delta^4 \varepsilon^4} (V_{\tau}^{\varepsilon})^2. 
\end{equation*}
We then use the lower bound for 
${\mathbb P}(D^0(\tau)  \, \vert \, {\mathcal F}_{\tau})$ in order to complete the proof. 
\end{proof}

\subsection{Escaping from the well formed by a local minimum}
\label{subse:escape:well}

{In the previous subsection, we addressed the distance between the angle $\theta^\varepsilon$ and the 
solution $\phi$ to the equation \eqref{eq:ODE:2:2:sphere}. As we show below, this preliminary analysis
turns out to be really useful when the particle starts from the same set ${\mathcal B}_{a}$
as in (\textbf{B3}) and (\textbf{\bf B4}). Recall indeed that 
${\mathcal B}_{a}$ is attracted to ${\mathcal S}$ by the flow $\phi$. In this subsection, we provide 
some preliminary computations when the flow starts away from ${\mathcal B}_{a}$, namely from 
a neighborhood of the local minima ${\mathcal L}$. 
Notice in this regard} from assumption (\textbf{\bf B4}) that 
the
local minima
of
$\Theta$ (on the region where it is positive) are located at bottoms of uniformly convex wells. 
Part of the proof of Theorem \ref{thm:2:2} below is to prove that the particle
leaves such wells in infinitesimal time whenever it starts from the interior of one of them. 

To make it clear, we consider $u_{w}$ the minimizer of $\Theta$ on a given well ${\mathcal W}_{w}$. 
Thanks to $(\textbf{\bf B4})$, ${\mathcal W}_{w}$ may be written in the form of a level set 
$\{u \in {\mathbb S}^{d-1} : \Theta(u) \leq a_{w}, \  \vert u - u_{w} \vert < e_{w}\}$ for some 
$e_{w}>0$ and the intersection of the well with ${\mathcal B}_{a}$ is given by 
$\{ u \in {\mathbb S}^{d-1} : \Theta(u) = a_{w}, \ \vert u - u_{w} \vert <e_{w}\}$,  {${\mathcal B}_{a}$ containing 
$\{ u \in {\mathbb S}^{d-1} :  \vert u - u_{w} \vert =e_{w}\}$}.
In other words, the particle leaves the well if    
$\Theta(\theta^{\varepsilon})$ becomes greater than $a_{w}$. 
This prompts us to define, 
for any stopping time $\tau$, 
the exit time
\begin{equation*}
{\boldsymbol e}_{w}^{\varepsilon}(\tau) := \inf \{ t \geq \tau : \theta_{t}^{\varepsilon} \not \in {\mathcal W}_{w} \}.
\end{equation*}
Whenever
$\theta_{\tau}^{\varepsilon}$ belongs to ${\mathcal W}_{w}$, 
${\boldsymbol e}_{w}^{\varepsilon}(\tau)$ is also equal to 
${\boldsymbol e}_{w}^{\varepsilon}(\tau)
= \inf \{t \geq \tau : \Theta(\theta_{t}^{\varepsilon}) \geq a_{w}\}$.


Following the proof of Lemma 
\ref{lem:decomposition:gt}, we have

\begin{lemma}
\label{lem:decomposition:Thetat}
For a given stopping time $\tau$, for any $t \in [\tau,{\boldsymbol e}_{w}^{\varepsilon}(\tau))$, 
\begin{equation}
\label{eq:decomposition:Thetat}
\begin{split} 
\ud \Theta \bigl( \theta_{t}^{\varepsilon} \bigr) &=
 \Bigl(
 \bigl( {R_{t}^{\varepsilon}}  \bigr)^{\alpha} \DTheta\bigl( \theta_{t}^{\varepsilon}\bigr) \cdot \nabla g_{t}^{\varepsilon}
+ \frac{ \varepsilon^2}{2}
 \frac1{(R_{t}^{\varepsilon})^2}
 \textrm{\rm Trace} \bigl( \HTheta\bigl( \theta_{t}^{\varepsilon}\bigr) \bigr) \Bigr) \ud t 
+ \varepsilon
 \frac1{R_{t}^{\varepsilon}}
\DTheta\bigl( \theta_{t}^{\varepsilon}\bigr)  \cdot \ud B_{t}, 
\end{split}
\end{equation}
\end{lemma}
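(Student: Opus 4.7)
The plan is to apply It\^o's formula to $\Theta$ composed with the process $\theta^{\varepsilon}$, relying on the decomposition given in Lemma \ref{lem:decomposition:angle}. On the interval $[\tau,{\boldsymbol e}_{w}^{\varepsilon}(\tau))$, the angle stays within ${\mathcal W}_{w} \subset B(u_{w},e_{w})$, so assumption (\textbf{\bf B4-a}) guarantees that $\Theta$ is twice continuously differentiable there. Extending $\Theta$ smoothly to an open neighborhood of $\theta_{t}^{\varepsilon}$ in $\RR^d$ (as suggested in the discussion around \eqref{eq:DTheta}) makes the standard It\^o formula directly applicable.

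For the drift, the key reduction is that the normal component of $\nabla V$ disappears under tangential projection. From \eqref{eq:6:1}, one has
\begin{equation*}
\nabla V_{t}^{\varepsilon} - \bigl( \nabla V_{t}^{\varepsilon} \cdot \theta_{t}^{\varepsilon} \bigr) \theta_{t}^{\varepsilon} = (R_{t}^{\varepsilon})^{1+\alpha} \bigl[ \nabla g_{t}^{\varepsilon} - (\nabla g_{t}^{\varepsilon} \cdot \theta_{t}^{\varepsilon}) \theta_{t}^{\varepsilon} \bigr],
\end{equation*}
since the purely radial contribution $(1+\alpha) g_{t}^{\varepsilon} (R_{t}^{\varepsilon})^\alpha \theta_{t}^{\varepsilon}$ cancels against its own tangential projection. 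Pairing this with $\nabla \Theta(\theta_{t}^{\varepsilon})$ and invoking the projection identity \eqref{eq:DTheta} together with $\DTheta(\theta_{t}^{\varepsilon}) \perp \theta_{t}^{\varepsilon}$, the first order contribution becomes $(R_{t}^{\varepsilon})^{\alpha} \DTheta(\theta_{t}^{\varepsilon}) \cdot \nabla g_{t}^{\varepsilon}$, which is precisely the drift appearing in \eqref{eq:decomposition:Thetat}.

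For the second order part, I would combine two terms: the genuine It\^o correction stemming from the quadratic variation $(\varepsilon/R_{t}^{\varepsilon})^2 (I_{d} - \theta_{t}^{\varepsilon} \otimes \theta_{t}^{\varepsilon}) \ud t$ of $\theta^{\varepsilon}$ (which is a direct consequence of the orthogonal projection appearing in the diffusion coefficient of Lemma \ref{lem:decomposition:angle}), giving $\tfrac{1}{2}(\varepsilon/R_{t}^{\varepsilon})^2 \textrm{Trace}[\nabla^2 \Theta(\theta_{t}^{\varepsilon})(I_{d} - \theta_{t}^{\varepsilon} \otimes \theta_{t}^{\varepsilon})]$, together with the contribution of the radial drift in Lemma \ref{lem:decomposition:angle}, namely $-\tfrac{d-1}{2}(\varepsilon/R_{t}^{\varepsilon})^2 \nabla \Theta(\theta_{t}^{\varepsilon}) \cdot \theta_{t}^{\varepsilon}$. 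Reading off formula \eqref{eq:Hess:theta}, the sum of these two terms is exactly $\tfrac{\varepsilon^2}{2(R_{t}^{\varepsilon})^2} \textrm{Trace}(\HTheta(\theta_{t}^{\varepsilon}))$.

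Finally, for the martingale part, $\nabla \Theta(\theta_{t}^{\varepsilon}) \cdot (I_{d} - \theta_{t}^{\varepsilon} \otimes \theta_{t}^{\varepsilon}) \ud B_{t}$ simplifies to $\DTheta(\theta_{t}^{\varepsilon}) \cdot \ud B_{t}$ by the same projection identity, producing $\varepsilon (R_{t}^{\varepsilon})^{-1} \DTheta(\theta_{t}^{\varepsilon}) \cdot \ud B_{t}$. There is essentially no obstacle in this computation; the only care needed is the local smooth extension of $\Theta$ off the sphere (which is available on ${\mathcal W}_{w}$ thanks to (\textbf{\bf B4-a})) and the algebraic verification of the Hessian identity, all of which is by now routine.
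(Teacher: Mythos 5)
Your proof is correct and is essentially the paper's argument: the paper applies It\^o's formula to $X^{\varepsilon}$ and the map $x \mapsto \Theta(x/\vert x \vert)$ using the Euclidean--spherical dictionary \eqref{eq:DTheta}, \eqref{eq:Laplace:spheric} and \eqref{eq:Hess:theta} together with \eqref{eq:6:1}, and it explicitly records as an equivalent alternative the route you took, namely It\^o on the sphere applied to $\Theta$ along the angle dynamics of Lemma \ref{lem:decomposition:angle} (i.e.\ \eqref{eq:theta:bib}). Your bookkeeping is the right one: the radial part of $\nabla V_{t}^{\varepsilon}$ is killed by the tangential projection so the first-order term is $(R_{t}^{\varepsilon})^{\alpha}\,\DTheta(\theta_{t}^{\varepsilon})\cdot\nabla g_{t}^{\varepsilon}$, and the drift $-\tfrac{d-1}{2}(\varepsilon/R_{t}^{\varepsilon})^{2}\,\theta_{t}^{\varepsilon}\cdot\nabla\Theta(\theta_{t}^{\varepsilon})$ recombines with the It\^o correction coming from the projected quadratic variation into $\tfrac{\varepsilon^{2}}{2(R_{t}^{\varepsilon})^{2}}\,\textrm{\rm Trace}\bigl(\HTheta(\theta_{t}^{\varepsilon})\bigr)$, exactly as in \eqref{eq:Hess:theta}; your remark that $\Theta$ is ${\mathcal C}^2$ on the well by (\textbf{B4-a}) (and that \eqref{eq:6:1} is available there since $g_{t}^{\varepsilon}>0$) covers the only regularity point that needs care.
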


\begin{proof}
The proof is a standard application of It\^o's formula
 {to $X^{\varepsilon}$ and the function $\R^d \setminus \{0\} \ni x \mapsto \Theta(\tfrac{x}{\vert x \vert})$.} 
It is based upon the connection between Euclidean and spherical derivatives, 
 {see 
\eqref{eq:DTheta}
for the first order terms and 
\eqref{eq:Laplace:spheric} (replacing 
$g$ by $\Theta$ and $1+\alpha$ by $0$)
together with
 \eqref{eq:Hess:theta} for the second order derivatives}.
In addition, we make use of 
\eqref{eq:6:1} 
and of the fact that the last term therein
has a zero
contribution 
since $\DTheta(\theta_{t}^{\varepsilon}) \cdot \theta_{t}^{\varepsilon}=0$.
Alternatively, we can also apply It\^o's formula on the sphere using 
$\theta^{\varepsilon}$ as underlying process, see \eqref{eq:theta:bib}, and $\Theta$ as function. 
\end{proof}

We deduce that 
\begin{lemma}
\label{lem:6:9:b}
There exist two positive thresholds
 $a_{\star}:=a_{\star}(\textbf{\bf A},\textbf{\bf B})$
and
 $\delta_{\star}:=\delta_{\star}(\textbf{\bf A},\textbf{\bf B})$
 such that, for any $a \leq a_{\star}$
 and $\delta \leq \delta_{\star}$, 
 we can find constants $c:=c(\textbf{\bf A},\textbf{B},a)$ and 
$C:=C(\textbf{\bf A},\textbf{\bf B},a)$ 
such that, 
 for any stopping time $\tau$, 
the following holds true on 
the event $D^{0}(\tau) \cap \{Ê\theta_{\tau}^{\varepsilon} \in 
{\mathcal W}_{w}\}
$
and
for $t \in [\tau, \varrho^{\varepsilon}(\tau) \wedge {\boldsymbol \rho}^{\varepsilon}(\tau)
\wedge 
{\boldsymbol e}_{w}^{\varepsilon}(\tau))$: 
\begin{equation}
\label{eq:6:9:b}
\begin{split}
\bigl( \Theta(\theta_{t}^{\varepsilon})
-
\Theta(u_{w})\bigr) &\geq \int_{\tau}^t
  \frac{c}2 \frac{V_{s}^{\varepsilon}}{g_{s}^{\varepsilon} (R_{s}^{\varepsilon})^2}
\bigl[
\Theta\bigl( \theta_{s}^{\varepsilon} \bigr) - \Theta(u_{w})
\bigr] \ud s
+
c \frac{\varepsilon^2}{V_{\tau}^{\varepsilon}}
  \Bigl( 1 - 
  \exp \bigl( - C \Sigma_{\tau,t}^{\varepsilon}
 \bigr)
 \Bigr)
-
C \int_{\tau}^t  
V_{s}^{\varepsilon} \ud s
 \\
&\hspace{15pt}+
V_{\tau}^{\varepsilon}
\int_{\tau}^t 
\frac{c}{(R_{s}^{\varepsilon})^2}
\bigl\vert \DTheta(\theta_{s}^{\varepsilon})
 \bigr\vert^2
 \ud s
 + 
  \varepsilon
 \int_{\tau}^t \frac1{R_{s}^{\varepsilon}}
 \DTheta(\theta_{s}^{\varepsilon}) \cdot \ud B_{s}.
\end{split}
\end{equation}
\end{lemma}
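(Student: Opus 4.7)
The strategy is to start from the It\^o decomposition of $\Theta(\theta^{\varepsilon})$ provided by Lemma \ref{lem:decomposition:Thetat}, rewrite the tangential drift $(R_t^{\varepsilon})^{\alpha}\DTheta(\theta_t^{\varepsilon})\cdot\nabla g_t^{\varepsilon}$ in a form that exposes $|\DTheta(\theta_t^{\varepsilon})|^2$ by means of the decomposition \eqref{eq:expansion:nablaV} from $(\textbf{B2})$, and then to apply on the well ${\mathcal W}_w$ the two convexity inputs provided by $(\textbf{B4\text{-}a})$: the pointwise bound $\text{Trace}(\HTheta)\geq c(d-1)$ and the relation \eqref{eq:convexity:2}, $|\DTheta(u)|^2\geq c'(\Theta(u)-\Theta(u_w))$. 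The control of $V^\varepsilon$ and $R^\varepsilon$ will come from the two-sided bounds defining the event $D^0(\tau)$: on $D^0(\tau)$ we have $\tfrac12 V_\tau^\varepsilon\exp(c^{-1}\Sigma_{\tau,t}^\varepsilon)\leq V_t^\varepsilon\leq 2V_\tau^\varepsilon\exp(c\Sigma_{\tau,t}^\varepsilon)$, while the stopping rule $\varrho^\varepsilon(\tau)$ keeps $g_s^\varepsilon\geq a$ (and thus bounded away from~$0$ and from $C_0$, by $(\textbf{A1})$) throughout the interval under consideration.

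First, combining \eqref{eq:6:1} with \eqref{eq:expansion:nablaV} and the orthogonality $\DTheta(\theta^\varepsilon)\cdot\theta^\varepsilon=0$, I expect to rewrite
\begin{equation*}
(R_t^{\varepsilon})^{\alpha}\,\DTheta(\theta_t^{\varepsilon})\cdot\nabla g_t^{\varepsilon}
= \frac{V_t^{\varepsilon}}{g_t^{\varepsilon}(R_t^{\varepsilon})^2}\Bigl[\,|\DTheta(\theta_t^{\varepsilon})|^2+\DTheta(\theta_t^{\varepsilon})\cdot\eta'(X_t^{\varepsilon})\Bigr].
\end{equation*}
The perturbation $\eta'(X_t^\varepsilon)$ is treated through $(\textbf{B2})$, which gives $|\eta'(x)|\leq C|x|$ for $|x|\leq\delta_\star$. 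Young's inequality $|\DTheta\cdot\eta'|\leq\tfrac12|\DTheta|^2+CR^2$ absorbs half of $|\DTheta|^2$ and leaves a remainder of order $\frac{V}{gR^2}R^2=V/g\leq C V_s^\varepsilon$, which contributes $-C\int V_s^\varepsilon\,ds$ to the lower bound. I then split the remaining $\tfrac12\tfrac{V}{gR^2}|\DTheta|^2$ into two halves: the first half is lower bounded by $\tfrac{c}{2}\tfrac{V_s^\varepsilon}{g_s^\varepsilon(R_s^\varepsilon)^2}(\Theta(\theta_s^\varepsilon)-\Theta(u_w))$ via \eqref{eq:convexity:2}; the second half, using the left inequality of $D^0(\tau)$ (i.e.\ $V_s^\varepsilon\geq\tfrac12 V_\tau^\varepsilon$) together with $g_s^\varepsilon\leq C_0$, is lower bounded by $c\tfrac{V_\tau^\varepsilon}{(R_s^\varepsilon)^2}|\DTheta(\theta_s^\varepsilon)|^2$. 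This yields the first and fourth terms in the right-hand side of \eqref{eq:6:9:b}.

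Next, the Hessian contribution $\tfrac{\varepsilon^2}{2(R_t^\varepsilon)^2}\text{Trace}(\HTheta(\theta_t^\varepsilon))$ is nonnegative on ${\mathcal W}_w$ by uniform convexity and in fact bounded below by $c\varepsilon^2/(R_t^\varepsilon)^2$. To convert this into the $\tfrac{\varepsilon^2}{V_\tau^\varepsilon}(1-\exp(-C\Sigma_{\tau,t}^\varepsilon))$ shape, I change variables via $d\sigma=(R_s^\varepsilon)^{\alpha-1}ds$, so that
\begin{equation*}
\int_\tau^t\frac{\varepsilon^2}{(R_s^\varepsilon)^2}\,ds
=\varepsilon^2\int_0^{\Sigma_{\tau,t}^\varepsilon}(R_s^\varepsilon)^{-1-\alpha}\,d\sigma
=\varepsilon^2\int_0^{\Sigma_{\tau,t}^\varepsilon}\frac{g_s^\varepsilon}{V_s^\varepsilon}\,d\sigma,
\end{equation*}
and I use the right inequality of $D^0(\tau)$, $V_s^\varepsilon\leq 2V_\tau^\varepsilon\exp(c\Sigma_{\tau,s}^\varepsilon)$, together with $g_s^\varepsilon\geq c'>0$ on ${\mathcal W}_w$ (which holds for $a_\star$ small enough since $\Theta(u_w)>0$ and $g\approx\Theta$ via $(\textbf{B2})$). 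This produces precisely the term $c\tfrac{\varepsilon^2}{V_\tau^\varepsilon}(1-\exp(-C\Sigma_{\tau,t}^\varepsilon))$. The martingale part $\varepsilon\int_\tau^t(R_s^\varepsilon)^{-1}\DTheta(\theta_s^\varepsilon)\cdot dB_s$ simply comes along from Lemma \ref{lem:decomposition:Thetat}.

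The main obstacle will be the careful handling of the $\eta'$ perturbation together with the choice of $a_\star$ and $\delta_\star$ that simultaneously makes $g$ comparable to $\Theta(u_w)>0$ on the well, makes $\eta'$ small so that the Young-inequality absorption of $\DTheta\cdot\eta'$ is legitimate, and keeps the two sides of $D^0(\tau)$ usable (so that $V_s^\varepsilon/g_s^\varepsilon(R_s^\varepsilon)^2$ can be compared from above and below to both $V_s^\varepsilon$-independent and $V_\tau^\varepsilon$-independent quantities). All of these thresholds depend only on $(\textbf{A},\textbf{B})$, so one does not need to track $v_0$; increasing $v_\star$ later is only needed to secure the a priori bound $V_s^\varepsilon\geq\tfrac12 V_\tau^\varepsilon$ via Lemma \ref{lem:5:1}. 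Once these thresholds are fixed, summing the drift contributions produced in the three previous paragraphs and keeping the Brownian martingale unchanged gives exactly \eqref{eq:6:9:b}.
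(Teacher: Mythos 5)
Your proposal is correct and follows essentially the same route as the paper's own proof: starting from Lemma \ref{lem:decomposition:Thetat}, using (\textbf{B2}) to expose $\vert\DTheta\vert^2$ plus an $\eta'$-error absorbed by Young's inequality (leaving the $-C\int V_s^{\varepsilon}\,\ud s$ remainder via $g_s^{\varepsilon}\geq a$), invoking (\textbf{B4-a}) for the Trace term and \eqref{eq:convexity:2} to convert part of $\vert\DTheta\vert^2$ into $\Theta(\theta^{\varepsilon})-\Theta(u_w)$, and exploiting both inequalities defining $D^0(\tau)$ to produce the $V_\tau^{\varepsilon}$-dependent terms. Your reformulation of $\int\varepsilon^2/(R_s^{\varepsilon})^2\,\ud s$ via the change of variables $\ud\sigma=(R_s^{\varepsilon})^{\alpha-1}\ud s$ is just a cosmetic variant of the paper's direct use of $(\ud/\ud t)\Sigma_{\tau,t}^{\varepsilon}=V_t^{\varepsilon}/(g_t^{\varepsilon}(R_t^{\varepsilon})^2)$, and yields the same $(1-\exp(-C\Sigma_{\tau,t}^{\varepsilon}))$ term.
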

\begin{proof}
We address the two terms entering the absolutely continuous part in the expansion  
\eqref{eq:decomposition:Thetat}. We know from
 (\textbf{B2}) that, for $\delta \leq \delta_{\star}:=\delta_{\star}(\textbf{\bf A},\textbf{\bf B})$
 and  $C:=C(\textbf{\bf A},\textbf{\bf B})$,
\begin{equation*}
\begin{split}
\bigl( R_{t}^{\varepsilon} \bigr)^{\alpha} \DTheta\bigl(\theta_{t}^{\varepsilon}\bigr) \cdot \nabla g_{t}^{\varepsilon}
&\geq 
\bigl( R_{t}^{\varepsilon} \bigr)^{\alpha-1}
\bigl\vert\DTheta\bigl({\theta_{t}^{\varepsilon}}\bigr)
 \bigr\vert^2 - C \bigl(R_{t}^{\varepsilon} \bigr)^{\alpha}
\bigl\vert \DTheta\bigl({\theta_{t}^{\varepsilon}}\bigr) \bigr\vert
\\ 
&= 
\frac{V_{t}^{\varepsilon}}{g_{t}^{\varepsilon} (R_{t}^{\varepsilon})^2}
\bigl\vert \DTheta\bigl(\theta_{t}^{\varepsilon} \bigr)
 \bigr\vert^2 - C \bigl(R_{t}^{\varepsilon} \bigr)^{\alpha} \bigl\vert \DTheta\bigl(\theta_{t}^{\varepsilon} \bigr) \bigr\vert,
\end{split}
\end{equation*}
with $C:=C(\textbf{\bf A},\textbf{\bf B})$. 
We now make use of  
the local strict convexity property of $\Theta$ stated in (\textbf{\bf B4}). 
Obviously, it permits to lower bound the Trace term in 
\eqref{eq:decomposition:Thetat}. It also permits to lower bound 
$\vert \DTheta\bigl(\theta_{t}^{\varepsilon} \bigr)
\vert^2$ by 
$c[\Theta(\theta_{t}^{\varepsilon}) - \Theta(u_{w})]$
for $c:=c(\textbf{\bf A},\textbf{\bf B})$, see 
\eqref{eq:convexity:2}. 
Hence, we have, 
for
$t \in [\tau,{\boldsymbol \rho}^{\varepsilon}(\tau)
\wedge 
{\boldsymbol e}_{w}^{\varepsilon}(\tau))$,
\begin{equation*}
\begin{split}
\ud \Theta(\theta_{t}^{\varepsilon}) &\geq 
\biggl[ \frac{c}2 \frac{V_{t}^{\varepsilon}}{g_{t}^{\varepsilon} (R_{t}^{\varepsilon})^2}
\bigl( 
\Theta(\theta_{t}^{\varepsilon}) - \Theta(u_{w})
\bigr)
+
\frac{\varepsilon^2}{2} \frac{c}{(R_{t}^{\varepsilon})^2}
-
 C \bigl(R_{t}^{\varepsilon} \bigr)^{\alpha} \bigl\vert \DTheta\bigl(\theta_{t}^{\varepsilon} \bigr)
 \bigr\vert
+
\frac12 \frac{V_{t}^{\varepsilon}}{g_{t}^{\varepsilon} (R_{t}^{\varepsilon})^2}
\bigl\vert \DTheta\bigl(\theta_{t}^{\varepsilon} \bigr)
 \bigr\vert^2
 \biggr]
 \ud t
 \\
&\hspace{15pt} + 
  \varepsilon
 \frac1{R_{t}^{\varepsilon}}
 \DTheta\bigl(\theta_{t}^{\varepsilon} \bigr) \cdot \ud B_{t},
\end{split}
\end{equation*}
where $c:=c(\textbf{\bf A},\textbf{\bf B})$. 
We then notice that 
\begin{equation*}
\begin{split}
C \bigl( R_{t}^{\varepsilon} \bigr)^{\alpha} 
\bigl\vert \DTheta\bigl(\theta_{t}^{\varepsilon} \bigr)
 \bigr\vert
 &= C \frac{V_{t}^{\varepsilon}}{g_{t}^{\varepsilon} ( R_{t}^{\varepsilon})^2}  R_{t}^{\varepsilon} 
\bigl\vert \DTheta\bigl(\theta_{t}^{\varepsilon} \bigr)
 \bigr\vert
 \\
&   \leq \frac{1}{4}
 \frac{V_{t}^{\varepsilon}}{g_{t}^{\varepsilon} ( R_{t}^{\varepsilon})^2} 
 \bigl\vert \DTheta\bigl(\theta_{t}^{\varepsilon} \bigr)
 \bigr\vert^2 
 +
C^2 \frac{V_{t}^{\varepsilon}}{g_{t}^{\varepsilon}}
= \frac{1}{4}
 \frac{V_{t}^{\varepsilon}}{g_{t}^{\varepsilon} (R_{t}^{\varepsilon})^2} 
 \bigl\vert \DTheta\bigl(\theta_{t}^{\varepsilon} \bigr)
 \bigr\vert^2 
 +
C^2 \bigl( R_{t}^{\varepsilon} \bigr)^{1+\alpha}. 
\end{split}
\end{equation*}
Using the fact 
$\Theta(\theta_{t}^{\varepsilon}) \geq 
\Theta(u_{w})$
 {for
all
$t \in [\tau,
{\boldsymbol e}_{w}^{\varepsilon}(\tau))$}, we get, on the event
$ \{Ê\theta_{\tau}^{\varepsilon} \in 
{\mathcal W}_{w}\}$, 
\begin{equation*}
\begin{split}
 \Theta(\theta_{t}^{\varepsilon})
-
\Theta(u_{w})  &\geq \int_{\tau}^t
  \frac{c}2 \frac{V_{s}^{\varepsilon}}{g_{s}^{\varepsilon} (R_{s}^{\varepsilon})^2}
\bigl[
\Theta\bigl( \theta_{s}^{\varepsilon} \bigr) - \Theta(u_{w})
\bigr] \ud s
+
\int_{\tau}^t \Bigl( \frac{c}2  \frac{\varepsilon^2}{(R_{s}^{\varepsilon})^2} -
 C^2  \bigl(R_{s}^{\varepsilon}\bigr)^{1+\alpha}
\Bigr) \ud s
 \\
&\hspace{15pt}+
\int_{\tau}^t 
\frac{1}{4} \frac{V_{s}^{\varepsilon}}{ g_{s}^{\varepsilon} (R_{s}^{\varepsilon})^2}
\bigl\vert \DTheta\bigl(\theta_{s}^{\varepsilon} \bigr)
 \bigr\vert^2
 \ud s
 + 
  \varepsilon
 \int_{\tau}^t \frac1{R_{s}^{\varepsilon}}
 \DTheta\bigl(\theta_{s}^{\varepsilon} \bigr) \cdot \ud B_{s}.
\end{split}
\end{equation*}
If we work on $D^{0}(\tau)$ (see the definition in 
Proposition \ref{prop:good:events})
 and if we require $t$ to be less than $\varrho^{\varepsilon}(\tau)$, then we have the following two lower bounds (for new values of 
$c:=c(\textbf{\bf A},\textbf{\bf B},a)$
and
$C:=C(\textbf{\bf A},\textbf{\bf B},a)$ that may now depend on $a$):
\begin{equation*}
\begin{split}
&\int_{\tau}^t 
\frac{V_{s}^{\varepsilon}}{ g_{s}^{\varepsilon} (R_{s}^{\varepsilon})^2}
\bigl\vert \DTheta\bigl(\theta_{s}^{\varepsilon} \bigr)
 \bigr\vert^2
 \ud s \geq
 V_{\tau}^{\varepsilon}
  \int_{\tau}^t 
\frac{c}{(R_{s}^{\varepsilon})^2}
\bigl\vert \DTheta\bigl(\theta_{s}^{\varepsilon} \bigr)
 \bigr\vert^2
 \ud s, 
 \\
 &\int_{\tau}^t  \frac{\varepsilon^2}{ (R_{s}^{\varepsilon})^2} \ud s
 \geq 
\frac{{c} \varepsilon^2}{V_{\tau}^{\varepsilon}}
 \int_{\tau}^t  \frac{V_{s}^{\varepsilon}}{g_{s}^{\varepsilon} (R_{s}^{\varepsilon})^2}
 \exp \bigl( - C \Sigma_{\tau,s}^{\varepsilon}
 \bigr)
  \ud s
  = \frac{ {c}\varepsilon^2}{C V_{\tau}^{\varepsilon}}
  \Bigl( 1 - 
  \exp \bigl( - C \Sigma_{\tau,t}^{\varepsilon}
 \bigr)
 \Bigr),
\end{split}
\end{equation*}
where we used the fact that $(\ud/\ud t) \Sigma_{\tau,t}^{\varepsilon}=
V_{t}^{\varepsilon}/(g_{t}^{\varepsilon} (R_{t}^{\varepsilon})^2)$.
\end{proof}

We now focus on the second line in 
\eqref{eq:6:9:b}.
Using the same notations as in the statement of Proposition 
\ref{prop:good:events}, we obtain
\begin{lemma}
\label{lem:6:10:b}
Consider 
 $a_{\star}:=a_{\star}(\textbf{A},\textbf{B})$
 as in Lemma \ref{lem:6:9:b}
and the same two constants $c:=c(\textbf{\bf A},\textbf{B},a)$ and 
$C:=C(\textbf{\bf A},\textbf{\bf B},a)$ 
as therein, for a given $a \leq a_{\star}$. 
Then, there exists a positive constant $K(c,C)$ (only depending on $c$ and $C$) such that,
for any stopping time $\tau$, 
\begin{equation*}
{\mathbb P}
\biggl(
\forall t \geq \tau, \ 
V_{\tau}^{\varepsilon}
\int_{\tau}^t 
\frac{c}{(R_{s}^{\varepsilon})^2}
\bigl\vert \DTheta\bigl(\theta_{s}^{\varepsilon} \bigr)
 \bigr\vert^2
 \ud s
 + 
  \varepsilon
 \int_{\tau}^t \frac1{R_{s}^{\varepsilon}}
 \DTheta\bigl(\theta_{s}^{\varepsilon} \bigr) \cdot \ud B_{s}
\geq 
-\frac{c}{2} \frac{\varepsilon^2}{V_{\tau}^{\varepsilon}}
  \bigl( 1 - 
  \exp ( - C)
 \bigr)
 \, \big\vert 
 \, {\mathcal F}_{\tau}
 \biggr) \geq K(c,C).
\end{equation*}
\end{lemma}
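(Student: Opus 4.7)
The plan is to exploit the fact that the drift and the quadratic variation of the martingale in the expression of interest differ only by an $\mathcal{F}_\tau$-measurable multiplicative constant. Set $M_t := \varepsilon \int_\tau^t (R_s^\varepsilon)^{-1} \DTheta(\theta_s^\varepsilon) \cdot \ud B_s$, which is a well-defined continuous local martingale on $[\tau,\infty)$ because $X^\varepsilon$ almost surely never returns to the origin (its law being equivalent to that of $\varepsilon B$; see the opening lines of the proof of Lemma \ref{lem:supermartingale}). Its quadratic variation equals $[M]_t = \varepsilon^2 \int_\tau^t (R_s^\varepsilon)^{-2} |\DTheta(\theta_s^\varepsilon)|^2 \, \ud s$, so that, letting $\lambda := c V_\tau^\varepsilon / \varepsilon^2$ (which is $\mathcal{F}_\tau$-measurable), the drift term under consideration is exactly $A_t := \lambda [M]_t$.

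The key observation is then to introduce the Dol\'eans-Dade exponential of the local martingale $-2\lambda M$, namely
\[
Y_t := \exp\bigl(-2\lambda M_t - 2\lambda^2 [M]_t\bigr) = \exp\bigl(-2\lambda (A_t + M_t)\bigr).
\]
Since $\lambda$ is $\mathcal{F}_\tau$-measurable and $Y$ is a non-negative continuous local martingale on $[\tau,\infty)$ with $Y_\tau = 1$, it is a supermartingale. Doob's maximal inequality for non-negative supermartingales, applied in its conditional form, then yields
\[
\mathbb{P}\bigl(\sup_{t \geq \tau} Y_t \geq e^{2\lambda h} \, \vert \, \mathcal{F}_\tau\bigr) \leq e^{-2\lambda h}, \quad h > 0.
\]

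Choosing $h := \tfrac{c}{2} \tfrac{\varepsilon^2}{V_\tau^\varepsilon} (1 - e^{-C})$ gives $2\lambda h = c^2(1 - e^{-C})$, a quantity depending only on $c$ and $C$. By strict monotonicity of $y \mapsto \exp(-2\lambda y)$, the event $\{\sup_{t \geq \tau} Y_t < e^{2\lambda h}\}$ coincides with $\{\forall t \geq \tau,\ A_t + M_t > -h\}$, which is contained in the event whose probability we wish to bound from below. We therefore conclude with $K(c,C) := 1 - \exp(-c^2 (1 - e^{-C})) > 0$. The whole argument is really just the classical exponential supermartingale trick, and there is no substantive obstacle: the only point requiring a word of caution is that the integrands $(R_s^\varepsilon)^{-1}$ and $(R_s^\varepsilon)^{-2}$ are well-defined along the trajectory, which follows from the non-accessibility of $0$ by $X^\varepsilon$.
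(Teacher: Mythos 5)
Your argument is correct and is essentially the paper's own proof: the paper likewise multiplies both terms of the event by the $\mathcal{F}_{\tau}$-measurable factor $c V_{\tau}^{\varepsilon}/\varepsilon^2$ and applies Doob's maximal inequality to the resulting Dol\'eans-Dade exponential supermartingale, exactly as you do (and as in Lemma \ref{lem:5:1}). The only cosmetic difference is your factor $2\lambda$ in the exponential, which absorbs the drift $\lambda [M]_t$ exactly and yields $K(c,C)=1-\exp(-c^2(1-e^{-C}))$ rather than the slightly smaller constant obtained from the paper's scaling; both are admissible.
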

\begin{proof}
The proof is rather straightforward. It suffices to multiply both terms
in the event appearing in the left-hand side 
by $c V_{\tau}^{\varepsilon}/\varepsilon^2$ 
and then to apply Doob's inequality to the resulting Dol\'eans-Dade martingale, 
see for instance Lemma \ref{lem:5:1}
for a similar use. 
\end{proof}

Combining the last two lemmas, we finally deduce:
\begin{proposition}
\label{prop:6:11}
 {There exist  {three} positive thresholds
 $a_{\star}:=a_{\star}(\textbf{\bf A},\textbf{\bf B})$,
 $\delta_{\star}:=\delta_{\star}(\textbf{\bf A},\textbf{\bf B})$
 and
 $v_{\star}:=v_{\star}(\textbf{\bf A},\textbf{\bf B})$
 such that, for any $a \leq a_{\star}$, 
 we can find constants $c:=c(\textbf{\bf A},\textbf{B},a)$ and 
$C:=C(\textbf{\bf A},\textbf{\bf B},a)$ 
satisfying, 
 for any stopping time $\tau$, any reals 
 $\delta \leq \delta_{\star}$ and $v_{0} \geq v_{\star}$, 
any integer $j \geq 1$, 
and on the event
 {$\{V_{\tau}^{\varepsilon} \geq v_{0} \varepsilon^2 \}$},}
\begin{equation*}
{\mathbb P}\bigl(D_{j}'(\tau) \, \vert \, {\mathcal F}_{\tau} \bigr) \geq  {\tfrac12} K(c,C),
\end{equation*}
with
\begin{equation}
\label{eq:D:j:prime:tau}
\begin{split}
D_{j}'(\tau)
&:= 
\biggl\{
\forall t \in \bigl[ {\sigma_{1}^{\varepsilon}(\tau)},
\sigma_{j}^{\varepsilon}(\tau) \wedge \varrho^{\varepsilon}(\tau) \wedge {\boldsymbol \rho}^{\varepsilon}(\tau)
\wedge 
{\boldsymbol e}_{w}^{\varepsilon}(\tau)\bigr), 
\\
&\hspace{30pt} \Theta(\theta_{t}^{\varepsilon})
-
\Theta(u_{w}) \geq 
\Bigl(
\frac{c}{2} \frac{\varepsilon^2}{V_{\tau}^{\varepsilon}}
\bigl( 1 - \exp(-C) \bigr) - C \int_{\tau}^{t} V_{s}^{\varepsilon} \ud s
\Bigr) \exp \Bigl( \frac{c}{2} \bigl( \Sigma_{\tau,t}-1\bigr) \Bigr) 
\biggr\},
\end{split}
\end{equation}
$\sigma_{j}^{\varepsilon}(\tau)$ being defined with $T=1$ in 
Proposition 
\ref{prop:good:events}.
\end{proposition}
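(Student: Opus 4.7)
The plan is to combine the integral lower bound of Lemma \ref{lem:6:9:b} with the high-probability martingale bound of Lemma \ref{lem:6:10:b}, working on the good event $D^0(\tau)$ from Proposition \ref{prop:good:events}, and then to close the argument by a Gronwall-type lower bound. Writing $M$ for the event controlled in Lemma \ref{lem:6:10:b}, and using that ${\mathbb P}(D^0(\tau) \mid {\mathcal F}_\tau) \geq 1 - 2\exp(-v_0/2)$ on $\{V_\tau^\varepsilon \geq v_0 \varepsilon^2\}$, a union bound gives ${\mathbb P}(D^0(\tau) \cap M \mid {\mathcal F}_\tau) \geq K(c,C) - 2\exp(-v_0/2) \geq \tfrac{1}{2} K(c,C)$, provided $v_\star$ is chosen large enough in function of $K(c,C)$. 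It will therefore suffice to establish the deterministic inclusion $D^0(\tau) \cap M \subset D_j'(\tau)$.

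Set $Y_t := \Theta(\theta_t^\varepsilon) - \Theta(u_w)$. One may assume $\theta_\tau^\varepsilon \in {\mathcal W}_w$, since otherwise ${\boldsymbol e}_w^\varepsilon(\tau) = \tau$ and $D_j'(\tau)$ is vacuous; in particular $Y_s \geq 0$ on $[\tau, {\boldsymbol e}_w^\varepsilon(\tau))$. Inserting the lower bound of Lemma \ref{lem:6:10:b} into \eqref{eq:6:9:b}, one obtains on $D^0(\tau) \cap M$ and for $t$ in the range permitted by Lemma \ref{lem:6:9:b},
\begin{equation*}
Y_t \geq \int_\tau^t \frac{c}{2}\, \frac{V_s^\varepsilon}{g_s^\varepsilon (R_s^\varepsilon)^2}\, Y_s\, ds + c\frac{\varepsilon^2}{V_\tau^\varepsilon}\bigl(1 - e^{-C \Sigma_{\tau,t}^\varepsilon}\bigr) - C\int_\tau^t V_s^\varepsilon\, ds - \frac{c}{2}\frac{\varepsilon^2}{V_\tau^\varepsilon}\bigl(1 - e^{-C}\bigr).
\end{equation*}
For $t \geq \sigma_1^\varepsilon(\tau)$ one has $\Sigma_{\tau,t}^\varepsilon \geq 1$, so the two $\varepsilon^2/V_\tau^\varepsilon$-terms collapse to at least $\tfrac{c}{2}\frac{\varepsilon^2}{V_\tau^\varepsilon}(1 - e^{-C})$; using moreover $Y_s \geq 0$ on $[\tau, \sigma_1^\varepsilon(\tau)]$ to discard the integral over that subinterval, the previous inequality reduces to
\begin{equation*}
Y_t \geq \tilde h(t) + \int_{\sigma_1^\varepsilon(\tau)}^t \frac{c}{2}\, \frac{V_s^\varepsilon}{g_s^\varepsilon (R_s^\varepsilon)^2}\, Y_s\, ds, \qquad \tilde h(t) := \frac{c}{2}\frac{\varepsilon^2}{V_\tau^\varepsilon}\bigl(1 - e^{-C}\bigr) - C\int_\tau^t V_s^\varepsilon\, ds.
\end{equation*}

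The final step is to apply the Gronwall lower bound, using that $\tilde h$ is nonincreasing (the integral term is nondecreasing) and that $\int_{\sigma_1^\varepsilon(\tau)}^t V_s^\varepsilon/(g_s^\varepsilon(R_s^\varepsilon)^2)\, ds = \Sigma_{\tau,t}^\varepsilon - 1$. When $\tilde h(t) \geq 0$, the monotonicity of $\tilde h$ allows $\tilde h(t)$ to be factored out of the Gronwall kernel; a change of variable $u = \Sigma_{s,t}^\varepsilon$ then collapses the resulting integral to $\exp(\tfrac{c}{2}(\Sigma_{\tau,t}^\varepsilon - 1)) - 1$, yielding $Y_t \geq \tilde h(t) \exp(\tfrac{c}{2}(\Sigma_{\tau,t}^\varepsilon - 1))$, which is precisely the bound defining $D_j'(\tau)$. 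When $\tilde h(t) < 0$, the same bound is immediate from $Y_t \geq 0$ and $\exp(\tfrac{c}{2}(\Sigma_{\tau,t}^\varepsilon - 1)) \geq 1$. The subtle point, which dictates the very formulation of $D_j'(\tau)$, is that the rescue term $-\tfrac{c}{2}\frac{\varepsilon^2}{V_\tau^\varepsilon}(1 - e^{-C})$ arising from Lemma \ref{lem:6:10:b} only half-cancels the positive $\varepsilon^2$-contribution of Lemma \ref{lem:6:9:b} once $\Sigma_{\tau,t}^\varepsilon$ has crossed the threshold $1$; this is exactly why the event $D_j'(\tau)$ is phrased starting at $\sigma_1^\varepsilon(\tau)$ and why the choice $T=1$ is made in the definition of the stopping sequence $(\sigma_j^\varepsilon(\tau))_j$.
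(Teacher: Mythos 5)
Your proof is correct and follows essentially the same route as the paper: combine the pathwise inequality \eqref{eq:6:9:b} with the martingale event of Lemma \ref{lem:6:10:b} on $D^0(\tau)$, use $\Sigma_{\tau,t}^{\varepsilon}\geq 1$ past $\sigma_{1}^{\varepsilon}(\tau)$ and the nonnegativity of $\Theta(\theta^{\varepsilon})-\Theta(u_w)$ inside the well, apply the Gronwall lower bound, and conclude with the probability count $K(c,C)-2\exp(-v_0/2)\geq \tfrac12 K(c,C)$. The only cosmetic difference is your case split on the sign of $\tilde h(t)$, where the paper simply observes that the Gronwall lower bound with a nonincreasing forcing term holds even when the prefactor is negative; both arguments are valid.
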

\begin{proof}
{The first step is 
to apply 
\eqref{eq:6:9:b}
for $t \geq \sigma_{1}^{\varepsilon}(\tau)$. 
If the latter is strictly less than 
$\sigma_{j}^{\varepsilon}(\tau)$
(which is fact impossible when $j=1$ proving that 
$D_{1}'(\tau)=\Omega$ and that $j$ can be taken greater than or equal to 2), it is thus 
strictly less than ${\boldsymbol \epsilon}$. 
In particular $\Sigma^{\varepsilon}_{\tau,\sigma^{\varepsilon}_{1}(\tau)}$ is then equal to $1$. 
By \eqref{eq:6:9:b}, we get, 
on
$D^{0}(\tau)$ 
and
for $t \in [\sigma_{1}^{\varepsilon}(\tau),\sigma_{j}^{\varepsilon}(\tau) \wedge \varrho^{\varepsilon}(\tau) \wedge {\boldsymbol \rho}^{\varepsilon}(\tau)
\wedge 
{\boldsymbol e}_{w}^{\varepsilon}(\tau))$,  
\begin{equation*}
\begin{split}
\bigl( \Theta(\theta_{t}^{\varepsilon})
-
\Theta(u_{w})\bigr) &\geq \int_{\sigma_{1}^{\varepsilon}(\tau)}^t
  \frac{c}2 \frac{V_{s}^{\varepsilon}}{g_{s}^{\varepsilon} (R_{s}^{\varepsilon})^2}
\bigl[
\Theta\bigl( \theta_{s}^{\varepsilon} \bigr) - \Theta(u_{w})
\bigr] \ud s
+
c \frac{\varepsilon^2}{V_{\tau}^{\varepsilon}}
  \Bigl( 1 - 
  \exp \bigl( - C  
 \bigr)
 \Bigr)
-
C \int_{\tau}^t  
V_{s}^{\varepsilon} \ud s
 \\
&\hspace{15pt}+
V_{\tau}^{\varepsilon}
\int_{\tau}^t 
\frac{c}{(R_{s}^{\varepsilon})^2}
\bigl\vert \DTheta(\theta_{s}^{\varepsilon})
 \bigr\vert^2
 \ud s
 + 
  \varepsilon
 \int_{\tau}^t \frac1{R_{s}^{\varepsilon}}
 \DTheta(\theta_{s}^{\varepsilon}) \cdot \ud B_{s}.
\end{split}
\end{equation*}
Intersecting with the event appearing 
in the statement of 
Lemma \ref{lem:6:10:b},
we obtain for $t \in [\sigma_{1}^{\varepsilon}(\tau),\sigma_{j}^{\varepsilon}(\tau) \wedge \varrho^{\varepsilon}(\tau) \wedge {\boldsymbol \rho}^{\varepsilon}(\tau)
\wedge 
{\boldsymbol e}_{w}^{\varepsilon}(\tau))$,  
\begin{equation*}
\begin{split}
\bigl( \Theta(\theta_{t}^{\varepsilon})
-
\Theta(u_{w})\bigr) &\geq \int_{\sigma_{1}^{\varepsilon}(\tau)}^t
  \frac{c}2 \frac{V_{s}^{\varepsilon}}{g_{s}^{\varepsilon} (R_{s}^{\varepsilon})^2}
\bigl[
\Theta\bigl( \theta_{s}^{\varepsilon} \bigr) - \Theta(u_{w})
\bigr] \ud s
+
\frac{c}2 \frac{\varepsilon^2}{V_{\tau}^{\varepsilon}}
  \Bigl( 1 - 
  \exp \bigl( - C  
 \bigr)
 \Bigr)
-
C \int_{\tau}^t  
V_{s}^{\varepsilon} \ud s.
\end{split}
\end{equation*}
By
Gronwall's lemma, we deduce that (notice that the lower bound below holds true even though the factor in front of the exponential is negative)
\begin{equation*}
\begin{split}
\Theta(\theta_{t}^{\varepsilon})
-
\Theta(u_{w}) &\geq 
\Bigl(
\frac{c}{2} \frac{\varepsilon^2}{V_{\tau}^{\varepsilon}}
\bigl( 1 - \exp(-C) \bigr) - C \int_{\tau}^{t} V_{s}^{\varepsilon} \ud s
\Bigr) \exp \Bigl( \frac{c}{2} \bigl( \Sigma_{\sigma_{1}^{\varepsilon}(\tau),t}\bigr) \Bigr)
\\
&=
\Bigl(
\frac{c}{2} \frac{\varepsilon^2}{V_{\tau}^{\varepsilon}}
\bigl( 1 - \exp(-C) \bigr) - C \int_{\tau}^{t} V_{s}^{\varepsilon} \ud s
\Bigr) \exp \Bigl( \frac{c}{2} \bigl( \Sigma_{\tau,t}-1\bigr) \Bigr).
\end{split}
\end{equation*}
It remains to invoke
Proposition 
\ref{prop:good:events} and Lemma \ref{lem:6:10:b} 
in order to lower bound the probability on which the above holds true.}
\end{proof}

The meaning of Proposition 
\ref{prop:6:11} is pretty clear: If 
$\sigma_{j}^{\varepsilon}(\tau) \leq \varrho^{\varepsilon}(\tau) \wedge {\boldsymbol \rho}^{\varepsilon}(\tau)
\wedge 
{\boldsymbol e}_{w}^{\varepsilon}(\tau)$, then we can 
choose $t = 
\sigma_{j}^{\varepsilon}(\tau)$ in the above statement. Provided that 
$\sigma_{j}^{\varepsilon}(\tau)$ is infinitesimal (and is thus less than ${\boldsymbol \epsilon}$), we get 
\begin{equation*}
\biggl( \frac{c}{2} \frac{\varepsilon^2}{V_{\tau}^{\varepsilon}}
\bigl( 1 - \exp(-C) \bigr) - C \int_{\tau}^{\sigma_j^{\varepsilon}(\tau)} V_{s}^{\varepsilon} \ud s
\biggr) \exp \bigl( \frac{c}{2} (j-1) \bigr),
\end{equation*}
as lower bound for $ \Theta\bigl(\theta_{\sigma_{j}^{\varepsilon}(\tau)}^{\varepsilon}\bigr)
-
\Theta(u_{w})$ on the event $D_{j}'(\tau)$. The simple fact that $\sigma_{j}^{\varepsilon}(\tau)$ is infinitesimal allows to get rid of the integral quite easily. Then, provided that $j$ can be chosen large enough, we can make
the above lower bound greater than the required threshold $a_{w}$, which implies that 
$\sigma_{j}^{\varepsilon}(\tau)$ is in fact greater than ${\boldsymbol e}_{w}^{\varepsilon}(\tau)$. This means in particular that the particle leaves the well in infinitesimal time. 
We make this argument clear in the next subsection.

\subsection{End of the proof of Theorem \ref{thm:2:2}}
We now complete the proof of Theorem \ref{thm:2:2}.

\begin{proof}[Proof of Theorem \ref{thm:2:2}]
The main idea is to divide the analysis in two mains cases according to the
value of the potential whenever the process $g^{\varepsilon}$ hits for the {last} time a given threshold $a>0$ (and then remains above this threshold forever). 
It may happen that $g^{\varepsilon}$ becomes of order 1 once for all quite quickly even though the potential is pretty small or
it may happen 
that the potential is already large (on a given scale) whenever the process $g^{\varepsilon}$ reaches the given threshold for the last time. 
\vskip 4pt

\textit{First step.}
{We here introduce the tools that are needed for the proof. 
We fix some $\pi \in (0,1)$ (standing for the probability that the particle behaves differently from what we claimed in the statement 
of 
Theorem \ref{thm:2:2}).}
We are also given some {${\boldsymbol \epsilon} \in (0,1)$ and 
$\delta \in (0, \delta_{\star}]$
with $\delta_{\star}$ being the minimum of the two 
$\delta_{\star}$'s given in the statements of
Propositions \ref{prop:good:events} and \ref{prop:6:11}, but the values of both 
${\boldsymbol \epsilon}$
and  
$\delta$
are allowed to decrease in the proof}. Moreover, 
following the proof of Theorem \ref{thm:2:1}
in Subsection \ref{subse:5:5}, we can assume without any loss of generality that 
{$g_{0}^{\varepsilon} >0$, 
$V_{0}^{\varepsilon} = v_{0} \varepsilon^2$, for some $v_{0} >0$}, and, for any $A>0$, 
\begin{equation*}
\lim_{\varepsilon \searrow \infty} {\mathbb P}
\Bigl(  
V_{0}^{\varepsilon}
\geq 
A (R_{0}^{\varepsilon})^2 
\Bigr) = 1. 
\end{equation*} 
We know from
{Lemma \ref{lem:5:1}}
and
 Proposition 
\ref{prop:hitting:g:a} that there exist two thresholds {$a_{\star}:=a_{\star}(\textbf{\bf A},\textbf{\bf B})>0$ and 
$v_{\star}:=v_{\star}(\textbf{\bf A},\textbf{\bf B},\pi) >0$} ({the values of which may vary from line to line in the proof as long as they only depend on the parameters specified in parentheses}) and a sequence of infinitesimal times 
$(t_{\varepsilon})_{\varepsilon>0}$ such that, 
{for 
$a \in (0,a_{\star}]$ and $v_{0} \geq v_{\star}$},
\begin{equation}
\label{eq:liminf:supplementaire}
\liminf_{\varepsilon \searrow 0}
{\mathbb P}
\Bigl(
{ \bigl\{ \zeta^{\varepsilon} \leq t_{\varepsilon} \bigr\} \cap \bigl\{ \forall t \geq 0,
\ V_{t}^{\varepsilon} \geq  \frac{v_{0}}2 \varepsilon^2 \bigr\}} \Bigr) \geq 1 - { \pi} - \exp\bigl(-\frac{v_{0}}8 \bigr),
\end{equation}
where $\zeta^{\varepsilon}$ denotes the stopping time:
\begin{equation*}
\zeta^{\varepsilon}
:=
\inf \bigl\{t \geq 0 : g_{t}^{\varepsilon} \geq 2a
 \bigr\}. 
\end{equation*}
By {Lemma
 \ref{lem:staying:g:above:a}}, by Markov's property
%
{and by \eqref{eq:liminf:supplementaire}, we get, for $a \in (0,a_{\star}]$, $v_{0} \geq v_{\star}$
and $\varepsilon$ small enough},
\begin{equation*}
{\mathbb P}
\Bigl( 
{\bigl\{
\forall t \in [\zeta^{\varepsilon},{\boldsymbol \epsilon}], 
\ g_{t}^{\varepsilon} \geq a 
\bigr\}
\cap
\bigl\{ \zeta^{\varepsilon} \leq t_{\varepsilon} \bigr\} \cap \bigl\{ \forall t \geq 0,
\ V_{t}^{\varepsilon} \geq  \frac{v_{0}}2 \varepsilon^2 \bigr\}}
\Bigr) \geq 1 - {2} \pi - C \exp \bigl( - \frac{v_{0}}{c} \bigr),
\end{equation*} 
where $c:=c(\textbf{\bf A},\textbf{B})$ and $C:=C(\textbf{\bf A},\textbf{\bf B})$. 
Now, by Proposition \ref{prop:5:6}, there exist $\psi':=\psi'(\textbf{\bf A},\textbf{\bf B}) >0$ and a new value of $C:=C(\textbf{\bf A},\textbf{\bf B})>1$, such that, for $v_{0} \geq v_{\star}(\textbf{\bf A},\textbf{\bf B},\pi)$
and $\varepsilon$ small enough, 
\begin{equation*}
{\mathbb P}
\Bigl( \forall t \in [0,{\boldsymbol \epsilon}], \ { C^{-1} t^{\psi'}} \leq V_{t}^{\varepsilon} \leq 
C t + \varepsilon^{1/2}
\Bigr) \geq 1 - \pi,
\end{equation*}
{where, to get the upper bound in the above event, we used the fact that $\nabla V$ is bounded on the whole space}. 
Also, allowing the value of the constant $C$ to increase from line to line, we have, 
\begin{equation*}
\begin{split}
{\mathbb P} \Bigl( \forall t \in [0,{\boldsymbol \epsilon}], \ R_{t}^{\varepsilon} < \delta \Bigr) 
&\geq {\mathbb P} \Bigl( \forall t \in [0,{\boldsymbol \epsilon}], \ R_{0}^{\varepsilon} + \varepsilon \vert B_{t}^{\varepsilon} \vert < \delta - C {\boldsymbol \epsilon} \Bigr)
\\
&\geq {\mathbb P} \Bigl( R_{0}^{\varepsilon}  < \tfrac12 \delta - \tfrac12 C {\boldsymbol \epsilon} \Bigr) +
{\mathbb P} \Bigl( \forall t \in [0,{\boldsymbol \epsilon}], \  \varepsilon \vert B_{t}^{\varepsilon} \vert < \tfrac12 \delta - \tfrac12 C {\boldsymbol \epsilon} \Bigr)
- 1.
\end{split}
\end{equation*}
Choosing ${\boldsymbol \epsilon}< {\boldsymbol \epsilon}_{\star}(\textbf{\bf A},\textbf{\bf B},\delta)$ and then choosing $\varepsilon$ small enough, the right-hand side can be made greater than $1-\pi$. 
Call now
\begin{equation*}
\begin{split}
{\mathcal E}_{0}^{\varepsilon} &:=
\Bigl\{ \zeta^{\varepsilon} \leq t_{\varepsilon} \Bigr\}  
\cap
\Bigl\{
\forall t \in [\zeta^{\varepsilon},{\boldsymbol \epsilon}], 
\ g_{t}^{\varepsilon} \geq a 
\bigr\}
\cap
\Bigl\{
\forall t \in [0,{\boldsymbol \epsilon}], \ \max(\tfrac12 v_{0} \varepsilon^2, C^{-1}t^{\psi'}) \leq V_{t}^{\varepsilon} \leq 
C t + \varepsilon^{1/2}
\Bigr\} 
\\
&\hspace{15pt} 
\cap
\Bigl\{  \forall t \in [0,{\boldsymbol \epsilon}], R_{t}^{\varepsilon} < \delta \Bigr\},
\end{split}
\end{equation*}
so that, for $v_{0} \geq v_{\star}(\textbf{\bf A},\textbf{\bf B},\pi)$
and
${\boldsymbol \epsilon}< {\boldsymbol \epsilon}_{\star}(\textbf{\bf A},\textbf{\bf B},\delta)$,
and for $\varepsilon$ small enough,
 {${\mathbb P} \bigl( {\mathcal E}_{0}^{\varepsilon} \bigr) \geq 1 -  {5} \pi$}, 
and, on ${\mathcal E}_{0}^{\varepsilon}$,
\begin{equation*}
{ \max \Bigl( \tfrac12 v_{0} \varepsilon^2,C^{-1} (\zeta^{\varepsilon})^{\psi'}\Bigr) \leq V_{\zeta^{\varepsilon}}^{\varepsilon} \leq C \zeta^{\varepsilon} + \varepsilon^{1/2} \leq C t_{\varepsilon} +\varepsilon^{1/2} },
\end{equation*}
the term $t_{\varepsilon}':= C t_{\varepsilon} + \varepsilon^{1/2}$ standing
for a new infinitesimal term. 

On the event ${\mathcal E}_{0}^{\varepsilon}$, 
 we have, with the same notations as in the statement of 
 Proposition \ref{prop:good:events},
{ $\varrho^{\varepsilon}(\zeta^{\varepsilon}) \geq {\boldsymbol \epsilon}$}
and {$V^{\varepsilon}_{\zeta^{\varepsilon}}$} is infinitesimal. 
We also have ${\boldsymbol \rho}^{{\boldsymbol \epsilon}}(\tau) \geq {\boldsymbol \epsilon}$ for any stopping time $\tau$ with values in 
$[0,{\boldsymbol \epsilon}]$. 
\vskip 4pt

{We now study the behavior of the particle
on each of the two events ${\mathcal E}_{0}^{\varepsilon} \cap \{\zeta^{\varepsilon} > 0\}$ and 
${\mathcal E}_{0}^{\varepsilon} \cap \{ \zeta^{\varepsilon} = 0\}$ using therein similar 
ranges of values for the parameters $a$, $\delta$, ${\boldsymbol \epsilon}$ and $v_{0}$.}
\vskip 4pt

\textit{Second step.}
We first address the case $\zeta^{\varepsilon} > 0$. 
Using the same notation as in Proposition
\ref{prop:good:events}
and Corollary
\ref{coro:6:7}, 
we consider the event
\begin{equation*}
{{\mathcal E}^{\varepsilon} := {\mathcal E}^{\varepsilon}_{0} \cap D^2_{0}(t_{\varepsilon}) \cap 
 {\mathcal D}(\zeta^{\varepsilon})}.
\end{equation*}
%
%
By Proposition
\ref{prop:good:events}
{(see $(i)$ and $(iii)$ therein)}
and
Corollary \ref{coro:6:7}, 
it has probability greater than $1 - {6} \pi$
{for the same range of parameters as before except that 
 $v_{0}$ is now taken greater than $v_{\star}(\textbf{\bf A},\textbf{\bf B},\delta,\pi)$}.

Below, we make use of assumptions 
(\textbf{B3-a}), (\textbf{B3-b}) and (\textbf{B3-c}). For a given value of $r>0$, we can choose $r':=r'(r)$
as in (\textbf{B3-b}). For such an $r'$, we choose $T \geq T_{0}$ where $T_{0}:=T_{0}({r'/2})$ is given by (\textbf{B3-a}) but with $r=r'/2$ therein. 

Work now on ${\mathcal E}^{\varepsilon}$ and call $r''$ a (small) positive real. 
{Since $\zeta^{\varepsilon} >0$, we have $g_{\zeta^{\varepsilon}}^{\varepsilon} = a$}.
Recalling from (\textbf{B2}) that 
$\vert \Theta(\theta_{\zeta^{\varepsilon}}^{\varepsilon}) - g_{\zeta^{\varepsilon}}^{\varepsilon} \vert 
\leq C R_{\zeta^{\varepsilon}}^{\varepsilon} \leq C \delta$ (the value of $C$ being allowed to increase from one inequality to another), 
we can choose $\delta$ small enough ({and so ${\boldsymbol \epsilon}$ and $v_{0}$ accordingly as they depend on $\delta$}) such that 
$${\theta^{\varepsilon}_{\zeta^{\varepsilon}}
 \in \Theta^{-1} \bigl(  {[ a,3a ]} \bigr)
 \subset {\mathcal B}_{a}
 },$$
{the inclusion following from (\textbf{B3-c})}.
Then, (\textbf{B3-a}) says that
\begin{equation}
\label{eq:proof:2.6:s1}
\forall t \geq \sigma_{{1}}^{\varepsilon}({\zeta^{\varepsilon}}),
\quad 
\textrm{\rm dist}\Bigl( \phi_{\Sigma^{\varepsilon}_{{\zeta^{\varepsilon}},t \wedge {\boldsymbol \epsilon}}}^{\theta^{\varepsilon}_{{\zeta^{\varepsilon}}}},{\mathcal S} \Bigr) \leq \tfrac12 r',
\end{equation}
provided that $\sigma_{{1}}^{\varepsilon}({\zeta^{\varepsilon}}) < {\boldsymbol \epsilon}$ (which we check later on)
Also, since we are on 
{$D^1_{0}(\zeta^{\varepsilon})$ and 
${\varrho}^{\varepsilon}(\zeta^{\varepsilon}) \wedge 
{\boldsymbol \rho}^{\varepsilon}(\zeta^{\varepsilon}) \geq {\boldsymbol \epsilon}$},
we have 
\begin{equation}
\label{eq:proof:2.6:s2}
\Bigl\vert 
\theta^{\varepsilon}_{\sigma_{1}^{\varepsilon}(\zeta^{\varepsilon})}
-
\phi_{\Sigma^{\varepsilon}_{\zeta^{\varepsilon},
\sigma_{1}^{\varepsilon}(\zeta^{\varepsilon})}}^{\theta^{\varepsilon}_{\zeta^{\varepsilon}}} 
\Bigr\vert
\leq C \Bigl( \delta + \frac{\varepsilon^2}{V^{\varepsilon}_{\zeta^{\varepsilon}}} \Bigr) \leq 2 C
\Bigl( \delta + \frac1{v_{0}} \Bigr),
\end{equation}
for $C:=C(\textbf{\bf A},\textbf{\bf B},a,T)$. 
If we require 
{the right-hand side to be less than $r''$, {namely 
$\delta \leq \delta_{\star}(\textbf{\bf A},\textbf{\bf B},{a},r'',T)$ and 
$v_{0} \geq v_{\star}(\textbf{\bf A},\textbf{\bf B},{a},\delta,\pi,r'',T)$}, and then if we take $r'' < r'/2$}, we get, 
by combining
\eqref{eq:proof:2.6:s1}
and
\eqref{eq:proof:2.6:s2}, 
\begin{equation*}
\textrm{\rm dist}\Bigl( \theta^{\varepsilon}_{\sigma_{{1}}^{\varepsilon}({\zeta^{\varepsilon}})},{\mathcal S} \Bigr) \leq r',
\end{equation*}
{Hence, by (\textbf{B3-b}),  $\theta^{\varepsilon}_{\sigma_{{1}}^{\varepsilon}({\zeta^{\varepsilon}})} \in 
{\mathcal B}_{a}$ and, for all $t \geq \sigma_{1}^{\varepsilon}(\zeta^{\varepsilon})$, 
\begin{equation*}
\textrm{\rm dist}\Bigl( \phi_{t}^{\theta^{\varepsilon}_{\sigma_{{1}}^{\varepsilon}({\zeta^{\varepsilon}})}},{\mathcal S} \Bigr) \leq r.
\end{equation*}
Now, invoking the definition of $D^1_{1}(\zeta^{\varepsilon})$, we deduce that 
\begin{equation*}
\sup_{\sigma_{1}^{\varepsilon}(\zeta^{\varepsilon}) \leq t \leq 
\sigma_{2}^{\varepsilon}(\zeta^{\varepsilon})}
\textrm{\rm dist}\bigl( \theta_{t}^{\varepsilon},{\mathcal S} \bigr)
\leq \bigl( r'' + r \bigr). 
\end{equation*}
Without any loss of generality, we can take $r'' \leq r$, in which case the above right-hand side is less than 
$2r$.}
{Observing that we obtained this conclusion by using the facts that 
we are on ${\mathcal E}^{\varepsilon}$, that 
{$\theta^{\varepsilon}_{\zeta^{\varepsilon}} \in 
{\mathcal B}_{a}$} and that
{$\sigma^{\varepsilon}_{1}(\zeta^{\varepsilon}) < {\boldsymbol \epsilon}$}, 
we deduce by induction that, for any $j \geq 0$ such that 
{$\sigma_{j+1}^{\varepsilon}(\zeta^{\varepsilon}) < {\boldsymbol \epsilon}$}, 
{$ \theta^{\varepsilon}_{\sigma_{j+1}^{\varepsilon}(\zeta^{\varepsilon})}
 \in {\mathcal B}_{a}$
 and
$\sup_{\sigma_{j+1}^{\varepsilon}(\zeta^{\varepsilon}) \leq t \leq 
\sigma_{j+2}^{\varepsilon}(\zeta^{\varepsilon})}
\textrm{\rm dist} ( \theta_{t}^{\varepsilon},{\mathcal S})
\leq 2r$. We deduce that, for all $t \in [\sigma_{1}^{\varepsilon}(\zeta^{\varepsilon}) \wedge {\boldsymbol \epsilon},{\boldsymbol \epsilon})$},
\begin{equation}
\label{eq:dist:S}
\textrm{\rm dist}\bigl( \theta^{\varepsilon}_{t},{\mathcal S} \bigr) \leq 2r.
\end{equation}}
(Notice that, on the event ${\mathcal E}^{\varepsilon}$, 
the path $[{\zeta^{\varepsilon}} \wedge {\boldsymbol \epsilon}, {\boldsymbol \epsilon}] \ni t \mapsto \Sigma_{{\zeta^{\varepsilon}},t}^{\varepsilon}$ is continuous {and increasing}, and so $\sigma_{j'}^{\varepsilon}({\zeta^{\varepsilon}})$ is equal to ${\boldsymbol \epsilon}$ for some (random) $t \geq \zeta^{\varepsilon}$.) 
 
It remains to prove that ${\sigma_{1}^{\varepsilon}(\zeta^{\varepsilon})}$ can be bounded by an  infinitesimal (deterministic) time.  To do so, we notice that, 
since
we work on ${\mathcal E}^{\varepsilon} \subset {\mathcal E}_{0}^{\varepsilon}$, we have
(see the {definition of ${\mathcal E}_{0}^{\varepsilon}$ in the first step})
\begin{equation*}
\tfrac12 v_{0} \varepsilon^2 \leq V^{\varepsilon}_{t_{\varepsilon}} \leq t_{\varepsilon}'. 
\end{equation*}
Moreover, we observe that, if ${\sigma_{1}^{\varepsilon}(\zeta^{\varepsilon})} > t_{\varepsilon}$, then 
\begin{equation*}
T \geq 
\Sigma^{\varepsilon}_{{\zeta^{\varepsilon}},
{\sigma_{1}^{\varepsilon}(\zeta^{\varepsilon})}}
\geq 
\Sigma^{\varepsilon}_{t_{\varepsilon},{\sigma_{1}^{\varepsilon}(\zeta^{\varepsilon})}}.
\end{equation*}
On ${\mathcal E}^{\varepsilon} \subset {\mathcal E}_{0}^{\varepsilon}$, 
$\varrho^{\varepsilon}(t_{\varepsilon}) \geq {\boldsymbol \epsilon}$ and 
${\boldsymbol \rho}^{\varepsilon}(t_{\varepsilon}) \geq {\boldsymbol \epsilon}$. Also, 
${\sigma_{1}^{\varepsilon}(\zeta^{\varepsilon})}
\leq \sigma_{1}^{\varepsilon}(t_{\varepsilon})$. 
 Since
 we work on $D^2_{0}(t_{\varepsilon})$, we get
 \begin{equation*}
\begin{split}
 T &\geq  C^{-1} 
\Bigl( 
1 + \frac{\varepsilon^2}{V_{t_{\varepsilon}}^{\varepsilon}}
\Bigr)^{-1}
\ln \Bigl[ 1 + C^{-1} (2 V_{t_{\varepsilon}}^{\varepsilon})^{-\tfrac{1-\alpha}{1+\alpha}}
\bigl( {\sigma_{1}^{\varepsilon}(\zeta^{\varepsilon})} \wedge {\boldsymbol \epsilon} - t_{\varepsilon}\bigr)_{+}\Bigr]
\\
&\geq C^{-1} 
\Bigl( 
1 + \frac{2}{v_{0}}
\Bigr)^{-1}
\ln \Bigl[ 1 + C^{-1} (2 t_{\varepsilon}')^{-\tfrac{1-\alpha}{1+\alpha}}
\bigl( {\sigma_{1}^{\varepsilon}(\zeta^{\varepsilon})} \wedge {\boldsymbol \epsilon} - t_{\varepsilon}\bigr)_{+}\Bigr].
\end{split}
 \end{equation*}
 Without any loss of generality, we can assume that $v_{0} \geq 2$, from which we obtain
 \begin{equation*}
\begin{split}
 2CT &\geq 
\ln \Bigl[ 1 + C^{-1} (2 t_{\varepsilon}')^{-\tfrac{1-\alpha}{1+\alpha}}
\bigl( {\sigma_{1}^{\varepsilon}(\zeta^{\varepsilon})} \wedge {\boldsymbol \epsilon} - t_{\varepsilon}\bigr)_{+}\Bigr],
\end{split}
 \end{equation*} 
and then
\begin{equation*}
2 C  \bigl( 2t_{\varepsilon}' \bigr)^{\tfrac{1-\alpha}{1+\alpha}}  \Bigl( \exp \bigl( 2 C T\bigr) - 1 \Bigr) \geq  
{\sigma_{1}^{\varepsilon}(\zeta^{\varepsilon})} \wedge {\boldsymbol \epsilon} - t_{\varepsilon},
\end{equation*}
which shows that 
{ $\sigma_{1}^{\varepsilon}(\zeta^{\varepsilon})$} is indeed less than 
 {some} infinitesimal (deterministic) time $t_{\varepsilon}''$.
Returning to 
\eqref{eq:dist:S}, this says that, on the event ${\mathcal E}^{\varepsilon} \cap \{ {\zeta^{\varepsilon}}>0\}$, 
\begin{equation*}
\forall t \in [t_{\varepsilon}'',{\boldsymbol \epsilon}], \quad \textrm{\rm dist}\bigl(\theta_{t}^{\varepsilon},{\mathcal S}\bigr) \leq {2r}, 
\end{equation*}
provided we choose in a sequential manner $a=a(\textbf{\bf A},\textbf{\bf B})$, 
$r'=r'(\textbf{\bf A},\textbf{\bf B},r)$, 
$T=T(r')$,
$r''=r''(\textbf{\bf A},\textbf{\bf B},a,r,r',T)$,
$\delta=\delta(\textbf{\bf A},\textbf{\bf B},{a},r'',T)$,
${\boldsymbol \epsilon}={\boldsymbol \epsilon}(\textbf{\bf A},\textbf{\bf B},\delta)$,
$v_{\star} = v_{\star}(\textbf{\bf A},\textbf{\bf B},{a},\delta,\pi,r'',T)$
and 
 $\varepsilon$ small enough. This shows that 
 \begin{equation*}
 {\mathbb P}
 \Bigl( \bigl\{ \zeta^{\varepsilon} >0 \bigr\}
\cap
\bigl\{ \exists t \in [t_{\varepsilon}'',{\boldsymbol \epsilon}] : \textrm{\rm dist}\bigl(\theta_{t}^{\varepsilon},{\mathcal S}\bigr) 
> 2r \bigr\}\Bigr) \leq 6 \pi.
\end{equation*}

\vskip 4pt

\textit{Third step.} We now treat the case $\zeta^{\varepsilon} =0$.
{Then, on 
 ${\mathcal E}_{0}^{\varepsilon}$, the process $g^{\varepsilon}$ never passes below $a$. 
So, even though we work on 
${\mathcal E}^{\varepsilon}_{0}$ as in the previous step,  
we need to proceed differently in order to prove that the particle hits ${\mathcal B}_{a}$. While we used assumption 
(\textbf{B3}) in the previous step, we use here assumption (\textbf{B4}).  
Once the particle has been proved to reach ${\mathcal B}_{a}$ with high probability, 
it may be shown to converge to ${\mathcal S}$ by using the same arguments as in the previous step. Hence, we only prove here that, with (asymptotically) high probability,
the particle hits ${\mathcal B}_{a}$ in infinitesimal time. 
This suffices for our purpose.}

  {Throughout the proof, we use the same parameters $a$, $\delta$, ${\boldsymbol \epsilon}$, $\varepsilon$, $\pi$ and $v_{0}$ as before,
  but for possibly smaller ranges of values, which is indeed licit provided that there is no conflict in the order that is used to fix one parameter in terms of the others. Here, we do not make use of $r$ and we use another value of $T$, which is indeed possible since $T$ is a free parameter.} 

{The idea of the proof is as follows. If $\zeta^{\varepsilon} =0$ and $\theta_{0}^{\varepsilon} \in {\mathcal B}_{a}$, 
the proof is over. If $\zeta^{\varepsilon} =0$ and $\theta_{0}^{\varepsilon} \not \in {\mathcal B}_{a}$,
we deduce from 
(\textbf{\bf B4}) that $\theta_{0}^{\varepsilon}$ belongs to the well formed by a local minimum
of the function $\Theta$ on the sphere. Then, in order to complete the proof, it suffices to prove that, conditional 
on the fact that {$\theta_{0}^{\varepsilon}$ belongs to a well}, $\theta^{\varepsilon}$ exits 
the well in infinitesimal time
{with high probability on 
${\mathcal E}_{0}^{\varepsilon}$}.
Since we assumed the number of wells to be finite, this obviously implies that 
$\theta^{\varepsilon}$
reaches ${\mathcal B}_{a}$ in infinitesimal time 
 {with high probability} 
 on the event
${\mathcal E}_{0}^{\varepsilon} \cap \{ \zeta^{\varepsilon}=0\}$.

Following 
Subsection \ref{subse:escape:well}, we consider $u_{w}$ the minimizer of $\Theta$ on a given well ${\mathcal W}_{w}$. 
We know that ${\mathcal W}_{w}$ may be written in the form of a level set 
$\{u \in {\mathbb S}^{d-1} : \Theta(u) \leq a_{w}, \ \vert u - u_{w} \vert < e_{w}\}$ for some 
$a_{w},e_{w}>0$ and the intersection of the well with ${\mathcal B}_{a}$ is  {in the form}
$\{ u \in {\mathbb S}^{d-1} : \Theta(u) = a_{w}, \ \vert u - u_{w} \vert<e_{w}\}$, 
 {${\mathcal B}_{a}$ containing 
$\{ u \in {\mathbb S}^{d-1} : \ \vert u - u_{w} \vert =e_{w}\}$}.
In other words, it suffices to show, for our purposes, that   
$\Theta(\theta^{\varepsilon})$ becomes greater than $a_{w}$ in infinitesimal time. 
As we already alluded to, this  {turns out to be} a consequence of Proposition 
\ref{prop:6:11}. 

In order to proceed, we use the same two constants $c$ and $C$ as in the statement of Proposition \ref{prop:6:11}. 
 {We recall that $c$ and $C$ depend on $a$ and that Proposition 
\ref{prop:6:11} is valid if 
$a \leq a_{\star}$, $\delta \leq \delta_{\star}$ and $v_{0} \geq v_{\star}$, 
for $a_{\star}$, $\delta_{\star}$ and $v_{\star}$ as therein.}
For such a $v_{0}$, we call ${\mathcal J}(v_{0})$ the smallest integer $j \geq 1$ such that 
\begin{equation}
\label{eq:j(v0)}
\frac{c}{8 v_{0}}
\bigl( 1 - \exp(-C) \bigr) 
 \exp \Bigl( \frac{c}2 \bigl( j-1 \bigr) \Bigr) \geq a_{w}. 
\end{equation}
Inductively, we define the following two sequences of integers $(j_{n})_{n \geq 0}$
and $(J_{n})_{n \geq 0}$:
\begin{equation*}
\begin{split}
&j_{0}=0, \quad j_{1}={\mathcal J}(v_{0}), \quad j_{n+1}={\mathcal J} \bigl(v_{0} \exp\bigl(C (j_{1}+\cdots+j_{n})\bigr), \quad n \geq 1;
\\
&J_{0}=0, \quad J_n=j_{1}+\cdots+j_{n}, \quad n \geq 1. 
\end{split}
\end{equation*}
Observe in particular that 
$j_{n+1}={\mathcal J}(v_{0} \exp(CJ_{n}))$. 
Accordingly, 
we consider the events
$(D_{j_{n+1}}'(\sigma_{J_{n}}^{\varepsilon}(0)))_{n \geq 0}$, 
see 
Proposition \ref{prop:good:events}
for the definition of $\sigma_{j}^{\varepsilon}(0)$
({with $T=1$ therein})
and
\eqref{eq:D:j:prime:tau} for the definition of $D_{j}'(\tau)$. 
Noticing 
that 
$\sigma_{J_{n+1}}^{\varepsilon}(0)=
\sigma^{\varepsilon}_{j_{n+1}-j_{n}}(\sigma_{J_{n}}^{\varepsilon}(0))$, we deduce that 
each $D_{j_{n+1}}'(\sigma_{J_{n}}^{\varepsilon}(0))$ belongs to 
${\mathcal F}_{\sigma_{J_{n+1}}^{\varepsilon}(0)}$. Moreover, by 
Proposition \ref{prop:6:11},
\begin{equation*}
{\mathbb P} \bigl( D_{j_{n+1}}'(\sigma_{J_{n}}^{\varepsilon}(0))
\, \vert \, {\mathcal F}_{\sigma_{J_{n}}^{\varepsilon}(0)} \bigr) \geq {\tfrac12} K(c,C). 
\end{equation*}
Therefore, by a straightforward induction, 
\begin{equation*}
 {{\mathbb P} \biggl( \bigcap_{\ell=0}^n \Bigl[ D_{j_{\ell+1}}'(\sigma_{J_{\ell}}^{\varepsilon}(0))
\Bigr]^{\complement}
 \biggr) \leq \bigl( 1- \tfrac12 K(c,C)\bigr)^{n+1}}, 
\end{equation*}
and then, for a given $\pi>0$, we can find 
$n_{\star}:=n_{\star}(\textbf{\bf A},\textbf{\bf B},a, {\pi})$ such that 
\begin{equation*}
{\mathbb P} \Bigl( \bigcup_{n=0}^{n_{\star}} D_{j_{n+1}}'(\sigma_{J_{n}}^{\varepsilon}(0))
\Bigr) \geq 1 -\pi.
\end{equation*}
Choosing $a \leq a_{\star}:=a_{\star}(\textbf{\bf A},\textbf{\bf B})$, $\delta \leq \delta_{\star}:=\delta_{\star}(\textbf{\bf A},\textbf{\bf B})$ and $v_{0} \geq v_{\star}(\textbf{\bf A},\textbf{\bf B},a,\delta,\pi)$
(which is compatible with the previous step), we may assume without any loss of generality that  
\begin{equation*}
{\mathbb P} \biggl( 
{\mathcal E}_{0}^{\varepsilon}
\cap 
{\mathcal D}(0) 
\cap 
D^{0}(0)
\cap \Bigl(\bigcup_{n=0}^{n_{\star}} D_{j_{n+1}}'(\sigma_{J_{n}}^{\varepsilon}(0))
\Bigr) \biggr)\geq 1 - {7} \pi. 
\end{equation*}
Work now on 
the event appearing in the left-hand side, namely 
${\mathcal E}_{0}^{\varepsilon}
\cap 
{\mathcal D}(0) 
\cap 
D^{0}(0)
\cap (\bigcup_{n=0}^{n_{\star}} D_{j_{n+1}}'(\sigma_{J_{n}}^{\varepsilon}(0)))$. 
Since $V_{0}^{\varepsilon} = v_{0} \varepsilon^2$ is infinitesimal, 
we deduce
by the same argument as in the second step that, on the above event, $\sigma_{1}^{\varepsilon}(0)$
is less than some (deterministic) infinitesimal time.
Since we are on $D^0(0)$, see the definition in Proposition \ref{prop:good:events}, 
we deduce that $V_{\sigma_{1}^{\varepsilon}(0)}^{\varepsilon}$ is also infinitesimal and then, by induction (since 
$J_{n_{\star}+1}$ is a deterministic integer), 
$\sigma_{J_{n_{\star}+1}}^{\varepsilon}(0)$ is also less than some 
infinitesimal $t_{{\varepsilon}}'''$. In particular (at least for $\varepsilon$ small enough), 
$\sigma_{J_{n_{\star}+1}}^{\varepsilon}(0)$ is less than 
${\boldsymbol \rho}^{\varepsilon}(0) \wedge \varrho^{\varepsilon}(0)$ since the latter is greater than 
${\boldsymbol \epsilon}$ on ${\mathcal E}_{0}^{\varepsilon}$. 
Moreover, again by definition of 
$D^0(0)$ in Proposition \ref{prop:good:events}, we have, for all $n \in \{0,\cdots,n_{\star}\}$,
\begin{equation*}
\frac12 
\varepsilon^2 v_{0} \exp \bigl(c J_{n}\bigr)
\leq
V_{\sigma_{J_{n}}^{\varepsilon}(0)}^{\varepsilon} \leq 2 \varepsilon^2 v_{0} \exp \bigl(C J_{n}\bigr).
\end{equation*}
Take now an integer $n \in \{0,\cdots,n_{\star}\}$ 
and work on 
${\mathcal E}_{0}^{\varepsilon}
\cap 
{\mathcal D}(0) 
\cap 
D^{0}(0)
\cap D_{j_{n+1}}'(\sigma^{\varepsilon}_{J_{n}}(0))$.
For a new constant $C':=C'(\textbf{\bf A},\textbf{B})$, 
we get 
from 
\eqref{eq:D:j:prime:tau}
and (again) from the fact that 
$\sigma_{J_{n+1}}^{\varepsilon}(0)=
\sigma_{j_{n+1}-j_{n}}^{\varepsilon}(\sigma_{J_{n}}^{\varepsilon}(0))$
that,  
for all $t \in [\sigma_{{J_{n}+1}}^{\varepsilon}(0),\sigma_{J_{n+1}}^{\varepsilon}(0) \wedge {\boldsymbol e}_{w}(\sigma_{J_{n}}^{\varepsilon}(0)
)]$, 
\begin{equation*}
\begin{split}
\Theta(\theta_{t}^{\varepsilon})
-
\Theta(u_{w}) 
&\geq
\Bigl(
\frac{c}{2} \frac{\varepsilon^2}{V_{\sigma_{J_{n}}^{\varepsilon}(0)}^{\varepsilon}}
\bigl( 1 - \exp(-C) \bigr) - C \int_{\sigma_{J_{n}}^{\varepsilon}(0)}^{t} V_{s}^{\varepsilon} \ud s
\Bigr) \exp \Bigl( \frac{c}{2} \bigl( \Sigma^{\varepsilon}_{\sigma_{J_{n}}^{\varepsilon}(0),t} -1 \bigr) \Bigr) 
\\
&\geq 
\Bigl(
\frac{c}{4 v_{0}} 
\bigl( 1 - \exp(-C) \bigr) \exp \bigl( - C J_{n} \bigr) - C  t_{\varepsilon}'''
\Bigr) \exp \Bigl( \frac{c}{2} \bigl( \Sigma_{\sigma_{J_{n}}^{\varepsilon}(0),t}^{\varepsilon} -1 \bigr) \Bigr),
\end{split}
\end{equation*}
where used the fact that, for a suitable choice of $C'$, $C V_{s}^{\varepsilon} \leq C'$, which follows from the 
bound $s \leq {\boldsymbol \rho}^{\varepsilon}(0)$;  {we also made use of the upper bound 
$V^{\varepsilon}_{\sigma^{\varepsilon}_{J_{n}}(0)} \leq 2 v_{0} \varepsilon^2 \exp (C J_{n})$, which follows from 
the definition of $D^0(0)$.}
Assuming that 
$\sigma_{J_{n+1}}^{\varepsilon}(0) \leq {\boldsymbol e}_{w}(\sigma_{J_{n}}^{\varepsilon}(0))$, 
we may choose $t = \sigma_{J_{n+1}}^{\varepsilon}(0)$
 in the above inequality ({notice that, by definition, 
 $\sigma_{J_{n+1}}^{\varepsilon}(0)
 \geq 
 \sigma_{J_{n}+1}^{\varepsilon}(0)$}). Observing that 
$\Sigma^{\varepsilon}_{\sigma^{\varepsilon}_{J_{n}}(0),\sigma^{\varepsilon}_{J_{n+1}}(0)}
=j_{n+1}$, we deduce that 
\begin{equation*}
\Theta \bigl( \theta_{\sigma_{J_{n+1}}^{\varepsilon}(0)}^{\varepsilon} \bigr)
-
\Theta(u_{w}) \geq 
\Bigl(
\frac{c}{4 v_{0}} 
\bigl( 1 - \exp(-C) \bigr) \exp \bigl( - C J_{n} \bigr) - C  t_{\varepsilon}'''
\Bigr) \exp \Bigl( \frac{c}2   {\bigl(j_{n+1}-1 \bigr)}\Bigr).
\end{equation*}
For $\varepsilon$ small enough and by 
\eqref{eq:j(v0)}, we obtain
\begin{equation*}
\begin{split}
\Theta \bigl( \theta_{\sigma_{J_{n+1}}^{\varepsilon}(0)}^{\varepsilon} \bigr)
-
\Theta(u_{w}) &\geq 
\frac{c}{8 v_{0}} 
\bigl( 1 - \exp(-C) \bigr) \exp \bigl( - C J_{n} \bigr)  \exp \Bigl( \frac{c}2  {\bigl( j_{n+1} -1\bigr)}\Bigr)
\\
&= \frac{c}{8 v_{0} \exp(CJ_{n})} 
\bigl( 1 - \exp(-C) \bigr)   \exp \Bigl( \frac{c}2   {\bigl( {\mathcal J}(v_{0} \exp(CJ_{n})) -1 \bigr)}\Bigr)
\geq a_{w},
\end{split}
\end{equation*}
which shows that 
${\boldsymbol e}_{w}(\sigma_{J_{n}}^{\varepsilon}(0)) \leq \sigma_{J_{n+1}}^{\varepsilon}(0)$. 
So, in any case, we have 
${\boldsymbol e}_{w}(0) \leq \sigma_{J_{n_{\star}+1}}^{\varepsilon}(0)$
on the event  
${\mathcal E}_{0}^{\varepsilon}
\cap 
{\mathcal D}(0) 
\cap 
D^{0}(0)
\cap (\bigcup_{n=0}^{n_{\star}} D_{j_{n+1}}'(\sigma_{J_{n}}^{\varepsilon}(0)))$.
This proves that the particle reaches ${\mathcal B}_{a}$
on the latter event. As a result,
 \begin{equation*}
 {\mathbb P}
 \Bigl( \bigl\{ \zeta^{\varepsilon} =0 \bigr\}
\cap
\bigl\{ \forall t \in [0,t_{\varepsilon}'''],  \ \theta_{t}^{\varepsilon}
\not \in {\mathcal B}_{a}
\bigr\}
 \Bigr) \leq 7 \pi,
\end{equation*}
which completes the proof.
}
\end{proof}

\section{Numerical example}
\label{se:7}
In this section, we provide a numerical example that illustrates our theoretical results. 
To make things simpler, we focus on a two-dimensional example, of the same type 
as in Subsection 
\ref{subse:2:2}, namely we take
$V$ of the form
$V(x) = g(\theta) \vert x \vert^{1+\alpha}$, $x \in {\mathbb R}^2$,
where $g$ is a function on ${\mathbb R}/(2 \pi {\mathbb Z})$ and $\theta$ is the angle in the polar decomposition of 
$x$.  
We choose $\alpha=0.5$ and 
\begin{equation*}
g(\theta) =  \left\{ 
\begin{array}{ll}
0 \quad &\textrm{\rm if} \ \theta \in \bigl(- \pi, - \tfrac{ \pi}{2} \bigr] \cup 
\bigl[ \frac{\pi}{2},\pi \bigr]. 
\vspace{1pt}
\\
\frac{2}{1.5+(\tan(\theta))^2}
-2 \cos (\theta)^4 + \cos(\theta)^2 
 &\textrm{\rm if} \ \theta 
\in \bigl( - \frac{\pi}{2}, \frac{\pi}{2} \bigr). 
\end{array}
\right.
\end{equation*}
The plot of $g$ together with 
the plot of a numerical approximation of $(1+\alpha)^2 g + g''$ (which is equal to the Laplacian, up to a scaling factor, and which is computed by finite differences)
are given in 
Figure \ref{fig:g:3:0} below. 
\begin{figure}[htb]
\includegraphics[scale=.35]{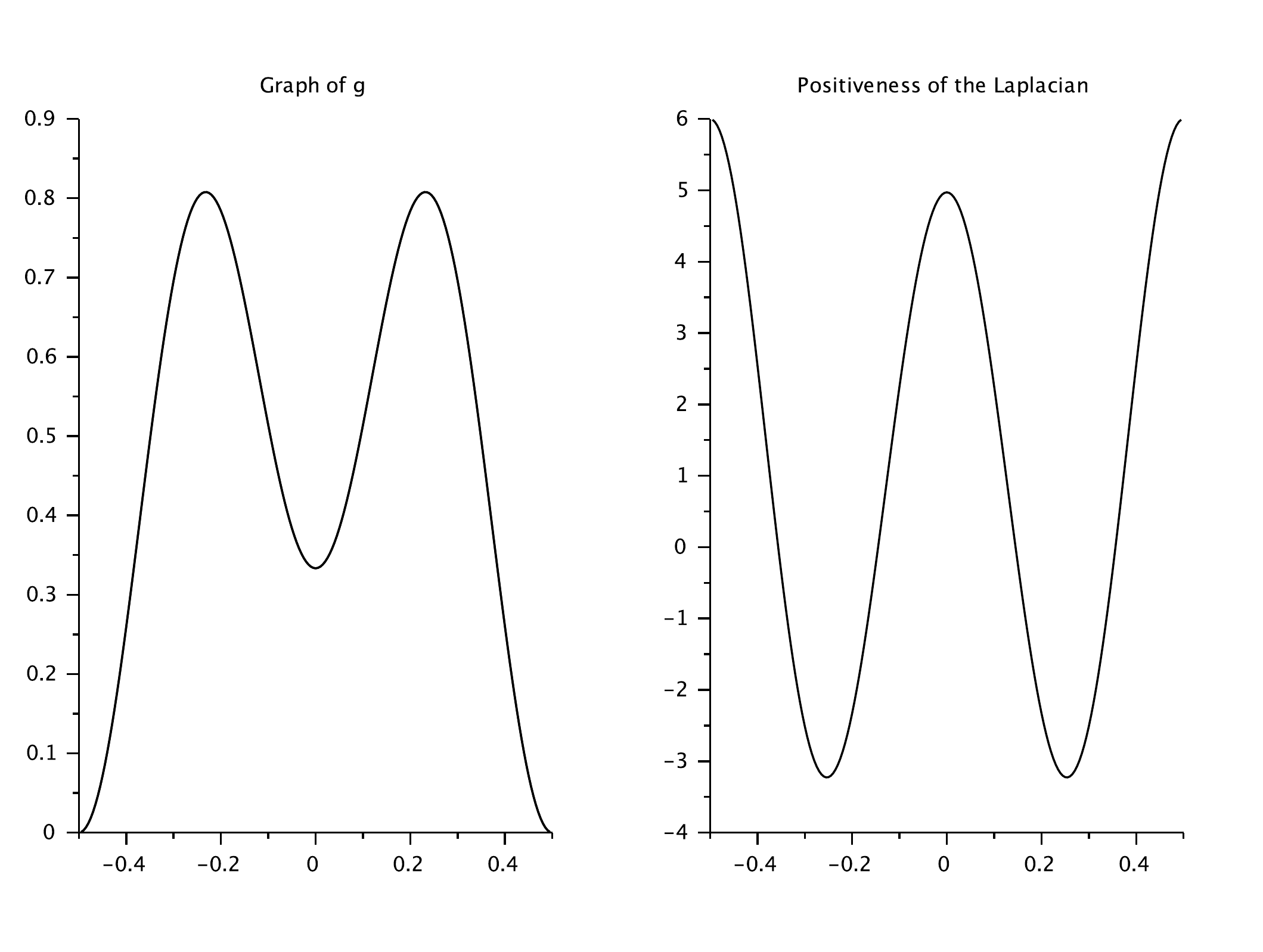}
\caption{Graph of $g$ (on its positive part) and positiveness of the Laplacian at the boundary of $\{ g>0\}$. 
Ticks in abscissa represent fractions of $\pi$ at which the function $g$ and its Laplacian are computed. }
\label{fig:g:3:0}
\end{figure}
Numerically, we find that $g$ has two maxima, at -0.23$\pi$ and 0.23$\pi$ approximatively. 

We then simulate (using a standard Euler scheme with step size $h=\varepsilon/100$)
 twelve paths of the process for the following values of $\varepsilon$: $\varepsilon=0.01$, 
 $\varepsilon=0.005$, $\varepsilon=0.002$ and $\varepsilon=0.001$. Paths are represented on 
 Figure \ref{fig:simu:3:0} below. 
\begin{figure}[htb]
\includegraphics[scale=.18]{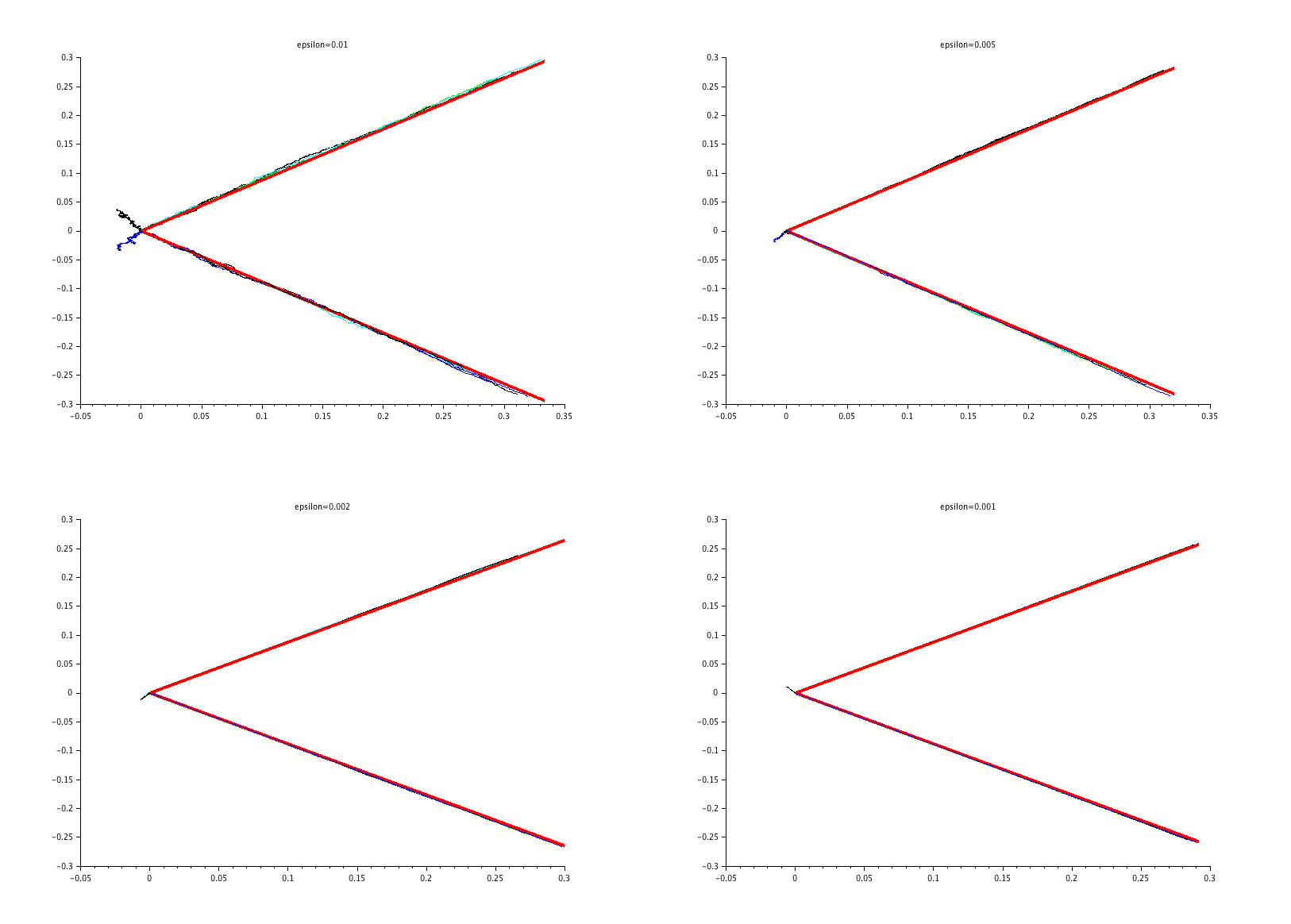}
\caption{Simulations for different values of $\varepsilon$}
\label{fig:simu:3:0}
\end{figure}

On Figure \ref{fig:simu:3:0}, thick red paths correspond to the expected exit directions, based on our theoretical results. Thin paths are the simulated ones. 
On each pane, the ``small excursions'' around the origin correspond to paths that failed to exit. There are two of them in the case $\varepsilon=.01$ and one in all other cases. On the bottom row, we hardly distinguish 
the simulated paths from the expected exit directions, which sounds as a numerical evidence 
of the fact that the particle indeed follows the maxima of $g$.

\section*{Appendix}

{\begin{lemma}
\label{lem:boundary:zero:measure}
Under Assumption \textbf{\bf A}, the boundary of $\{g=0\}$ has zero Lebesgue measure. 
\end{lemma}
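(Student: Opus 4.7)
The plan is to apply the Lebesgue density theorem to the measurable set $\{g = 0\}$: I will show that at every $x_{0} \in \partial\{g=0\} \setminus \{0\}$ the lower Lebesgue density of $\{g > 0\}$ at $x_{0}$ is strictly positive, so that $x_{0}$ is never a density-$1$ point of $\{g=0\}$. Since a.e. point of $\{g=0\}$ is a density-$1$ point of itself, this forces $\vert\partial\{g=0\} \setminus \{0\}\vert = 0$, and together with $\vert\{0\}\vert = 0$ the claim follows.

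The key technical ingredient is a two-sided Lojasiewicz-type estimate for $g$ derived from (\textbf{\bf A4}). On any neighbourhood of $x_{0} \neq 0$ on which $g < a_{0}$, the disjointness of the supports of $g$ and $h$ yields $V = g \, \vert x\vert^{1+\alpha}$, and differentiation gives
\[
\nabla V(x) = \vert x\vert^{1+\alpha} \nabla g(x) + (1+\alpha)\, g(x)\, \vert x\vert^{\alpha-1}\, x.
\]
Combining this identity with $c_{0} L(x)^{p+1} \leq g(x) \leq C_{0} L(x)^{p+1}$ and $c_{0} L(x)^{p}\vert x\vert^{\alpha} \leq \vert\nabla V(x)\vert \leq C_{0} L(x)^{p}\vert x\vert^{\alpha}$ from (\textbf{\bf A4}), and absorbing the lower-order summand $g\vert x\vert^{\alpha-1} \sim L^{p+1}\vert x\vert^{\alpha}$ into $L^{p}\vert x\vert^{\alpha}$ (legitimate once $a_{0}$, hence $L$, is small enough), I obtain
\[
c\, \frac{g(x)^{p/(p+1)}}{\vert x\vert} \leq \vert\nabla g(x)\vert \leq C\, \frac{g(x)^{p/(p+1)}}{\vert x\vert}, \qquad g(x) \in (0, a_{0}).
\]
Setting $w := g^{1/(p+1)}$ (extended by $0$ on $\{g=0\}$), this rewrites as $c_{1}/\vert x\vert \leq \vert\nabla w(x)\vert \leq C_{1}/\vert x\vert$ on $\{0 < g < a_{0}\}$, and by integrating $\nabla w$ along straight segments (treating crossings with $\{g=0\}$ via continuity of $w$), $w$ is locally Lipschitz on any ball $B \subset \RR^{d} \setminus \{0\}$ on which $g < a_{0}$.

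The density estimate is then a gradient-flow argument. Fix $x_{0} \in \partial\{g=0\} \setminus \{0\}$ and choose $r$ small enough that $B(x_{0}, r) \subset \RR^{d} \setminus \{0\}$ and $g < a_{0}$ on $B(x_{0}, r)$. Since $\{g>0\}$ is open and $x_{0} \in \partial\{g>0\}$, pick $y^{*} \in \{g>0\} \cap B(x_{0}, r/4)$, and follow the unit-speed gradient flow of $w$ starting from $y^{*}$. Along the flow $w$ is strictly increasing at rate $\vert\nabla w\vert \geq c_{2}(x_{0}) > 0$, and the flow never re-enters $\{g=0\}$. Stopping at arc length $r/4$ (or earlier if $g$ reaches $a_{0}$), I obtain $y_{0} \in B(x_{0}, r/2) \cap \{g>0\}$ with $w(y_{0}) \geq c_{3}(x_{0})\, r$. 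The Lipschitz upper bound on $w$ then forces the ball $B(y_{0}, w(y_{0})\vert x_{0}\vert/(2C_{1}))$ to lie inside $\{g>0\}$, since on any straight segment leaving this ball $w$ cannot drop to $0$; for $r$ small, this ball is contained in $B(x_{0}, r)$, giving
\[
\bigl\vert \{g>0\} \cap B(x_{0}, r)\bigr\vert \geq \kappa(x_{0})\, r^{d},
\]
with $\kappa(x_{0}) > 0$ independent of $r$.

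Hence the upper density of $\{g=0\}$ at $x_{0}$ is strictly less than $1$, so $x_{0}$ does not belong to the density-$1$ set of $\{g=0\}$; the Lebesgue density theorem, applied on exhausting compact subsets of $\RR^{d} \setminus \{0\}$, yields $\vert\partial\{g=0\} \setminus \{0\}\vert = 0$. The main obstacle is the clean derivation of the two-sided Lojasiewicz estimate and the resulting local Lipschitz character of $w = g^{1/(p+1)}$ near $\partial\{g=0\}$, especially the treatment of line segments that may cross back and forth between $\{g>0\}$ and $\{g=0\}$ when proving the Lipschitz bound; once these are in place, the gradient-flow density argument is essentially standard.
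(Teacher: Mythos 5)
Your proof is correct and takes essentially the same route as the paper's appendix: from the two-sided bounds in (\textbf{A4}) you make the root of the potential ($w=g^{1/(p+1)}$ in your version, $V^{1/(p+1)}$ in the paper's, equivalent up to a factor bounded above and below away from the origin) Lipschitz with gradient bounded below near $\partial\{g>0\}$, deduce that every boundary point sees at every small scale a ball of proportional radius contained in $\{g>0\}$ (porosity), and conclude with the Lebesgue density/differentiation theorem. The only real difference is cosmetic: you produce the interior ball by following the gradient flow of $w$, while the paper translates by a fixed direction $e$ and argues by contradiction across possible crossings of $\{g=0\}$; both mechanisms are sound.
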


\begin{proof}
It suffices to prove that, for any open ball $U$ such that $U \cap \partial \{g>0\}$ is not empty, 
$U \cap \partial \{g>0\}$ has zero measure. Without any loss of generality, we may assume 
that the closure $\overline U$ of $U$ does not contain zero and that the radius of $U$ is as small as needed.  
 
Indeed, for such an $U$,  
(\textbf{A4}) says that, provided that the radius of $U$ is small enough, 
\begin{equation}
\label{eq:zero:measure:level:set}
\begin{split}
0 < \tfrac1{p+1}
\inf_{y \in U \cap \{ g >0\}}
 \frac{\vert \nabla V(y) \vert}{V^{p/(p+1)}(y)} 
& =
\inf_{y \in U \cap \{ g >0\}}
 \bigl\vert \nabla \bigl( V^{\frac{1}{p+1}} \bigr)(y)\bigr\vert
 \\
&\leq 
\sup_{y \in U \cap \{ g >0\}}
 \bigl\vert \nabla \bigl( V^{\frac{1}{p+1}} \bigr) (y) \bigr\vert 
 = 
 \tfrac1{p+1}
 \sup_{y \in U \cap \{ g >0\}}
 \frac{\vert \nabla V(y) \vert}{V^{p/(p+1)}(y)}
< \infty.
\end{split}
 \end{equation}
Hence, we can find a unitary vector $e$ such that $\inf_{y \in U \cap \{ g >0\}}
\nabla (V^{\frac{1}{p+1}})(y) \cdot e >0$. 
 Therefore, 
 for another open ball $U' \subset U$, with the same center as $U$ but with a strictly smaller radius, 
 the fact that $V$ is ${\mathcal C}^{1,1}$ on $U$ guarantees that, for
 any $y \in U' \cap \{ g >0\}$ and for any $\lambda >0$ small enough such that $[y,y + \lambda e] \subset U \cap \{ g >0\}$, 
 \begin{equation}
\label{eq:zero:measure:level:set:2} 
 V^{\frac1{p+1}} \bigl( y + \lambda e \bigr) = V^{\frac1{p+1}}(y) + \lambda \int_{0}^1 s \nabla \bigl( V^{\frac1{p+1}} \bigr) (y + s \lambda e) \cdot e \, ds   \geq  c \lambda,
 \end{equation}
 where $c$ only depends on $\inf_{z \in U \cap \{ g >0\}}
\nabla (V^{\frac{1}{p+1}})(z) \cdot e$. 
 In particular, 
 if we choose $\lambda$ small enough such that  
 $U' + \lambda e \subset U$, then we must have $V^{1/(p+1)}(y+\lambda e) \geq c \lambda$
 for any $y \in U' \cap \{g>0\}$ 
 (if not consider $\lambda_{\star} := \inf \{ \lambda' >0 : g(y + \lambda' e) =0 \}$
 and apply \eqref{eq:zero:measure:level:set:2}  to $\lambda = \lambda_{\star} - {\epsilon}$, for ${\epsilon}$ small enough, 
 and get a contradiction by letting ${\epsilon}$ tend to zero). 
In particular, for another ball $U'' \subset U'$, with the same center but with a strictly smaller radius, 
for any $x \in U'' \cap \partial \{g>0\}$, 
it holds (by approximating $x$ by a sequence $(y_{n})_{n \geq 1}$ in $U' \cap \{g >0\}$ such that $y_{n} \rightarrow x$)
$V^{1/(p+1)}(x + \lambda e) \geq c\lambda$.  
Recalling from 
\eqref{eq:zero:measure:level:set}
that 
$V^{1/(p+1)}$ is Lipschitz continuous on $U \cap \{ g >0\}$
and 
assuming that $U'' + \lambda z \subset U'$ for any $z \in {\mathbb R}$ with $\vert z \vert \leq 1$, we deduce that there exists 
$\varrho \in (0,1)$ such that, for the same values of $\lambda$ as before, 
$V^{1/(p+1)}(x + \lambda e + \lambda \varrho z) \geq (c/2)\lambda$, for any $z \in \R^d$ with 
$\vert z \vert \leq 1$. In particular, the ball $B(x+\lambda e, \lambda \varrho)$
(of center $x + \lambda e$ and of radius $\lambda \varrho$) is included in 
$\{ g >0\}$ and is thus disjoint from $\{g=0\}$, which proves by standard arguments of porosity
that $U'' \cap \partial \{ g=0\}$ 
has zero measure, see \cite{Zajcek}.
For completeness, we provide a sketch of proof of the latter claim. If   $U'' \cap \partial \{ g>0\}$ has positive measure, then
by Lebesgue differentiation theorem, we can find some $x \in U'' \cap \partial \{ g>0\}$
such that
\begin{equation}
\label{eq:zero:measure:level:set:3}
\lim_{\lambda \searrow 0} \frac1{\vert B(x,2\lambda) \vert} \bigl\vert B(x,2\lambda) \cap  U'' \cap \partial \{ g>0\}
\bigr\vert =1,
\end{equation}
where $\vert \cdot \vert$ is here used to denote the Lebesgue measure. 
Now, for $\lambda$ small enough, 
\begin{equation*}
\bigl\vert B(x,2\lambda) \cap  U'' \cap \partial \{ g>0\}
\bigr\vert \leq \bigl\vert B(x,2\lambda) \setminus 
B(x+\lambda e, \lambda \varrho)
\bigr\vert = \bigl\vert  B(x,2\lambda) \bigr\vert 
\bigl( 1 - 2^{-d}\varrho^d\bigr),
\end{equation*}
which yields a contradiction with \eqref{eq:zero:measure:level:set:3}.
Now that $\vert U'' \cap \partial \{ g>0\} \vert$ is known to  be zero, it is easy to see that the same must hold true for 
$\vert U \cap \partial \{ g>0\} \vert$.
\end{proof}
}

\bibliographystyle{plain}
\bibliography{zeronoise}

\begin{thebibliography}{10}

\bibitem{AttanasioFlandoli}
S.~Attanasio and F.~Flandoli.
\newblock Zero-noise solutions of linear transport equations without
  uniqueness: an example.
\newblock {\em C. R. Math. Acad. Sci. Paris}, 347(13-14):753--756, 2009.

\bibitem{BaficoBaldi}
R.~Bafico and P.~Baldi.
\newblock Small random perturbations of {P}eano phenomena.
\newblock {\em Stochastics}, 6:279--292, 1982.

\bibitem{BassChen}
R.F. Bass and Z.-Q. Chen.
\newblock Stochastic differential equations for {D}irichlet processes.
\newblock {\em Probab. Theory Related Fields}, 121(3):422--446, 2001.

\bibitem{BuckdahnOuknineQuincampoix}
R.~Buckdahn, Y.~Ouknine, and M.~Quincampoix.
\newblock On limiting values of stochastic differential equations with small
  noise intensity tending to zero.
\newblock {\em Bull. Sci. Math.}, 133(3):229--237, 2009.

\bibitem{CatellierGubinelli}
R.~Catellier and M.~Gubinelli.
\newblock Averaging along irregular curves and regularisation of {ODE}s.
\newblock {\em Stochastic Process. Appl.}, 126(8):2323--2366, 2016.

\bibitem{Charpentier2}
M.~Charpentier.
\newblock {\em Sur les points de {P}eano d'une \'equation diff\'erentielle du
  premier ordre}.
\newblock NUMDAM, 1931.

\bibitem{Charpentier1}
M.~Charpentier.
\newblock On certain dynamical systems with points of {P}eano.
\newblock {\em Bull. Amer. Math. Soc.}, 38(12):849--854, 1932.

\bibitem{dav:07}
A.M. Davie.
\newblock Uniqueness of solutions of stochastic differential equations.
\newblock {\em Int. Math. Res. Not. IMRN}, 24:Art. ID rnm124, 26 pp., 2007.

\bibitem{DelarueDiel}
F.~Delarue and R.~Diel.
\newblock Rough paths and 1d {SDE} with a time dependent distributional drift:
  application to polymers.
\newblock {\em Probab. Theory Related Fields}, 165, 2016.

\bibitem{DelarueFlandoli}
F.~Delarue and F.~Flandoli.
\newblock The transition point in the zero noise limit for a 1{D} {P}eano
  example.
\newblock {\em Discrete Contin. Dyn. Syst.}, 34(10):4071--4083, 2014.

\bibitem{DelarueFlandoli2}
F.~Delarue, F.~Flandoli, and D.~Vincenzi.
\newblock Noise prevents collapse of {V}lasov-{P}oisson point charges.
\newblock {\em Comm. Pure Appl. Math.}, 67(10):1700--1736, 2014.

\bibitem{StroockVaradhan}
D.Stroock and S.R.S. Varadhan.
\newblock {\em Multidimensional Diffusion Processes}.
\newblock Springer Verlag, 2005.

\bibitem{Ferreira2014}
O.~P. Ferreira, A.~N. Iusem, and S.~Z. N{\'e}meth.
\newblock Concepts and techniques of optimization on the sphere.
\newblock {\em TOP}, 22(3):1148--1170, Oct 2014.

\bibitem{Flandoli}
F.~Flandoli.
\newblock {\em Random Perturbation of PDEs and Fluid Dynamics: Ecole d'\'et\'e
  de probabilit\'es de Saint-Flour XL}.
\newblock Lecture Notes in Mathematics. Springer Verlag, 2011.

\bibitem{FlandoliGubinelliPriola}
F.~Flandoli, M.~Gubinelli, and E.~Priola.
\newblock Well posedness of the transport equation by stochastic perturbation.
\newblock {\em Invent. Math.}, 180:1--53, 2010.

\bibitem{FlandoliIssoglioRusso}
F.~Flandoli, E.~Issoglio, and F.~Russo.
\newblock Multidimensional stochastic differential equations with
  distributional drift.
\newblock {\em Trans. Amer. Math. Soc.}, 369(3):1665--1688, 2017.

\bibitem{fla:rus:wol:03}
F.~Flandoli, F.~Russo, and J.~Wolf.
\newblock Some {S}{D}{E}s with distributional drift. i. general calculus.
\newblock {\em Osaka J. Math.}, 40:493--542, 2003.

\bibitem{fla:rus:wol:04}
F.~Flandoli, F.~Russo, and J.~Wolf.
\newblock Some {S}{D}{E}s with distributional drift. ii. lyons-zheng structure,
  it\^o's formula and semimartingale characterization.
\newblock {\em Random Oper. Stochastic Equations}, 2:145--184, 2004.

\bibitem{GradinaruHerrmannRoynette}
M.~Gradinaru, S.~Herrmann, and B.~Roynette.
\newblock A singular large deviations phenomenon.
\newblock {\em Ann. Inst. H. Poincar\'e Probab. Statist.}, 37(5):555--580,
  2001.

\bibitem{JourdainReygner}
B.~Jourdain and J.~Reygner.
\newblock The small noise limit of order-based diffusion processes.
\newblock {\em Electron. J. Probab.}, 19:no. 29, 36, 2014.

\bibitem{Kazamaki}
N.~Kazamaki.
\newblock {\em Continuous Exponential Martingales and BMO}, volume 1579 of {\em
  Lecture Notes in Math.}
\newblock Springer, Berlin, 1994.

\bibitem{kry:roc:05}
N.~V. Krylov and M.~R{\"o}ckner.
\newblock Strong solutions of stochastic equations with singular time dependent
  drift.
\newblock {\em Probab. Theory Related Fields}, 131:154--196, 2005.

\bibitem{Peano}
G.~Peano.
\newblock D\'emonstration de l'int\'egrabilit\'e des \'equations
  diff\'erentielles ordinaires.
\newblock {\em Math. Ann.}, 37(2):182--228, 1890.

\bibitem{PilipenkoProske}
A.~Pilipenko and F.N. Proske.
\newblock On a selection problem for small noise perturbation in the
  multidimensional case.
\newblock {\em Stoch. Dyn.}, 18(6):1850045, 23, 2018.

\bibitem{Trevisan}
D.~Trevisan.
\newblock Zero noise limits using local times.
\newblock {\em Electron. Commun. Probab.}, 18:no. 31, 7, 2013.

\bibitem{Veretennikov}
A.~Y. Veretennikov.
\newblock Strong solutions and explicit formulas for solutions of stochastic
  integral equations.
\newblock {\em Mat. Sb.}, 111:434--452, 1980.

\bibitem{Zajcek}
L.~Zaj\'{\i}\v{c}ek.
\newblock Porosity and {$\sigma$}-porosity.
\newblock {\em Real Anal. Exchange}, 13(2):314--350, 1987/88.

\bibitem{Zvonkin}
A.~K. Zvonkin.
\newblock A transformation of the phase space of a diffusion process that will
  remove the drift.
\newblock {\em Mat. Sb.}, 93:129--149, 1974.

\end{thebibliography}

\end{document}